\documentclass[12pt]{amsart}
\usepackage{tcolorbox}
\usepackage{amsmath}
\usepackage{amsfonts}
\usepackage{amssymb}
\usepackage[all,cmtip]{xy}           
\usepackage{bbm}
\usepackage{bbding}
\usepackage{txfonts}
\usepackage[shortlabels]{enumitem}
\usepackage{mathrsfs}
\usepackage{amscd}
\usepackage{tikz-cd}
\usepackage[active]{srcltx}
\usepackage{tikz}
\usepackage[colorlinks,final,backref=page,hyperindex]{hyperref}
\usetikzlibrary{decorations.pathreplacing}
\usetikzlibrary{calc}
\usetikzlibrary{arrows,shapes,chains}

\makeatletter

\newtheorem{thm}{Theorem}[section]
\newtheorem{prop}[thm]{Proposition}
\newtheorem{lem}[thm]{Lemma}
\newtheorem{cor}[thm]{Corollary}
\newtheorem{prop-def}{Proposition-Definition}[section]

\theoremstyle{definition}
\newtheorem{defn}[thm]{Definition}

\newtheorem{thmA}{Theorem}

\newtheorem{remark}[thm]{Remark}
\newtheorem{exam}[thm]{Example}

\newcommand{\nc}{\newcommand}

\nc{\delete}[1]{{}}
\nc{\mmargin}[1]{}

\newcommand{\Z}{{\mathbb{Z}}}
\newcommand{\V}{{\mathcal{V}}}
\newcommand{\M}{{\mathcal{M}}}
\nc{\U}{{\mathcal U}}
\nc{\D}{{\mathcal D}}
 \newcommand{\Path}{{\mathcal{P}}}

\nc{\Alg}{\mathrm{Alg}}
\nc{\rmH}{\mathrm{H}}
\nc{\rmB}{\mathrm{B}}
\nc{\rmZ}{\mathrm{Z}}
\nc{\rmC}{\mathrm{C}}
\nc{\DT}{\mathrm{DT}}
\nc{\C}{\mathbf{C}}
\nc{\sh}{\mathrm{sh}}

\nc{\mlabel}[1]{\label{#1}}  
\nc{\mcite}[1]{\cite{#1}}  
\nc{\mref}[1]{\ref{#1}}  
\nc{\mbibitem}[1]{\bibitem{#1}} 

\delete{
	\nc{\mlabel}[1]{\label{#1}  
		{\hfill \hspace{1cm}{\bf{{\ }\hfill(#1)}}}}
	\nc{\mcite}[1]{\cite{#1}{{\bf{{\ }(#1)}}}}  
	\nc{\mref}[1]{\ref{#1}{{\bf{{\ }(#1)}}}}  
	\nc{\mbibitem}[1]{\bibitem[\bf #1]{#1}} 
	
}

 \font\cyrs=wncyr7

\nc{\lan}{\langle}
\nc{\ran}{\rangle}
\nc{\vep}{\varepsilon}
\nc{\bin}[2]{ (_{\stackrel{\scs{#1}}{\scs{#2}}})}  
\nc{\binc}[2]{(\!\! \begin{array}{c} \scs{#1}\\
		\scs{#2} \end{array}\!\!)}  
\nc{\bincc}[2]{  ( {\scs{#1} \atop
		\vspace{-1cm}\scs{#2}} )}  
\nc{\oline}[1]{\overline{#1}}
\nc{\mapm}[1]{\lfloor\!|{#1}|\!\rfloor}
\nc{\bs}{\bar{S}}
\nc{\la}{\longrightarrow}
\nc{\ot}{\otimes}
\nc{\rar}{\rightarrow}
\nc{\lon }{\,\rightarrow\,}
\nc{\dar}{\downarrow}
\nc{\dap}[1]{\downarrow \rlap{$\scriptstyle{#1}$}}
\nc{\defeq}{\stackrel{\rm def}{=}}
\nc{\dis}[1]{\displaystyle{#1}}
\nc{\dotcup}{\ \displaystyle{\bigcup^\bullet}\ }
\nc{\hcm}{\ \hat{,}\ }
\nc{\hts}{\hat{\otimes}}
\nc{\hcirc}{\hat{\circ}}
\nc{\lleft}{[}
\nc{\lright}{]}
\nc{\curlyl}{\left \{ \begin{array}{c} {} \\ {} \end{array}
	\right .  \!\!\!\!\!\!\!}
\nc{\curlyr}{ \!\!\!\!\!\!\!
	\left . \begin{array}{c} {} \\ {} \end{array}
	\right \} }
\nc{\longmid}{\left | \begin{array}{c} {} \\ {} \end{array}
	\right . \!\!\!\!\!\!\!}
\nc{\ora}[1]{\stackrel{#1}{\rar}}
\nc{\ola}[1]{\stackrel{#1}{\la}}
\nc{\scs}[1]{\scriptstyle{#1}} \nc{\mrm}[1]{{\rm #1}}
\nc{\dirlim}{\displaystyle{\lim_{\longrightarrow}}\,}
\nc{\invlim}{\displaystyle{\lim_{\longleftarrow}}\,}
\nc{\dislim}[1]{\displaystyle{\lim_{#1}}} \nc{\colim}{\mrm{colim}}
\nc{\mvp}{\vspace{0.3cm}} \nc{\tk}{^{(k)}} \nc{\tp}{^\prime}
\nc{\ttp}{^{\prime\prime}} \nc{\svp}{\vspace{2cm}}
\nc{\vp}{\vspace{8cm}}
\nc{\modg}[1]{\!<\!\!{#1}\!\!>}
\nc{\intg}[1]{F_C(#1)}
\nc{\lmodg}{\!<\!\!}
\nc{\rmodg}{\!\!>\!}
\nc{\cpi}{\widehat{\Pi}}
\nc{\ssha}{{\mbox{\cyrs X}}} 
\nc{\tsha}{{\mbox{\cyrt X}}}
\nc{\shpr}{\diamond}    
\nc{\labs}{\mid\!}
\nc{\rabs}{\!\mid}

\nc{\RBO}{{\mathrm{RBO}_\lambda}}
\nc{\RBLO}{\mathrm{RBLO}_{\lambda}}
\nc{\Sh}{\mathrm{Sh}}
\nc{\RBLA}{{_\lambda\mathfrak{RBL}}}
\nc{\RBLAz}{{_0\mathfrak{RBL}}}
\nc{\frakLie}{{\mathfrak{Lie}}}
\nc{\sgn}{\mathrm{sgn}}

\nc{\Ass}{\mathrm{Ass}}
\nc{\rmLM}{\mathrm{LM}}
\nc{\rmRM}{\mathrm{RM}}
\nc{\rmBM}{\mathrm{BM}}
\nc{\rmAA}{\mathrm{AA}}
\nc{\rmLMA}{\mathrm{LMAlg}}
\nc{\rmRMA}{\mathrm{RMAlg}}
\nc{\rmBMA}{\mathrm{BMAlg}}
\nc{\rmAAA}{\mathrm{AAAlg}}

\nc{\rmrRBz}{\mathrm{rRBLA_{0}}}
\nc{\rmrRB}{\mathrm{rRBLA_{\lambda}}}
\nc{\Lie}{\mrm{Lie}}
\nc{\Liemod}{\mrm{LieRep}}
\nc{\LieAct}{\mrm{LieAct}}
\nc{\rmLR}{\mathrm{LRLie}}
\nc{\rmLAct}{\mathrm{LALie}}
\nc{\rmrRBOz}{\mathrm{rRBLO_{0}}}
\nc{\rmrRBO}{\mathrm{rRBLO_{\lambda}}}
\nc{\rmRBLA}{\mathrm{RBL_{\lambda}}}

\nc{\rDAz}{\mathrm{rDA}_0}
\nc{\rDA}{\mathrm{rDA}_{\lambda}}
\nc{\rDOz}{\mathrm{rDO}_0}
\nc{\rDO}{\mathrm{rDO}_{\lambda}}

\nc{\ad}{\mrm{ad}}
\nc{\ann}{\mrm{ann}}
\nc{\Aut}{\mrm{Aut}}
\nc{\bim}{\mbox{-}\mathsf{Bimod}}
\nc{\br}{\mrm{bre}}
\nc{\can}{\mrm{can}}
\nc{\Cont}{\mrm{Cont}}
\nc{\rchar}{\mrm{char}}
\nc{\cok}{\mrm{coker}}
\nc{\de}{\mrm{dep}}
\nc{\dtf}{{R-{\rm tf}}}
\nc{\dtor}{{R-{\rm tor}}}

\nc{\Div}{{\mrm Div}}
\nc{\Diff}{\mrm{DA}}
\nc{\Diffl}{\mathsf{DA}_\lambda}
\nc{\diffo}{{\mathsf{DO}_\lambda}}
\nc{\alg}{\mathsf{Alg}}
\nc{\End}{\mrm{End}}
\nc{\Ext}{\mrm{Ext}}
\nc{\rmf}{\mrm{f}}
\nc{\Fil}{\mrm{Fil}}
\nc{\Fr}{\mrm{Fr}}
\nc{\Frob}{\mrm{Frob}}
\nc{\Gal}{\mrm{Gal}}
\nc{\GL}{\mrm{GL}}
\nc{\h}{\mrm{H}}
\nc{\Hom}{\mrm{Hom}}
\nc{\Hoch}{\mrm{Hoch}}

\nc{\hsr}{\mrm{H}}
\nc{\hpol}{\mrm{HP}}
\nc{\id}{\mrm{Id}}
\nc{\im}{\mrm{im}}
\nc{\Id}{\mrm{Id}}
\nc{\ID}{\mrm{ID}}
\nc{\Irr}{\mrm{Irr}}
\nc{\incl}{\mrm{incl}}
\nc{\length}{\mrm{length}}
\nc{\NLSW}{\mrm{NLSW}}
\nc{\mchar}{\rm char}
\nc{\mpart}{\mrm{part}}
\nc{\ql}{{\QQ_\ell}}
\nc{\qp}{{\QQ_p}}
\nc{\rank}{\mrm{rank}}
\nc{\rcot}{\mrm{cot}}
\nc{\rdef}{\mrm{def}}
\nc{\rdiv}{{\rm div}}
\nc{\rtf}{{\rm tf}}
\nc{\rtor}{{\rm tor}}
\nc{\res}{\mrm{res}}
\nc{\s}{\mrm{S}}
\nc{\SL}{\mrm{SL}}
\nc{\Spec}{\mrm{Spec}}
\nc{\tor}{\mrm{tor}}
\nc{\Tr}{\mrm{Tr}}
\nc{\tr}{\mrm{tr}}
\nc{\wt}{\mrm{wt}}
\def\ot{\otimes}
\def\antish{{\rotatebox[origin=c]{180}{\rm !}}}

\nc{\In}{\mrm{in}}
\nc{\Out}{\mrm{out}}
\nc{\prof}{\mrm{prof}}
\nc{\pth}{\mrm{Path}}

\nc{\f}{\mrm{f}}

\nc{\bfk}{{\bf k}}
\nc{\bfone}{{\bf 1}}
\nc{\bfzero}{{\bf 0}}
\nc{\bfhom}{\mathbf{Hom}}
\nc{\detail}{\marginpar{\bf More detail}
	\noindent{\bf Need more detail!}
	\svp}
\nc{\gap}{\marginpar{\bf Incomplete}\noindent{\bf Incomplete!!}
	\svp}
\nc{\FMod}{\mathbf{FMod}}
\nc{\Mon}{\mathbf{Mon}}
 \nc{\sproof}{\noindent{  \textit{Sketch of Proof:} }}
\nc{\remarks}{\noindent{\bf Remarks: }}
\nc{\Rep}{\mathbf{Rep}}
\nc{\Rings}{\mathbf{Rings}}
\nc{\Sets}{\mathbf{Sets}}
\nc{\ob}{\mathsf{Ob}}

\nc{\Ver}{\mathbf{Vert}}
\nc{\Par}{\mathbf{Parent}}
\nc{\Int}{\mathbf{Int}}
\nc{\Leaf}{\mathbf{Leaf}}
\nc{\Root}{\mathbf{Root}}
\nc{\Tip}{\mathrm{Tip}}
\nc{\Nontip}{\mathrm{Nontip}}
\nc{\Edge}{\mathbf{Edge}}

\nc{\BA}{{\mathbb A}}   \nc{\CC}{{\mathbb C}}
\nc{\DD}{{\mathbb D}}   \nc{\EE}{{\mathbb E}}
\nc{\FF}{{\mathbb F}}   \nc{\GG}{{\mathbb G}}
\nc{\LL}{{\mathbb L}}
\nc{\NN}{{\mathbb N}}   \nc{\PP}{{\mathbb P}}
\nc{\QQ}{{\mathbb Q}}   \nc{\RR}{{\mathbb R}}
\nc{\bbS}{{\mathbb S}}
\nc{\TT}{{\mathbb T}}   \nc{\VV}{{\mathbb V}}
\nc{\ZZ}{{\mathbb Z}}   \nc{\TP}{\widetilde{P}}
\nc{\m}{{\mathbbm m}}

\nc{\scra}{\mathscr A}
\nc{\scrm}{\mathscr M}
\nc{\scrp}{\mathscr P}
\nc{\scro}{\mathscr O}
\nc{\scrq}{\mathscr Q}

\nc{\ac}{\mathrm{ac}}

\nc{\cala}{{\mathcal A}}    \nc{\calC}{{\mathcal C}} 
\nc{\cald}{\mathcal{D}}     \nc{\cale}{{\mathcal E}} \nc{\calE}{{\mathcal E}}
\nc{\calf}{{\mathcal F}}    \nc{\calg}{{\mathcal G}}
\nc{\calh}{{\mathcal H}}    \nc{\cali}{{\mathcal I}} \nc{\calI}{{\mathcal I}}
\nc{\call}{{\mathcal L}}    \nc{\calm}{{\mathcal M}}
\nc{\caln}{{\mathcal N}}    \nc{\calO}{{\mathcal O}}
\nc{\calp}{{\mathcal P}} \nc{\calP}{{\mathcal P}}   \nc{\calr}{{\mathcal R}}
\nc{\calS}{{\mathcal S}}    \nc{\calt}{{\mathcal T}}
\nc{\calv}{{\mathcal V}}    \nc{\calw}{{\mathcal W}}
\nc{\calx}{{\mathcal X}}

\nc{\fraka}{{\mathfrak a}}
\nc{\frakb}{\mathfrak{b}}
\nc{\frakC}{\mathfrak{C}}
\nc{\frakg}{{\frak g}}
\nc{\frakh}{{\frak h}}
\nc{\frakl}{{\frak l}}
\nc{\frakL}{{\frak L}}
\nc{\fraks}{{\frak s}}
\nc{\frakB}{{\frak B}}
\nc{\frakm}{{\frak m}}
\nc{\frakM}{{\frak M}}
\nc{\frako}{{\frak O}}
\nc{\frakp}{{\frak p}}
\nc{\frakW}{{\frak W}}
\nc{\frakX}{{\frak X}}
\nc{\frakS}{{\frak S}}
\nc{\frakST}{{{\frak S}{\frak T}}}
\nc{\frakt}{{\mathfrak{T}}}\nc{\frakT}{{\mathfrak{T}}}
\nc{\frakLT}{{\frakL}{\frakt}}
\nc{\frakTp}{{\frakt^+}}
\nc{\frakA}{{\frak A}}
\nc{\frakx}{{\frakx}}
\nc{\red}{\color{red}}
\nc{\RB}{{{}_\lambda\mathfrak{RBA}_{\bbS}}}
\nc{\nsRB}{{{}_\lambda\mathfrak{RBA}}}
\nc{\Dif}{{}_\lambda\!\mathfrak{Dif}  }
\nc{\Difz}{{}_0\!\mathfrak{Dif}  }
\nc{\LM}{\mathfrak{LM}}
\nc{\RM}{\mathfrak{RM}}
\nc{\BM}{\mathfrak{BM}}
\nc{\AsAct}{\mathfrak{AA}}
\nc{\reDif}{_{\lambda}\mathfrak{rDif}}
\nc{\reDifz}{_{0}\mathfrak{rDif}}
\nc{\reRBLA}{_{\lambda}\mathfrak{rRBL}}
\nc{\reRBLAz}{_{0}\mathfrak{rRBL}}

\nc{\frakCT}{\mathfrak{CT}}
\nc{\frakCTP}{\mathfrak{CT}^{+}}

\nc{\lir}[1]{\textcolor{red}{\underline{Li:}#1 }}

\begin{document}

\title[Homotopy Rota-Baxter Lie algebras]{Deformations and homotopy theory of Rota-Baxter Lie algebras}

\author{Jun Chen, Kai Wang,  and Guodong Zhou}

\address{Jun Chen, School of Mathematics,
   Nanjing University,
   Nanjing 210093,
   P.R.China}
   
\email{mathcj@nju.edu.cn}

\address{Kai Wang,  School of Mathematical Sciences\\ University of Science and Technology of China\\ Hefei, Anhui Provience 230026, China}

   \email{wangkai17@ustc.edu.cn }

\address{Guodong Zhou,
  School of Mathematical Sciences, Key Laboratory of MEA (Ministry of Education), Shanghai Key Laboratory of PMMP,
  East China Normal University,
 Shanghai 200241,
   China}

\email{gdzhou@math.ecnu.edu.cn}

\date{\today}

\begin{abstract}

For     Rota-Baxter Lie algebras, a  homotopy cooperad is exhibited,  whose cobar construction is shown to be the minimal model of the operad of Rota-Baxter Lie algebras by using algebraic Morse theory.  The deformation complex of Rota-Baxter Lie algebras  as well as the $L_\infty$-algebra structure on this complex are deduced from the minimal model and the notion of homotopy Rota-Baxter Lie algebras is given as a consequence.  

\end{abstract}

\subjclass[2010]{
18M70   
 17B38  
16E40   
16S80   
16S70   
}

\keywords{cohomology,     homotopy cooperad,  homotopy Rota-Baxter Lie algebra, Kosuzl dual homotopy cooperad,  $L_\infty$-algebra,  minimal model, operad,  Rota-Baxter Lie algebra }

\maketitle

 \tableofcontents

\allowdisplaybreaks

\section*{Introduction}
A general philosophy of deformation theory of mathematical structures, going back to Gerstenhaber \cite{Ge63} and developed by Nijenhuis--Richardson, Deligne, Goldman--Millson and others, claims that the deformation theory of any given mathematical object is controlled by a certain differential graded (=dg) Lie algebra, or more generally an $L_\infty$-algebra, associated to the object; the underlying cochain complex is called the deformation complex. In characteristic zero this philosophy has been promoted to a theorem by Lurie \cite{Lurie} and Pridham \cite{Pridham} in the language of $\infty$-categories. It remains, however, an important and in general difficult problem to construct explicitly the $L_\infty$-algebra governing the deformation theory of a given algebraic structure.

A closely related problem is to find the correct homotopy version of a given algebraic structure, in the same way as $A_\infty$-algebras are the homotopy version of associative algebras and $L_\infty$-algebras \cite{Sta63, LS93, LM95} the homotopy version of Lie algebras. The optimal answer is a minimal model of the operad governing the structure. When the operad is Koszul, Koszul duality theory \cite{GK94, LV} produces such a minimal model via the cobar construction of the Koszul dual cooperad. When the operad is not Koszul, essential difficulties arise and only a few examples are known: G\'{a}lvez-Carrillo, Tonks and Vallette \cite{GCTV12} obtained a cofibrant resolution of the Batalin--Vilkovisky operad via inhomogeneous Koszul duality, and Drummond-Cole and Vallette \cite{DCV13} then extracted a minimal model from it by algebraic Morse theory; Dotsenko and Khoroshkin \cite{DK10, DK13} constructed resolutions for operads with a Gr\"{o}bner basis. These two problems are intimately connected: given a cofibrant resolution, in particular a minimal model, of the operad in question, the convolution construction produces an $L_\infty$-algebra whose Maurer--Cartan elements are exactly the algebraic structures under consideration \cite{KS, MV1, MV2}.

The algebraic structures studied in this paper are Rota-Baxter Lie algebras of arbitrary weight. Rota-Baxter algebras originated in the work of Baxter \cite{Bax} on probability theory and were further investigated by Rota \cite{Rota69, Rota95}, Cartier \cite{Cartier} and Atkinson \cite{Atkinson}, among others; we refer to \cite{Guo12} for the basic theory. They have by now found numerous connections with combinatorics, renormalization in quantum field theory \cite{CK}, multiple zeta values, Hopf algebras and the Yang--Baxter equation. Their Lie analogues, Rota-Baxter Lie algebras, appeared only recently and have already shown rich structures: Tang, Bai, Guo and Sheng \cite{TBGS19} developed the deformation and cohomology theory of $\mathcal{O}$-operators on Lie algebras; Guo, Lang and Sheng \cite{GLS21} introduced Rota-Baxter Lie groups and studied integration and geometrization; Lazarev, Sheng and Tang \cite{LST} established the deformation theory of relative Rota-Baxter Lie algebras of weight zero, determined the controlling $L_\infty$-algebra and introduced homotopy relative Rota-Baxter Lie algebras, which were further related to triangular $L_\infty$-bialgebras in \cite{LST2}; cohomologies of relative Rota-Baxter operators of weight one on Lie groups and Lie algebras were studied by Jiang, Sheng and Zhu \cite{JSZ21, JiangSheng21}. On the associative side, Das \cite{Das20, Das21} developed the cohomology and deformation theory of (weighted) Rota-Baxter operators, and in our previous work \cite{WZ24} the last two named authors constructed the minimal model of the operad of Rota-Baxter associative algebras of arbitrary weight and deduced from it the deformation complex, its $L_\infty$-structure and the notion of homotopy Rota-Baxter associative algebras. However, all these works deform the Rota-Baxter operator with the underlying bracket fixed, or concern weight zero. The goal of the present paper is to study \emph{simultaneous} deformations of the Lie bracket and the Rota-Baxter operator of \emph{arbitrary} weight, and to provide the homotopy theory of Rota-Baxter Lie algebras on the operadic level.

Our first main result is a minimal model of the operad $\RBLA$ of Rota-Baxter Lie algebras of weight $\lambda$. Since $\RBLA$ is not Koszul (in fact not even quadratic, see Section~\ref{Sect: RB Lie}), classical Koszul duality theory does not apply directly. Instead, we construct directly a coaugmented symmetric homotopy cooperad $\mathscr{S}(\RBLA^{\antish})$ (Section~\ref{Sect: RB Lie}), identify its Koszul dual homotopy cooperad $\RBLA^\antish:=\mathscr{S}\RBLA^\antish\ot_{\mathrm{H}}\calS^{-1}$, and show that its cobar construction is a minimal model, by exhibiting an explicit acyclic Morse matching on the free operad generated by the desuspension of the Koszul dual.

\begin{thmA}[{Proposition~\ref{Prop: homotopy cooperad structure} and Theorem~\ref{Thm: Minimal model}}]
The $\mathbb{S}$-module $\mathscr{S}(\RBLA^{\antish})$ carries a natural coaugmented symmetric homotopy cooperad structure, and the dg operad $\RBLA_\infty:=\Omega(\RBLA^{\antish})$, with explicitly computed differentials, is the minimal model of the operad $\RBLA$.
\end{thmA}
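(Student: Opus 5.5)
The proof has two parts, following the strategy of \cite{WZ24} for the associative case but with Lie brackets and symmetric group actions.

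\emph{The homotopy cooperad structure.} The first step is to fix the graded $\mathbb{S}$-module $\mathscr{S}(\RBLA^{\antish})$. In arity $n$ it should be spanned by a generator $m_n$, of degree $1-n$, dual to the (shifted) $L_\infty$-component coming from the Lie bracket, together with generators $T_n$, of degree $-n$, dual to the Rota--Baxter operator and its higher homotopies. The signs are handled with the sign representation, so that in the symmetric setting each component is $\mathrm{sgn}_n$ or trivial after the shift. Next I define the higher cooperations $\Delta_T$, indexed by trees $T$ with $k\ge 2$ vertices. For $m_n$ only two-vertex trees occur, giving the Lie (Koszul dual of $\mathrm{Com}$) decomposition. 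For $T_n$ the decompositions are those forced by the weighted Rota--Baxter identity
\[
[T x, T y]=T\bigl([Tx,y]+[x,Ty]+\lambda[x,y]\bigr).
\]
Concretely, $T_n$ decomposes along trees whose root is a bracket vertex $m_k$ with all inputs $T$'s, or along trees in which $T$ vertices sit above inputs of $m_k$, with $\lambda$-powers counting inputs not covered by a $T$. Coassociativity up to homotopy, that is, the relations $\sum \pm(\Delta_{T'}\circ\Delta_{T''})=0$ of a homotopy cooperad, I verify by transporting the problem to the convolution side. The identity is equivalent to the statement that Maurer--Cartan elements in the associated convolution $L_\infty$-algebra are exactly Rota--Baxter Lie structures, and that this holds for the free Lie algebra. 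One then checks that the resulting equations are identities of the free operad, which is done by a case analysis on the shape of $T$ and a sign bookkeeping. Coaugmentation is the obvious one. This is routine but sign-heavy.

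\emph{Minimality of $\Omega(\RBLA^{\antish})$.} The cobar construction is quasi-free, and its differential has no linear part, since $\Delta_T$ has $|T|\ge 2$. It therefore suffices to show that the natural map $\Omega(\RBLA^{\antish})\to\RBLA$, sending $m_2\mapsto[-,-]$, $T_1\mapsto T$ and all other generators to $0$, is a quasi-isomorphism. I plan to filter both sides by the number of $T$-vertices, or weight, and compare arity by arity. Then I build on the underlying free operad, whose basis is trees decorated by $m_n,T_n$, a Morse matching. I choose a normal form for $\RBLA$: Lie monomials in which the Rota--Baxter relation is oriented to rewrite $[Tx,Ty]$, i.e. using a Gröbner--Shirshov basis for Rota--Baxter Lie algebras, as in free Rota--Baxter Lie algebras via Lyndon--Shirshov words. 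The matching pairs a tree containing a generator $T_n$ with $n\ge2$ (respectively $m_n$, $n\ge3$) at a chosen leftmost position with the tree obtained by splitting that vertex according to the leading term of the differential. One then checks that the matching is acyclic by a descending weight function, and that the critical cells are exactly the trees in normal form, concentrated in degree $0$. Algebraic Morse theory then gives that the complex is homotopy equivalent to the span of critical cells with zero differential, i.e. to $\RBLA$.

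The main obstacle is this second step: making the Morse matching well defined and acyclic in the presence of the symmetric group actions and the Lie antisymmetry, and identifying the critical cells with a basis of $\RBLA$. Unlike the associative case, there are no planar trees. I would first pass to shuffle operads, using the forgetful functor, which preserves quasi-isomorphisms. There, shuffle trees give a canonical basis, and the leading-term order can be taken from a shuffle Gröbner basis, as in Dotsenko--Khoroshkin. I would then copy the weight argument of \cite{WZ24} for the matching there.
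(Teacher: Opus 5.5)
Your overall architecture is the paper's: explicit cooperations, then algebraic Morse theory on shuffle tree monomials, with the critical cells being the degree-zero Gr\"obner normal forms. But the step you flag as the main obstacle is where the actual proof lies, and the concrete choices you make there fail.

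\textbf{The matching rule is not a matching.} You pair every tree containing a positive-degree generator with the tree obtained by splitting one such vertex according to the leading term. For $n\geqslant 3$ the leading monomial $\widehat{T_n}$ of $\partial(T_n)$ has degree $n-2>0$, and the only degree-zero generators are $\ell_2$ and $T_1$. So $\widehat{T_n}$ contains a positive-degree vertex; likewise $\widehat{\ell_n}$ for $n\geqslant 4$. The split tree is therefore split again and lies on two arrows. With the paper's leading terms, for instance, you get the chain $T_3\mapsto T_2(x_1,\ell_2(x_2,T_1x_3))\mapsto T_1(\ell_2(x_1,T_1(\ell_2(x_2,T_1x_3))))$.

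The paper instead indexes the matching by its targets. It uses the right path-permutation extension of $T_1\sqsubset\ell_2\sqsubset T_2\sqsubset\ell_3\sqsubset\cdots$, so that the typical monomials are $\ell_{n-1}\circ_{n-1}\ell_2$ and $T_{n-1}\circ_{n-1}(\ell_2\circ_2T_1)$. A tree is matched only through its effective divisor: the top-rightmost typical divisor, with no positive-degree vertex and no other typical divisor after its first vertex.
\begin{itemize}
\item The three-case lemma shows this is a partial matching. In the example, the third tree is matched with $T_1(\ell_2(x_1,T_2(x_2,x_3)))$, not with the second.
\item A non-effective positive-degree tree is shown to be a source by splitting its last, not first, positive-degree vertex.
\item Acyclicity comes from the map $D$, which is constant along dotted arrows and non-increasing along thick ones, together with the well-ordering of $\sqsubset$ applied to the first vertex where two trees differ.
\end{itemize}

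\textbf{Your orientation of the Rota--Baxter relation is unavailable in the shuffle Gr\"obner framework you invoke.} Take any order comparing path sequences slot by slot: Dotsenko--Khoroshkin's path-lexicographic or path-permutation orders, or the paper's right-to-left variant. The monomial $\ell_2(T_1x_1,T_1x_2)$ has path sequence $(\ell_2T_1,\ell_2T_1)$. By contrast, $T_1\ell_2(T_1x_1,x_2)$ and $T_1\ell_2(x_1,T_1x_2)$ carry $T_1\ell_2T_1$ in the first, resp.\ last, slot, and $T_1\ell_2T_1\succ\ell_2T_1$ in every monomial order on words. So $[Tx,Ty]$ is never the leading monomial. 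The Lyndon--Shirshov $\Omega$-word normal forms therefore do not transfer unless you build a different admissible order and redo both the Gr\"obner check and the matching for it. The paper's leading term $T_1(\ell_2(x_1,T_1x_2))$ has the further advantage of not overlapping the Jacobi leading term $\ell_2\circ_2\ell_2$.

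\textbf{The reduction in the first part is circular.} Reducing to ``Maurer--Cartan elements of the convolution algebra are Rota--Baxter Lie structures'' does not work, for two reasons. That $L_\infty$-algebra exists only once the axioms hold. And for $V$ in degree $0$ its Maurer--Cartan equation involves only the components corresponding to $\ell_2$ and $T_1$, so it cannot detect the higher relations. The right criterion, and the paper's, is $\partial^2=0$ on the cobar generators $x_n=s^{-1}u_n$ and $y_n=s^{-1}v_n$. For $x_n$ this is the suspended $L_\infty$ operad. For $y_n$ one expands $\partial^2(y_n)$ and checks that the terms cancel in pairs, which is what your fallback ``case analysis'' should be.
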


The proof uses algebraic Morse theory: we introduce the right path-permutation extension order on shuffle tree monomials, single out the effective tree monomials, and prove that the resulting matching is a Morse matching whose critical tree monomials are precisely the normal forms of Rota-Baxter Lie algebras.

Our second main result extracts the deformation theory from the minimal model. Applying the general convolution formalism to the Koszul dual homotopy cooperad, we obtain an $L_\infty$-algebra $\frakC_{\rmRBLA}(V)$ on the graded space of cochains of a graded space $V$, whose Maurer--Cartan elements are exactly homotopy Rota-Baxter Lie algebra structures on $V$ (Theorem~\ref{Thm: rb-L-infty} and Proposition~\ref{prop: RBLA 1-1 MC}); this gives the notion of homotopy Rota-Baxter Lie algebras of weight $\lambda$. Twisting this $L_\infty$-algebra by the Maurer--Cartan element corresponding to a given Rota-Baxter Lie algebra $(\frakg, \ell, T)$ produces its deformation complex. Concretely, we define the cochain complex $\rmC_{\RBLO}^{\bullet}(\frakg, M)$ of the Rota-Baxter operator $T$ with coefficients in a Rota-Baxter Lie representation $(M,\rho,T_M)$ as the Chevalley--Eilenberg complex of the descendent Lie algebra $\frakg_{\star}$ with coefficients in the representation $_{\rhd}M$, construct an explicit chain map $\Phi^\bullet$ from the Chevalley--Eilenberg complex of $\frakg$ to it (Proposition~\ref{prop: Phi is chain map}), and define the cochain complex $\rmC_{\rmRBLA}^{\bullet}(\frakg,M)$ of the Rota-Baxter Lie algebra as the negative shift of the mapping cone of $\Phi^\bullet$.

\begin{thmA}[{Propositions~\ref{prop: cochain complex of RBLA} and~\ref{prop: dgLa RBLO and twisting}}]
Twisting the $L_\infty$-algebra $\frakC_{\rmRBLA}(\frakg)$ by the Maurer--Cartan element corresponding to $(\frakg,\ell,T)$ recovers the suspension of the cochain complex $\rmC_{\rmRBLA}^{\bullet}(\frakg)$. Moreover, the graded space $\frakC_{\RBLO}(\frakg)$ carries a dg Lie algebra structure whose Maurer--Cartan elements are exactly the Rota-Baxter operators of weight $\lambda$ on $(\frakg,\ell)$, and twisting by such an element recovers the cochain complex $\rmC_{\RBLO}^{\bullet}(\frakg)$ of the Rota-Baxter operator.
\end{thmA}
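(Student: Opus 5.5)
The statement is Theorem B of the introduction. I would prove it by reducing both assertions to the general twisting principle for $L_\infty$-algebras coming from the convolution of a homotopy cooperad with an endomorphism operad, and then checking by hand that the twisted differentials agree with the explicit complexes.

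\emph{Step 1: the twisted $L_\infty$-algebra is a dg Lie algebra up to suspension.} Let $\alpha=(m_\ell,\tau_T)$ be the Maurer--Cartan element of $\frakC_{\rmRBLA}(\frakg)$ attached to $(\frakg,\ell,T)$, as provided by Theorem~\ref{Thm: rb-L-infty} and Proposition~\ref{prop: RBLA 1-1 MC}. Twisting gives an $L_\infty$-algebra with brackets
\[
l_n^\alpha(x_1,\dots,x_n)=\sum_{k\ge0}\tfrac{1}{k!}\,l_{n+k}(\alpha,\dots,\alpha,x_1,\dots,x_n).
\]
Because $\frakg$ is concentrated in degree $0$ and $\alpha$ lives in a fixed arity/weight range, only finitely many terms survive. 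I would show $l_1^\alpha$ is the relevant differential by splitting the convolution space as
\[
\frakC_{\rmRBLA}(\frakg)=\frakC_{\Lie}(\frakg)\oplus\frakC_{\RBLO}(\frakg),
\]
where the first summand consists of the bracket-type components (coming from the Lie part of the cooperad) and the second of the operator-type components.

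\emph{Step 2: identify $l_1^\alpha$ with the mapping-cone differential.} I expect $l_1^\alpha$ to be upper triangular with respect to this splitting.
\begin{itemize}
\item On $\frakC_{\Lie}(\frakg)$, only $l_2(m_\ell,-)$ contributes. This is the Nijenhuis--Richardson bracket with $\ell$, i.e. the Chevalley--Eilenberg differential of $\frakg$.
\item On $\frakC_{\RBLO}(\frakg)$, the terms $l_{k+1}(m_\ell,\tau_T,\dots,\tau_T,-)$ assemble into the Chevalley--Eilenberg differential of the descendent Lie algebra $\frakg_\star$, with bracket $[x,y]_\star=[Tx,y]+[x,Ty]+\lambda[x,y]$, acting on $_{\rhd}\frakg$. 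This is the definition of $\rmC^\bullet_{\RBLO}(\frakg)$.
\item The off-diagonal component, from $\frakC_{\Lie}$ to $\frakC_{\RBLO}$, comes from the higher brackets $l_{k+1}(\tau_T^{\otimes k},-)$. I would match it with the explicit map $\Phi^\bullet$ of Proposition~\ref{prop: Phi is chain map}, up to signs.
\end{itemize}
Once these identifications hold, Proposition~\ref{prop: cochain complex of RBLA} identifies the negative shift of the cone of $\Phi^\bullet$ with $\rmC^\bullet_{\rmRBLA}(\frakg)$, which gives the first claim after the degree shift. The main obstacle is this Step~2 comparison. It requires unwinding the higher cocomposition maps of $\mathscr S(\RBLA^\antish)$ from Proposition~\ref{Prop: homotopy cooperad structure} and tracking the Koszul signs and the weight $\lambda$ coefficients, which enter $\Phi$ through powers $\lambda^{k}$ combined with combinatorial sums over subsets. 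I would first check arities $1$ and $2$ by hand and then argue inductively, using the recursive form of $\Phi$.

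\emph{Step 3: the second claim.} Fix $m_\ell$ and restrict to $\frakC_{\RBLO}(\frakg)$. Twisting only by the $m_\ell$ component and projecting shows that this subspace is closed under the brackets. Brackets $l_n$ with $n\ge3$ vanish on it for arity reasons: each operator-type generator carries a single output of degree $0$, so higher cocompositions contribute only through $m_\ell$. This leaves a dg Lie algebra, namely a weighted, derived-bracket-type Lie algebra in the style of the weight-zero case of \cite{LST}. Its Maurer--Cartan equation $d\tau+\frac12[\tau,\tau]=0$, evaluated on $\tau=T$, expands to
\[
[Tx,Ty]=T\bigl([Tx,y]+[x,Ty]+\lambda[x,y]\bigr),
\]
which is the Rota--Baxter identity of weight $\lambda$. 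Twisting by $T$ then gives the diagonal block already identified in Step~2, namely $\rmC^\bullet_{\RBLO}(\frakg)$. This argument is what Proposition~\ref{prop: dgLa RBLO and twisting} should formalize.
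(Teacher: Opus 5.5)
Your plan is essentially the paper's proof. The paper also splits $\frakC_{\rmRBLA}(\frakg)=\frakC_{\Lie}(\frakg)\oplus\frakC_{\RBLO}(\frakg)$ and computes the twisted differential blockwise for $\alpha=(s\widetilde{\ell},\widetilde{T})$: the Nijenhuis--Richardson bracket with $s\widetilde{\ell}$ gives the diagonal block on $\frakC_{\Lie}(\frakg)$, the terms $l_{i+1}(\widetilde{T}^{\otimes i}\otimes sf)$ give $\Phi^\bullet$, and $-l_2(s\widetilde{\ell}\otimes g)-l_3(s\widetilde{\ell}\otimes\widetilde{T}\otimes g)$ gives $\partial^\bullet$; for the second part it twists first by $(s\widetilde{\ell},0)$, notes that only $l_1^\alpha,l_2^\alpha$ survive on $\frakC_{\RBLO}(\frakg)$, and then twists by $\tau=T\circ s^{-1}$.

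Two small fixes are needed. First, identifying the shifted cone with $\rmC^\bullet_{\rmRBLA}(\frakg)$ is the definition, and citing Proposition~\ref{prop: cochain complex of RBLA} there is circular. Second, the actual reason higher brackets vanish on $\frakC_{\RBLO}(\frakg)$ is that every nonzero bracket landing in $\frakC_{\RBLO}(\frakg)$ has exactly one $\frakC_{\Lie}$-input, here $s\widetilde{\ell}$ of arity $2$, so by the formulas (II)(i)--(ii) at most two operator-type inputs occur. Those same formulas give the off-diagonal block as $\Phi^\bullet$ in closed form, so no induction on arity is needed.
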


Our third main result concerns the relationship with the associative theory. The commutator construction sends Rota-Baxter associative algebras to Rota-Baxter Lie algebras; we show that this classical functor deforms to the homotopy level.

\begin{thmA}[{Proposition~\ref{prop: comparison morphism}}]
There is an explicit morphism of dg operads $\Phi: \RBLA_\infty \to \RB_\infty$ from the minimal model of the operad of Rota-Baxter Lie algebras to the minimal model of the operad of Rota-Baxter associative algebras constructed in \cite{WZ24}, given on generators by anti-symmetrization.
\end{thmA}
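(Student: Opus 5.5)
The plan is to exploit the fact that both $\RBLA_\infty=\Omega(\RBLA^{\antish})$ and $\RB_\infty=\Omega(\RB^{\antish})$ are quasi-free dg operads. A morphism of dg operads out of a quasi-free operad is therefore determined by its values on generators, and the only thing to verify is compatibility with the differentials on those generators. On the Lie side the generators are the suspended elements $s^{-1}\bar{\RBLA^{\antish}}$. By the construction of $\mathscr{S}(\RBLA^{\antish})$, these come in two families. The first family is the generators $m_n$ of the $L_\infty$-part, of arity $n\ge2$ and degree $n-2$, totally (anti)symmetric. The second family is the generators $T_n$ of the operator part, of arity $n\ge1$. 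On the associative side \cite{WZ24} provides the analogous families $\mu_n$ (the $A_\infty$-part) and $\nu_n$ (the homotopy Rota-Baxter operators). I will define
\[
\Phi(m_n)=\sum_{\sigma\in S_n}\sgn(\sigma)\,\mu_n\cdot\sigma,\qquad \Phi(T_n)=\sum_{\sigma\in S_n}\sgn(\sigma)\,\nu_n\cdot\sigma,
\]
with the signs adjusted by the Koszul convention fixed by the suspension $\calS^{-1}$. This is the operadic version of sending a Rota-Baxter algebra to its commutator Rota-Baxter Lie algebra. For $n=1$ it gives $\Phi(T_1)=\nu_1$, and on the $L_\infty$-part it is the classical morphism $L_\infty\to A_\infty$.

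The verification that $\Phi\circ\partial=\partial\circ\Phi$ then proceeds in two steps.

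\textbf{Step 1: the $m_n$ part.} Here the statement is the known fact that anti-symmetrization of $A_\infty$ gives $L_\infty$, i.e.\ that $\mathrm{Lie}^{\antish}\to\mathrm{As}^{\antish}$ is a cooperad map. I expect this to be quoted or checked quickly.

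\textbf{Step 2: the $T_n$ part.} The differential $\partial T_n$ computed in Theorem~\ref{Thm: Minimal model} is a sum over two-level and multi-level tree monomials. These involve one $m_k$ at the root (or $T$'s at the root) with several $T$'s grafted on, weighted by powers of $\lambda$, with the shuffle sums dictated by the homotopy cooperad decomposition maps of $\mathscr{S}(\RBLA^{\antish})$.

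The cleanest route is to avoid comparing formulas term by term. Instead, I would show that the $\mathbb{S}$-module map $\Psi\colon\mathscr{S}(\RBLA^{\antish})\to\mathscr{S}(\RB^{\antish})$, given by anti-symmetrization, is a strict morphism of homotopy cooperads. That is, $\Psi$ intertwines every decomposition map $\Delta_T$ indexed by trees $T$ with the corresponding maps on the associative side (one uses that the homotopy cooperad on the associative side is the nonsymmetric one tensored with the regular representation). Functoriality of the cobar construction then yields $\Phi=\Omega(\Psi)$ as a dg operad morphism automatically.

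Concretely, on each $T$-indexed component, applying $\Delta_T$ to $\sum_\sigma\sgn(\sigma)\,\nu_n\sigma$ gives a sum over all planar trees with all leaf labelings. Regrouping by underlying shuffle tree, with permutations distributed over vertices, one must recover $\sum$(antisymmetrized vertex decorations), matching $(\Psi\otimes\cdots\otimes\Psi)\Delta_T(T_n)$. This regrouping is the standard bijection $S_n\cong\{\text{shuffles}\}\times\prod S_{\text{vertex arities}}$.

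I expect the main obstacle to be the signs and the weight-$\lambda$ terms. In the associative formulas of \cite{WZ24}, the decomposition of $\nu_n$ involves choices of which inputs carry operators, and these appear with position-dependent signs. I must check that after anti-symmetrization these produce exactly the coefficients in $\partial T_n$, including those from the operadic suspension $\calS^{-1}$, whose sign twist $\sgn$ is what converts symmetrization into anti-symmetrization. I would first check low arities ($n=1,2,3$) explicitly against the formulas for $\partial T_2,\partial T_3$, to fix the sign convention in $\Phi$. I would then give the general argument via the shuffle regrouping above, tracking Koszul signs through the identification $\calS^{-1}\otimes_{\mathrm H}-$.
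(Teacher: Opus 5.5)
Your proposal is correct and is essentially the paper's argument. The paper defines $\Phi$ by the same anti-symmetrization, $\ell_n\mapsto\sum_{\sigma\in\s_n}\sgn(\sigma)\,\mu_n\ot\sigma$ and $T_n\mapsto\sum_{\sigma\in\s_n}\sgn(\sigma)\,R_n\ot\sigma$. It then checks $\partial\Phi=\Phi\partial$ on generators by rewriting each $(\mu_{n-j+1}\circ_i\mu_j)\cdot\sigma$ uniquely as a shuffle tree monomial with permutations at the vertices and comparing signs, and declares the $T_n$ case similar.

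Your reformulation as a strict morphism of homotopy cooperads followed by functoriality of $\Omega$ is equivalent to this, not a way around term-by-term comparison. The cobar differential on a generator is exactly $-\sum_{T}(s^{-1})^{\ot\omega(T)}\circ\overline{\Delta}_T$, so you need the same shuffle regrouping and the same sign check. Note also that on $\mathscr{S}(\RBLA^{\antish})$, where $\s_n$ acts trivially, the equivariant map is a symmetrization; it becomes anti-symmetrization only after $\ot_{\rmH}\calS^{-1}$.
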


The paper is organized as follows. In Section~\ref{sect: preliminaries} we recall the necessary preliminaries on algebraic Morse theory, $L_\infty$-algebras and their twisting procedure, and symmetric homotopy (co)operads. In Section~\ref{Sect: RB Lie} we construct the homotopy cooperad $\mathscr{S}(\RBLA^{\antish})$, determine its Koszul dual, make the differential of the cobar construction completely explicit, and prove Theorem~\ref{Thm: Minimal model} via algebraic Morse theory. In Section~\ref{sect: Linfty RBLA} we construct the $L_\infty$-algebra $\frakC_{\rmRBLA}(V)$, define homotopy Rota-Baxter Lie algebras, develop the cohomology theory of Rota-Baxter Lie algebras and of Rota-Baxter operators, and show that both arise from the twisting procedure. In Section~\ref{sect: comparison} we construct the comparison morphism $\Phi: \RBLA_\infty\to \RB_\infty$.

\medskip
\noindent\textbf{Conventions.} Throughout the paper, $\bfk$ is a field of characteristic $0$ and $\lambda\in\bfk$ is a fixed scalar. We use homological grading and the Koszul sign rule everywhere. Shuffles and the shuffle sets $\sh(n_1,\dots,n_p)$ are as recalled in Section~\ref{sect: notations}.

\section{Preliminaries}\label{sect: preliminaries}

In this section, we recall basic notions and methods concerning algebraic Morse theory, $L_\infty$-algebras and homotopy (co)operads.

\subsection{Notations}\label{sect: notations}

Let $\s_n$ be the \textbf{symmetric group} of order $n$. For $n_1, n_2, \cdots, n_p \geqslant 0,$ we denote $\sh(n_1, n_2, \cdots, n_p)$ to be the set of permutations $\sigma \in \s_n$ ($n=\sum_{i=1}^{p} n_i$) satisfies
$$\begin{array}{c}
 \sigma(1) < \sigma(2) < \cdots < \sigma(n_1); \\
\vdots \\
 \sigma(n_{p-1}+1) < \sigma(n_{p-1}+2) < \cdots < \sigma(n_p).
 \end{array}$$
 We call $\sh(n_1, n_2, \cdots, n_p)$ the set of \textbf{$(n_1, n_2, \cdots, n_p)$-shuffles}.

We shall use the \textbf{homological grading} and employ everywhere the  \textbf{Koszul rule} to determine signs.  For a graded space $V = \{V_n \}_{n\in \mathbb{Z}},$  its \textbf{symmetric algebra} $\s(V) $ is defined  to be the quotient algebra of $T(V) : = \oplus_{n\geqslant 1} V^{\ot n}$ modulo the ideal $I$ generated by
$$x\ot y =(-1)^{|x||y|} y \ot x$$
for homogeneous element $x,y \in V$ and define  $n$-th symmetric  power $\s^{n}(V) : = V^{\ot n} /( I \cap V^{\ot n}).$
 For homogeneous elements $v_{1}, \dots, v_{n} \in V$ and $\sigma \in \s_n,$ we define the \textbf{Koszul sign} $\varepsilon(\sigma; v_{1}, \dots, v_{n})$ as
$$v_{1} \ot v_{2} \ot \cdots \ot v_{n}=\varepsilon(\sigma; v_{1}, \dots, v_{n}) v_{{\sigma(1)}} \ot v_{{\sigma(2)}} \ot \cdots \ot v_{{\sigma(n)}}$$
in $\s^{n}(V)$ and introduce also
$$\chi(\sigma; v_{1}, \dots, v_{n})=\sgn(\sigma) \varepsilon(\sigma; v_{1}, \dots, v_{n}),$$
where $\sgn(\sigma)$ is the sign of the permutation $\sigma.$
Similarly, the \textbf{exterior  algebra} $\bigwedge(V)$ is defined to be the quotient algebra of $T(V)$ modulo the ideal $I'$ generated by
$$x\ot y =(-1)^{|x||y| + 1} y \ot x$$
for homogeneous element $x,y \in V$ and define  $n$-th exterior power $\wedge^{n}(V) : = V^{\ot n} / (I' \cap V^{\ot n}).$
For graded spaces $V_1, V_2, \cdots, V_n$ and $\sigma \in \s_n$, we define the \textbf{right action map}
$$r_{\sigma}:V_1\ot V_2 \ot \cdots \ot V_n \to V_1\ot V_2 \ot \cdots \ot V_n$$
by
$$ r_{\sigma}(v_1 \ot v_2 \ot \cdots \ot v_n) : = \varepsilon(\sigma; v_{1}, \dots, v_{n}) v_{{\sigma(1)}} \ot v_{{\sigma(2)}} \ot \cdots \ot v_{{\sigma(n)}}. $$
For a graded space $V=\{V_n \}_{n\in \mathbb{Z}},$ its \textbf{suspension} $sV$ is defined to be $(sV)_p=V_{p-1}, p \in \mathbb{Z}$ and its \textbf{desuspension} $s^{-1}V$ is $(s^{-1}V)_{p}=V_{p+1}.$

\bigskip

\subsection{Algebraic Morse theory}\label{subsect: Morse}\

In this subsection, we will quickly review the algebraic Morse theory. We will follow the notations in \cite{CLZ}, with some slight modifications.

Let $X_*$ be the following complex of vector spaces:
$$\cdots\to X_{n+1}\stackrel{d_{n+1}}{\to} X_n \stackrel{d_{n}}{\to} X_{n-1}\to\cdots$$
Suppose that for each $n\in \Z$,  there exists a decomposition into direct sums of subspaces
$$X_n=\oplus_{i\in I_n}X_{n, i}.$$
So $d_n:X_n\to X_{n-1}$ has a matrix presentation  $d_n=(d_{n, ji})$ with $i\in I_n, j\in I_{n-1}$ and
where $d_{n, ji}: X_{n, i}\to X_{n-1, j}$ is a linear map.
Such decomposition gives a weighted  quiver  $Q=Q_{X_*}$  as follows:
\vspace{1mm}
\begin{itemize}
\item[(Q1)] The  vertices are the pairs   $(n, i)$ with $ n\in \Z, i\in I_n$;
\item[(Q2)] if a map $d_{n, ji}$ with $i\in I_n, j\in I_{n-1}$ does not vanish, then draw an arrow from $(n, i)$ to $(n-1, j)$;
\item[(Q3)] for an arrow in (Q2), its  weight  is just the map  $d_{n, ji}$.
\end{itemize}
\vspace{1mm}
 A \textbf{partial  matching} is a full subquiver $\M$ of $Q$ such that
 \vspace{1mm}
 \begin{itemize}
 \item[(M1)]each vertex in $Q$  belongs to at most  one arrow of $\M$;

 \item[(M2)] each arrow in $\M$  has its weight  invertible as a linear map.
 \end{itemize}
\vspace{1mm}

 Given a  partial matching  $\M$, we define a new weighted quiver $Q^\M$ as follows:
 \begin{itemize}
 \item[(QM1)]Keep everything for all  arrows  which are not in $\M$ (they will be called \textbf{thick arrows});

 \item[(QM2)] For an  arrow in $\M$, replace it by  a new \textbf{dotted arrow} in the reverse direction and the weight of this new arrow is the negative  inverse of the weight of the original arrow.
 \end{itemize}

Given a partial matching $\M$ of $Q$, a vertex of $Q$ is called a \textbf{critical vertex} with respect to $\M$ if it is not incident to  any arrow in $\M$. A path $p$ in $\M$ is called a \textbf{zigzag path} if dotted arrows and  thick arrows  appear  alternately.
Let $m,n\in \mathbb{Z}, i \in I_n, j\in I_m$, $p$ be a path in $Q^{\M}$, denote by
\begin{itemize}
\item[(i)]  $ \V_{n}  $, the set of $(n,i)$ of the quiver $Q$;
\item[(ii)] $ \V_n^{\M}    $, the set of critical vertices of $\V_{n}$;
\item[(iii)] $ \U_n,$ the set of vertices which appear as the starting vertex of an arrow in $\M$;
\item[(iv)] $ \D_n,$ the set of vertices which appear as the ending  vertex of an arrow in $\M$;
\item[(v)] $ \varphi_p^\M,$ the composition of all maps appearing as the weights of all arrows in $p$;
\item[(vi)] $ \Path^\M((n, i), (m, j)),$ the set of all zigzag paths from $(n, i)$ to $(m, j)$  in $Q^\M$;
\item[(vii)] $  \Path^\M_1((n, i), (n, j)),$ the set of all zigzag paths in $\Path^\M((n, i), (m, j))$ which  begin  with  a dotted arrow;
\item[(viii)] $ \Path^\M_2((n, i), (n, j)),$ the set of all zigzag paths in $\Path^\M((n, i), (m, j))$ which  begin  with  a thick arrow.
\end{itemize}

A \textbf{Morse matching} is a partial  matching which satisfies the following local finiteness hypothesis:
 \begin{itemize}
 \item[(LFH)] Given  an arbitrary vertex  $(n, i)\in \D_n$, for each vertex  $(n, j)\in  \V_{n}$ and
     for each element $x\in X_{n, i}$, the sum
     $$\sum_{ p\in  \Path^\M_1 ((n, i),  (n, j) ) } \varphi_p^\M(x)$$
     exists (for instance, it may be a finite sum or it is convergent in a certain norm); moreover,
     the number of vertices $(n, j)\in \V_{n}$ such that  $$ \sum_{p\in  \Path^\M_1 ((n, i),  (n, j) )} \varphi^\M_p(x) \neq 0$$  is finite.

 \end{itemize}

There is a useful  sufficient condition for the Morse matching \cite[Remark3.5]{CLZ}.

\begin{lem}\label{Lem: criterion of morse matching}
If each zigzag path in $Q^{\M}$ which begins with a dotted arrow has finite length, then $\M$ is a Morse matching.
\end{lem}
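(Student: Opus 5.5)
We have to check (LFH) for a fixed vertex $(n,i)\in\D_n$ and a fixed $x\in X_{n,i}$, assuming that every zigzag path in $Q^\M$ beginning with a dotted arrow has finite length. The plan is to show that the set $\Path^\M_1((n,i),-)$ of such paths starting at $(n,i)$ is finite; both parts of (LFH) then follow at once.

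First I will pin down the shape of these paths. A zigzag path beginning with a dotted arrow at $(n,i)\in\D_n$ goes up to a vertex $(n+1,i')\in\U_{n+1}$ along the reversed matching arrow. By (M1) there is at most one such dotted arrow out of $(n,i)$. It then takes a thick arrow down to some $(n,j_1)$, and if $(n,j_1)\in\D_n$ it may continue with the unique dotted arrow up, and so on. So all vertices alternate between degrees $n+1$ and $n$, and the branching happens only at the thick steps.

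Next I will bound the branching. The thick arrows leaving a vertex $(n+1,k)$ correspond to the nonzero components $d_{n+1,jk}$ with $j\in I_n$. In the intended applications the differential of each summand lands in finitely many summands, so the out-degree is finite. I will make this standing local finiteness of the decomposition explicit; in \cite{CLZ} it is implicit in the definition of the weighted quiver, since $d_{n+1}(y)$ is a finite sum for every $y$. With it, the tree of zigzag paths issuing from $(n,i)$ with a dotted first step is finitely branching, because dotted steps are unique and thick steps have finite choice.

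By hypothesis this tree has no infinite branch. König's lemma then shows it is finite: a finitely branching rooted tree with infinitely many nodes has an infinite path. Hence $\Path^\M_1((n,i),(n,j))$ is finite for every $j$, so each sum $\sum_{p}\varphi^\M_p(x)$ is a finite sum and exists. Moreover, only finitely many end vertices $(n,j)$ occur at all, so only finitely many of these sums can be nonzero. This is exactly (LFH), and therefore $\M$ is a Morse matching.

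The main obstacle is the finite-branching step. The hypothesis "finite length" alone does not prevent infinitely many paths of bounded length unless the differential has finitely many nonzero components out of each summand. I would address this by checking the out-degree directly from the matrix presentation of $d_{n+1}$ restricted to $X_{n+1,k}$. If the decomposition were not locally finite, I would instead read the hypothesis as a uniform bound on path lengths and argue through finiteness of the nonzero terms of each $d(y)$ evaluated on the specific element $x$.
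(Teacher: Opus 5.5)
The paper gives no proof of this lemma; it just cites \cite[Remark 3.5]{CLZ}. Your K\"onig-lemma argument is therefore filling in what the paper omits, and its structure is the right one: by (M1) the dotted steps are forced, branching occurs only at thick steps, and the hypothesis rules out infinite branches.

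The weak point is the one you flagged, and your justification for it does not hold. That $d_{n+1}(y)$ is a finite sum for each $y$ does \emph{not} make the quiver finitely branching when a summand $X_{n+1,k}$ is infinite-dimensional. Different vectors of $X_{n+1,k}$ can land in different summands $X_{n,j}$, so infinitely many $d_{n+1,jk}$ may be nonzero. Nothing in the setup of Subsection~\ref{subsect: Morse} excludes this. Your fallback of reading the hypothesis as a uniform bound on path lengths is also unnecessary, and it changes the statement.

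The fix is to run K\"onig's lemma on a pruned tree that depends on the element. Fix $x\in X_{n,i}$. Consider only the zigzag paths $p$ out of $(n,i)$ that begin with a dotted arrow and satisfy $\varphi^\M_p(x)\neq 0$. This set is closed under prefixes, so it is a rooted tree, and it is finitely branching:
\begin{itemize}
\item After a thick step ending in $\D_n$ there is at most one continuation. It keeps a nonzero value, since the weight $-d_{n+1,ik}^{-1}$ is invertible.
\item At a vertex $(n+1,k)$ with value $y=\varphi^\M_p(x)\neq 0$, the thick continuations with nonzero value are exactly those $(n,j)$ where the $X_{n,j}$-component of $d_{n+1}(y)\in\bigoplus_j X_{n,j}$ is nonzero. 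There are only finitely many such $j$.
\end{itemize}
By hypothesis this tree has no infinite branch, so by K\"onig it is finite. Hence only finitely many $p\in\Path^\M_1((n,i),-)$ have $\varphi^\M_p(x)\neq 0$. Every sum in (LFH) then has finitely many nonzero terms, and only finitely many endpoints $(n,j)$ can give a nonzero sum. This proves the lemma under exactly the stated hypothesis.

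In this paper's application each $X_{n,i}$ is spanned by a single shuffle tree monomial, so the quiver is finitely branching and your original version already suffices there.
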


Given a Morse matching $\M$, we can construct a new complex $( {X}_*^\M, d_*^\M)$ as follows:

The complex ${X}_*^\M$ has its $n$-th component $X_n^\M=\oplus_{(n, i)\in \V_n^\M} X_{n, i}$ and the differential
$d_n^\M: X_n^\M\to X_{n-1}^\M$ has the matrix presentation
$d_n^\M=(d_{n, ji}^\M)$ with $(n, i)\in \V_n^\M, (n-1, j)\in \V_{n-1}^\M$ and where
$d_{n, ji}^\M: X_{n, i}\to X_{n-1, j} $  is defined to be
$$d_{n, ji}^\M=\sum_{p\in  \Path^\M((n, i), (n-1, j))} \varphi^\M_p.$$

The main result of algebraic Morse theory \cite{CLZ} is as follows:

\begin{thm}\label{Thm: main result of algebraic Morse theory}
 Under the above setup, the following holds:
\begin{itemize}
\item[(i)]
 $({X}_*^\M, d^\M)$ is a complex.

\item[(ii)]   Define maps \begin{align*}
f_n: X_n^\M & \rightarrow X_n \\
 x \in X_{n,i} & \mapsto f_n(x):=x+ \sum_{(n, j)\in \U_n}\sum_{p\in  \Path^\M_2   ((n,i), (n, j))} \varphi^\M_p(x),
\end{align*}and

\begin{align*}
g_n: X_n & \rightarrow X^\M_n \\
 x \in X_{n,i} & \mapsto g_n(x):=\left\{\begin{array}{ll} \sum_{(n, j)\in \V^\M_n}\sum_{p\in   \Path^\M_1   ((n,i), (n, j)) } \varphi^\M_p(x),& (n, i)\in \D_n\\
  x,  &  (n,i) \in \V_n^\M \\
 0 & (n, i)\in \U_n\end{array}\right.
\end{align*}
 Then $f_*: {X}_*^\M   \rightarrow X_* $ and $ g_*: X_*   \rightarrow {X}_*^\M $ are chain maps which are homotopy equivalences:
$gf=\Id_{{X}_*^\M}$ and $fg\sim \Id_{X_*}$ via the homotopy
\begin{align*}
{\theta_n}: X_n & \rightarrow X_{n+1} \\
  x \in X_{n,i} & \mapsto {\theta_n}(x):= \left\{\begin{array}{ll} \sum_{(n+1, j)\in \U_{n+1}}\sum_{p\in \Path^\M((n,i), (n+1, j)) } \varphi^\M_p(x),& (n, i)\in \D_n\\
   0 & otherwise\end{array}\right.
\end{align*}

    \item[(iii)] We have a decomposition $$(X_*,d) \cong (X_* ^\M) \oplus (Y_*,d^Y)$$ where $(Y_*,d^Y)$ is a null homotopic  complex.
\end{itemize}
\end{thm}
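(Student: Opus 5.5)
The plan is to reduce everything to the case where the matching $\M$ is finite, and then pass to the general case through the finiteness hypothesis (LFH). The finite case rests on a single elementary fact. Suppose $d_{n,ji}\colon X_{n,i}\to X_{n-1,j}$ is an invertible component. Then the complex splits, up to isomorphism, as the direct sum of the contractible complex $X_{n,i}\xrightarrow{\cong}X_{n-1,j}$ and a complex $X'$. The space $X'$ is obtained by deleting these two summands. Its new differential is $d' = d - d_{\ast j}\,d_{n,ji}^{-1}\,d_{i\ast}$, the Schur complement. Unwinding this formula, each new component is the old thick arrow plus the composite thick$\to$dotted$\to$thick. The dotted arrow carries weight $-d_{n,ji}^{-1}$, which is exactly the weight assigned in (QM2). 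So one cancellation is precisely one step of zigzag-path summation. The explicit inclusion and projection of this Gaussian elimination step are exactly the $f$ and $g$ for one arrow, with $\theta=d_{n,ji}^{-1}$ placed on $X_{n-1,j}$.

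The first step is to verify the one-step lemma by direct block-matrix computation. Write $d$ in block form with respect to $X_{n,i}\oplus X_n'$ and $X_{n-1,j}\oplus X'_{n-1}$. Conjugating by the triangular isomorphisms built from $d_{n,ji}^{-1}$ gives $d\cong \begin{pmatrix} d_{n,ji}&0\\0&d'\end{pmatrix}$. From this block form, $d'^2=0$ follows, as do $gf=\Id$ and $fg-\Id = d\theta+\theta d$.

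The second step handles a finite matching, by induction on the number of arrows in $\M$. I must check that after cancelling one arrow, the remaining arrows of $\M$ still have invertible weights. The concern is that their weights may change by zigzag corrections, so that an invertible weight becomes a non-invertible one. To rule this out, I would order the cancellations so that this cannot happen, using acyclicity or LFH. Once that holds, I check that the composite Schur complements sum over exactly the zigzag paths, which is a combinatorial bookkeeping of concatenations. The composites of the successive $f$'s, $g$'s and homotopies then produce the stated formulas. Statement (iii) follows because each cancelled pair $X_{n,i}\to X_{n-1,j}$ contributes a null-homotopic summand, and $Y_*$ is their direct sum.

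The main obstacle is the general case, with infinite matchings and possibly infinite zigzag sums. There I would not induct. Instead I would define $d^\M$, $f$, $g$, $\theta$ directly by the path sums, which exist by (LFH). I would then verify the identities $(d^\M)^2=0$, $df=fd^\M$, $gd=d^\M g$, $gf=\Id$ and $fg-\Id=d\theta+\theta d$ by a path-splitting argument. Expanding $d\circ f$ and sorting paths by their last arrow, the terms ending in a thick-then-matched configuration cancel telescopically against the dotted weights, because $d_{n,ji}\circ(-d_{n,ji}^{-1})=-\Id$. The remaining terms are reassembled using $d^2=0$ on the original complex. Since every identity is checked componentwise and each component involves only summable families, the finite-case algebra carries over verbatim. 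Alternatively, one may cite \cite{CLZ}, which is where the theorem is stated.
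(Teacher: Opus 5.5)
The paper does not prove this theorem; it quotes it from \cite{CLZ}, where it is deduced from the homological perturbation lemma. Your route is a genuinely different, more hands-on argument: cancel one invertible component by Gaussian elimination, induct over finite matchings, then verify the path-sum formulas directly in general. It can be made to work. The one-step lemma is correct up to a sign: dotted arrows carry weight $-d_{n,ji}^{-1}$ and the statement asks for $fg-\Id=d\theta+\theta d$, so the homotopy on $X_{n-1,j}$ must be $\theta=-d_{n,ji}^{-1}$. The trade-off is that the perturbation lemma packages all the bookkeeping, including convergence, into one resolvent. Your approach makes the cancellations visible but has two soft spots, plus a loose end in (iii).

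\emph{The finite induction.} You cannot in general order the cancellations so that the remaining matched weights stay unchanged, because the hypotheses impose only (LFH), not acyclicity of $Q^{\M}$. Suppose $(n,i)\to(n-1,j)$ and $(n,k)\to(n-1,l)$ are both in $\M$ and $d_{n,jk}\neq 0\neq d_{n,li}$. Then cancelling either one changes the weight of the other. For example, cancelling the first replaces $d_{n,lk}$ by $d_{n,lk}-d_{n,li}d_{n,ji}^{-1}d_{n,jk}=(\Id-L)\,d_{n,lk}$, where $L$ is the weight of the zigzag loop at $(n-1,l)$. Invertibility must instead be derived from (LFH) by a geometric-series argument for $\Id-L$. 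When $Q^{\M}$ has no zigzag cycles (e.g.\ under the hypothesis of Lemma~\ref{Lem: criterion of morse matching}), no ordering is needed at all.

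\emph{The general case.} The phrase ``reassembled using $d^2=0$'' hides the only step where (LFH) is really used. Write $d=d_{\M}+\delta$ with $d_{\M}$ the matched components. Let $h$ be $-d_{n,ji}^{-1}$ on $X_{n-1,j}$ for each matched arrow $(n,i)\to(n-1,j)$ and $0$ elsewhere, and let $\iota,\pi$ be the inclusion and projection of the critical summands. Then $d_{\M}^2=0$, $d_{\M}h+hd_{\M}=\iota\pi-\Id$ and $d_{\M}\iota=0=\pi d_{\M}$. With $\Sigma=\sum_{k\geqslant 0}(h\delta)^k$ and $\Sigma'=\sum_{k\geqslant 0}(\delta h)^k$, your maps are exactly $f=\Sigma\iota$, $g=\pi\Sigma'$, $\theta=h\Sigma'$ and $d^{\M}=\pi\delta\Sigma\iota$.

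Your two moves are the cancellation $d_{\M}h=-\Id$ on the $\D$-summands, then $d_{\M}\delta=-\delta d$ from $d^2=0$. They give $(\Id-h\delta)\,df=\iota\,d^{\M}$, not $df=fd^{\M}$. To conclude you need $\Sigma(\Id-h\delta)=\Id=(\Id-h\delta)\Sigma$. That uniqueness is what (LFH) supplies, together with continuity of $d$ if the sums are only norm-convergent. Once written this way, your direct argument is the perturbation lemma for the deformation retract of $(X_*,d_{\M})$ onto $(X_*^{\M},0)$ perturbed by $\delta$, i.e.\ the proof in \cite{CLZ}. The finite induction then becomes unnecessary.

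\emph{Part (iii).} In the infinite case, (iii) should be derived from (ii) rather than from a direct sum of cancelled pairs. $Y_*=\ker g$ is a subcomplex complementary to $f(X_*^{\M})$, and $-(\Id-fg)\theta(\Id-fg)$ is a contracting homotopy for it.
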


 \subsection{$L_\infty$-algebras and Maurer-Cartan elements}\ \label{Subsect: DGLAs and Linfinity algebras}
In this subsection, we will recall some preliminaries on   $L_\infty$-algebras. For more background, we refer the reader to \cite{Sta92, LS93, LM95,Get09}.

\begin{defn}\label{Def:L-infty}
	Let $L=\bigoplus\limits_{i\in\mathbb{Z}}L_i$ be a graded space. Assume that $L$ is endowed with a family of graded linear operators $l_n:L^{\ot n}\rightarrow L, n\geqslant 1$ with  $|l_n|=n-2$ subject to  the following conditions:
	for arbitrary  $n\geqslant 1$,  $ \sigma\in \s_n$ and $x_1,\dots, x_n\in L$,
	\begin{enumerate}
		\item[(i)](generalised anti-symmetry) $$l_n(x_{\sigma(1)}\ot \cdots \ot x_{\sigma(n)})=\chi(\sigma; x_1,\dots,   x_n)\ l_n(x_1 \ot \cdots  \ot x_n);$$

		\item[(ii)](generalised Jacobi identity)
		$$\sum\limits_{i=1}^n\sum\limits_{\sigma\in \Sh(i,n-i)}\chi(\sigma; x_1, \dots, x_n)(-1)^{i(n-i)}l_{n-i+1}(l_i(x_{\sigma(1)}\ot \cdots \ot x_{\sigma(i)})\ot x_{\sigma(i+1)}\ot \cdots \ot x_{\sigma(n)})=0,$$
	\end{enumerate}
	Then $(L,\{l_n\}_{n\geqslant1})$ is called an $L_\infty$-algebra.
\end{defn}

\begin{remark} \label{Rem: L-infinity for small n}   Let us consider the generalised Jacobi identity for   $n\leqslant 3$ with the assumption of generalised anti-symmetry.
	
	\begin{enumerate}
		\item[(i)]  $n=1$,  $l_1\circ l_1=0$, that is,  $l_1$ is a differential,

		\item[(ii)]  $n=2$, $l_1\circ l_2=l_2\circ (l_1\ot\Id+\Id\ot l_1)$, that is   $l_1$ is a derivation for  $l_2$,

		\item[(iii)] $n=3$, for homogeneous elements $x_1, x_2, x_3\in L$
		$$\begin{array}{ll} &l_2(l_2(x_1\ot x_2)\ot x_3)+(-1)^{|x_1|(|x_2|+|x_3|)} l_2(l_2(x_2\ot x_3)\ot x_1)+
			(-1)^{|x_3|(|x_1|+|x_2|)} l_2(l_2(x_3\ot x_1)\ot x_2)
			\\
			=&-\Big(l_1(l_3(x_1\ot x_2\ot x_3))+ l_3(l_1 (x_1)\ot x_2\ot x_3 )+(-1)^{|x_1|} l_3(x_1\ot l_1 (x_2)\ot x_3 )+\\
			&(-1)^{|x_1|+|x_2|} l_3(x_1\ot x_2\ot l_1 (x_3) )\Big),\end{array}$$
		that is, $l_2$ satisfies the   Jacobi identity up to homotopy.
	\end{enumerate}
	
	In particular,    if all   $l_n=0$ with  $n\geqslant 3$, then $(L,l_1,l_2)$ is just a dg Lie algebra.
	
\end{remark}

One can also define Maurer-Cartan elements in $L_\infty$-algebras.
\begin{defn}\label{Def: MC element in L infinity algebra}
	Let $(L,\{l_n\}_{n\geqslant1})$ be an $L_\infty$-algebra. An element $\alpha\in L_{-1}$ is called a Maurer-Cartan element if it satisfies the Maurer-Cartan equation:
	\begin{eqnarray}\label{Eq: mc-equation}\sum_{n=1}^\infty\frac{1}{n!}(-1)^{\frac{n(n-1)}{2}} l_n(\alpha^{\ot n})=0,\end{eqnarray}
	whenever this infinite sum exists.
\end{defn}

\begin{prop}[{Twisting procedure \cite[Section 4]{Get09}}]\label{Prop: deformed-L-infty}
	Given a Maurer-Cartan element $\alpha$ in $L_\infty$-algebra $L$, one can introduce  a new $L_\infty$-structure $\{l_n^\alpha\}_{n\geqslant 1}$ on graded space $L$, where $l_n^{\alpha}: L^{\ot n}\rightarrow L$ is defined as :
	\begin{eqnarray}\label{Eq: twisted L infinity algebra} l^\alpha_n(x_1\ot \cdots\ot x_n)=\sum_{i=0}^\infty\frac{1}{i!}(-1)^{in+\frac{i(i-1)}{2}}l_{n+i}(\alpha^{\ot i}\ot x_1\ot \cdots\ot x_n),\ \forall x_1, \dots, x_n\in L,\end{eqnarray}
	whenever these infinite sums exist. The new $L_\infty$-algebra $(L, \{l_n^\alpha\}_{n\geqslant 1})$ is called the twisted $L_\infty$-algebra (by the Maurer-Cartan element $\alpha$).
\end{prop}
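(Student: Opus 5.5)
The plan is to reduce the twisting statement to the symmetric-coalgebra picture of an $L_\infty$-algebra and to read the twisted brackets off the conjugation of the codifferential by $e^{\alpha}$. Shift to $sL$, so that the brackets $l_n$ become degree $-1$ maps $m_n:\s^n(sL)\to sL$. Via the standard décalage isomorphism, generalised anti-symmetry is equivalent to graded symmetry of the $m_n$, and the generalised Jacobi identities are equivalent to the single equation $D^2=0$. Here $D$ is the unique coderivation of the cofree cocommutative coalgebra $\s^c(sL)$ with corestriction $\sum_n m_n$. The signs $(-1)^{n(n-1)/2}$ and $(-1)^{in+i(i-1)/2}$ in the statement are exactly the décalage signs that arise from moving $s$ past the arguments. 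So I would first prove the shifted version and then translate back, checking the sign $(-1)^{in+i(i-1)/2}$ by commuting $i$ copies of $s\alpha$ (of degree $0$) and $n$ copies of $s$ through $l_{n+i}$.

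In the shifted setting, $a=s\alpha$ has degree $0$. Under the assumption that all relevant sums converge (for instance via a complete filtration, or because $\alpha$ is nilpotent in the appropriate sense), the formal element $e^{a}=\sum_{i\ge0}a^{\odot i}/i!$ is group-like. Multiplication by it defines a coalgebra automorphism
$$\Psi_a:\s^c(sL)\to \s^c(sL),\qquad x\mapsto e^{a}\odot x,$$
suitably extended to the completion, with inverse $\Psi_{-a}$. The conjugate $D^{a}:=\Psi_{-a}\circ D\circ\Psi_a$ is again a coderivation and satisfies $(D^a)^2=\Psi_{-a}D^2\Psi_a=0$. Its corestriction to arity $n$ is computed by projecting $D(e^{a}\odot x_1\odot\cdots\odot x_n)$ to $sL$. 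The projection kills everything except the terms $m_{n+i}(a^{\odot i}\odot x_1\odot\cdots\odot x_n)/i!$, up to the correction coming from $\Psi_{-a}$, which only affects the arity-zero part. That arity-zero part is $\sum_i m_i(a^{\odot i})/i!$, the curvature, and it vanishes precisely by the Maurer--Cartan equation. Hence $D^a$ is a genuine, uncurved codifferential, and its components are the shifted twisted brackets $m^a_n(x_1,\dots,x_n)=\sum_i \frac1{i!}m_{n+i}(a^{\odot i},x_1,\dots,x_n)$.

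I expect the main obstacle to be this middle step. One must verify that $\Psi_{-a}D\Psi_a$ has no constant term exactly when $\alpha$ is Maurer--Cartan, and that the coderivation property survives on the completion. For this I would use the explicit formula for $D$ on $a^{\odot i}\odot x_1\odot\cdots\odot x_n$ as a sum over unshuffles, and then count multiplicities: the binomial factors $\binom{i}{k}$ combine with $1/i!$ to give $\frac{1}{k!}\frac{1}{(i-k)!}$, which is what lets $e^{a}$ factor back out. Finally, desuspending gives the formula stated in Proposition~\ref{Prop: deformed-L-infty}, and the generalised Jacobi identity for $\{l^\alpha_n\}$ is just $(D^a)^2=0$ read componentwise, exactly as in \cite[Section 4]{Get09}.
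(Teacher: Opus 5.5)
Your argument is correct. The paper gives no proof of this proposition; it quotes it from \cite[Section 4]{Get09}. What you have written is therefore a self-contained proof of the cited fact, and it is the standard coalgebraic one.

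The part of the statement that depends on this paper's conventions is the signs. Your claim that they are pure d\'ecalage signs checks out, provided the d\'ecalage is fixed as $m_n=(-1)^{n(n-1)/2}\,s\circ l_n\circ (s^{-1})^{\ot n}$. This choice is forced. With the unsigned $m_n=s\circ l_n\circ(s^{-1})^{\ot n}$, the arity-$3$ component of $D^2=0$ pairs $l_1l_3$ and $l_2(l_2\ot\Id)$ with the opposite relative sign to the one dictated by $(-1)^{i(n-i)}$ in Definition~\ref{Def:L-infty}.

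With the signed choice, $(s^{-1})^{\ot(n+i)}$ produces no Koszul sign on the degree-zero entries $s\alpha$, and the paper's formulas drop out:
\begin{itemize}
\item $m_n((s\alpha)^{\ot n})=(-1)^{n(n-1)/2}s\,l_n(\alpha^{\ot n})$, which is exactly the Maurer--Cartan equation of Definition~\ref{Def: MC element in L infinity algebra};
\item the twisted brackets pick up $(-1)^{n(n-1)/2+(n+i)(n+i-1)/2}=(-1)^{in+i(i-1)/2}$, as in \eqref{Eq: twisted L infinity algebra}.
\end{itemize}

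One correction to your middle step: $\Psi_{-a}$ contributes nothing to any corestriction. Since $\varepsilon\circ D=0$, $D$ has no component in $\s^0(sL)$, so $\mathrm{pr}_1(e^{-a}\odot y)=\mathrm{pr}_1(y)$ for every $y$ in the image of $D$. The curvature is simply the arity-zero component
\[
\mathrm{pr}_1D^a(1)=\mathrm{pr}_1D(e^a)=\sum_{k\geqslant 1}\tfrac{1}{k!}m_k(a^{\odot k}).
\]
The binomial count you describe is what gives $D(e^a\odot x)=e^a\odot D^a(x)$ with $D^a(x)$ a finite expression. That is, it shows $D^a$ restricts to the uncompleted coalgebra and, once its constant term vanishes, to the reduced one. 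This is where the convergence hypothesis is genuinely needed.

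The direct alternative is to apply the generalised Jacobi identity of $L$ to $(\alpha^{\ot k},x_1,\dots,x_n)$, divide by $k!$, and sum over $k$. It involves exactly the same combinatorics, but every sign must be tracked by hand. Your packaging absorbs all of them into the d\'ecalage.
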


\bigskip

\subsection{Symmetric homotopy (co)operads}\label{sec: symmetric homotopy (co)operads}\label{Sect: homotopy cooperad}
In this subsection, we collect some basics on symmetric homotopy (co)operads and explain how to obtain $L_\infty$-structures from homotopy operads, in particular, from convolution homotopy operads. 

Recall that a \textbf{graded $\mathbb S$-module} $\calP=\{\calP(n)\}_{n\geqslant 1}$ consists of a collection of graded spaces such that each $\calP(n)$ is a right $\bfk[\mathbb S_n]$-module. The \textbf{suspension} of $\calP$, denoted by $s\calP$, is defined to be the graded $\mathbb S$-module $\{s\calP(n)\}_{n\geqslant 1}$. Similarly one have the \textbf{desuspension} $s^{-1}\calP$ of the graded $\mathbb S$-module $\calP$.

To give a clear account of operad theory, we need to resort to the language of trees. We consider only reduced planar rooted trees and will simply refer to them as trees from now on. For a tree $T$, let $\omega(T)$ denote its \textbf{weight} (number of internal vertices) and $\alpha(T)$ its \textbf{arity} (number of leaves). We will refer to internal vertices simply as vertices unless otherwise stated. For each vertex $v$, the arity $\alpha(v)$ is the number of children of $v$. Let $\mathfrak{T}$ be the set of all trees with weight $\geqslant 1$ and arity $\geqslant 1$. For integers $n,m\geqslant 1$, define 
$$
\frakT^{(n)}:=\{T\in\frakT\mid \omega(T)=n\},\quad \frakT(m):=\{T\in\frakT\mid \alpha(T)=m\},\quad \frakT^{n}(m):=\frakT^{(n)}\cap \frakT(m).
$$ 
Since the trees are planar, the inputs of each vertex are totally ordered from left to right. This order induces a total order on the set of all vertices of any given $T\in\frakT$, which we call the \textbf{planar order} (specifically, the order obtained by traversing the outer boundary of the tree clockwise). More formally,  for any two vertices $v$ and $v'$ with root paths $r=v_0\rightarrow v_1\rightarrow\dots\rightarrow v_k=v$ and $r=v'_0\rightarrow v'_1\rightarrow\dots\rightarrow v'_l=v'$, we set $v<v'$ if and only if there is some $i$ such that $v_j=v'_j$ for all $j<i$ and $v_i<v'_i$. 

\begin{exam}
    For the tree
    \begin{align*}
        \begin{tikzpicture}[scale=1,descr/.style={fill=white}]
            \tikzstyle{every node}=[thick,minimum size=3pt, inner sep=1pt]
            \node(T)at(-1.7,0.5)[label=left:{$T=$}]{};
            \node(dot)at(1.7,0.4)[label=right:{,}]{};
            \node(r)at(0,-0.5)[label=below:{\tiny$r$}]{};
            \node(v1)at(0,0)[fill=black,circle,label=left:{\tiny$v_1$}]{};
            \node(v2)at(-1,0.5)[fill=black,circle,label=left:{\tiny$v_2$}]{};
            \node(v3)at(1,0.5)[fill=black,circle,label=left:{\tiny$v_3$}]{};
            \node(v4)at(0.5,1)[fill=black,circle,label=left:{\tiny$v_4$}]{};
            \node(v2-1)at(-1.5,1)[label=above:{\tiny$l_1$}]{};
            \node(v2-2)at(-1,1)[label=above:{\tiny$l_2$}]{};
            \node(v2-3)at(-0.5,1)[label=above:{\tiny$l_3$}]{};
            \node(v3-1)at(1.5,1)[label=above:{\tiny$l_6$}]{};
            \node(v4-1)at(0,1.5)[label=above:{\tiny$l_4$}]{};
            \node(v4-2)at(1,1.5)[label=above:{\tiny$l_5$}]{};
            \draw(r)--(v1);\draw(v1)--(v2);\draw(v1)--(v3);
            \draw(v2)--(v2-1);\draw(v2)--(v2-2);\draw(v2)--(v2-3);
            \draw(v3)--(v3-1);\draw(v3)--(v4);
            \draw(v4)--(v4-1);\draw(v4)--(v4-2);
        \end{tikzpicture}
    \end{align*}
    we have $T\in\frakT^{(4)}(6)$. The set of all vertices of $T$ (including the root and leaves), in planar order, is $\{r<v_1<v_2<l_1<l_2<l_3<v_3<v_4<l_5<l_6<l_7\}$.
\end{exam}

A tree $T$ is called a \textbf{labelled tree} if there is a bijective \textbf{labelling map} $\Phi$ from $\{1,2,\dots, \alpha(T)\}$ to the set of the leaves of $T$. Let  $\frakLT$ be the set of all labelled trees. 
For a tree $T\in\frakLT$, each vertex is assigned the minimum label among its descendant leaves. A tree $T$ is called \textbf{trivially labelled} if its leaf labels increase according to the planar order, and a \textbf{shuffle tree} if the labels of the children of each vertex are increasing with respect to the planar order. The set of all shuffle trees is denoted by $\frakST$. We define 
$$
\frakST^{(n)}:=\frakT^{(n)}\cap\frakST,\quad \frakST(m):=\frakT(m)\cap\frakST.
$$
Unless specified otherwise, all trees considered hereafter are labelled trees. 

There is a right $\mathbb S_n$-action on $\frakLT(n)$. For $T\in\frakLT(n)$ and $\sigma\in\mathbb S_n$, the tree $T\cdot\sigma$ is obtained from $T$ by replacing the labelling map $\Phi$ with $\Phi\circ\sigma$, or equivalently, by replacing each label $i$ with $\sigma^{-1}(i)$.

Let $T_1, T_2\in\frakLT$. We say $T_1$ and $T_2$ are \textbf{isomorphic} if there exists a bijection $\theta$ between their vertex sets such that
\begin{enumerate}
    \item $\theta$ maps leaves to leaves, internal vertices to internal vertices, and the root to the root;
    \item for each vertex $v$ of $T_1$, $\theta$ maps the children of $v$ to the children of $\theta(v)$;
    \item for each leaf $l$ of $T_1$, the label of $l$ is equal to the label of $\theta(l)$. 
\end{enumerate}
Informally, $T_1$ and $T_2$ are isomorphic if $T_2$ can be obtained by rotating $T_1$. 
For an isomorphism $\theta: T_1\rightarrow T_2$ of trees, there is an associated family of permutations $(\tau; \sigma_1,\dots,\sigma_n)$. Let $\{v_1<\dots<v_n\}$ and $\{v'_1<\dots<v'_n\}$ denote the sets of internal vertices of $T_1$ and $T_2$, respectively, under the planar order. The permutation $\tau\in\mathbb S_n$ is defined by setting  $v'_i=\theta(v_{\tau(i)})$ for each $i=1,\dots,n$. Moreover, for each vertex $v_i$ of $T_1$, let $\{w_1<\dots<w_{\alpha(v_i)}\}$ be the children of $v_i$ under the planar order, and let $\{w'_1<\dots<w'_{\alpha(v_i)}\}$ be the children of $\theta(v_i)$. The permutation $\sigma_i$ is then determined by  $w'_j=\theta(w_{\sigma_i(j)})$ for each $j=1,\dots,\alpha(v_i)$.  Note that there is a natural bijection between the isomorphism classes of trees and the set of shuffle trees.

A trivially labelled tree $T'$ is called a subtree of $T$, denoted $T'\subset T$, if $T'$ is a divisor of $T$ as planar trees. Define the \textbf{quotient tree} $T/T'$ to be the tree obtained by replacing $T'$ in $T$ with a corolla (i.e., a tree of weight $1$) of the same arity. The pair $(T, T')$ naturally induces a permutation $\sigma=\sigma(T, T')$ as follows.
Suppose $\omega(T)=n$ and $\omega(T')=j$. Let $\{v_1 < v_2 < \dots < v_n\}$ be the set of internal vertices of $T$ under the planar order, and $v'$ be the $i$-th vertex of $T/T'$ corresponding to the corolla replacing $T'$. Define $\sigma \in \mathbb S_{n}$ to be the unique permutation such that, in the sequence
$$
(v_{\sigma(1)}, \dots, v_{\sigma(i-1)}, v_{\sigma(i)}, \dots, v_{\sigma(i+j-1)}, v_{\sigma(i+j)}, \dots, v_{\sigma(n)}),
$$
the set 
$\{v_{\sigma(i)} < \dots < v_{\sigma(i+j-1)}\}$ is precisely the set of internal vertices of $T'$ under the planar order, 
and $\{v_{\sigma(1)} < \dots < v_{\sigma(i-1)} < v' < v_{\sigma(i+j)} < \dots < v_{\sigma(n)}\}$ is the set of internal vertices of $T/T'$.

Let $\calP=\{\calP(n)\}_{n\geqslant 1}$ be a graded $\mathbb{S}$-module and $T\in \frakT^{(n)}$ be a tree whose internal vertices are $\{v_1 < v_2 < \dots < v_n\}$. Define the graded space of tree tensors as
$$
\calP^{\otimes T} : = \calP(\alpha(v_1)) \otimes \calP(\alpha(v_2)) \otimes \dots \otimes \calP(\alpha(v_n)),
$$
or equivalently, an element of $\calP^{\otimes T}$ is a linear combination of trees with each vertex $v_i$ decorated by an element of $\calP(\alpha(v_i))$. Let $B$ be a $\bfk$-basis of $\calP$.  An element $x\in\calP^{\otimes T}$ is called a \textbf{tree monomial} if each vertex of $x$ is decorated by an element of $B$.
Note that for $T\in \frakT(n)$ and $\sigma \in \mathbb S_n$, it is clear that $ \calP^{\otimes T}\cong\calP^{\otimes T \cdot \sigma}$ as graded spaces.

\begin{defn} \label{def:sym homotopy operads}
	A \textbf{symmetric homotopy operad} structure on a graded $\mathbb S$-module $\calP=\{\calP(n)\}_{n\geqslant 1}$ consists of a family of operations 
    $$
    \{m_T: \calP^{\otimes T}\rightarrow \calP(\alpha(T))\}_{T\in \frakLT}
    $$ 
    with degree $|m_T|=\omega(T)-2$, such that the following conditions hold:
    \begin{enumerate}
        \item For any $T\in \frakLT^{(n)}(m)$, $\sigma\in \mathbb S_{m}$, and  $x_1\otimes\dots \otimes x_n \in \calP^{\otimes T} \cong \calP^{\otimes T\cdot \sigma}$, we have
        $$
        m_{T \cdot \sigma}(x_1 \otimes x_2  \otimes \dots \otimes x_n )=m_T(x_1\otimes x_2 \otimes  \dots \otimes x_n) \cdot \sigma.
        $$
        \item Let $\theta: T_1\rightarrow T_2$ be an isomorphism of trees in $\frakLT^{(n)}$, and let $x_1\otimes \dots \otimes x_n \in \calP^{\otimes T_1}$. Then 
        $$
        m_{T_1}(x_1 \otimes \dots \otimes x_n ) = \chi(\tau; x_1,\dots, x_n) m_{T_2}(x_{\tau(1)}\cdot\sigma_{\tau(1)} \otimes \dots \otimes x_{\tau(n)}\cdot\sigma_{\tau(n)}),
        $$
        where $(\tau;\sigma_1,\dots,\sigma_n)$ is the family of permutations induced by $\theta$.
        \item For any $T\in \frakLT$, the relation
        $$
        \sum_{T'\subset{T}}(-1)^{i-1+jk}\sgn(\sigma(T,T'))\ m_{T/T'}\circ(\Id^{\otimes {i-1}}\otimes m_{T'}\otimes \Id^{\otimes k})\circ r_{\sigma(T,T')}=0
        $$
        holds,  where $i$ is the serial number of the vertex corresponding to $T'$ in $T/T'$, $j=\omega(T')$, $k=\omega(T)-i-j+1$, and $r_{\sigma(T,T')}$ denotes the right action of $\sigma:=\sigma(T,T')$, i.e., 
        $$
        r_\sigma(x_1\otimes \dots\otimes x_n)=\varepsilon(\sigma; x_1,\dots, x_{n})x_{\sigma(1)}\otimes \dots  \otimes x_{\sigma(n)}.
        $$   
    \end{enumerate}
\end{defn}

A \textbf{strict morphism} of homotopy operads is a morphism of graded $\mathbb S$-module compatible with all operations $\{m_T\}_{T\in \frakLT}$.

Let $\calI$ be the collection defined by $\calI(1)=\bfk$ and $\calI(n)=0$ for $n\ne1$. The collection $\calI$ is equipped with a symmetric homotopy operad structure in the natural way: namely, $m_T: \calI(1)\otimes \calI(1)\to \calI(1)$ is the identity when $T$ is the tree with two internal vertices and a unique leaf, and $m_T=0$ otherwise.

A symmetric homotopy operad $\calP$ is called \textbf{strictly unital} if there exists a strict morphism of symmetric homotopy operads $\eta: \calI\rightarrow \calP$ such that, for each $n\geqslant 1$, the compositions  
$$
\calP(n)\cong \calP(n)\otimes \calI(1)\xrightarrow{\Id\otimes \eta}\calP(n)\otimes \calP(1)\xrightarrow{m_{T_{1, i}}}\calP(n)
$$
and
$$
\calP(n)\cong \calI(1)\otimes \calP(n)\xrightarrow{\eta\otimes \Id}\calP(1)\otimes \calP(n)\xrightarrow{m_{T_2}} \calP(n)
$$
are the identity maps on $\calP(n)$. Here $T_{1,i}\in\frakLT^{(2)}(n)$ is the trivially labelled tree whose second vertex has arity $1$ and attaches to the $i$-th child of the first vertex, for $1\leqslant i\leqslant n$; and $T_2\in\frakLT^{(2)}(n)$ is the trivially labelled tree whose first vertex has arity $1$, as shown in the figure below. 

\begin{align*}
        \begin{tikzpicture}[scale=1,descr/.style={fill=white}]
			\tikzstyle{every node}=[thick,minimum size=3pt, inner sep=1pt]
            \node(r)at(0,-0.5){};
            \node(v1)at(0,0)[fill=black, circle,label=left:{\tiny$v_1$}]{};
            \node(v2)at(0,0.5)[fill=black, circle,label=left:{\tiny$v_2$}]{};
            \node(v1-1)at(-0.8,0.5)[label=above:{\tiny$1$},label=left:{$T_{1,i}=\ \ $}]{};
            \node(v1-2)at(0.8,0.5)[label=above:{\tiny$n$}]{};
            \node(v2-1)at(0,1)[label=above:{\tiny$i$}]{};
            \node(dot)at(0.8,0.3)[label=right:{\ \ ,}]{};
            \draw(r)--(v1);\draw(v1)--(v2);
            \draw(v1)--(v1-1);\draw(v1)--(v1-2);
            \draw(v2)--(v2-1);
            \draw[dotted](-0.4,0.3)--(0.4,0.3);
		\end{tikzpicture}\ \qquad \ 
        \begin{tikzpicture}[scale=1,descr/.style={fill=white}]
			\tikzstyle{every node}=[thick,minimum size=3pt, inner sep=1pt]
            \node(r)at(0,-0.5){};
            \node(v1)at(0,0)[fill=black, circle,label=left:{\tiny$v_1$}]{};
            \node(v2)at(0,0.5)[fill=black, circle,label=left:{\tiny$v_2$}]{};
            \node(v2-1)at(-0.8,1)[label=above:{\tiny$1$}]{};
            \node(v2-2)at(0.8,1)[label=above:{\tiny$n$}]{};
            \node(T2)at(-0.8,0.5)[label=left:{$T_{2}= $}]{};
            \node(dot)at(0.8,0.3)[label=right:{\ \ .}]{};
            \draw(r)--(v1);\draw(v1)--(v2);
            \draw(v2)--(v2-1);\draw(v2)--(v2-2);
            \draw[dotted](-0.4,0.8)--(0.4,0.8);
		\end{tikzpicture}
\end{align*}
A symmetric homotopy operad $\calP$ is called \textbf{augmented} if there exists a strict morphism of symmetric homotopy operads $\varepsilon: \calP\rightarrow \calI$ such that $\varepsilon \circ \eta=\Id_{\calI}$.

If a homotopy operad $\calP$ satisfies $m_T=0$ for all $T\in \frakLT$ with $\omega(T)\geqslant 3$, then $\calP$ is precisely a nonunital dg operad in the sense of Markl \cite{Mar08}, where $\{m_T\mid T\in\frakST^{(1)}\}$ and $\{m_T\mid T\in\frakST^{(2)}\}$ give its differentials and partial compositions, respectively.

There is a natural $L_\infty$-algebra associated with any symmetric homotopy operad $\calP=\{\calP(n)\}_{n\geqslant 1}$.
Let $\calP^{\prod}:= \prod_{n=1}^\infty\calP(n)$. For each $n\geqslant 1$, define operations 
$$
m_n:=\sum\limits_{T\in \frakST^{(n)}}m_T: (\calP^{\prod})^{\otimes n}\to \calP^{\prod}
$$ 
and let $l_n$ be the anti-symmetrization of $m_n$, given by 
$$
l_n(x_1\otimes \dots \otimes x_n)=\sum\limits_{\sigma\in \mathbb S_n}\chi(\sigma;x_1,\dots, x_n)m_n(x_{\sigma(1)}\otimes\dots \otimes x_{\sigma(n)}).
$$

\begin{prop}[\cite{MV1, Van02}]\label{prop:homotopy operad->L_infty}
	Let $\calP$ be a symmetric homotopy operad. Then $(\calP^{\prod}, \{l_n\}_{n\geqslant 1})$ forms an $L_\infty$-algebra.
\end{prop}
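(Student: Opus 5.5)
The plan is to prove the generalised Jacobi identity of Definition~\ref{Def:L-infty} for the $l_n$ directly from axiom (3) of Definition~\ref{def:sym homotopy operads}. Generalised anti-symmetry is automatic, since $l_n$ is by construction the $\chi$-weighted anti-symmetrization of $m_n$.

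The first step is to expand the left-hand side of the Jacobi identity. Substituting $l_i=\sum_{\sigma}\chi\, m_i\circ r_\sigma$ and $l_{n-i+1}=\sum_\tau \chi\, m_{n-i+1}\circ r_\tau$, the sum over unshuffles $\Sh(i,n-i)$ combines with the two inner symmetrizations. This rewrites the whole expression as
\[
\sum_{\pi\in\mathbb S_n}\chi(\pi;x_1,\dots,x_n)\,
\sum_{i}\sum_{p}\pm\, m_{n-i+1}\bigl(\Id^{\otimes p-1}\otimes m_i\otimes \Id^{\otimes n-i-p+1}\bigr)\bigl(x_{\pi(1)}\otimes\cdots\otimes x_{\pi(n)}\bigr).
\]
Here $p$ records the position at which the output of $m_i$ is inserted. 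The multiplicity coming from $i!\,(n-i)!$ against $|\Sh(i,n-i)|$ has to be accounted for, and it is exactly compensated by the $n-i+1$ possible positions $p$. With this in hand, it suffices to show that for each fixed ordering $(y_1,\dots,y_n)=(x_{\pi(1)},\dots,x_{\pi(n)})$ the inner double sum vanishes.

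The second step expands $m_k=\sum_{T\in\frakST^{(k)}}m_T$. A term $m_{T_1}\circ_p m_{T_2}$, where $m_{T_2}$ is plugged into the $p$-th vertex slot, is nonzero only when arities match. In that case it is, via the equivariance axioms (1) and (2), of the form $m_{T/T'}\circ(\Id\otimes m_{T'}\otimes\Id)\circ r_{\sigma(T,T')}$ for a unique tree $T$ with $n$ vertices, obtained by grafting $T_2$ into the vertex of $T_1$ it composes into, together with a subtree $T'\cong T_2$. The resulting correspondence is between pairs $(T_1,T_2,p)$ with composable arities, together with a choice of attaching data, and pairs $(T,T')$ with $T$ a shuffle tree on vertices labelled compatibly with the order $y_1,\dots,y_n$.

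So I will regroup the sum by the big tree $T$. Because we sum over all $\pi$, the planar vertex order of each $T$ can be matched with the given ordering of the $y$'s, and the permutation $\sigma(T,T')$ is absorbed by reindexing $\pi$. The summand for fixed $T$ is then precisely the left-hand side of axiom (3), which vanishes.

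The main obstacle is the sign bookkeeping. The following have to be shown to agree:
\begin{itemize}
\item the factor $(-1)^{i(n-i)}$ together with the $\chi$'s from the two anti-symmetrizations and the Koszul sign from moving $m_i$ (of degree $i-2$) past $y_1,\dots,y_{p-1}$;
\item the factor $(-1)^{i-1+jk}\sgn(\sigma(T,T'))\varepsilon(\sigma;\dots)$ in axiom (3).
\end{itemize}
I would handle this as in \cite{MV1, Van02}. First I would verify the agreement in the non-symmetric case, where it is the standard check that operadic $A_\infty$-type relations give pre-Lie $L_\infty$ structures. Then I would note that anti-symmetrization converts the $r_\sigma$ signs into $\chi$ signs, since $\chi=\sgn\cdot\varepsilon$, and the extra $\sgn$ is exactly the $\sgn(\sigma(T,T'))$ in axiom (3). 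Convergence causes no problem, because on $\calP^{\prod}$ each arity component receives only finitely many terms.
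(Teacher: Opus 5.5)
The paper does not prove this proposition; it quotes it from Merkulov--Vallette and Van der Laan. Your direct route (expand the Jacobi identity, regroup by grafted trees, apply axiom (3) of Definition~\ref{def:sym homotopy operads}) is viable. However, the reduction in your first step is false. For a fixed ordering $(y_1,\dots,y_n)$, the inner sum $G=\sum_{i,p}\pm m_{n-i+1}\circ(\Id^{\otimes p-1}\otimes m_i\otimes\Id^{\otimes n-i-p+1})$ does \emph{not} vanish. Its vanishing is exactly the $A_\infty$-relation for $\{m_n\}$ on $\calP^{\prod}$.

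Take a dg operad ($m_T=0$ for $\omega(T)\geqslant 3$) and $n=3$. There the relation says the shuffle brace $f\{g\}$ is associative. In fact $(f\{g\})\{h\}-f\{g\{h\}\}$ is the sum over shuffle trees in which $g$ and $h$ are both children of $f$. This is nonzero in general (e.g.\ in $\mathrm{Com}$ with $f=g=h$ binary), and it is only graded symmetric in $g,h$ (the pre-Lie identity), so only antisymmetrization kills it.

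The structural reason is that a divisor $T'\subset T$ need not occupy consecutive positions in the planar order of $T$. For example, take a root with two children, and let $T'$ be the root together with its second child. This is why $r_{\sigma(T,T')}$ appears in axiom (3): for fixed $T$ and a fixed ordering of inputs, axiom (3) relates terms of $G$ evaluated on \emph{different} orderings. What actually holds is
$\sum_{\pi}\chi(\pi;x)\,G(x_{\pi})=\sum_{\pi}\chi(\pi;x)\sum_{T\in\frakST^{(n)}}A_T(x_{\pi})$, where $A_T$ is the left-hand side of axiom (3). The integrands $G$ and $\sum_T A_T=0$ are different maps, and they match only after reindexing $\pi$ by $\sigma(T,T')$. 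Your third paragraph already uses exactly this, so the first-step reduction should be deleted, not proved.

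With that repaired, several points need to be stated precisely.
\begin{itemize}
\item The correspondence $(T_1,T_2,p)\mapsto(T,T')$ is a bijection with no extra ``attaching data''. Graft the shuffle tree $T_2$ into the vertex $v_p$ of $T_1$, sending leaf $j$ of $T_2$ to the $j$-th child of $v_p$. The result is again a shuffle tree, because the minimal labels of the children of $v_p$ increase with $j$. Conversely, $T_1$ and $T_2$ are recovered from $(T,T')$ by ranking minimal labels.
\item Write $m_{T_2}=m_{\overline{T}_2}(-)\cdot\sigma$ with $\overline{T}_2$ trivially labelled (axiom (1)), then push $\sigma$ into the outer tree with axiom (2). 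This only rotates $v_p$. Hence the outer decorations are merely reordered (only the block of descendants of $v_p$ moves, and this is absorbed into $\pi$), and they are never acted on by permutations. This matters on $\calP^{\prod}$, where no invariance is available to absorb such an action. It is also what identifies the outer tree with $T/T'$ for $T'=\overline{T}_2$.
\item The sign check you defer is short. Set $k=n-i-p+1$. The coefficient of $\chi(\pi;x)\,m_{n-i+1}\circ(\Id^{\otimes p-1}\otimes m_i\otimes\Id^{\otimes k})(x_\pi)$ is $(-1)^{i(n-i)}(-1)^{p-1}(-1)^{i(p-1)}$. The three factors are the Jacobi sign, the sign of moving the output of $l_i$ into slot $p$, and the block-move sign contained in $\chi(\pi)$; the Koszul signs cancel because $|m_i|=|l_i|=i-2$. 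This coefficient equals $(-1)^{p-1+ik}$, the sign of axiom (3) with $j=i$. Finally, as you say, $\chi=\sgn\cdot\varepsilon$ absorbs $\sgn(\sigma(T,T'))\,r_{\sigma(T,T')}$.
\end{itemize}
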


Let $\calP(n)^{\mathbb S_n}\subset \calP(n)$ denote the invariant subspace under the right $\mathbb S_n$-action, and define $\calP_{\mathbb S}^{\prod}:=\prod_{n=1}^\infty\calP(n)^{\mathbb S_n}\subset \calP^{\prod}$. By restricting the operations $\{l_n\}_{n\geqslant 1}$ from $\calP^{\prod}$ to $\calP_{\mathbb S}^{\prod}$, we obtain the following:
\begin{prop}\label{prop:Linfty on subspace of sym homotopy operad}
    Let $\calP$ be a symmetric homotopy operad. Then $(\calP_{\mathbb S}^{\prod}, \{l_n\}_{n\geqslant 1})$ is an $L_\infty$-algebra. In particular, if $\calP$ is a symmetric dg operad, then $\calP_{\mathbb S}^{\prod}$ reduces to a dg Lie algebra.
\end{prop}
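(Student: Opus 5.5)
The plan is to obtain the statement as a restriction of Proposition~\ref{prop:homotopy operad->L_infty}: show that $\calP_{\mathbb S}^{\prod}$ is closed under every $l_n$, and then the generalised anti-symmetry and generalised Jacobi identities on the subspace are inherited from those on $\calP^{\prod}$. So the whole argument reduces to one closure statement: if $x_1,\dots,x_n$ are $\mathbb S$-invariant, with $x_k\in\calP(a_k)^{\mathbb S_{a_k}}$, then $l_n(x_1\otimes\cdots\otimes x_n)$ is invariant in $\calP(m)$, where $m=\sum_k a_k-n+1$.

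To prove closure, I would first rewrite $l_n$ on invariant inputs as a sum over all labelled trees rather than over shuffle trees. Let $T\in\frakST^{(n)}(m)$ and $\sigma\in\mathbb S_m$. Axiom (1) of Definition~\ref{def:sym homotopy operads} gives $m_T(x)\cdot\sigma=m_{T\cdot\sigma}(x)$. The tree $T\cdot\sigma$ need not be a shuffle tree, but it is isomorphic to a unique shuffle tree $T'$. Axiom (2) then expresses $m_{T\cdot\sigma}(x)$ as $\chi(\tau;\dots)\,m_{T'}$ applied to the permuted inputs $x_{\tau(i)}\cdot\sigma_{\tau(i)}$. Because each $x_k$ is invariant, $x_{\tau(i)}\cdot\sigma_{\tau(i)}=x_{\tau(i)}$, so the local permutations $\sigma_i$ drop out. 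What remains is a reordering $\tau$ of the vertex decorations together with the Koszul sign $\chi$.

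Next I sum over $\sigma'\in\mathbb S_n$ as in the definition of $l_n$. The map $(T,\sigma')\mapsto (T',\sigma'\tau)$ is a bijection on pairs consisting of a shuffle tree and a vertex ordering, because $T\mapsto T\cdot\sigma$ permutes the set of labelled trees and isomorphism classes correspond bijectively to shuffle trees. The signs should combine as $\chi(\sigma';x)\chi(\tau;x_{\sigma'})=\chi(\sigma'\tau;x)$, by multiplicativity of the Koszul sign. Hence $l_n(x)\cdot\sigma=l_n(x)$. A cleaner way to organise this is to note that $l_n(x_1,\dots,x_n)$ equals the sum over all isomorphism classes of decorated trees whose vertices carry $x_1,\dots,x_n$ in some order, where decorations are counted via axiom (2). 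This sum is manifestly stable under relabelling of the leaves.

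The main obstacle is the sign and bijection bookkeeping in the last step. One has to make sure that the planar-order permutation $\tau$ induced by the isomorphism $T\cdot\sigma\cong T'$ is absorbed exactly by the anti-symmetrization over $\mathbb S_n$. One also has to check that no trees are counted twice. This second point needs care when several vertices carry the same $x_k$ but sit at different positions, since then the tree has nontrivial automorphisms; I would handle it by working with the ordered tuple of inputs throughout, so that automorphisms never cause coincidences. Finally, for the dg case I would apply the same argument: when $m_T=0$ for $\omega(T)\geqslant 3$, we get $l_n=0$ for $n\geqslant 3$, and Remark~\ref{Rem: L-infinity for small n} then says $(\calP_{\mathbb S}^{\prod},l_1,l_2)$ is a dg Lie algebra.
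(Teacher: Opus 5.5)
Your proposal is correct and follows the paper's route. The paper obtains the statement by restricting the operations $l_n$ of Proposition~\ref{prop:homotopy operad->L_infty} to $\calP_{\mathbb S}^{\prod}$ and leaves closure of the invariant subspace implicit; your argument via axioms (1) and (2) of Definition~\ref{def:sym homotopy operads}, together with $\chi(\sigma';x)\chi(\tau;x_{\sigma'})=\chi(\sigma'\tau;x)$, supplies exactly that verification. One small correction: your automorphism worry is unnecessary, because leaf-labelled trees with no arity-zero vertices have only the trivial automorphism (each child of a vertex is pinned down by its nonempty set of descendant leaf labels), so $\theta\colon T\cdot\sigma\to T'$, hence $\tau$, is unique and the bijection $(\sigma',T)\mapsto(\sigma'\tau,T')$ is automatically well defined.
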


\begin{defn}
    Given a symmetric operad $\calP$, for each tree $T\in\frakLT$, the \textbf{composition along $T$ in $\calP$}
\begin{align*}
    m_\calP^T:\calP^{\otimes T}\longrightarrow \calP(\alpha(T))
\end{align*}
is defined inductively as follows:
\begin{enumerate}
    \item For the trivially labelled tree $T\in\frakLT^{(1)}(m)$ with $m\geqslant 1$, let $\sigma\in\mathbb S_m$ and $x\in\calP(m)$. We define $m_\calP^{T\cdot\sigma}(x)=x\cdot\sigma$;
    \item For $T\in\frakLT^{(2)}$, we define $m_\calP^T=m_T$;
    \item For $T\in\frakLT^{(n)}$ with $n\geqslant 3$, let $\{v_1<\dots<v_n\}$ be the set of internal vertices of $T$. Let $T'$ denote the subtree whose set of internal vertices is $\{v_1<\dots<v_{n-1}\}$. We then define $m_\calP^T=m_{T/T'}\circ(m_\calP^{T'}\otimes\Id)$, where $m_\calP^{T'}$ is given by induction.
\end{enumerate}
\end{defn}

\begin{defn}\label{Def: brace operation of shuffle tree}
Let $\mathcal P$ be a (nonunital) symmetric operad. For $f\in\mathcal P(m)$ and $g_j\in\mathcal P(r_j)$, $j=1,\dots,n$, with $1\leqslant n\leqslant m$, the \textbf{shuffle brace operation} of $(f; g_1,\dots,g_n)$ is defined as
\begin{align}\label{eq:shuffle brace operation}
f\{g_1,\dots,g_n\}=\sum_{T\cdot\sigma}m_{\mathcal{P}}^T(f\otimes g_1\otimes\dots\otimes g_n)\cdot\sigma,
\end{align}
where the sum is taken over all shuffle trees $T\cdot\sigma$ of the form
\begin{align*}
        \begin{tikzpicture}[scale=1,descr/.style={fill=white}]
			\tikzstyle{every node}=[thick,minimum size=3pt, inner sep=1pt]
            \node(r)at(0,-0.5){};
            \node(v1)at(0,0)[fill=black, circle,label=left:{\tiny$v_1$}]{};
            \node(v2)at(-1,1)[fill=black, circle,label=left:{\tiny$v_2$}]{};
            \node(v3)at(1,1)[fill=black, circle,label=right:{\tiny$v_{n+1}$}]{};
            \node(v1-1)at(-2,1){};\node(v1-2)at(0,1){};\node(v1-3)at(2,1){};
            \node(v2-1)at(-1.5,1.8){};\node(v2-2)at(-0.5,1.8){};
            \node(v3-1)at(0.5,1.8){};\node(v3-2)at(1.5,1.8){};
            \node(sigma)at(2,1)[minimum size=0pt, label=right:{$\cdot\sigma$}]{};
            \draw(r)--(v1);
            \draw(v1)--(v1-1);\draw(v1)--(v1-2);\draw(v1)--(v1-3);
            \draw(v2)--(v2-1);\draw(v2)--(v2-2);
            \draw(v3)--(v3-1);\draw(v3)--(v3-2);
            \draw[dotted](-1,0.5)--(1,0.5);
            \draw[dotted](-1.2,1.4)--(-0.8,1.4);
            \draw[dotted](1.2,1.4)--(0.8,1.4);
            \draw[dotted](-0.5,1.4)--(0.5,1.4);
            \path[-,font=\scriptsize] 
            (v1) edge node[descr]{{\tiny$i_1$}} (v2)
            edge node[descr]{{\tiny$i_n$}} (v3);
		\end{tikzpicture}
\end{align*}
Here, $T$ is a trivially labelled tree with $\alpha(v_1)=m$, $\alpha(v_{j+1})=r_j$ for $j=1,\dots n$, and $1\leqslant i_1<\dots< i_n\leqslant m$. Furthermore, for $f\in\mathcal P(m)$ and $g\in\mathcal P(n)$, we define a bracket operation
$$
[f,g]:=f\{g\}-(-1)^{|f||g|}g\{f\}\in \mathcal P(m+n-1).
$$
\end{defn}

\begin{remark}\label{rmk: brace operation and NR bracket}
	\begin{enumerate}
	    \item If $\sigma$ is the identity permutation throughout the summation in Equation \eqref{eq:shuffle brace operation}, then the shuffle brace operation reduce to the classical \textbf{brace operation}.
        \item The operation $l_2$ in the dg Lie algebra $\calP^{\prod}$ coincides exactly with the bracket operation defined above.
        \item Let $\mathcal P=\End_{sV}$. Then the bracket operation on $(\End_{sV}^{\prod})_{\mathbb{S}}=\prod_{n\geqslant 1}\Hom((sV)^{\odot n}, sV)$ is precisely the \textbf{Richardson-Nijenhuis bracket}, which endows $(\End_{sV}^{\prod})_{\mathbb{S}}$ with a Lie algebra structure. 
	\end{enumerate}
\end{remark}

\begin{lem}\label{lem:shuffle brace operation on End_sV}
Let $\mathcal P=(\End_{sV})_{\mathbb S}$. For $f\in\Hom((sV)^{\odot k},sV)$, $g_i\in\Hom((sV)^{\odot r_i},sV)$ with $i=1,\dots,l$ and $1\leqslant l\leqslant k$, we have
\begin{align*}
    f\{g_1,\dots,g_l\}=\sum_{\substack{\sigma\in\Sh(r_1,\dots,r_l,k-l)\\ \sigma(1)<\sigma(r_1+1)<\dots<\sigma(r_1+\dots+r_{l-1}+1)}}f(g_1\otimes \dots \otimes g_l\otimes\Id^{\otimes k-l})r_\sigma.
\end{align*}
\end{lem}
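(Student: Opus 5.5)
We prove the formula by unwinding Definition~\ref{Def: brace operation of shuffle tree} for $\mathcal P=\End_{sV}$ and then restricting to symmetric maps.

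\textbf{Step 1: the trees.} The shuffle trees indexing the sum in \eqref{eq:shuffle brace operation} are corollas $v_1$ of arity $k$ with $g_1,\dots,g_l$ grafted at inputs $i_1<\dots<i_l$, relabelled by $\sigma$ so that the children of each vertex carry increasing labels. Such a tree is determined by two pieces of data:
\begin{itemize}
\item the positions $i_1<\dots<i_l$;
\item a partition of $\{1,\dots,k+\sum r_j-l\}$ into the label sets of the $l$ blocks $g_j$ and the $k-l$ remaining leaves of $v_1$.
\end{itemize}
Within each block the labels are increasing. The children of $v_1$ are ordered by their minimal labels, and this order must be increasing along the planar order.

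\textbf{Step 2: composition along $T$ in $\End_{sV}$.} For the trivially labelled $T$ we have
$$m^T_{\End}(f\otimes g_1\otimes\cdots\otimes g_l)=f\circ(\Id^{\otimes i_1-1}\otimes g_1\otimes\cdots)$$
with the usual Koszul signs. Acting by $\sigma$ then precomposes with $r_\sigma$.

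\textbf{Step 3: use symmetry of $f$ to normalise.} Since $f$ is symmetric, $f\circ r_\tau=f$ up to Koszul signs for any permutation $\tau$ of its inputs. We may therefore move the $g$-blocks to the first $l$ slots and the bare identity inputs to the last $k-l$ slots.

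After this normalisation the data is exactly a permutation
$$\sigma\in\Sh(r_1,\dots,r_l,k-l),$$
where the blocks are taken in the order $g_1,\dots,g_l,\Id^{\otimes k-l}$. The only condition that survives from the shuffle-tree requirement concerns the relative order of the $g$-blocks. Their order is fixed by the order of the $g_j$ in the brace, i.e.\ $i_1<\dots<i_l$. Hence the minimal labels of the blocks must increase: $\sigma(1)<\sigma(r_1+1)<\dots<\sigma(r_1+\dots+r_{l-1}+1)$.

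The identity leaves impose no extra condition. Their relative order among themselves and with respect to the blocks is absorbed by the symmetry of $f$ and by the shuffle condition on the last block. Once the blocks are normalised, the choice of positions $i_j$ relative to the identity leaves is also determined by the labels. So the correspondence between trees and shuffles is bijective rather than many-to-one.

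\textbf{Step 4: signs.} We must check that the Koszul signs from moving the $g_j$ past identity inputs, which comes from using $f$'s symmetry, combine with $\varepsilon(\sigma)$ to give exactly $r_\sigma$, with no extra factor. I expect this sign bookkeeping, together with making the bijection in Step 3 precise, to be the main obstacle. I would handle it by writing both sides evaluated on $sv_1\otimes\cdots\otimes sv_N$ and comparing Koszul signs of the total reordering of the inputs.
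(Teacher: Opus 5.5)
Your argument is correct. The paper states this lemma without proof, so your direct unwinding of Definition~\ref{Def: brace operation of shuffle tree} is the intended verification: the shuffle conditions at the vertices $v_{j+1}$ and $v_1$ give the bijection with shuffles whose block minima increase, and only the symmetry of $f$ (not of the $g_j$) is needed to move the blocks to the front.

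The sign check you defer in Step 4 is not an obstacle. Write $f=f\circ r_\tau$ and use naturality of the Koszul symmetry, $r_\tau\circ(h_1\otimes\cdots\otimes h_k)=\varepsilon(\tau;h_1,\dots,h_k)\,(h_{\tau(1)}\otimes\cdots\otimes h_{\tau(k)})\circ r_{\tilde\tau}$, where $\tilde\tau$ is the induced block permutation of the inputs. Here $\varepsilon(\tau;h_1,\dots,h_k)=1$, because the $g_j$ keep their relative order and only cross identity maps of degree $0$. Finally, $r_{\tilde\tau}$ and the relabelling compose, Koszul signs included, to the single $r_\sigma$.
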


Dualizing the definition of symmetric homotopy operads, one obtains the notion of symmetric homotopy cooperads.

\begin{defn}
    Let $\calC=\{\calC(n)\}_{n\geqslant 1}$ be a graded $\mathbb S$-module. A \textbf{symmetric homotopy cooperad} structure on $\calC$ consists of a family of operations $\{\Delta_T: \calC(\alpha(T))\rightarrow \calC^{\otimes T }\}_{T\in\frakLT}$ with degree $|\Delta_T|=\omega(T)-2$, such that for any $c\in \calC$, $\Delta_T(c)=0$ for all but finitely many $T\in\frakLT$. Furthermore, this family of operations $\{\Delta_T\}_{T\in\frakLT}$ is required to satisfy the following conditions:
    \begin{enumerate}
        \item For any $T\in\frakLT(n)$ and $\sigma\in\mathbb S_n$, we have $\Delta_T(c)=\Delta_{T\cdot\sigma}(c\cdot\sigma)$.
        \item Let $\theta:T_1\rightarrow T_2$ be an isomorphism of trees in $\frakLT^{(n)}(m)$, and let $c\in\calC(m)$. Suppose that $\Delta_{T_1}(c)=x_1\otimes\dots\otimes x_n$. Then
        $$
        \Delta_{T_2}(c)=\chi(\tau;x_1,\dots,x_n)x_{\tau(1)}\cdot\sigma_{\tau(1)}\otimes\dots\otimes x_{\tau(n)}\cdot\sigma_{\tau(n)},
        $$
        where $(\tau;\sigma_1,\dots,\sigma_n)$ is the family of permutations induced by $\theta$.
        \item For any $T\in \frakLT$, the relation 
        $$
        \sum_{T'\subset T}\sgn\big(\sigma(T,T')^{-1}\big)(-1)^{i-1+jk} r_{\sigma(T,T')^{-1}}\circ (\Id^{\otimes i-1}\otimes \Delta_{T'}\otimes \Id^{\otimes k})\circ  \Delta_{T/T'}=0
        $$
        holds, where the indices $i,j,k$ are defined as in Definition \ref{def:sym homotopy operads}.
    \end{enumerate}
\end{defn}

The graded collection $\calI$ carries a natural symmetric homotopy cooperad structure, where $\Delta_T: \calI(1)\to  \calI(1)\otimes \calI(1)$ is given by the identity map when $T$ is the unique tree in $\frakLT^{(2)}(1)$, and $\Delta_T$ vanishes otherwise.
A symmetric homotopy cooperad $\calC$ is called \textbf{strictly counital} if there exists a strict morphism of symmetric homotopy cooperad $\varepsilon: \calC\rightarrow \calI$ such that the compositions
$$
\calC(n)\xrightarrow{\Delta_{T_{1, i}}}\calC(n)\otimes \calC(1)\xrightarrow{\Id\otimes \varepsilon}\calC(n)\otimes \calI(1)\cong \calC(n)
$$ 
and 
$$
\calC(n)\xrightarrow{\Delta_{T_2}}\calC(1)\otimes \calC(n)\xrightarrow{\varepsilon\otimes \Id}\calI(1)\otimes \calC(n)\cong \calC(n)
$$ 
are the identity maps on $\calC(n)$, where $T_{1,i}$ and $T_2$ are the trees defined previously. Moreover, for any $T\in\frakLT^{(n)}$ with $n\geqslant 3$, the composition $(\Id^{\otimes i-1}\otimes \varepsilon \otimes\Id^{\otimes n-i})\circ\Delta_T$ is required to vanish for all $1\leqslant i\leqslant n$.

A symmetric homotopy cooperad $\calC$ is called \textbf{coaugmented} if there exists a strict morphism of symmetric homotopy cooperads $\eta:\calI\rightarrow \calC$ such that $\varepsilon\circ \eta=\Id_{\calI}$. For a coaugmented symmetric homotopy cooperad $\calC$, the graded $\mathbb S$-module $\overline{\calC}=\operatorname{ker}(\varepsilon)$, endowed with the operations $\{\overline{\Delta}_T\}_{T\in\frakLT}$ obtained by restricting $\Delta_T$ to $\overline{\calC}$, naturally inherits a symmetric homotopy cooperad structure. 

A symmetric homotopy cooperad $\calE = \{\calE(n)\}_{n\geqslant 1}$ for which the operations $\{\Delta_T\}$ vanish for all trees with $\omega(T) \geqslant 3$ reduces precisely to a noncounital symmetric dg cooperad in the sense of Markl \cite{Mar08}.

For a (noncounital) symmetric dg cooperad $\calE$ and a tree $T\in\frakLT$, one can define the \textbf{cocomposition along $T$}, denoted by  $\Delta^{T}_\calE:\calE(\alpha(T))\to \calE^{\otimes T}$, in a manner dual to the composition $m^T_\calP$ along $T$ for a dg operad $\calP$.

\begin{prop-def}
    Let $\calC$ be a symmetric homotopy cooperad and $\calE$ be a symmetric dg cooperad. Then the graded collection $\calC\otimes \calE$, defined by $(\calC\otimes\calE)(n) := \calC(n)\otimes \calE(n)$ for $n\geqslant 1$, carries a natural symmetric homotopy cooperad structure given as follows:
	\begin{itemize}
		\item For any $c\in\calC(n)$, $e\in\calE(n)$, and $\sigma\in\mathbb S_n$, the right $\mathbb S_n$-action on $(\calC\otimes\calE)(n)$ is defined by 
        $$
        (c\otimes e)\sigma:=(c\cdot\sigma)\otimes (e\cdot\sigma).
        $$
        \item For any tree $T\in \frakLT^{(1)}(n)$, the operation $\Delta_T^H$ on $c\otimes e \in \calC(n)\otimes \calE(n)$ is defined by 
        $$
        \Delta_T^H(c\otimes e):=\Delta_T^\calC(c)\otimes e+(-1)^{|c|}c\otimes d_{\calE}(e).
        $$ 
		\item For any tree $T\in \frakLT^{(n)}(m)$ with $n\geqslant 2$, and for any $c\in\calC(m), e\in\calE(m)$, the operation $\Delta_T^H$ is defined by 
        $$
        \Delta_T^H(c\otimes e):=(-1)^{\sum\limits_{k=1}^{n-1}\sum\limits_{j=k+1}^n|e_k||c_j|}(c_1\otimes e_1)\otimes \dots \otimes (c_n\otimes e_n)\in (\calC\otimes \calE )^{\otimes T},
        $$ 
        where $c_1\otimes \dots\otimes c_n=\Delta_T^\calC(c)\in \calC^{\otimes T}$  and $e_1\otimes \dots \otimes e_n=\Delta^T_\calE(e)\in \calE^{\otimes T}$, with $\Delta^T_\calE$ denoting the cocomposition along $T$ in $\calE$.
	\end{itemize}
	This new symmetric homotopy cooperad structure is called the \textbf{Hadamard product} of $\calC$ and $\calE$, and is denoted by $\calC\otimes_{\rmH}\calE$.
\end{prop-def}

Define $\calS=\mathrm{End}_{\bfk s}^c$ to be the symmetric graded cooperad whose underlying graded collection is given by $\calS(n)=\Hom((\bfk s)^{\otimes n},\bfk s)\cong\bfk \delta_n$ for $n\geqslant 1$, where $\delta_n$ is the map that sends $s^{\otimes n}$ to $s$. The $\mathbb S$-module structure of $\calS$ is given by 
$$
\delta_n\cdot\sigma:=\sgn(\sigma)\delta_n,
$$ 
for $n\geqslant 1$ and $\sigma\in\mathbb S_n$. The cooperad structure is defined by
$$
\Delta_T(\delta_n):=(-1)^{(j-1)(i-1)}\sgn(\sigma)\delta_{n-i+1}\otimes \delta_i\in (\calS)^{\otimes T}
$$ 
for any tree $T=\overline T\cdot\sigma\in\frakLT^{(2)}$ of the form
\begin{align*}
        \begin{tikzpicture}[scale=1,descr/.style={fill=white}]
			\tikzstyle{every node}=[thick,minimum size=3pt, inner sep=1pt]
            \node(r)at(0,-0.5){};
            \node(v1)at(0,0)[fill=black, circle,label=left:{\tiny$v_1$}]{};
            \node(v2)at(0,1)[fill=black, circle,label=left:{\tiny$v_2$}]{};
            \node(v1-1)at(-1,1){};\node(v1-2)at(1,1){};
            \node(v2-1)at(-0.5,1.6){};\node(v2-2)at(0.5,1.6){};
            \node(sigma)at(0.9,0.8)[minimum size=0pt, label=right:{$\cdot\sigma$}]{};
            \draw(r)--(v1);
            \draw(v1)--(v1-1);\draw(v1)--(v1-2);
            \draw(v2)--(v2-1);\draw(v2)--(v2-2);
            \draw[dotted](-0.5,0.5)--(0.5,0.5);
            \draw[dotted](-0.2,1.3)--(0.2,1.3);
            \path[-,font=\scriptsize] 
            (v1) edge node[descr]{{\tiny$j$}} (v2);
		\end{tikzpicture}
\end{align*}
where $\overline T$ is the trivially labelled tree with $\alpha(v_1)=n-i+1, \alpha(v_2)=i$, and $j$ denotes the incoming edge index of $v_2$ at $v_1$.

We also define $\calS^{-1}$ to be the graded symmetric cooperad whose underlying graded collection is given by $\calS^{-1}(n)=\Hom((\bfk s^{-1})^{\otimes n},s^{-1})\cong\bfk\varepsilon_n$ for $n\geqslant 1$, where $\varepsilon_n$ is the map that sends $(s^{-1})^{\otimes n}$ to $s^{-1}$. The $\mathbb S$-module structure of $\calS^{-1}$ is given by 
$$
\varepsilon_n\cdot\sigma:=\sgn(\sigma)\varepsilon_n,
$$ 
for $n\geqslant 1$ and $\sigma\in\mathbb S_n$. The cooperad structure is defined by 
$$
\Delta_T(\varepsilon_n)=(-1)^{(i-1)(n-i+1-j)}\sgn(\sigma)\varepsilon_{n-i+1}\otimes \varepsilon_i\in (\calS^{-1})^{\otimes T},
$$  
where $T$ is the same tree as pictured above.
It is easy to see that $\calS\otimes_{\rmH}\calS^{-1}\cong \calS^{-1}\otimes_{\rmH}\calS=:\mathbf{Com}^\vee$. Notice that for any symmetric homotopy cooperad $\calC$, we have $\calC\otimes_\rmH \mathbf{Com}^\vee\cong\calC\cong \mathbf{Com}^\vee\otimes_\rmH \calC.$

\begin{defn}
    Let $\calC$ be a symmetric homotopy cooperad. The \textbf{operadic suspension} (resp. \textbf{desuspension}) of $\calC$ is the symmetric homotopy cooperad defined by $\calC\otimes _{\rmH} \calS$ (resp. $\calC\otimes_{\rmH}\calS^{-1}$), and is denoted by $\mathscr{S}\calC$ (resp. $\mathscr{S}^{-1}\calC$).
\end{defn}

\begin{defn}
	Let $\calC=\{\calC(n)\}_{n\geqslant 1}$ be a coaugmented symmetric homotopy cooperad. The \textbf{cobar construction} of $\calC$, denoted by $\Omega\calC$, is the free graded symmetric operad generated by the graded $\mathbb S$-module $s^{-1}\overline{\calC}$, endowed with the differential $\partial$ induced by the lifting of the linear map $\partial: s^{-1}\overline{\calC}\to \Omega\calC$ given by
	$$
    \partial(s^{-1}f)=-\sum_{T\in \frakST(n)} (s^{-1})^{\otimes\omega(T)} \circ \overline\Delta_T(f),
    $$
    for any $f\in \overline{\calC}(n)$.
\end{defn}

This provides an alternative definition for symmetric homotopy operads. In fact, a graded $\mathbb S$-module $\overline{\calC}=\{\overline{\calC}(n)\}_{n\geqslant 1}$ carries a symmetric homotopy cooperad structure if and only if the free graded symmetric operad generated by $s^{-1}\overline{\calC}$ (also called the cobar construction of $\calC=\overline{C}\oplus \calI$) can be  endowed with a differential making it a symmetric dg operad.

Given a symmetric homotopy cooperad and a symmetric dg cooperad, one can construct a new symmetric homotopy operad. 
\begin{prop}\label{prop:Hom(C,P)-homotopy operad}
	Let $\calC$ be a symmetric homotopy cooperad and $\calP$ be a symmetric dg operad. Then the graded collection $\mathbf{Hom}(\calC,\calP)=\{\Hom(\calC(n), \calP(n))\}_{n\geqslant 1}$ has a natural symmetric homotopy operad structure defined as follows:
	\begin{itemize}
		\item For any $f\in\mathbf{\Hom}(\calC,\calP)(n)$, $\sigma\in\mathbb S_n$, and $x\in\calC(n)$, the right $\mathbb S_n$-action on $\mathbf{\Hom}(\calC,\calP)(n)$ is given by
        $$
        (f\cdot\sigma)(x):=f(x\cdot\sigma^{-1})\cdot\sigma.
        $$
        \item For any $T\cdot\sigma\in \frakLT^{(1)}(n)$ with $T$ being a trivially labelled tree, and for any $f\in\mathbf{\Hom}(\calC,\calP)(n)$, $c\in\calC(n)$, the operation $m_{T\cdot\sigma}$ is given by
        $$
        m_{T\cdot\sigma}(f)(c):=m_{T\cdot\sigma}^\calP\circ f(c\cdot\sigma^{-1})-(-1)^{|f|}\big(f\circ \Delta_{T\cdot\sigma}^\calC(c)\big)\cdot\sigma.
        $$
		\item For any $T\in \frakLT^{(n)}$ with $n\geqslant 2$, and for any $f_1\otimes\dots\otimes f_n\in\mathbf{\Hom}(\calC,\calP)^{\otimes T}$,  the operation $m_T$ is defined by
        $$
        m_T(f_1\otimes \dots \otimes f_n)=(-1)^{\frac{n(n-1)}{2}+1+n(\sum_{i=1}^n |f_i|)}m_{\calP}^T\circ(f_1\otimes \dots\otimes f_n)\circ \Delta_T^\calC,
        $$  
        where $m_\calP^T$ denotes the composition along $T$ in $\calP$.
	\end{itemize}
\end{prop}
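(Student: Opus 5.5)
We have to verify the three axioms of Definition~\ref{def:sym homotopy operads} for the operations $m_T$ on $\mathbf{Hom}(\calC,\calP)$. The cleanest route is to go through the cobar/convolution dictionary rather than check axiom (3) by hand. The plan is to identify $\mathbf{Hom}(\calC,\calP)$ with a space of operad maps. A degree-$p$ element of $\prod_n\Hom(\calC(n),\calP(n))$, after desuspension, corresponds to an $\mathbb S$-equivariant map $s^{-1}\overline{\calC}\to\calP$. Such a map extends uniquely to an operad morphism $\Omega\calC\to\calP$.

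Equivariance and isomorphism invariance come first. For axiom (1) we compute $m_{T\cdot\sigma}(f_1\otimes\dots\otimes f_n)(c)$. We use property (1) of the cooperad, $\Delta_{T\cdot\sigma}(c)=\Delta_T(c\cdot\sigma^{-1})$ up to relabelling, together with the equivariance of $m^T_\calP$ in the operad $\calP$. This gives $m_T(\dots)(c\cdot\sigma^{-1})\cdot\sigma$, which is exactly the action $(f\cdot\sigma)(x)=f(x\cdot\sigma^{-1})\cdot\sigma$. For axiom (2), take an isomorphism $\theta:T_1\to T_2$. Cooperad axiom (2) rewrites $\Delta_{T_2}$ as a reordering of the tensor factors of $\Delta_{T_1}$ by $\tau$, with each factor twisted by $\sigma_i$. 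The corresponding invariance of $m^T_\calP$ under isomorphic trees holds because $\calP$ is a genuine symmetric operad. The Koszul sign $\chi(\tau;\dots)$ arises from commuting the $f_i$ past the $c_j$. The extra factor $\sgn(\tau)$ is compensated by the global sign $(-1)^{n(n-1)/2+1+n\sum|f_i|}$ together with the shift degrees. This is a bookkeeping check.

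The main step is axiom (3), and I would reduce it to a square-zero statement. Set $L=\mathbf{Hom}(\calC,\calP)$ and write $D$ for the total operation. It is more efficient to use the characterization quoted after the cobar definition: a homotopy operad structure on $L$ is the same as a square-zero derivation on the free operad generated by $s^{-1}L$. Equivalently, dually, it is a codifferential on the cofree conilpotent cooperad on $sL$, whose components are the $m_T$. The corresponding codifferential here is built from three pieces: $d_\calP$, the $\Delta_T^\calC$ via the cobar differential $\partial$ of $\Omega\calC$, and the compositions $m^T_\calP$. Concretely, the relation in (3) evaluated on $c$ expands as a sum over pairs $T'\subset T$, and we split it into three types of terms. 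The terms in which $\calP$-compositions are iterated cancel by associativity of $m_\calP^T$, since $\calP$ is a strict operad. The terms involving $d_\calP$ twice, or $d_\calP$ against a composition, cancel because $d_\calP^2=0$ and $d_\calP$ is a derivation of the compositions. The terms in which two $\Delta^\calC$'s are nested, $(\Id\otimes\Delta_{T'}\otimes\Id)\circ\Delta_{T/T'}$, are exactly the cooperad relation (3) for $\calC$, and so cancel. There remain mixed terms, where $\Delta_{T'}^\calC$ appears at the unary level and the rest is a composition. These pair off between the case where $T'$ is a corolla, with $m_{T'}$ the unary operation, and the case where $T/T'$ is a corolla; the pairing uses the formula for $m_{T\cdot\sigma}$ with $\omega=1$.

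The hardest step will be the signs. Sign correctness is where I expect trouble: the prefactor $(-1)^{n(n-1)/2+1+n\sum|f_i|}$ has to interact with $(-1)^{i-1+jk}\sgn(\sigma(T,T'))$ so that each cancellation above happens with opposite signs. To control this, I would suspend first. I would check that under $f\mapsto sf$ the signs become those of a plain composition of degree-$(-1)$ maps, so that the relation becomes $D^2=0$ for $D=\partial_\calP+\partial_{\Omega\calC}$ acting on $\Hom(\Omega\calC,\calP)$-type expressions. I would verify the signs only on $\omega(T)\le 3$ and then appeal to the operadic nature of the construction.
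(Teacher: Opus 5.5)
The paper gives no proof of this proposition; it only recalls it as background. Your argument therefore has to stand on its own, and for axiom (3) it does not.

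\textbf{The dictionary you start from does not apply.} A square-zero derivation on the free operad on $s^{-1}L$ encodes a homotopy \emph{cooperad} structure on $L$, which is the remark after the cobar construction. Morphisms $\Omega\calC\to\calP$ correspond only to $\mathbb S$-invariant elements of degree $-1$ satisfying the Maurer--Cartan equation (Proposition~\ref{prop:Linfinity give MC}, which itself relies on the present statement). Neither controls the operations $m_T$ on arbitrary inputs $f_1,\dots,f_N$.

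\textbf{The cancellation scheme for (3) is wrong.}
\begin{itemize}
\item The terms containing the weight-one cooperation $\Delta^\calC$ (from $T'$ a single vertex and from $T'=T$) do not pair off among themselves. That would say this cooperation is a coderivation of every $\Delta_T$. In a homotopy cooperad this fails by exactly the nested terms $(\Id\otimes\Delta_{T'}\otimes\Id)\circ\Delta_{T/T'}$ with $2\leqslant\omega(T')<\omega(T)$.
\item Associativity of $\calP$ does not make the iterated compositions cancel; it only rewrites them.
\item Checking signs for $\omega(T)\leqslant 3$ and appealing to ``the operadic nature of the construction'' proves nothing for general $T$, and the signs are the whole content here.
\item A smaller error in axiom (2): $\sgn(\tau)$ is cancelled by the $\chi(\tau)$ in axiom (2) for $\calC$. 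It is not cancelled by the global prefactor, which is the same on both sides.
\end{itemize}

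\textbf{What works instead.} The third formula with $n=1$ gives $-(-1)^{|f|}f\circ\Delta^\calC_T$. So every $m_T$ equals $\epsilon(n,F)\,m^T_\calP\circ(f_1\otimes\cdots\otimes f_n)\circ\Delta^\calC_T$, where $\epsilon(n,F)=(-1)^{\binom n2+1+nF}$ and $F=\sum|f_i|$. When $\omega(T)=1$ one adds the extra term $d_\calP\circ f$.

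In relation (3) at a tree $T$ with $\omega(T)=N$, the $d_\calP$-terms occur only for $T'$ a single vertex or $T'=T$. They cancel because $d_\calP$ is a derivation of $m^T_\calP$, together with $d_\calP^2=0$ when $N=1$.

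Now fix $T'$ with $\omega(T')=j$. Use naturality of $r_\sigma$ with respect to $f_1\otimes\cdots\otimes f_N$, and the identity $m^{T/T'}_\calP\circ(\Id\otimes m^{T'}_\calP\otimes\Id)\circ r_\sigma=m^T_\calP$. Then the $T'$-summand becomes a constant times $m^T_\calP\circ(f_1\otimes\cdots\otimes f_N)$ composed with the $T'$-summand of relation (3) for $\calC$. The factors $(-1)^{i-1+jk}$ and $\sgn(\sigma)=\sgn(\sigma^{-1})$ agree on the two sides.

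The constant is $\epsilon(j,G')\,\epsilon(N-j+1,F+j-2)\,(-1)^{j(F-G')}$. Here $G'$ is the total degree of the inputs on $T'$. The factor $(-1)^{j(F-G')}$ comes from passing $m_{T'}$ over the inputs before $T'$ and $\Delta_{T'}$ over the inputs after it; both maps have degree $j-2$.

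Using $\binom j2+\binom{N-j+1}2-\binom N2\equiv (j-1)(N-j) \pmod 2$, this constant equals $\epsilon(N,F)(-1)^{N+1+F}$, which is independent of $T'$. Summing over $T'$ gives this constant times $m^T_\calP\circ(f_1\otimes\cdots\otimes f_N)$ applied to relation (3) for $\calC$, which is zero. This uniform parity identity is the step your proposal is missing.
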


\begin{prop}\label{prop:Linfinity give MC}
	Let $\calC$ be a coaugmented symmetric homotopy cooperad and $\calP$ be a unital symmetric dg operad. Then there is a natural bijection:
	$$
    \Hom_{udgOp}(\Omega\calC, \calP)\cong \calm\calC\left(\mathbf{Hom}(\overline{\calC},\calP)_{\mathbb S}^{\prod}\right),
    $$
	where the left-hand side is the set of morphisms of unital symmetric dg operads from $\Omega \calC$ to $\calP$ and the right-hand side is the set of Maurer-Cartan elements in the $L_\infty$-algebra $\mathbf{Hom}(\overline{\calC},\calP)_{\mathbb S}^{\prod}$.
\end{prop}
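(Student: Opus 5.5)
The bijection is obtained by unwinding both sides into data on the generators $s^{-1}\overline{\calC}$ and matching the defining equations.

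\emph{Step 1: reduce to generators.} Since $\Omega\calC$ is the free graded symmetric operad on $s^{-1}\overline{\calC}$, a morphism of unital graded symmetric operads $F:\Omega\calC\to\calP$ is the same as an $\mathbb S$-equivariant degree-$0$ map $F|: s^{-1}\overline{\calC}\to\calP$. Such a map corresponds to an $\mathbb S$-equivariant map $\alpha:=F|\circ s^{-1}:\overline{\calC}\to\calP$ of degree $-1$. With the action $(f\cdot\sigma)(x)=f(x\cdot\sigma^{-1})\cdot\sigma$, equivariance means exactly $\alpha\cdot\sigma=\alpha$. So these maps are precisely the elements of degree $-1$ in $\mathbf{Hom}(\overline{\calC},\calP)_{\mathbb S}^{\prod}$, which is the degree of Maurer--Cartan elements. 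The assignment $F\mapsto\alpha$ is therefore a bijection between unital graded operad maps and candidate elements $\alpha$.

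\emph{Step 2: the differential condition.} $F$ commutes with differentials iff $d_\calP\circ F=F\circ\partial$ holds on generators, because both sides are $F$-derivations and so are determined by generators. For $f\in\overline{\calC}(n)$ the definition of $\partial$ gives
\[
F(\partial s^{-1}f)=-\sum_{T\in\frakST(n)}m^T_\calP\circ\bigl(\alpha^{\otimes\omega(T)}\bigr)\circ\overline\Delta_T(f),
\]
up to the Koszul signs produced by moving $(s^{-1})^{\otimes\omega(T)}$ past the tensor factors. Group these terms by weight $k=\omega(T)$.
\begin{itemize}
\item For $k=1$, the term together with $d_\calP\circ\alpha$ gives $m_{T}(\alpha)$ for the corolla $T$, using the formula for $m_{T\cdot\sigma}$ in Proposition~\ref{prop:Hom(C,P)-homotopy operad}.
\item For $k\geq 2$, the terms assemble into $\sum_{T\in\frakST^{(k)}}m_T(\alpha^{\otimes k})=m_k(\alpha^{\otimes k})$.
\end{itemize}
Since $\alpha$ is $\mathbb S$-invariant and of odd degree, the antisymmetrization defining $l_k$ gives $l_k(\alpha^{\otimes k})=k!\,\varepsilon_k\, m_k(\alpha^{\otimes k})$ for a sign $\varepsilon_k$. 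The condition $F\partial=d_\calP F$ then becomes $\sum_k \pm\frac{1}{k!}l_k(\alpha^{\otimes k})=0$. The sum is finite when evaluated on each $f$, because $\overline\Delta_T(f)$ vanishes for all but finitely many $T$.

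\emph{Step 3: signs (main obstacle).} It remains to show that the signs in Step 2 are exactly $(-1)^{k(k-1)/2}$, so that the equation is the Maurer--Cartan equation \eqref{Eq: mc-equation}. The sign $(-1)^{\frac{k(k-1)}2+1+k\sum|f_i|}$ in $m_T$ reduces, for $|f_i|=-1$, to $(-1)^{\frac{k(k-1)}2+1+k^2}$. The Koszul sign from distributing $(s^{-1})^{\otimes k}$ across $\overline\Delta_T(f)$ and then replacing $F\circ s^{-1}$ by $\alpha$ gives $(-1)^{k(k-1)/2}$ times a term depending on the degrees of the tensor factors. That dependence should cancel against the $k\sum|f_i|$ correction once the $|\overline\Delta_T|=k-2$ grading is used. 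I would check $k=1$ and $k=2$ by hand and then argue that the sign is multiplicative in $k$.

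The inverse map sends a Maurer--Cartan element $\alpha$ to the unique operad map extending $\alpha\circ s$. Steps 1–2 show it is a dg map and that the two assignments are mutually inverse. Naturality in $\calC$ and $\calP$ is evident from the construction.
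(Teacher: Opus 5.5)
The paper only recalls this proposition and gives no proof. Your Steps 1 and 2 are the standard argument behind it, and they are correct: the universal property of the free operad, $\mathbb S$-invariance of $\alpha$ as equivariance, checking $d_\calP F=F\partial$ on generators, and arity-wise finiteness from $\overline\Delta_T$.

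The gap is Step 3, and it is more than unfinished. With the conventions fixed in the paper, the sign identity you aim for is false for $\alpha=F\circ s^{-1}$. Your bookkeeping is off in two places.
\begin{enumerate}
\item Since $s^{-1}$ and $\alpha$ both have degree $-1$ and $F$ has degree $0$, one has $\alpha^{\otimes k}=F^{\otimes k}\circ(s^{-1})^{\otimes k}$ exactly. Hence $F\bigl((s^{-1})^{\otimes k}\overline\Delta_T(f)\bigr)=m^T_\calP\circ\alpha^{\otimes k}\circ\overline\Delta_T(f)$ with no sign at all. There is no $(-1)^{k(k-1)/2}$, and nothing depends on the degrees of the tensor factors of $\overline\Delta_T(f)$.
\item The term $n\sum|f_i|$ in the formula for $m_T$ refers to the inputs $f_i=\alpha$ of $m_T$, not to those tensor factors. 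It simply contributes $(-1)^k$.
\end{enumerate}
Combine this with $\chi(\sigma;\alpha,\dots,\alpha)=1$, so that $l_k(\alpha^{\otimes k})=k!\,m_k(\alpha^{\otimes k})$, and with the global minus sign in $\partial$. You get that $F$ is a dg map if and only if
\[
\sum_{k\geqslant 1}\frac{(-1)^{k(k+1)/2}}{k!}\,l_k(\alpha^{\otimes k})=0 .
\]
Compared with \eqref{Eq: mc-equation}, every even-$k$ term has the opposite sign. For a dg cooperad $\calC$, where only $k\leqslant 2$ occurs, you obtain $l_1(\alpha)+\frac12 l_2(\alpha,\alpha)=0$ instead of $l_1(\alpha)-\frac12 l_2(\alpha,\alpha)=0$. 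So your planned check at $k=2$ would fail, and no ``multiplicativity in $k$'' argument can rescue it.

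The repair is a sign in the bijection. Send $F$ to $\alpha:=-F\circ s^{-1}$; the inverse sends $\alpha$ to the operad map extending $s^{-1}f\mapsto-\alpha(f)$. Since $l_k((-\alpha)^{\otimes k})=(-1)^k\,l_k(\alpha^{\otimes k})$ and $k(k+1)/2+k\equiv k(k-1)/2 \pmod 2$, the dg condition becomes exactly the Maurer--Cartan equation. Degree, $\mathbb S$-invariance, arity-wise finiteness and naturality are unaffected. With this change, and with the uniform one-line sign computation above in place of the proposed check-and-extrapolate in $k$, your proof goes through.
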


At the end of this section, we recall the notions of minimal models and Koszul dual symmetric homotopy cooperads of symmetric operads. We then explain how these concepts are related to deformation complexes and the resulting $L_\infty$-algebra structures that arise on them.

Recall that a symmetric dg operad is called \textbf{quasi-free} if its underlying graded symmetric operad is free.
\begin{defn} \label{def:minimal model} 
    A \textbf{minimal model} for a symmetric operad $\mathcal{P}$ is a quasi-free symmetric dg operad $(\mathcal{F}(M),\partial)$ together with a surjective quasi-isomorphism of symmetric operads $(\mathcal{F}(M), \partial)\overset{\sim}{\twoheadrightarrow}\mathcal{P}$, such that the symmetric dg operad $(\mathcal{F}(M), \partial)$ satisfies the following  minimality conditions:
	\begin{enumerate}
		\item \label{it:min1} The differential $\partial$ is decomposable, i.e., $\partial$ maps $M$ into $\mathcal{F}(M)^{\geqslant 2}$, the subspace of $\mathcal{F}(M)$ consisting of elements of tree weight $\geqslant 2$; 
		\item \label{it:min2} The generating $\mathbb S$-module $M$ admits a decomposition $M=\bigoplus_{i\geqslant 1}M_{(i)}$ such that $\partial(M_{(k+1)})\subset \mathcal{F}\left(\bigoplus_{i=1}^kM_{(i)}\right)$ for any $k\geqslant 1$. 
\end{enumerate}
\end{defn}

\begin{thm}[\cite{DCV13}]\label{thm:uniqueness of minimal model}
    If a symmetric operad $\mathcal{P}$ admits a minimal model,  then it is unique up to isomorphism.
\end{thm}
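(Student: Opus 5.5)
The plan is the classical Sullivan-type argument, transported to symmetric dg operads over a field of characteristic $0$. Let $(\mathcal F(M),\partial)\overset{p}{\twoheadrightarrow}\mathcal P$ and $(\mathcal F(N),\partial')\overset{q}{\twoheadrightarrow}\mathcal P$ be two minimal models. The argument has three steps:
\begin{enumerate}
\item construct a morphism of dg operads $f:(\mathcal F(M),\partial)\to(\mathcal F(N),\partial')$ with $q\circ f=p$;
\item show that $f$ induces an isomorphism on generators modulo decomposables;
\item conclude that $f$ is an isomorphism of free operads.
\end{enumerate}

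For the first step I will use the filtration $M=\bigoplus_{i\geqslant1}M_{(i)}$ of condition~(\ref{it:min2}) of Definition~\ref{def:minimal model}. Define $f$ on $M_{(1)}$, then on $M_{(2)}$, and so on. Suppose $f$ is already defined on $\mathcal F(\bigoplus_{i\leqslant k}M_{(i)})$. Since $\mathbb S$-modules over a field of characteristic $0$ are semisimple, I may choose a homogeneous basis of each isotypic piece of $M_{(k+1)}$. For a generator $x$ in this basis, $f(\partial x)$ is then a $\partial'$-cycle in $\mathcal F(N)$ and $q(f(\partial x))=p(\partial x)=\partial p(x)$. I must find $y\in\mathcal F(N)$ with $\partial' y=f(\partial x)$ and $q(y)=p(x)$. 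This is the usual lifting problem against the surjective quasi-isomorphism $q$. It is solvable because $\ker q$ is acyclic: a class $[p(x)]$ corrected by a boundary lifts, and the discrepancy is a cycle in $\ker q$, hence a boundary there. Equivariance is restored by averaging over $\mathbb S_n$, which is allowed since $\operatorname{char}\bfk=0$. Setting $f(x)=y$ and extending freely completes the induction. By two-out-of-three, $f$ is a quasi-isomorphism.

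For the second step, which I expect to be the main obstacle, I must show that a quasi-isomorphism between minimal quasi-free operads is an isomorphism. Consider the indecomposable functor $Q(\mathcal F(M),\partial)=M$. By condition~(\ref{it:min1}) the differential induced on $M$ is zero. I will identify $M$, up to suspension, with the homology of the bar construction $\mathrm B(\mathcal F(M),\partial)$. The bar construction preserves quasi-isomorphisms between connected dg operads, which I would justify by a spectral sequence for the weight filtration. For a quasi-free $\mathcal F(M)$, the comparison map $\mathrm B\mathcal F(M)\to sM$ is a quasi-isomorphism. To prove this, filter by the weight of the free operad: on the associated graded, where $\partial$ has been killed, it is the standard acyclicity of $\mathrm B\mathcal F(M)$ relative to $sM$ for free operads. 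Convergence follows from the exhaustive filtration by $\bigoplus_{i\leqslant k}M_{(i)}$ together with arity and weight bounds. Hence $Q(f):M\to N$ is an isomorphism.

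For the third step, a morphism of free operads $f:\mathcal F(M)\to\mathcal F(N)$ whose linear part $M\to N$ is an isomorphism is itself an isomorphism. Its associated graded with respect to the tree-weight filtration is $\mathcal F(Q(f))$, which is an isomorphism, and the filtration is complete and exhaustive arity-wise. This gives an isomorphism of dg operads over $\mathcal P$, and so proves the uniqueness of minimal models.
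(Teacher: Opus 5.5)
Your Step~1 is sound. Lifting generator by generator against the surjective quasi-isomorphism $q$, using acyclicity of $\ker q$ and averaging over $\mathbb S_n$, is the standard argument. One caveat: the induction needs $\partial(M_{(1)})=0$, which is the intended content of condition~(2); as printed, with $k\geqslant 1$ only, condition~(2) leaves $M_{(1)}$ unconstrained. The paper gives no proof and only cites \cite{DCV13}, and your three-step outline is the usual one.

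\textbf{The gap is in Step~3, and it also undermines the convergence claim in Step~2.} Both rest on the tree-weight filtration of $\mathcal F(N)(n)$ being complete, i.e.\ on each arity and degree involving only finitely many tree weights. That holds for connected operads with generators of arity $\geqslant 2$, or when arity-one generators have positive degree. It fails in the generality of the statement, and precisely for the operads of this paper: $T_1$ has arity $1$ and degree $0$, so each graded piece of $\RBLA_\infty(n)$ is an infinite direct sum over weights. In that situation a morphism of free operads with invertible linear part need not be invertible, and your own Step~1 produces one:
\begin{itemize}
\item Take $\mathcal F(M)=\mathcal F(N)=\RBLA_\infty$ and $p=q$.
\item Choose $f(T_1)=T_1$ and $f(\ell_2)=\ell_2+\partial T_2$. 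This is an antisymmetric $\partial$-cycle with $q(f(\ell_2))=\ell$, so it is an admissible lift, and the higher generators can then be lifted as you describe.
\item The resulting $f$ is a quasi-isomorphism over $\RBLA$, and by the (correct) conclusion of your Step~2, $Q(f)$ is invertible.
\item In arity $2$ and degree $0$, the shuffle monomials $T_1^{a}\ell_2(T_1^{b}\otimes T_1^{c})$ form a basis; identify them with $X^aY^bZ^c\in\bfk[X,Y,Z]$.
\item In these coordinates $f$ acts as multiplication by $P=1\pm\lambda X\pm XY\pm XZ\pm YZ$, which encodes $\ell_2+\partial T_2$.
\item Since $P$ is not a unit, $\ell_2$ (i.e.\ $1$) is not in the image, so $f$ is not an isomorphism.
\end{itemize}

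So Steps~1--2 do not yield an isomorphism. You need either an extra hypothesis, namely a grading that bounds the number of vertices in each arity and degree (under which your Steps~2 and~3 are correct as written), or a genuinely new argument that the lifts can be chosen to make $f$ invertible. In degree $0$ the latter means showing the substitution $\mathcal F(M_0)\to\mathcal F(N_0)$ can be made invertible. The lifting procedure does not control this, and achieving it may require dropping $q\circ f=p$.

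The same unboundedness affects Step~2. On the direct-sum complex $\mathrm B\mathcal F(M)$ the decreasing weight filtration is not complete, so your spectral sequence does not converge as claimed, and the ``weight bounds'' you invoke do not exist. The conclusion that $Q(f)\colon M\to N$ is an isomorphism can be obtained differently: minimal operads are cofibrant, and the indecomposables functor is left Quillen, so it preserves quasi-isomorphisms between cofibrant objects. This requires $f$ to be augmented, i.e.\ the lift of a degree-$0$ arity-one generator must have no component on the unit. It does not repair Step~3.
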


For a symmetric operad $\calP$, suppose that its minimal model $\calP_\infty $ exists. Since $\calP_\infty$ is a quasi-free symmetric dg operad, it can be realized as the cobar construction $\Omega \calC$ of a coaugmented symmetric homotopy cooperad $\calC$. We call $\calC$ the \textbf{Koszul dual symmetric homotopy cooperad} of $\calP$, and denote it by $\calP^\ac$.

Let $V$ be a chain complex, and let $\End_V$ denote the symmetric dg endomorphism operad of $V$.
The \textbf{deformation complex} of $\calP$ on $V$ is defined as the underlying complex of the $L_{\infty}$-algebra $\mathbf{Hom}(\overline{\calP^\ac}, \End_V)^{\prod}_{\mathbb S}$.
A \textbf{homotopy $\calP$-structure} on the space $V$ is defined to be a morphism in $\Hom_{udgOp}(\calP_\infty, \End_V)$.
By Proposition \ref{prop:Linfinity give MC}, there is a bijection between the set of homotopy $\calP$-structures on $V$ and the set of Maurer-Cartan elements in the $L_\infty$-algebra $\mathbf{Hom}(\overline{\calP^\ac}, \End_V)^{\prod}_{\mathbb S}$.
\begin{remark}\label{rmk: sym (co)operads to nonsym (co)operads}
    If we forget all $\mathbb S_n$-actions and the labels of the leaves of the trees, the theory of symmetric homotopy (co)operads reduces to that of non-symmetric homotopy (co)operads.
\end{remark}

\bigskip

\section{The minimal model of   Rota-Baxter Lie algebras with weight}\label{Sect: RB Lie}

\subsection{The Koszul dual homotopy cooperad for the operad of Rota-Baxter Lie algebras}
In this section, we will study the operad of Rota-Baxter Lie algebras. Explicitly, we will going to construct a symmetric homotopy cooperad which can be considered as the Koszul dual of the operad of Rota-Baxter Lie algebras, and we will prove that the cobar construction of this symmetric homotopy cooperad is exactly the minimal model for the operad of Rota-Baxter Lie algebras.

Firstly, let's recall some basics on Rota-Baxter Lie algebras.
	\begin{defn}\label{Def: Rota-Baxter Lie algebra}
		Let $(A, \ell=[,])$ be a Lie algebra over field $\bfk$ and
		$\lambda\in \bfk$. A linear
		operator $T: A\rightarrow A$ is said to be a Rota-Baxter operator of
		weight $\lambda$ if it satisfies
		\begin{eqnarray}\label{Eq: Rota-Baxter relation}
			[T(a), T(b)]=T\big([ a, T(b)] + [ T(a), b]+\lambda\  [a,
			b]\big)\end{eqnarray}	
		for any $a,b \in A$, or in terms of maps
		\begin{eqnarray}\label{Eq: Rota-Baxter relation in terms of maps}
			\ell\circ (T \ot T)=T\circ \ell \circ (\Id\ot T +T \ot \Id)+\lambda\ T\circ \ell.\end{eqnarray} In this case,  $(A,\ell,T)$ is called a Rota-Baxter Lie
		algebra of weight $\lambda$. Denote by $\rmRBLA$ the category of Rota-Baxter Lie algebras of weight $\lambda$ with obvious morphisms.
	\end{defn}

%
%
%

By the definition of Rota-Baxter Lie algebras, we can see that the  operad for Rota-Baxter Lie algebras of weight $\lambda$, denoted by $\RBLA$,  is the symmetric free operad generated by a unary operator $T$ and a binary operator $\ell$ with  $\ell(12)=-\ell$ modulo the operadic relation generated by
\begin{equation}\label{Eq: RBLA relation 1}
\begin{tikzpicture}[scale=0.5,descr/.style={fill=white},baseline=(current bounding box.center)]
\tikzstyle{every node}=[thick,minimum size=3pt, inner sep=1pt]
\node(r) at (0,-0.7)[minimum size=0pt,circle]{};
\node(v0) at (0,0)[fill=black, circle,label=left:{\tiny $\ell$}]{};
\node(v1-1) at (-1,1)[fill=black, circle,label=left:{\tiny $\ell$}]{};
\node(v1-2) at(1,1)[label=mid:{\tiny $ 3$}]{};
\node(v2-1) at (-2,2)[label=mid:{\tiny $ 1$}]{};
\node(v2-2) at (0,2)[label=mid:{\tiny $ 2$}]{};
\node(s1) at (2,0.5)[label=mid:{ $ +$}]{};
\node(r') at (5,-0.7)[minimum size=0pt,circle]{};
\node(v0') at (5,0)[fill=black, circle,label=left:{\tiny $\ell$}]{};
\node(v1-1') at (4,1)[fill=black, circle,label=left:{\tiny $\ell$}]{};
\node(v1-2') at(6,1)[label=mid:{\tiny $ 1$}]{};
\node(v2-1') at (3,2)[label=mid:{\tiny $ 2$}]{};
\node(v2-2') at (5,2)[label=mid:{\tiny $ 3$}]{};
\draw(r) -- (v0);
\draw(v0)--(v1-1);
\draw(v0)--(v1-2);
\draw(v1-1)--(v2-1);
\draw(v1-1)--(v2-2);
\draw(r') -- (v0');
\draw(v0')--(v1-1');
\draw(v0')--(v1-2');
\draw(v1-1')--(v2-1');
\draw(v1-1')--(v2-2');
\node(s2) at (7,0.5)[label=mid:{ $ +$}]{};
\node(r'') at (10,-0.7)[minimum size=0pt,circle]{};
\node(v0'') at (10,0)[fill=black, circle,label=left:{\tiny $\ell$}]{};
\node(v1-1'') at (9,1)[fill=black, circle,label=left:{\tiny $\ell$}]{};
\node(v1-2'') at(11,1)[label=mid:{\tiny $ 2$}]{};
\node(v2-1'') at (8,2)[label=mid:{\tiny $ 3$}]{};
\node(v2-2'') at (10,2)[label=mid:{\tiny $ 1$}]{};
\draw(r'') -- (v0'');
\draw(v0'')--(v1-1'');
\draw(v0'')--(v1-2'');
\draw(v1-1'')--(v2-1'');
\draw(v1-1'')--(v2-2'');
		\end{tikzpicture}\end{equation}
and
\begin{equation}\label{Eq: RBLA relation 2}
\begin{tikzpicture}[scale=0.5,descr/.style={fill=white},baseline=(current bounding box.center)]
\tikzstyle{every node}=[thick,minimum size=3pt, inner sep=1pt]
\node(r) at (0,-0.7)[minimum size=0pt,circle]{};
\node(v0) at (0,0)[fill=black, circle,label=left:{\tiny $\ell$}]{};
\node(v1-1) at (-1,1)[fill=black, circle,label=left:{\tiny $T$}]{};
\node(v1-2) at (1,1)[fill=black, circle,label=left:{\tiny $T$}]{};
\node(v2-1) at (-1,2)[label=mid:{\tiny $ 1$}]{};
\node(v2-2) at (1,2)[label=mid:{\tiny $ 2$}]{};
\draw(r) -- (v0);
\draw(v0) -- (v1-1);
\draw(v0) -- (v1-2);
\draw(v1-1)--(v2-1);
\draw(v1-2)--(v2-2);
\node(s1) at (2,0.5)[label=mid:{\tiny $ -$}]{};
\node(r') at (4,-0.7)[minimum size=0pt,circle]{};
\node(v0') at (4,0)[fill=black, circle,label=left:{\tiny $T$}]{};
\node(v1') at (4,1)[fill=black, circle,label=left:{\tiny $\ell$}]{};
\node(v2-1') at (3,2)[fill=black, circle,label=left:{\tiny $T$}]{};
\node(v2-2') at (5,2)[label=mid:{\tiny $ 2$}]{};
\node(v3') at (3,3)[label=mid:{\tiny $ 1$}]{};
\draw(r') -- (v0');
\draw(v0') -- (v1');
\draw(v1')--(v2-1');
\draw(v1')--(v2-2');
\draw(v2-1')--(v3');
\node(s2) at (6,0.5)[label=mid:{\tiny $ -$}]{};
\node(r'') at (8,-0.7)[minimum size=0pt,circle]{};
\node(v0'') at (8,0)[fill=black, circle,label=left:{\tiny $T$}]{};
\node(v1'') at (8,1)[fill=black, circle,label=left:{\tiny $\ell$}]{};
\node(v2-2'') at (7,2)[label=mid:{\tiny $ 1$}]{};
\node(v2-1'') at (9,2)[fill=black, circle,label=left:{\tiny $T$}]{};
\node(v3'') at (9,3)[label=mid:{\tiny $ 2$}]{};
\draw(r'') -- (v0'');
\draw(v0'') -- (v1'');
\draw(v1'')--(v2-1'');
\draw(v1'')--(v2-2'');
\draw(v2-1'')--(v3'');
\node(s3) at (10,0.5)[label=mid:{\tiny $ -$ \large$\lambda$}]{};
\node(r''') at (12,-0.7)[minimum size=0pt,circle]{};
\node(v0''') at (12,0)[fill=black, circle,label=left:{\tiny $T$}]{};
\node(v1''') at (12,1)[fill=black, circle,label=left:{\tiny $\ell$}]{};
\node(v2-2''') at (11,2)[label=mid:{\tiny $ 1$}]{};
\node(v3''') at (13,2)[label=mid:{\tiny $ 2$}]{};
\draw(r''') -- (v0''');
\draw(v0''') -- (v1''');
\draw(v1''')--(v3''');
\draw(v1''')--(v2-2''');
\node(stop) at (13.5,0) {$.$};
\end{tikzpicture}
\end{equation}


It can be easily seen that the symmetric operad $\RBLA$ is not a Koszul operad, even not a quadratic operad. Usually, for a Koszul operad $\calp$, the minimal model of $\calp$ is exactly $\Omega \calp^{\antish}$, the cobar construction of its Koszul dual cooperad $\calp^\antish$. But since  the operad $\RBLA$ is not Koszul, its Koszul dual $(\RBLA^{\lambda})^\antish$ should be a symmetric homotopy cooperad, rather than a symmetric cooperad.

\smallskip

Now, let's construct the symmetric homotopy cooperad $(\RBLA^{\lambda})^\antish$ explicitly and we will prove that its cobar construction is exactly the minimal model for the symmetric operad $\RBLA$.

 Firstly, define $\mathscr{S}(\RBLA^\antish)$ to be the $\mathbb{S}$-module,  whose $n$-arity component is $\mathscr{S}(\RBLA^\antish)(n)=\bfk u_n\oplus \bfk v_n,$ $|u_n|=0$ and $ |v_n|=1,$ with trivial $\s_n$-action. Then we are going to construct a coaugmented symmetric homotopy cooperad structure on the $\mathbb{S}$-module  $\mathscr{S}(\RBLA^\antish)$. Consider the subset of $\frakt$ consisting of trees in the following list:
\begin{itemize}[keyvals]
	
	\item[(I)]
	
	\begin{eqnarray*}
		\begin{tikzpicture}[scale=1,descr/.style={fill=white}]
			\tikzstyle{every node}=[thick,minimum size=3pt, inner sep=1pt]
			\node(r) at (0,-0.5)[minimum size=0pt,circle]{};
			\node(v0) at (0,0)[fill=black, circle,label=right:{\tiny $ n-j+1$}]{};
			\node(v1-1) at (-1.5,1)[minimum size=0pt,circle]{};
			\node(v1-2) at(0,1)[fill=black,circle,label=right:{\tiny $\tiny j$}]{};
			\node(v1-3) at(1.5,1)[minimum size=0pt,circle]{};
			\node(v2-1)at (-1,2){};
			\node(v2-2) at(1,2){};
            \node(v3-1) at (1.9,1){$\cdot \sigma$};
            \draw(r) -- (v0);
			\draw(v0)--(v1-1);
			\draw(v0)--(v1-3);
			\draw(v1-2)--(v2-1);
			\draw(v1-2)--(v2-2);
			\draw[thick,dotted](-0.4,1.5)--(0.4,1.5);
			\draw[thick,dotted](-0.5,0.5)--(-0.1,0.5);
			\draw[thick,dotted](0.1,0.5)--(0.5,0.5);
			\path[-,font=\scriptsize]
			(v0) edge node[descr]{{\tiny$i$}} (v1-2);
		\end{tikzpicture}
	\end{eqnarray*}
	with $1\leq j \leq n,  1\leq i\leq   n-j+1$ and $\sigma \in \s_n.$
	\item[(II)]
	\begin{eqnarray*}
		\begin{tikzpicture}[scale=1,descr/.style={fill=white}]
			\tikzstyle{every node}=[thick,minimum size=3pt, inner sep=1pt]
            \node(r) at (0,-2) [minimum size=0pt,circle]{};
			\node(v0) at (0,-1.5)[circle, fill=black,label=right:\tiny$k$]{};
			\node(v1) at(-1.2,-0.3)[circle, fill=black,label=right:\tiny$r_1$]{};
			\node(v1-1) at(-2,0.8){};
			\node(v1-2) at (-1,0.8){};
			\node(v3) at (1.2,-0.3)[circle, fill=black,label=right:\tiny$r_k$]{};
			\node(v3-1) at (1,0.8){};
			\node(v3-2) at (2,0.8){};
			\node(v2-1) at (0,0) [circle,fill=black,label=right:\tiny$r_i$]{};
			\node(v2-1-1) at (-0.5,1){};
			\node(v2-1-2) at (0.5, 1){};
            \node(v4-1) at (2.5,-0.5){$\cdot \sigma$};
            \draw(r) -- (v0);
			\draw [thick,dotted ] (-0.7,-0.7)--(-0.15,-0.7);
			\draw [thick,dotted ] (0.15,-0.7)--(0.7,-0.7);
			\draw [thick,dotted ] (-0.2,0.6)--(0.25,0.6);
			\draw [thick,dotted ] (-1.65,0.4)--(-1.2, 0.4);
			\draw [thick,dotted ] (1.2,0.4)--(1.65,0.4);
			\draw        (v0)--(v1);
			\draw         (v0)--(v3);
			\draw         (v1)--(v1-1);
			\draw          (v1)--(v1-2);
			\draw        (v2-1)--(v2-1-1);
			\draw        (v2-1)--(v2-1-2);
			\draw        (v3)--(v3-1);
			\draw        (v3)--(v3-2);
			\path[-,font=\scriptsize]
			(v0) edge node[descr]{{\tiny$i$}} (v2-1);
		\end{tikzpicture}
	\end{eqnarray*}
	with $k\geqslant 2$, $r_i\geqslant 1$ for all $1\leq i\leq k ,$ $n=r_1+\dots +r_k$ and $\sigma\in \s_n.$
	\item[(III)]
	\begin{eqnarray*}
		\begin{tikzpicture}[scale=1,descr/.style={fill=white}]
			\tikzstyle{every node}=[thick,minimum size=3pt, inner sep=1pt]
            \node(r) at (0,-0.5){};
			\node(v0) at (0,0)[circle,inner sep=0.5pt,fill=black,label=below right :{\tiny $r_1$}]{};
			\node(v1-1) at (-2,1){};
			\node(v1-2) at(0,1.2)[circle,fill=black,label=right:{\tiny $p$}]{};
			\node(v1-3) at(2,1){};
			\node(v2-1) at(-1.9,2.6){};
			\node(v2-2) at (-0.9, 2.8)[circle,fill=black,label=right:{\tiny $r_2$}]{};
			\node(v2-3) at (0,2.9){};
			\node(v2-4) at(0.9,2.8)[circle,fill=black,label=right:{\tiny $r_q$}]{};
			\node(v2-5) at(1.9,2.6){};
			\node(v3-1) at (-1.5,3.5){};
			\node(v3-2) at (-0.3,3.5){};
			\node(v3-3) at (0.3,3.5){};
			\node(v3-4) at(1.5,3.5){};
\draw(r)--(v0);
            \node(v4-1) at (2.3, 1.4){$\cdot \sigma$};
			\draw(v0)--(v1-1);
			\draw(v0)--(v1-3);
			\path[-,font=\scriptsize]
			(v0) edge node[descr]{{\tiny$i$}} (v1-2);
			\draw(v1-2)--(v2-1);
			\draw(v1-2)--(v2-3);
			\draw(v1-2)--(v2-5);
			\path[-,font=\scriptsize]
			(v1-2) edge node[descr]{{\tiny$k_1$}} (v2-2)
			edge node[descr]{{\tiny$k_{q-1}$}} (v2-4);
			\draw(v2-2)--(v3-1);
			\draw(v2-2)--(v3-2);
			\draw(v2-4)--(v3-3);
			\draw(v2-4)--(v3-4);
			\draw[thick,dotted](-1,0.7)--(-0.15,0.7);
			\draw[thick,dotted](0.15,0.7)--(1,0.7);
			\draw[thick,dotted](-0.5,2.4)--(-0.15,2.4);
			\draw[thick,dotted](0.15,2.4)--(0.5,2.4);
			\draw[thick,dotted](-1.45,2.4)--(-0.8,2.4);
			\draw[thick,dotted](1.45,2.4)--(0.8,2.4);
			\draw[thick,dotted](-1.15,3.2)--(-0.6,3.2);
			\draw[thick,dotted](1.15,3.2)--(0.6,3.2);
		\end{tikzpicture}
	\end{eqnarray*}
	with $1\leqslant q\leqslant p$, $1\leqslant k_1<\dots<k_{q-1}\leqslant p$, $1\leqslant i\leqslant r_1,$  $r_j\geqslant 1$ for all $1\leqslant j\leqslant q,$ $n=r_1+\cdots+r_q+ p-q$ and $\sigma \in \s_n$.
\end{itemize}

Now, we define a family of operations $\{\Delta_T: \mathscr{S}(\RBLA^\antish)(\alpha(T)) \rightarrow (\mathscr{S}(\RBLA^\antish))^{\ot T}\}_{T\in \frakt}$ as follows:

\begin{itemize}
	\item[(i)] for element $u_n\in\mathscr{S}(\RBLA^\antish)(n)$ and $T$ of type $\mathrm{(I)}$, define
	\begin{eqnarray*}
		\begin{tikzpicture}[scale=1,descr/.style={fill=white}]
			\tikzstyle{every node}=[thick,minimum size=5pt, inner sep=1pt]
			\node(r) at (0,-0.5)[minimum size=0pt,rectangle]{};
			\node(v-2) at(-2,0.5)[minimum size=0pt, label=left:{\Large$\Delta_T(u_n)=$}]{};
			\node(v0) at (0,0)[draw,rectangle]{{\small $u_{n-j+1}$}};
			\node(v1-1) at (-1.5,1){};
			\node(v1-2) at(0,1)[draw,rectangle]{\small$u_j$};
			\node(v1-3) at(1.5,1){};
			\node(v2-1)at (-1,2){};
			\node(v2-2) at(1,2){};
\node(v3-1) at (2,0.5){$\cdot\sigma$};
\draw(r) --(v0);
			\draw(v0)--(v1-1);
			\draw(v0)--(v1-3);
			\draw(v1-2)--(v2-1);
			\draw(v1-2)--(v2-2);
			\draw[thick,dotted](-0.4,1.5)--(0.45,1.5);
			\draw[thick,dotted](-0.55,0.5)--(-0.1,0.5);
			\draw[thick,dotted](0.1,0.5)--(0.55,0.5);
			\path[-,font=\scriptsize]
			(v0) edge node[descr]{{\tiny$i$}} (v1-2);
		\end{tikzpicture}
	\end{eqnarray*}
	\item[(ii)] for the element $v_n\in\mathscr{S}(\RBLA^\antish)(n)$ and tree $T$ of type $\mathrm{(I)}$ with $j=1$, define
	\begin{eqnarray*}
		\begin{tikzpicture}[scale=1,descr/.style={fill=white}]
			\tikzstyle{every node}=[thick,minimum size=5pt, inner sep=1pt]
			\node(r) at (0,-0.5)[minimum size=0pt,rectangle]{};
			\node(v-1) at(-2,0.5)[minimum size=0pt, label=left:{\Large$\Delta_T(v_n)=$}]{};
			\node(v0) at (0,0)[draw,rectangle]{\small$v_n$};
			\node(v1-1) at (-1.3,1){};
			\node(v1-2) at(0,1)[draw,rectangle]{\small$u_1$};
			\node(v1-3) at(1.3,1){};
			\node(v2-1)at (0,1.8){};
\node(v3-1) at (1.8,0.5){$\cdot \sigma$};
\draw(r) --(v0);
			\draw(v0)--(v1-1);
			\draw(v0)--(v1-3);
			\draw(v1-2)--(v2-1);
			\draw[thick,dotted](-0.55,0.5)--(-0.1,0.5);
			\draw[thick,dotted](0.1,0.5)--(0.55,0.5);
			\path[-,font=\scriptsize]
			(v0) edge node[descr]{{\tiny$i$}} (v1-2);
		\end{tikzpicture}
	\end{eqnarray*}
	for tree $T$ of type $\mathrm{(I)}$ with $2\leqslant j\leqslant n-1$, define
	\begin{eqnarray*}
		\begin{tikzpicture}[scale=1,descr/.style={fill=white}]
			\tikzstyle{every node}=[thick,minimum size=5pt, inner sep=1pt]
			\node(r) at (0,-0.5)[minimum size=0pt,rectangle]{};
			\node(v-2) at(-3,0.5)[minimum size=0pt, label=left:{\Large$\Delta_T(v_n)=$}]{};
			\node(v-1) at(-1.5,0.5)[minimum size=0pt,label=left:{\Large$\lambda^{j-1}$}]{};
			\node(v0) at (0,0)[draw, rectangle]{$v_{n-j+1}$};
			\node(v1-1) at (-1.5,1){};
			\node(v1-2) at(0,1)[draw,rectangle]{\small $u_j$};
			\node(v1-3) at(1.5,1){};
			\node(v2-1)at (-1,2){};
			\node(v2-2) at(1,2){};
\node(v3-1) at (2,0.5){$\cdot \sigma$};
\draw(r) --(v0);
			\draw(v0)--(v1-1);
			\draw(v0)--(v1-3);
			\draw(v1-2)--(v2-1);
			\draw(v1-2)--(v2-2);
			\draw[thick,dotted](-0.4,1.5)--(0.45,1.5);
			\draw[thick,dotted](-0.55,0.5)--(-0.1,0.5);
			\draw[thick,dotted](0.1,0.5)--(0.55,0.5);
			\path[-,font=\scriptsize]
			(v0) edge node[descr]{{\tiny$i$}} (v1-2);
		\end{tikzpicture}
	\end{eqnarray*}
	and for tree $T$ of type $(\mathrm{I})$ with $j=n$, define
	\begin{eqnarray*}
		\begin{tikzpicture}[scale=1,descr/.style={fill=white}]
			\tikzstyle{every node}=[thick,minimum size=5pt, inner sep=1pt]
			\node(r) at (0,-0.7)[minimum size=0pt,rectangle]{};
			\node(va) at(-3,0.5)[minimum size=0pt, label=left:{\Large$\Delta_T(v_n)=$}]{};
			\node(vb) at(-1.5,0.5)[minimum size=0pt,label=left:{\Large$\lambda^{n-1}$}]{};
\begin{scope}[yshift=-0.2cm]
			\node(vc) at (0,0)[draw, rectangle]{\small $v_1$};
			\node(v1) at(0,1)[draw,rectangle]{\small $u_n$};
			\node(v2-1)at (-1,2){};
			\node(v2-2) at(1,2){};
\node(3-1) at  (1.3,0.7){$\cdot \sigma$};
\end{scope}
\begin{scope}[xshift=1cm,yshift=-0.2cm]
            \node(r') at (3.5,-0.5)[minimum size=0pt,rectangle]{};
			\node(vd) at(1.5,0.8)[minimum size=0, label=right:$+$]{};
			\node(ve) at (3.5,0)[draw, rectangle]{\small $u_1$};
			\node(ve1) at (3.5,1)[draw,rectangle]{\small $v_n$};
			\node(ve2-1) at(2.5,2){};
			\node(ve2-2) at(4.5,2){};
			\draw[thick,dotted](3.1,1.5)--(3.95,1.5);
            \node(ve3-1) at (4.8, 0.7){$\cdot\sigma$};
\end{scope}
            \draw(r) --(vc);
            \draw(r') --(ve);
			\draw(v1)--(v2-1);
			\draw(v1)--(v2-2);
			\draw[thick,dotted](-0.4,1.5)--(0.45,1.5);
			\draw(vc)--(v1);
			\draw(ve)--(ve1);
			\draw(ve1)--(ve2-1);
			\draw(ve1)--(ve2-2);
		\end{tikzpicture}
	\end{eqnarray*}
	
	\item[(iii)] for tree $T$ of type $\mathrm{(II)}$ with $r_1+\dots+r_k=n$, define
	
	\begin{eqnarray*}
		\begin{tikzpicture}[scale=1,descr/.style={fill=white}]
			\tikzstyle{every node}=[thick,minimum size=5pt, inner sep=1pt]
\node(r) at (0,-2){};
			\node(v-2) at (-4,-0.5)[minimum size=0pt, label=left:{\Large$\Delta_T(v_n)=$}]{};
			\node(v-1) at(-4,-0.5)[minimum size=0pt,label=right:{\Large$(-1)^{\frac{k(k-1)}{2}}$}]{};
			\node(v0) at (0,-1.5)[rectangle, draw]{\small $u_k$};
			\node(v1) at(-1.2,-0.3)[rectangle,draw]{\small $v_{r_1}$};
			\node(v1-1) at(-2,0.8){};
			\node(v1-2) at (-1,0.8){};
			\node(v3) at (1.2,-0.3)[rectangle, draw]{\small $v_{r_k}$};
			\node(v3-1) at (1,0.8){};
			\node(v3-2) at (2,0.8){};
			\node(v2-1) at (0,0) [rectangle,draw]{\small $v_{r_i}$};
			\node(v2-1-1) at (-0.5,1){};
			\node(v2-1-2) at (0.5, 1){};
\node(v4-1) at (2.5,-0.5){$\cdot\sigma$};
\draw(r)--(v0);
			\draw [thick,dotted ] (-0.75,-0.5)--(-0.2,-0.5);
			\draw [thick,dotted ] (0.3,-0.5)--(0.85,-0.5);
			\draw [thick,dotted ] (-0.2,0.5)--(0.25,0.5);
			\draw [thick,dotted ] (-1.65,0.5)--(-1.2, 0.5);
			\draw [thick,dotted ] (1.2,0.5)--(1.65,0.5);
			\draw        (v0)--(v1);
			\draw         (v0)--(v3);
			\draw         (v1)--(v1-1);
			\draw          (v1)--(v1-2);
			\draw        (v2-1)--(v2-1-1);
			\draw        (v2-1)--(v2-1-2);
			\draw        (v3)--(v3-1);
			\draw        (v3)--(v3-2);
			\path[-,font=\scriptsize]
			(v0) edge node[descr]{{\tiny$i$}} (v2-1);
		\end{tikzpicture}
	\end{eqnarray*}
	\item[(iv)] for tree $T$ of type $\mathrm{(III)}$ with $r_1+\dots+r_q+p-q=n$, define
	\begin{eqnarray*}
		\begin{tikzpicture}[scale=1,descr/.style={fill=white}]
			\tikzstyle{every node}=[minimum size=4pt, inner sep=1pt]
\node(r) at (0,-0.5){};
			\node(v-2) at (-6,1.2)[minimum size=0pt, label=left:{\Large$\Delta_T(v_n)=$}]{};
			\node(v-1) at(-6,1.3)[minimum size=0pt,label=right:{\Large$(-1)^\frac{q(q-1)}{2}\lambda^{p-q}$}]{};
			\node(v0) at (0,0)[rectangle,draw]{\small $v_{r_1}$};
			\node(v1-1) at (-2,1){};
			\node(v1-2) at(0,1.2)[rectangle,draw]{\small $u_p$};
			\node(v1-3) at(2,1){};
			\node(v2-1) at(-1.9,2.6){};
			\node(v2-2) at (-0.9, 2.8)[rectangle,draw]{\small$v_{r_2}$};
			\node(v2-3) at (0,2.9){};
			\node(v2-4) at(0.9,2.8)[rectangle,draw]{\small $v_{r_q}$};
			\node(v2-5) at(1.9,2.6){};
			\node(v3-1) at (-1.5,3.5){};
			\node(v3-2) at (-0.3,3.5){};
			\node(v3-3) at (0.3,3.5){};
			\node(v3-4) at(1.5,3.5){};
\node(v4-1) at (2.5,1.2){$\cdot\sigma$};
\draw(r)--(v0);
			\draw(v0)--(v1-1);
			\draw(v0)--(v1-3);
			\path[-,font=\scriptsize]
			(v0) edge node[descr]{{\tiny$i$}} (v1-2);
			\draw(v1-2)--(v2-1);
			\draw(v1-2)--(v2-3);
			\draw(v1-2)--(v2-5);
			\path[-,font=\scriptsize]
			(v1-2) edge node[descr]{{\tiny$k_1$}} (v2-2)
			edge node[descr]{{\tiny$k_{q-1}$}} (v2-4);
			\draw(v2-2)--(v3-1);
			\draw(v2-2)--(v3-2);
			\draw(v2-4)--(v3-3);
			\draw(v2-4)--(v3-4);
			\draw[thick,dotted](-1,0.7)--(-0.15,0.7);
			\draw[thick,dotted](0.15,0.7)--(1,0.7);
			\draw[thick,dotted](-0.5,2.4)--(-0.15,2.4);
			\draw[thick,dotted](0.15,2.4)--(0.5,2.4);
			\draw[thick,dotted](-1.45,2.4)--(-0.8,2.4);
			\draw[thick,dotted](1.45,2.4)--(0.8,2.4);
			\draw[thick,dotted](-1.1,3.2)--(-0.65,3.2);
			\draw[thick,dotted](1.1,3.2)--(0.65,3.2);
		\end{tikzpicture}
	\end{eqnarray*}
	\item[(v)] all other components of $\Delta_T, T\in \frakt$ vanish.
\end{itemize}

\begin{prop}\label{Prop: homotopy cooperad structure}
The $\mathbb{S}$-module $\mathscr{S}(\RBLA^\antish)$ endowed with operations $\{\Delta_T\}_{T\in \frakt}$ introduced above forms a coaugmented symmetric homotopy cooperad, whose strict counit is the natural projection $\varepsilon:\mathscr{S}(\RBLA^\antish) \twoheadrightarrow \bfk u_1\cong \cali$ and the coaugmentation is just the natural embedding $\eta:\cali\cong \bfk u_1\hookrightarrow \mathscr{S}(\RBLA^\antish)$. 	
\end{prop}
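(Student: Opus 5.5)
Rather than checking the homotopy cooperad identities (3) tree by tree, I will use the equivalence recalled just before Proposition~\ref{prop:Hom(C,P)-homotopy operad}. A graded $\mathbb S$-module $\overline{\calC}$ carries a symmetric homotopy cooperad structure exactly when the derivation $\partial$ of the free symmetric operad $\mathcal F(s^{-1}\overline{\calC})$, defined by $\partial(s^{-1}f)=-\sum_{T\in\frakST}(s^{-1})^{\otimes\omega(T)}\circ\overline\Delta_T(f)$, squares to zero. So the plan is as follows.

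\textbf{Step 1: equivariance and isomorphism conditions.} Axioms (1) and (2) are essentially built into the definition. The operations are defined on a representative tree $T$ and then extended along $T\cdot\sigma$. The generators $u_n,v_n$ carry the trivial $\mathbb S_n$-action, so extending along isomorphisms of trees only introduces the Koszul sign $\chi(\tau;\dots)$. The formulas (i)--(iv) were written with exactly these signs in mind: for instance, $(-1)^{k(k-1)/2}$ and $(-1)^{q(q-1)/2}$ compensate for permuting the odd elements $v_{r_i}$. One checks directly that the definition is consistent on automorphisms of the planar representatives. In practice I would pass to the desuspended, or rather non-suspended, version $\RBLA^\antish=\mathscr S\RBLA^\antish\otimes_{\rmH}\calS^{-1}$, where the signs become the natural ones.

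\textbf{Step 2: counit and coaugmentation.} The counit and coaugmentation axioms follow by inspecting weight-2 trees with an arity-one vertex.
\begin{itemize}
\item From (i) with $j=1$ or $j=n$, we get $\Delta_{T_{1,i}}(u_n)=u_n\otimes u_1$ and $\Delta_{T_2}(u_n)=u_1\otimes u_n$.
\item From (ii), the $u_1$-components of $\Delta(v_n)$ are $v_n\otimes u_1$ and $u_1\otimes v_n$. The term $\lambda^{n-1}v_1\otimes u_n$ dies under $\varepsilon$ because $v_1\notin\bfk u_1$.
\item For $\omega(T)\geqslant 3$, types (II) and (III) never produce a $u_1$-decorated vertex in the relevant trees. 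In type (III), the vertex $u_p$ has $p\geqslant q\geqslant 1$, and the case $p=1$ forces $q=1$, which is exactly the case to handle separately. I would either restrict the trees so that $u_1$ appears only in weight 2, or check that these terms vanish after applying $\varepsilon$. This is the one point where the precise index ranges must be read carefully.
\end{itemize}
The map $\eta$ is obviously strict.

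\textbf{Step 3: $\partial^2=0$ on the cobar construction.} This is the main step. Write $U_n=s^{-1}u_n$ (degree $-1$) and $V_n=s^{-1}v_n$ (degree $0$). Then $\partial U_n$ is the sum of all two-vertex shuffle compositions $U\circ U$. This is the usual cobar differential of $\mathbf{Com}^\vee$ up to suspension, so $\partial^2U_n=0$ is the standard coassociativity of the cooperad $\mathscr S\mathbf{Com}^\vee$; it amounts to the $\mathbf{Com}$ part, which is Koszul.

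For $V_n$, I would split $\partial V_n$ into three pieces:
\begin{itemize}
\item $\partial_1$: the weight-two terms $V\circ U$ and $U\circ V$ of type (ii);
\item $\partial_2$: the corolla terms $U_k\circ(V_{r_1},\dots,V_{r_k})$ of type (iii);
\item $\partial_3$: the terms $V_{r_1}\circ_i(U_p\circ(V_{r_2},\dots,V_{r_q}))$ with weight $\lambda^{p-q}$ of type (iv).
\end{itemize}
Then I would verify $\partial^2V_n=0$ by collecting terms by the shape of the resulting tree monomial. The cancellations come in three kinds:
\begin{itemize}
\item the $U$-parts recombine by coassociativity of $\mathbf{Com}^\vee$;
\item when $\partial$ is applied to an inner $V_{r_i}$ inside a type (iii) or (iv) tree, the resulting terms cancel against those obtained by applying $\partial_1$ and then expanding the $V$ vertex, and vice versa;
\item the $\lambda$-powers match through the identity $\lambda^{a}\lambda^{b}=\lambda^{a+b}$, by counting the leaves of $u_p$ that are not grafted with $v$'s.
\end{itemize}

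To organize this efficiently, I would model the check on the associative case of \cite{WZ24}. There the analogous homotopy cooperad was verified by exhibiting $\partial$ as a "twisted" version of a known differential. Alternatively, one can check $\partial^2=0$ after applying the antisymmetrization comparison map, but this needs injectivity, so the direct combinatorial check is the safer route.

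\textbf{Main obstacle.} I expect the sign bookkeeping in Step 3 to be the hardest part: the Koszul signs from the odd $v$'s, the $\sgn$ twists of $\calS$, and the shuffle permutations all interact. I would first verify arity $n\leqslant 3$ by hand to fix the sign conventions, and then do the general count.
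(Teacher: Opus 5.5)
Your plan matches the paper's proof. Both reduce to $\partial^2=0$ on the cobar construction, get $\partial^2U_n=0$ because the $U_n$ ($n\geqslant 2$) generate a suspended copy of the $L_\infty$ operad, and check $\partial^2V_n=0$ by expanding and pairing tree monomials of equal shape; the paper likewise only lists the terms, asserts pairwise cancellation, and leaves your equivariance and counit checks implicit. One correction to your decomposition of $\partial V_n$: it has no $U\circ V$ summands, since the only such component $u_1\otimes v_n$ of $\Delta_T(v_n)$ lies in the coaugmentation and is discarded by $\overline{\Delta}_T$, and your remaining $V\circ U$ terms from (ii) are exactly the $q=1$ terms of (iv) (a type (I) tree with $j\geqslant 2$ is a type (III) tree with $q=1$, both giving $\lambda^{j-1}v_{n-j+1}\otimes u_j$), so $\partial_1$ must not be added to $\partial_3$ a second time---which is why the paper's formula for $\partial y_n$ (your $\partial V_n$) contains only the type (iii) and type (iv) sums.
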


\begin{proof}
 One needs to show that the induced derivation $\partial$ on the cobar construction of $\mathscr{S}(\RBLA^\antish),$ i.e., the free symmetric operad generated by $s^{-1}\overline{\mathscr{S}(\RBLA^\antish)},$ is a differential, that is, $\partial^2=0$.

Denote $s^{-1}u_n, n\geqslant 2$ (resp.  $s^{-1}v_n, n\geqslant 1$) by $x_n$ (resp. $y_n$) which are the generators of $\Omega(  \mathscr{S}(\RBLA^\antish) ).$
Notice that $|x_n|=-1$ and $|y_n|=0$.
By the definition of cobar construction of coaugmented symmetric homotopy cooperads, the action of differential $\partial$ on generators $x_n, y_n$ is given by the following formulas: for $n\geqslant 2,$
\begin{eqnarray}\label{Eq: trees of differential of Lie bracket}
 		\hspace{-2cm} \begin{tikzpicture}[scale=0.4,baseline=(current bounding box.center) ]
 			\tikzstyle{every node}=[minimum size=3pt, inner sep=1pt]
 			\node(a) at (-2,0){\begin{Large}$\partial \ \ \ $\end{Large}};
 \node(r) at (0,-2){};
 			\node(b0)at (0,-1)[rectangle,draw]{\tiny{$x_n$}};
 			\node (b1) at (-2,1)  [minimum size=0pt,label=above:{\tiny{$1$}}]{};
 			\node (b2) at (0,1)  [minimum size=0pt,label=above:{}]{};
 			\node (b3) at (2,1)  [minimum size=0pt,label=above:{\tiny{$n$}}]{};
 \draw(r) -- (b0);
 			\draw        (b0)--(b1);
 			\draw        (b0)--(b2);
 			\draw        (b0)--(b3);
 			\draw [dotted,thick] (-1.05,0.25)--(-0.1,0.25);
  			\draw [dotted,thick] (1.05,0.25)--(0.1,0.25);
 		\end{tikzpicture}
 		&&
 		\begin{tikzpicture}[scale=0.4,descr/.style={fill=white},baseline=(current bounding box.center) ]
 			\tikzstyle{every node}=[minimum size=5pt, inner sep=1pt]
 			\node(a) at (0,0.25)[minimum size=0pt, label=right:{\large $=-\sum\limits_{\substack{2\leqslant j\leqslant n-1 \\ 1\leqslant i\leqslant n-j+1}} \sum\limits_{\sigma}$}]{};
 \node(r) at (10,-2){};
 			\node(r1) at(10,-1)[rectangle,draw]{\tiny{$x_{n-j+1}$}};
 			\node(r2) at(10,1.5)[rectangle,draw]{\tiny{$x_{j}$}};
 \node  at(13, 0.5){\large $\cdot \sigma$};
 \draw(r) -- (r1);
 			\draw(r1)--(8,1);
 			\draw (r1)--(r2);
 			\draw(r1)--(12,1);
 			\draw(r2)--(8.5,3);
 			\path[-,font=\scriptsize]
 			(r1) edge node[descr]{{\tiny{$i$}}} (r2);
 			\draw(r2)--(11.5,3);
 			\draw [dotted,thick] (9.1,0.25)--(9.8,0.25);
  			\draw [dotted,thick] (10.2,0.25)--(10.9,0.25);
 			\draw [dotted,thick] (9.15,2.5)--(10.85,2.5);
 		\end{tikzpicture}
 \end{eqnarray}
and for $n\geqslant 1,$
 \begin{eqnarray}	\nonumber	\label{Eq: trees of differential of RB operator} 
	\begin{tikzpicture}[scale=0.4,baseline=(current bounding box.center)]
 			\tikzstyle{every node}=[minimum size=3pt, inner sep=1pt]
 			\node(a) at (-2,0){\begin{Large}$\partial \ \ \ $\end{Large}};
 \node(r) at (0,-2){};
 			\node(b0)at (0,-1)[rectangle,draw]{\tiny{$y_n$}};
 			\node (b1) at (-2,1)  [minimum size=0pt,label=above:{\tiny{$1$}}]{};
 			\node (b2) at (0,1)  [minimum size=0pt,label=above:{}]{};
 			\node (b3) at (2,1)  [minimum size=0pt,label=above:{\tiny{$n$}}]{};
 \draw(r)--(b0);
 			\draw        (b0)--(b1);
 			\draw        (b0)--(b2);
 			\draw        (b0)--(b3);
 			\draw [dotted,thick] (-1.15,0.25)--(-0.05,0.25);
  			\draw [dotted,thick] (1.15,0.25)--(0.05,0.25);
 		\end{tikzpicture}&&
\hspace{-0.5cm} \begin{tikzpicture}[scale=0.5,descr/.style={fill=white},baseline=(current bounding box.center)]
\tikzstyle{every node}=[minimum size=3pt, inner sep=1pt]
 			\node(b) at(13,0.5)[minimum size=0pt,label=right:{\large$\ \ \ \ =-\sum\limits_{k=2}^{n}\sum\limits_{\substack{r_1+\cdots+r_k=n \\ r_1, \cdots, r_k \geqslant 1}} \sum\limits_{\sigma}$}]{};
 \node(r) at (25,-2){};
 			\node(l1) at(25,-1)[rectangle, draw]{\tiny{$x_k$}};
 			\node(l21) at(22,1.5)[rectangle, draw]{\tiny{$y_{r_1}$}};
 			\node(l22) at(24,1.5)[rectangle,draw]{\tiny{$y_{r_2}$}};
 			\node(l23) at(28,1.5)[rectangle,draw]{\tiny{$y_{r_k}$}};
 \node(sgn) at (30,0.5){\large$\cdot \sigma$};
 			\draw[dotted,thick](24.8,0.2)--(26.1,0.2);
 \draw(r) -- (l1);
 			\draw(l21)--(21,2.5);
 			\draw(l21)--(22.5,2.5);
 			\draw(l22)--(23,2.5);
 			\draw(l22)--(24.5,2.5);
 			\draw(l23)--(27,2.5);
 			\draw(l23)--(29,2.5);
 			\draw(l1)--(l21);
 			\draw(l1)--(l22);
 			\draw(l1)--(l23);
 			\draw[dotted,thick] (21.5,2.25)--(22.2,2.25);
 			\draw[dotted,thick] (23.5,2.25)--(24.2,2.25);
 			\draw[dotted,thick] (27.5,2.25)--(28.6,2.25);
 		\end{tikzpicture}
	\end{eqnarray}
 $$\hspace{4cm}	\begin{tikzpicture}[scale=0.5,descr/.style={fill=white},baseline=(current bounding box.center)]
\tikzstyle{every node}=[minimum size=3pt, inner sep=1pt]
 			\node(b) at(-11,0)[minimum size=0pt,label=right:{\large$\ \ \ \  +\sum\limits_{\substack{2\leqslant p \leqslant n\\ 1 \leqslant q\leqslant p}}\sum\limits_{\substack{r_1+\cdots+r_q + p -q =n \\ r_1, \cdots, r_q \geqslant 1\\ 1\leqslant i \leqslant r_1\\  1\leqslant k_1 < \cdots < k_{q-1} \leqslant p}} \sum\limits_{\sigma} \lambda^{p-q}$}]{};
 \node(r) at (5,-2){};
 \node(a0) at (5,-1)[rectangle, draw]{\tiny{$y_{r_1}$}};
 \node(a1-1) at (1.5,1){};
 \node(a1-2) at (5,1)[rectangle, draw]{\tiny{$x_p$}};
 \node(a1-3) at (8.5,1){};
 \node(a2-1) at (1.5,3){};
 \node(a2-2) at (3, 3.5)[rectangle, draw]{\tiny $y_{r_2}$};
  \node(a2-3) at (5, 3){};
  \node(a2-4) at (7, 3.5)[rectangle, draw]{\tiny $y_{r_q}$};
    \node(a2-5) at (8.5, 3){};
    \node(a3-1) at (2,4.5){};
    \node(a3-2) at (4,4.5){};
        \node(a3-3) at (6,4.5){};
    \node(a3-4) at (8,4.5){};
\draw(r)--(a0);
\draw(a0)--(a1-1);
\draw(a0)--(a1-2);
\draw(a0)--(a1-3);
\draw(a1-2)--(a2-1);
\draw(a1-2)--(a2-2);
\draw(a1-2)--(a2-3);
\draw(a1-2)--(a2-4);
\draw(a1-2)--(a2-5);
\draw(a2-2) --(a3-1);
 \draw(a2-2) --(a3-2);
 \draw(a2-4) --(a3-3);
 \draw(a2-4) --(a3-4);
 \node(sgn) at (10,0.5){\large$\cdot \sigma$};
 			\draw[dotted,thick](3.5,0)--(4.85,0);
  			\draw[dotted,thick](5.15,0)--(6.5,0);
    		\draw[dotted,thick](3.1,2.25)--(3.6,2.25);
    	    \draw[dotted,thick](4.4,2.25)--(4.9,2.25);
            \draw[dotted,thick](5.6,2.25)--(5.1,2.25);
            \draw[dotted,thick](6.4,2.25)--(6.9,2.25);
            \draw[dotted,thick] (2.55,4.2)--(3.45,4.2);
            \draw[dotted,thick] (6.55,4.2)--(7.45,4.2);
 	\path[-,font=\scriptsize]
 			(a0) edge node[descr]{{\tiny{$i$}}} (a1-2);
  	\path[-,font=\scriptsize]
 			(a1-2) edge node[descr]{{\tiny{$k_1$}}} (a2-2);
    \path[-,font=\scriptsize]
 			(a1-2) edge node[descr]{{\tiny{$k_{q-1}$}}} (a2-4);
 		\end{tikzpicture}$$
where all  $\sum_\sigma$ denote the summations over permutations  $\sigma \in \s_n$ that make the tree monomials into shuffle tree monomials.

We just need to see that $\partial^{2}$ holds on generators $x_n, n\geqslant 2$ and $y_n, n\geqslant 1$ which can be checked by direct computations.
Firstly, note that the suboperad in $\Omega(  \mathscr{S}(\RBLA^\antish) )$ generated by the  $\s$-module $\{x_n\}_{n\geqslant 2}$ is exactly the operadic suspension of the dg operad that governs $L_{\infty}$-algebras \cite{LM95}. %
Equation \eqref{Eq: trees of differential of Lie bracket} precisely represents the formulas for the differential on generators in this  symmetric dg  operad. Therefore, the equation $\partial^2(x_n) = 0$ holds  true.

We also have
 \begin{eqnarray*}		
&& \begin{tikzpicture}[scale=0.5,baseline=(current bounding box.center)]
 			\tikzstyle{every node}=[minimum size=2pt, inner sep=0.8pt]
 			\node(a) at (-2,0){\begin{Large}$\partial^2 \ \ \ $\end{Large}};
 \node(r) at (0,-2){};
 			\node(b0)at (0,-1)[rectangle,draw]{\tiny{$y_n$}};
 			\node (b1) at (-2,1)  [minimum size=0pt,label=above:{\tiny{$1$}}]{};
 			\node (b2) at (0,1)  [minimum size=0pt,label=above:{}]{};
 			\node (b3) at (2,1)  [minimum size=0pt,label=above:{\tiny{$n$}}]{};
 \draw(r)--(b0);
 			\draw        (b0)--(b1);
 			\draw        (b0)--(b2);
 			\draw        (b0)--(b3);
 			\draw [dotted,thick] (-1.15,0.25)--(-0.05,0.25);
  			\draw [dotted,thick] (1.15,0.25)--(0.05,0.25);
 		\end{tikzpicture}\\
 &&	 \begin{tikzpicture}[scale=0.5,descr/.style={fill=white},baseline=(b)]
\tikzstyle{every node}=[minimum size=3pt, inner sep=0.8pt]
 			\node(b) at(16,0)[minimum size=0pt,label=right:{$ =\partial ( -\sum\limits_{\substack{2\leqslant k\leqslant n \\ r_1+\cdots+r_k=n \\ r_1, \cdots, r_k \geqslant 1}} \sum\limits_{\sigma}$}]{};
 \node(r) at (25,-2){};
 			\node(l1) at(25,-1)[rectangle, draw]{\tiny{$x_k$}};
 			\node(l21) at(23,0.5)[rectangle, draw]{\tiny{$y_{r_1}$}};
 			\node(l22) at(24,0.5)[rectangle,draw]{\tiny{$y_{r_2}$}};
 			\node(l23) at(27,0.5)[rectangle,draw]{\tiny{$y_{r_k}$}};
 \node(sgn) at (28,0.5){$\cdot \sigma$};
 			\draw[dotted,thick](24.7,-0.2)--(26,-0.2);
 \draw(r) -- (l1);
 			\draw(l21)--(22.2,1.5);
 			\draw(l21)--(23.2,1.5);
 			\draw(l22)--(23.7,1.5);
 			\draw(l22)--(24.7,1.5);
 			\draw(l23)--(26.5,1.5);
 			\draw(l23)--(27.5,1.5);
 			\draw(l1)--(l21);
 			\draw(l1)--(l22);
 			\draw(l1)--(l23);
 			\draw[dotted,thick] (22.5,1.25)--(23,1.25);
 			\draw[dotted,thick] (23.9,1.25)--(24.4,1.25);
 			\draw[dotted,thick] (26.8,1.25)--(27.3,1.25);
 		\end{tikzpicture} \hspace{0.3cm}
 	\begin{tikzpicture}[scale=0.5,descr/.style={fill=white},baseline=(b)]
\tikzstyle{every node}=[minimum size=3pt, inner sep=0.8pt]
 			\node(b) at(-7,-0.2)[minimum size=0pt,label=right:{$ +\sum\limits_{\substack{2\leqslant p \leqslant n\\ 1\leqslant q\leqslant p}}\sum\limits_{\substack{r_1+\cdots+r_q + p -q =n \\ r_1, \cdots, r_q \geqslant 1\\ 1\leqslant i \leqslant r_1\\  1\leqslant k_1 < \cdots < k_{q-1} \leqslant p}} \sum\limits_{\sigma} \lambda^{p-q}$}]{};
 \node(r) at (5,-2){};
 \node(a0) at (5,-1)[rectangle, draw]{\tiny{$y_{r_1}$}};
 \node(a1-1) at (2.5,0.5){};
 \node(a1-2) at (5,0.5)[rectangle, draw]{\tiny{$x_p$}};
 \node(a1-3) at (7.5,0.5){};
 \node(a2-1) at (2,2){};
 \node(a2-2) at (3.5, 2.5)[rectangle, draw]{\tiny $y_{r_2}$};
  \node(a2-3) at (5, 2){};
  \node(a2-4) at (6.65, 2.5)[rectangle, draw]{\tiny $y_{r_q}$};
    \node(a2-5) at (8, 2){};
    \node(a3-1) at (2.5,3.5){};
    \node(a3-2) at (4,3.5){};
        \node(a3-3) at (6,3.5){};
    \node(a3-4) at (7.5,3.5){};
\draw(r)--(a0);
\draw(a0)--(a1-1);
\draw(a0)--(a1-2);
\draw(a0)--(a1-3);
\draw(a1-2)--(a2-1);
\draw(a1-2)--(a2-2);
\draw(a1-2)--(a2-3);
\draw(a1-2)--(a2-4);
\draw(a1-2)--(a2-5);
\draw(a2-2) --(a3-1);
 \draw(a2-2) --(a3-2);
 \draw(a2-4) --(a3-3);
 \draw(a2-4) --(a3-4);
 \node(sgn) at (9,0.6){$\cdot \sigma )$};
 			\draw[dotted,thick](4,-0.25)--(4.85,-0.25);
  			\draw[dotted,thick](5.15,-0.25)--(6,-0.25);
    		\draw[dotted,thick](3.3,1.5)--(3.6,1.5);
    	    \draw[dotted,thick](4.4,1.5)--(4.9,1.5);
            \draw[dotted,thick](5.6,1.5)--(5.1,1.5);
            \draw[dotted,thick](6.4,1.5)--(6.9,1.5);
            \draw[dotted,thick] (3.1,3.1)--(3.55,3.1);
            \draw[dotted,thick] (6.45,3.1)--(7,3.1);
 	\path[-,font=\scriptsize]
 			(a0) edge node[descr]{{\tiny{$i$}}} (a1-2);
  	\path[-,font=\scriptsize]
 			(a1-2) edge node[descr]{{\tiny{$k_1$}}} (a2-2);
    \path[-,font=\scriptsize]
 			(a1-2) edge node[descr]{{\tiny{$k_{q-1}$}}} (a2-4);
 		\end{tikzpicture}\\
 &&	 \begin{tikzpicture}[scale=0.5,descr/.style={fill=white},baseline=(b)]
\tikzstyle{every node}=[minimum size=3pt, inner sep=0.8pt]
 			\node(b) at(16,0)[minimum size=0pt,label=right:{$ = -\sum\limits_{\substack{2\leqslant k\leqslant n \\ r_1+\cdots+r_k=n \\ r_1, \cdots, r_k \geqslant 1}} \sum\limits_{\sigma}$}]{};
 \node(r) at (25,-2){};
 			\node(l1) at(25,-1)[rectangle, draw]{\tiny{$\partial x_k$}};
 			\node(l21) at(23,0.5)[rectangle, draw]{\tiny{$y_{r_1}$}};
 			\node(l22) at(24,0.5)[rectangle,draw]{\tiny{$y_{r_2}$}};
 			\node(l23) at(27,0.5)[rectangle,draw]{\tiny{$y_{r_k}$}};
 \node(sgn) at (28,0.5){$\cdot \sigma$};
 			\draw[dotted,thick](24.7,-0.2)--(26,-0.2);
 \draw(r) -- (l1);
 			\draw(l21)--(22.2,1.5);
 			\draw(l21)--(23.2,1.5);
 			\draw(l22)--(23.7,1.5);
 			\draw(l22)--(24.7,1.5);
 			\draw(l23)--(26.5,1.5);
 			\draw(l23)--(27.5,1.5);
 			\draw(l1)--(l21);
 			\draw(l1)--(l22);
 			\draw(l1)--(l23);
 			\draw[dotted,thick] (22.5,1.25)--(23,1.25);
 			\draw[dotted,thick] (23.9,1.25)--(24.4,1.25);
 			\draw[dotted,thick] (26.8,1.25)--(27.3,1.25);
 		\end{tikzpicture}
\hspace{0.5cm} \begin{tikzpicture}[scale=0.5,descr/.style={fill=white},baseline=(b)]
\tikzstyle{every node}=[minimum size=3pt, inner sep=0.8pt]
 			\node(b) at(15,0)[minimum size=0pt,label=right:{$  + \sum\limits_{\substack{2\leqslant k\leqslant n \\ r_1+\cdots+r_k=n \\ r_1, \cdots, r_k \geqslant 1}} \sum\limits_{l=1}^{k}\sum\limits_{\sigma}$}]{};
 \node(r) at (25,-2){};
 			\node(l1) at(25,-1)[rectangle, draw]{\tiny{$ x_k$}};
 			\node(l21) at(23,0.5)[rectangle, draw]{\tiny{$y_{r_1}$}};
 			\node(l22) at(25,0.5)[rectangle,draw]{\tiny{$\partial y_{r_l}$}};
 			\node(l23) at(27,0.5)[rectangle,draw]{\tiny{$y_{r_k}$}};
 \node(sgn) at (28,0.5){$\cdot \sigma$};
 			\draw[dotted,thick](24.1,-0.2)--(24.85,-0.2);
  			\draw[dotted,thick](25.2,-0.2)--(25.85,-0.2);
 \draw(r) -- (l1);
 			\draw(l21)--(22.2,1.5);
 			\draw(l21)--(23.2,1.5);
 			\draw(l22)--(24.5,1.5);
 			\draw(l22)--(25.55,1.5);
 			\draw(l23)--(26.5,1.5);
 			\draw(l23)--(27.5,1.5);
 			\draw(l1)--(l21);
 			\draw(l1)--(l22);
 			\draw(l1)--(l23);
 			\draw[dotted,thick] (22.5,1.25)--(23,1.25);
 			\draw[dotted,thick] (24.7,1.25)--(25.2,1.25);
 			\draw[dotted,thick] (26.8,1.25)--(27.3,1.25);
 	\path[-,font=\scriptsize]
 			(l1) edge node[descr]{{\tiny{$i$}}} (l22);
 		\end{tikzpicture} \\
 && 	\begin{tikzpicture}[scale=0.5,descr/.style={fill=white},baseline=(b)]
\tikzstyle{every node}=[minimum size=3pt, inner sep=0.8pt]
 			\node(b) at(-7,-0.2)[minimum size=0pt,label=right:{$ +\sum\limits_{\substack{2\leqslant p \leqslant n\\ 1\leqslant q\leqslant p}}\sum\limits_{\substack{r_1+\cdots+r_q + p -q =n \\ r_1, \cdots, r_q \geqslant 1\\ 1\leqslant i \leqslant r_1\\  1\leqslant k_1 < \cdots < k_{q-1} \leqslant p}} \sum\limits_{\sigma} \lambda^{p-q}$}]{};
 \node(r) at (5,-2){};
 \node(a0) at (5,-1)[rectangle, draw]{\tiny{$\partial y_{r_1}$}};
 \node(a1-1) at (2.5,0.5){};
 \node(a1-2) at (5,0.5)[rectangle, draw]{\tiny{$x_p$}};
 \node(a1-3) at (7.5,0.5){};
 \node(a2-1) at (2,2){};
 \node(a2-2) at (3.5, 2.5)[rectangle, draw]{\tiny $y_{r_2}$};
  \node(a2-3) at (5, 2){};
  \node(a2-4) at (6.65, 2.5)[rectangle, draw]{\tiny $y_{r_q}$};
    \node(a2-5) at (8, 2){};
    \node(a3-1) at (2.5,3.5){};
    \node(a3-2) at (4,3.5){};
        \node(a3-3) at (6,3.5){};
    \node(a3-4) at (7.5,3.5){};
\draw(r)--(a0);
\draw(a0)--(a1-1);
\draw(a0)--(a1-2);
\draw(a0)--(a1-3);
\draw(a1-2)--(a2-1);
\draw(a1-2)--(a2-2);
\draw(a1-2)--(a2-3);
\draw(a1-2)--(a2-4);
\draw(a1-2)--(a2-5);
\draw(a2-2) --(a3-1);
 \draw(a2-2) --(a3-2);
 \draw(a2-4) --(a3-3);
 \draw(a2-4) --(a3-4);
 \node(sgn) at (8,0.6){$\cdot \sigma $};
 			\draw[dotted,thick](4,-0.25)--(4.85,-0.25);
  			\draw[dotted,thick](5.15,-0.25)--(6,-0.25);
    		\draw[dotted,thick](3.3,1.5)--(3.6,1.5);
    	    \draw[dotted,thick](4.4,1.5)--(4.9,1.5);
            \draw[dotted,thick](5.6,1.5)--(5.1,1.5);
            \draw[dotted,thick](6.4,1.5)--(6.9,1.5);
            \draw[dotted,thick] (3.1,3.1)--(3.55,3.1);
            \draw[dotted,thick] (6.45,3.1)--(7,3.1);
 	\path[-,font=\scriptsize]
 			(a0) edge node[descr]{{\tiny{$i$}}} (a1-2);
  	\path[-,font=\scriptsize]
 			(a1-2) edge node[descr]{{\tiny{$k_1$}}} (a2-2);
    \path[-,font=\scriptsize]
 			(a1-2) edge node[descr]{{\tiny{$k_{q-1}$}}} (a2-4);
 		\end{tikzpicture}
\hspace{0.3cm} \begin{tikzpicture}[scale=0.5,descr/.style={fill=white},baseline=(b)]
\tikzstyle{every node}=[minimum size=3pt, inner sep=0.8pt]
 			\node(b) at(-7,-0.2)[minimum size=0pt,label=right:{$ +\sum\limits_{\substack{2\leqslant p \leqslant n\\ 1\leqslant q\leqslant p}}\sum\limits_{\substack{r_1+\cdots+r_q + p -q =n \\ r_1, \cdots, r_q \geqslant 1\\ 1\leqslant i \leqslant r_1\\  1\leqslant k_1 < \cdots < k_{q-1} \leqslant p}} \sum\limits_{\sigma} \lambda^{p-q}$}]{};
 \node(r) at (5,-2){};
 \node(a0) at (5,-1)[rectangle, draw]{\tiny{$ y_{r_1}$}};
 \node(a1-1) at (2.5,0.5){};
 \node(a1-2) at (5,0.5)[rectangle, draw]{\tiny{$\partial x_p$}};
 \node(a1-3) at (7.5,0.5){};
 \node(a2-1) at (2,2){};
 \node(a2-2) at (3.5, 2.5)[rectangle, draw]{\tiny $y_{r_2}$};
  \node(a2-3) at (5, 2){};
  \node(a2-4) at (6.65, 2.5)[rectangle, draw]{\tiny $y_{r_q}$};
    \node(a2-5) at (8, 2){};
    \node(a3-1) at (2.5,3.5){};
    \node(a3-2) at (4,3.5){};
        \node(a3-3) at (6,3.5){};
    \node(a3-4) at (7.5,3.5){};
\draw(r)--(a0);
\draw(a0)--(a1-1);
\draw(a0)--(a1-2);
\draw(a0)--(a1-3);
\draw(a1-2)--(a2-1);
\draw(a1-2)--(a2-2);
\draw(a1-2)--(a2-3);
\draw(a1-2)--(a2-4);
\draw(a1-2)--(a2-5);
\draw(a2-2) --(a3-1);
 \draw(a2-2) --(a3-2);
 \draw(a2-4) --(a3-3);
 \draw(a2-4) --(a3-4);
 \node(sgn) at (8,0.6){$\cdot \sigma $};
 			\draw[dotted,thick](4,-0.25)--(4.85,-0.25);
  			\draw[dotted,thick](5.15,-0.25)--(6,-0.25);
    		\draw[dotted,thick](3.3,1.5)--(3.6,1.5);
    	    \draw[dotted,thick](4.4,1.5)--(4.9,1.5);
            \draw[dotted,thick](5.6,1.5)--(5.1,1.5);
            \draw[dotted,thick](6.4,1.5)--(6.9,1.5);
            \draw[dotted,thick] (3.1,3.1)--(3.55,3.1);
            \draw[dotted,thick] (6.45,3.1)--(7,3.1);
 	\path[-,font=\scriptsize]
 			(a0) edge node[descr]{{\tiny{$i$}}} (a1-2);
  	\path[-,font=\scriptsize]
 			(a1-2) edge node[descr]{{\tiny{$k_1$}}} (a2-2);
    \path[-,font=\scriptsize]
 			(a1-2) edge node[descr]{{\tiny{$k_{q-1}$}}} (a2-4);
 		\end{tikzpicture}\\
 && 	\begin{tikzpicture}[scale=0.5,descr/.style={fill=white},baseline=(b)]
\tikzstyle{every node}=[minimum size=3pt, inner sep=0.8pt]
 			\node(b) at(-8,-0.2)[minimum size=0pt,label=right:{$ - \sum\limits_{\substack{2\leqslant p \leqslant n\\ 2\leqslant q\leqslant p\\ 2\leqslant l \leqslant q}}\sum\limits_{\substack{r_1+\cdots+r_q + p -q =n \\ r_1, \cdots, r_q \geqslant 1\\ 1\leqslant i \leqslant r_1\\  1\leqslant k_1 < \cdots < k_{q-1} \leqslant p}} \sum\limits_{\sigma} \lambda^{p-q}$}]{};
 \node(r) at (5,-2){};
 \node(a0) at (5,-1)[rectangle, draw]{\tiny{$y_{r_1}$}};
 \node(a1-1) at (1.5,0.5){};
 \node(a1-2) at (5,0.5)[rectangle, draw]{\tiny{$x_p$}};
 \node(a1-3) at (8.5,0.5){};
 \node(a2-1) at (1,2){};
 \node(a2-2) at (2.5, 2.5)[rectangle, draw]{\tiny $y_{r_2}$};
 \node(b2-1) at (4.2,2.1){};
 \node(b2-2) at (5.8,2.1){};
  \node(a2-3) at (5, 2.5)[rectangle, draw]{\tiny $\partial y_{r_l}$};
  \node(a2-4) at (7.5, 2.5)[rectangle, draw]{\tiny $y_{r_q}$};
    \node(a2-5) at (9, 2){};
    \node(a3-1) at (1.5,3.5){};
    \node(a3-2) at (3.5,3.5){};
        \node(a3-3) at (6.5,3.5){};
    \node(a3-4) at (8.5,3.5){};
    \node(b3-1) at (4,3.5){};
    \node(b3-2) at (6,3.5){};
\draw(r)--(a0);
\draw(a0)--(a1-1);
\draw(a0)--(a1-2);
\draw(a0)--(a1-3);
\draw(a1-2)--(a2-1);
\draw(a1-2)--(a2-2);
\draw(a1-2)--(a2-3);
\draw(a1-2)--(a2-4);
\draw(a1-2)--(a2-5);
\draw(a2-2) --(a3-1);
 \draw(a2-2) --(a3-2);
 \draw(a2-4) --(a3-3);
 \draw(a2-4) --(a3-4);
 \draw(b2-1) -- (a1-2);
 \draw(b2-2) -- (a1-2);
 \draw(b3-1) -- (a2-3);
 \draw(b3-2) -- (a2-3);
 \node(sgn) at (9,0.6){$\cdot \sigma $};
 			\draw[dotted,thick](3.6,-0.25)--(4.85,-0.25);
  			\draw[dotted,thick](5.15,-0.25)--(6.4,-0.25);
    		\draw[dotted,thick](2.9,1.5)--(3.4,1.5);
    	    \draw[dotted,thick](3.85,1.5)--(5.7,1.5);
            \draw[dotted,thick](6.4,1.5)--(7.3,1.5);
            \draw[dotted,thick] (2.1,3.1)--(2.95,3.1);
            \draw[dotted,thick] (4.6,3.1)--(5.45,3.1);
            \draw[dotted,thick] (7.1,3.1)--(7.95,3.1);
 	\path[-,font=\scriptsize]
 			(a0) edge node[descr]{{\tiny{$i$}}} (a1-2);
  	\path[-,font=\scriptsize]
 			(a1-2) edge node[descr]{{\tiny{$k_1$}}} (a2-2);
    \path[-,font=\scriptsize]
 			(a1-2) edge node[descr]{{\tiny{$k_{q-1}$}}} (a2-4);
 		\end{tikzpicture}\\
 &&	 \begin{tikzpicture}[scale=0.5,descr/.style={fill=white},baseline=(b)]
\tikzstyle{every node}=[minimum size=3pt, inner sep=0.8pt]
 			\node(b) at(14,0)[minimum size=0pt,label=right:{$ = \sum\limits_{\substack{2\leqslant k\leqslant n \\ r_1+\cdots+r_k=n \\ r_1, \cdots, r_k \geqslant 1}} \sum\limits_{\substack{2\leqslant s \leqslant k-1 \\ 1\leqslant t \leqslant k-s-1}}\sum\limits_{\sigma}$}]{};
 \node(r) at (25,-2){};
 			\node(l1) at(25,-1)[rectangle, draw]{\tiny{$x_{k-s-1}$}};
            \node(l2') at (25,0.5)[rectangle, draw]{\tiny{$ x_{s}$}};
 			\node(l21) at(23,0.5)[rectangle, draw]{\tiny{$y_{r_1}$}};
 			\node(l22) at(24,2)[rectangle,draw]{\tiny{$y_{r_t}$}};
            \node(l22') at (25.7,2)[rectangle,draw]{\tiny{$y_{r_{t+s+1}}$}};
 			\node(l23) at(27,0.5)[rectangle,draw]{\tiny{$y_{r_k}$}};
 \node(sgn) at (28,0.5){$\cdot \sigma$};
 \draw[dotted,thick](24.1,-0.2)--(24.75,-0.2);
  \draw[dotted,thick](25.25,-0.2)--(25.9,-0.2);
 \draw(r) -- (l1);
 			\draw(l21)--(22.2,1.5);
 			\draw(l21)--(23.2,1.5);
 			\draw(l22)--(23.3,3);
 			\draw(l22)--(24.5,3);
            \draw(l22') -- ( 25.3,3);
            \draw(l22') -- ( 26.3,3);
 			\draw(l23)--(26.7,1.5);
 			\draw(l23)--(27.7,1.5);
 			\draw(l1)--(l21);
 			\draw(l1)--(l2');
 			\draw(l1)--(l23);
            \draw(l2') -- (l22);
            \draw(l2') -- (l22');
 			\draw[dotted,thick] (22.5,1.25)--(23,1.25);
 			\draw[dotted,thick] (24.7,1.25)--(25.25,1.25);
 			\draw[dotted,thick] (26.9,1.25)--(27.4,1.25);
  			\draw[dotted,thick] (25.6,2.75)--(26.1,2.75);
    			\draw[dotted,thick] (23.6,2.75)--(24.1,2.75);
  \path[-,font=\scriptsize]
 			(l1) edge node[descr]{{\tiny{$t$}}} (l2');
 		\end{tikzpicture}
 \begin{tikzpicture}[scale=0.5,descr/.style={fill=white},baseline=(b)]
\tikzstyle{every node}=[minimum size=3pt, inner sep=0.8pt]
 			\node(b) at(14,0)[minimum size=0pt,label=right:{$ - \sum\limits_{\substack{2\leqslant k\leqslant n \\ r_1+\cdots+r_k=n \\ r_1, \cdots, r_k \geqslant 1}} \sum\limits_{\substack{1\leqslant l \leqslant k\\ 2\leqslant u \leqslant r_l \\ v_1 + \cdots v_u =r_l
\\ v_1, \cdots, v_u \geqslant 1}}\sum\limits_{\sigma}$}]{};
 \node(r) at (25,-2){};
 			\node(l1) at(25,-1)[rectangle, draw]{\tiny{$x_{k}$}};
            \node(l2') at (25,0.5)[rectangle, draw]{\tiny{$ x_{u}$}};
 			\node(l21) at(22,0.5)[rectangle, draw]{\tiny{$y_{r_1}$}};
 \node(l21') at (23.9,0.5)[rectangle, draw]{\tiny{$y_{r_{l-1}}$}};
 			\node(l22) at(24,2)[rectangle,draw]{\tiny{$y_{v_1}$}};
            \node(l22') at (25.7,2)[rectangle,draw]{\tiny{$y_{v_{u}}$}};
 			\node(l23) at(28,0.5)[rectangle,draw]{\tiny{$y_{r_k}$}};
 \node(l23') at (26.1,0.5)[rectangle,draw]{\tiny{$y_{r_{l+1}}$}};
 \node(sgn) at (29,0.5){$\cdot \sigma$};
 \draw[dotted,thick](23.7,-0.2)--(24.15,-0.2);
  \draw[dotted,thick](25.85,-0.2)--(26.4,-0.2);
 \draw(r) -- (l1);
 			\draw(l21)--(21.2,1.5);
 			\draw(l21)--(22.2,1.5);
 \draw(l21') --(23.1,1.5);
  \draw(l21') --(24,1.5);
 			\draw(l22)--(23.3,3);
 			\draw(l22)--(24.5,3);
            \draw(l22') -- ( 25.3,3);
            \draw(l22') -- ( 26.3,3);
 			\draw(l23)--(27.7,1.5);
 			\draw(l23)--(28.7,1.5);
 \draw(l23') -- (25.9,1.5);
  \draw(l23') -- (26.9,1.5);
   \draw[dotted,thick](26.05,1.25)--(26.5,1.25);
  \draw[dotted,thick](27.85,1.25)--(28.3,1.25);
 			\draw(l1)--(l21);
 			\draw(l1)--(l2');
 			\draw(l1)--(l23);
            \draw(l2') -- (l22);
            \draw(l2') -- (l22');
            \draw(l21') -- (l1);
            \draw(l23')--(l1);
 			\draw[dotted,thick] (21.6,1.25)--(22.1,1.25);
  			\draw[dotted,thick] (23.4,1.25)--(23.9,1.25);
 			\draw[dotted,thick] (24.7,1.25)--(25.25,1.25);
  			\draw[dotted,thick] (25.6,2.75)--(26.1,2.75);
    			\draw[dotted,thick] (23.6,2.75)--(24.1,2.75);
  \path[-,font=\scriptsize]
 			(l1) edge node[descr]{{\tiny{$l$}}} (l2');
 		\end{tikzpicture} \\
 &&  \begin{tikzpicture}[scale=0.5,descr/.style={fill=white},baseline=(b)]
\tikzstyle{every node}=[minimum size=3pt, inner sep=0.8pt]
 			\node(b) at(9,0)[minimum size=0pt,label=right:{$ + \sum\limits_{\substack{2\leqslant k\leqslant n \\ r_1+\cdots+r_k=n \\ r_1, \cdots, r_k \geqslant 1}} \sum\limits_{\substack{1\leqslant l \leqslant k\\ 2\leqslant s \leqslant r_l \\ 1\leqslant t \leqslant s}} \sum\limits_{\substack{ v_1 + \cdots + v_t  + s - t = r_l\\ v_1, \cdots,  v_t \geqslant 1 \\ 1\leqslant i \leqslant v_1 \\ 1\leqslant j_1 < \cdots < j_{t-1} \leqslant s }} \sum\limits_{\sigma} \lambda^{s-t}$}]{};
 \node(r) at (25,-2){};
 			\node(l1) at(25,-1)[rectangle, draw]{\tiny{$x_{k}$}};
            \node(l2') at (25,0.5)[rectangle, draw]{\tiny{$ y_{v_1}$}};
 			\node(l21) at(22,0.5)[rectangle, draw]{\tiny{$y_{r_1}$}};
 \node(l21') at (23.9,0.5)[rectangle, draw]{\tiny{$y_{r_{l-1}}$}};
 			\node(l22) at(23.9,2){};
            \node(l22') at (25.8,2){};
            \node(l22'') at (24.95,2.25)[rectangle, draw]{\tiny{$x_{s}$}};
            \node(l31) at (22,3.5){};
            \node(l32) at (23.5,4)[rectangle, draw]{\tiny{$y_{v_2}$}};
            \node(l33) at (24.95, 3.5){};
            \node(l34) at (26.4,4)[rectangle, draw]{\tiny{$y_{v_{t}}$}};
            \node(l35) at (27.9,3.5){};
            \node(l41) at (23,5){};
            \node(l42) at (24,5){};
            \node(l43) at (25.9,5){};
            \node(l44) at (26.9,5){};
 			\node(l23) at(28,0.5)[rectangle,draw]{\tiny{$y_{r_k}$}};
 \node(l23') at (26.1,0.5)[rectangle,draw]{\tiny{$y_{r_{l+1}}$}};
 \node(sgn) at (29,0.5){$\cdot \sigma$};
 \draw[dotted,thick](23.7,-0.2)--(24.15,-0.2);
  \draw[dotted,thick](25.85,-0.2)--(26.4,-0.2);
 \draw(r) -- (l1);
 			\draw(l21)--(21.2,1.5);
 			\draw(l21)--(22.2,1.5);
 \draw(l21') --(23.1,1.5);
  \draw(l21') --(24,1.5);
 			\draw(l23)--(27.7,1.5);
 			\draw(l23)--(28.7,1.5);
 \draw(l23') -- (25.9,1.5);
  \draw(l23') -- (26.9,1.5);
   \draw[dotted,thick](26.05,1.25)--(26.5,1.25);
  \draw[dotted,thick](27.85,1.25)--(28.3,1.25);
  \draw(l2')--(l22'');
  \draw(l22'')--(l31);
  \draw(l22'')--(l32);
  \draw(l22'')--(l33);
  \draw(l22'')--(l34);
  \draw(l22'')--(l35);
  \draw(l32)--(l41);
  \draw(l32)--(l42);
  \draw(l34)--(l43);
  \draw(l34)--(l44);
 			\draw(l1)--(l21);
 			\draw(l1)--(l2');
 			\draw(l1)--(l23);
            \draw(l2') -- (l22);
            \draw(l2') -- (l22');
            \draw(l21') -- (l1);
            \draw(l23')--(l1);
 			\draw[dotted,thick] (21.6,1.25)--(22.1,1.25);
  			\draw[dotted,thick] (23.4,1.25)--(23.9,1.25);
 			\draw[dotted,thick] (24.5,1.35)--(24.75,1.35);
  			\draw[dotted,thick] (25.15,1.35)--(25.4,1.35);
  \draw[dotted,thick] (23.4,3.1)--(23.9,3.1);
  \draw[dotted,thick] (24.2,3.1)--(25.95,3.1);
  \draw[dotted,thick] (26.65,3.1)--(26.15,3.1);
  \draw[dotted,thick](23.35,4.6) -- (23.65,4.6);
    \draw[dotted,thick](26.55,4.6) -- (26.25,4.6);
  \path[-,font=\scriptsize]
 			(l1) edge node[descr]{{\tiny{$l$}}} (l2');
   \path[-,font=\scriptsize]
 			(l2') edge node[descr]{{\tiny{$i$}}} (l22'');
    \path[-,font=\scriptsize]
 			(l22'') edge node[descr]{{\tiny{$j_1$}}} (l32);
     \path[-,font=\scriptsize]
 			(l22'') edge node[descr]{{\tiny{$j_{t-1}$}}} (l34);
 		\end{tikzpicture} \\
 &&  \begin{tikzpicture}[scale=0.5,descr/.style={fill=white},baseline=(b)]
\tikzstyle{every node}=[minimum size=3pt, inner sep=0.8pt]
 			\node(b) at(-11,-0.2)[minimum size=0pt,label=right:{$ - \sum\limits_{\substack{2\leqslant p \leqslant n\\ 1\leqslant q\leqslant p}}\sum\limits_{\substack{r_1+\cdots+r_q + p -q =n \\ r_1, \cdots, r_q \geqslant 1\\ 1\leqslant i \leqslant r_1\\  1\leqslant k_1 < \cdots < k_{q-1} \leqslant p}} \sum\limits_{\substack{ 2\leqslant s \leqslant r_1\\ v_1+\cdots + v_s = r_1\\ v_1, \cdots, v_s \geqslant 1 \\ 1\leqslant j \leqslant s \\ 1\leqslant l \leqslant v_j }}\sum\limits_{\sigma} \lambda^{p-q}$}]{};
 \node(r) at (5,-3.5){};
 \node(a-1) at (5,-2.5)[rectangle, draw]{\tiny{$x_s$}};
 \node(a0-1) at (2,-1)[rectangle, draw]{\tiny{$ y_{v_1}$}};
  \node(a0-2) at (8,-1)[rectangle, draw]{\tiny{$ y_{v_s}$}};
 \node(a0) at (5,-1)[rectangle, draw]{\tiny{$ y_{v_j}$}};
 \node(a1-4') at (1.3,0){};
 \node(a1-5') at (2.2,0){};
  \node(a1-6') at (7.8,0){};
 \node(a1-7') at (8.7,0){};
 \node(a1-1) at (2.5,0.5){};
 \node(a1-2) at (5,0.5)[rectangle, draw]{\tiny{$x_p$}};
 \node(a1-3) at (7.5,0.5){};
 \node(a2-1) at (2,2){};
 \node(a2-2) at (3.5, 2.5)[rectangle, draw]{\tiny $y_{r_2}$};
  \node(a2-3) at (5, 2){};
  \node(a2-4) at (6.65, 2.5)[rectangle, draw]{\tiny $y_{r_q}$};
    \node(a2-5) at (8, 2){};
    \node(a3-1) at (2.5,3.5){};
    \node(a3-2) at (4,3.5){};
        \node(a3-3) at (6,3.5){};
    \node(a3-4) at (7.5,3.5){};
\draw(r)--(a-1);
\draw(a-1)--(a0-1);
\draw(a-1)--(a0-2);
\draw(a-1)--(a0);
\draw(a1-4') -- (a0-1);
\draw(a1-5') -- (a0-1);
\draw(a1-6') -- (a0-2);
\draw(a1-7') -- (a0-2);
\draw(a0)--(a1-1);
\draw(a0)--(a1-2);
\draw(a0)--(a1-3);
\draw(a1-2)--(a2-1);
\draw(a1-2)--(a2-2);
\draw(a1-2)--(a2-3);
\draw(a1-2)--(a2-4);
\draw(a1-2)--(a2-5);
\draw(a2-2) --(a3-1);
 \draw(a2-2) --(a3-2);
 \draw(a2-4) --(a3-3);
 \draw(a2-4) --(a3-4);
 \node(sgn) at (9,0.6){$\cdot \sigma $};
 			\draw[dotted,thick](4,-0.25)--(4.85,-0.25);
  			\draw[dotted,thick](5.15,-0.25)--(6,-0.25);
    		\draw[dotted,thick](3.3,1.5)--(3.6,1.5);
    	    \draw[dotted,thick](4.4,1.5)--(4.9,1.5);
            \draw[dotted,thick](5.6,1.5)--(5.1,1.5);
            \draw[dotted,thick](6.4,1.5)--(6.9,1.5);
            \draw[dotted,thick] (3.1,3.1)--(3.55,3.1);
            \draw[dotted,thick] (6.45,3.1)--(7,3.1);
            \draw[dotted,thick] (3.9,-1.8)--(4.8,-1.8);
            \draw[dotted,thick] (5.2,-1.8)--(6.1,-1.8);
            \draw[dotted,thick] (1.7,-0.4)--(2,-0.4);
            \draw[dotted,thick] (8.3,-0.4)--(8,-0.4);
 	\path[-,font=\scriptsize]
 			(a0) edge node[descr]{{\tiny{$l$}}} (a1-2);
  	\path[-,font=\scriptsize]
 			(a1-2) edge node[descr]{{\tiny{$k_1$}}} (a2-2);
    \path[-,font=\scriptsize]
 			(a1-2) edge node[descr]{{\tiny{$k_{q-1}$}}} (a2-4);
  	\path[-,font=\scriptsize]
 			(a0) edge node[descr]{{\tiny{$j$}}} (a-1);
 		\end{tikzpicture}\\
 && 	\begin{tikzpicture}[scale=0.5,descr/.style={fill=white},baseline=(b)]
\tikzstyle{every node}=[minimum size=3pt, inner sep=0.8pt]
 			\node(b) at(-19,-0.2)[minimum size=0pt,label=right:{$ +\sum\limits_{\substack{2\leqslant p \leqslant n\\ 1\leqslant q\leqslant p}}\sum\limits_{\substack{r_1+\cdots+r_q + p -q =n \\ r_1, \cdots, r_q \geqslant 1\\  1\leqslant k_1 < \cdots < k_{q-1} \leqslant p}} \sum\limits_{\substack{2\leqslant  s \leqslant r_1 \\ 1\leqslant t\leqslant s}} \sum\limits_{\substack{v_1+ \cdots v_t + s- t = r_1\\ v_1, \cdots, v_t \geqslant 1\\ 1\leqslant j_1 < \cdots < j_{t-1} \leqslant s\\ 1\leqslant i_1 < i_2 \leqslant v_1  }}\sum\limits_{\sigma} \lambda^{p-q+s -t}$}]{};
 \node(r) at (1,-4){};
 \node(b1) at (0,-1.5)[rectangle, draw]{\tiny{$ x_{s}$}};
 \node(b2-1) at (-3.5,0){};
 \node(b2-2) at (-2,0.5)[rectangle, draw]{\tiny{$ y_{v_2}$}};
  \node(b2-3) at (0,0){};
   \node(b2-4) at (2,0.5)[rectangle, draw]{\tiny{$ y_{v_t}$}};
     \node(b2-5) at (3.5,0){};
     \node(b3-1) at (-2.5,1.5){};
     \node(b3-2) at (-1,1.5){};
     \node(b3-3) at (1,1.5){};
     \node(b3-4) at (2.5,1.5){};
 \node(a0) at (1,-3)[rectangle, draw]{\tiny{$ y_{v_1}$}};
 \node(a1-1) at (-2,-1.5){};
 \node(a1-2) at (5,0.5)[rectangle, draw]{\tiny{$x_p$}};
 \node(a1-3) at (4.5,-1.5){};
 \node(a2-1) at (2,2){};
 \node(a2-2) at (3.5, 2.5)[rectangle, draw]{\tiny $y_{r_2}$};
  \node(a2-3) at (5, 2){};
  \node(a2-4) at (6.65, 2.5)[rectangle, draw]{\tiny $y_{r_q}$};
    \node(a2-5) at (8, 2){};
    \node(a3-1) at (2.5,3.5){};
    \node(a3-2) at (4,3.5){};
        \node(a3-3) at (6,3.5){};
    \node(a3-4) at (7.5,3.5){};
    \draw(b3-1) --(b2-2);
     \draw(b3-2) --(b2-2);
         \draw(b3-3) --(b2-4);
     \draw(b3-4) --(b2-4);
\draw(r)--(a0);
\draw(a0)--(b1);
\draw(b1) --(b2-1);
\draw(b1) --(b2-2);
\draw(b1) --(b2-3);
\draw(b1) --(b2-4);
\draw(b1) --(b2-5);
\draw(a0)--(a1-1);
\draw(a0)--(a1-2);
\draw(a0)--(a1-3);
\draw(a1-2)--(a2-1);
\draw(a1-2)--(a2-2);
\draw(a1-2)--(a2-3);
\draw(a1-2)--(a2-4);
\draw(a1-2)--(a2-5);
\draw(a2-2) --(a3-1);
 \draw(a2-2) --(a3-2);
 \draw(a2-4) --(a3-3);
 \draw(a2-4) --(a3-4);
 \node(sgn) at (8,0.6){$\cdot \sigma $};
    		\draw[dotted,thick](3.3,1.5)--(3.6,1.5);
    	    \draw[dotted,thick](4.4,1.5)--(4.9,1.5);
            \draw[dotted,thick](5.6,1.5)--(5.1,1.5);
            \draw[dotted,thick](6.4,1.5)--(6.9,1.5);
            \draw[dotted,thick] (3.1,3.1)--(3.55,3.1);
            \draw[dotted,thick] (6.45,3.1)--(7,3.1);
            \draw[dotted,thick] (-0.3,-2.2)--(0.2,-2.2);
            \draw[dotted,thick] (0.8,-2.2)--(2.7,-2.2);
            \draw[dotted,thick] (-2,-0.5)--(-1.3,-0.5);
            \draw[dotted,thick] (-0.8,-0.5)--(0.85,-0.5);
            \draw[dotted,thick] (1.3,-0.5)--(2,-0.5);
            \draw[dotted,thick] (-2.2,1.1)--(-1.5,1.1);
            \draw[dotted,thick] (2.2,1.1)--(1.5,1.1);
 	\path[-,font=\scriptsize]
 			(a0) edge node[descr]{{\tiny{$i_2$}}} (a1-2);
  	\path[-,font=\scriptsize]
 			(a0) edge node[descr]{{\tiny{$i_1$}}} (b1);
  	\path[-,font=\scriptsize]
 			(a1-2) edge node[descr]{{\tiny{$k_1$}}} (a2-2);
    \path[-,font=\scriptsize]
 			(a1-2) edge node[descr]{{\tiny{$k_{q-1}$}}} (a2-4);
     \path[-,font=\scriptsize]
 			(b2-2) edge node[descr]{{\tiny{$j_{1}$}}} (b1);
      \path[-,font=\scriptsize]
 			(b2-4) edge node[descr]{{\tiny{$j_{t-1}$}}} (b1);
 		\end{tikzpicture}\\
 && 	\begin{tikzpicture}[scale=0.5,descr/.style={fill=white},baseline=(b)]
\tikzstyle{every node}=[minimum size=3pt, inner sep=0.8pt]
 			\node(b) at(-24,-0.2)[minimum size=0pt,label=right:{$-\sum\limits_{\substack{2\leqslant p \leqslant n\\ 1\leqslant q\leqslant p}}\sum\limits_{\substack{r_1+\cdots+r_q + p -q =n \\ r_1, \cdots, r_q \geqslant 1\\  1\leqslant k_1 < \cdots < k_{q-1} \leqslant p}} \sum\limits_{\substack{2\leqslant s \leqslant r_1 \\ 1\leqslant t\leqslant s}} \sum\limits_{\substack{v_1+ \cdots v_t + s- t = r_1\\ v_1, \cdots, v_t \geqslant 1\\ 1\leqslant j_1 < \cdots < j_{t-1} \leqslant s\\ 1\leqslant i_1 < i_2 \leqslant v_1  }}\sum\limits_{\sigma} \lambda^{p-q + s - t}$}]{};
 \node(r) at (-1,-4){};
 \node(b1) at (0,-1.5)[rectangle, draw]{\tiny{$ x_{s}$}};
 \node(b2-1) at (3.5,0){};
 \node(b2-2) at (2,0.5)[rectangle, draw]{\tiny{$ y_{v_t}$}};
  \node(b2-3) at (0,0){};
   \node(b2-4) at (-2,0.5)[rectangle, draw]{\tiny{$ y_{v_2}$}};
     \node(b2-5) at (-3.5,0){};
     \node(b3-1) at (2.5,1.5){};
     \node(b3-2) at (1,1.5){};
     \node(b3-3) at (-1,1.5){};
     \node(b3-4) at (-2.5,1.5){};
 \node(a0) at (-1,-3)[rectangle, draw]{\tiny{$ y_{v_1}$}};
 \node(a1-1) at (2,-1.5){};
 \node(a1-2) at (-5,0.5)[rectangle, draw]{\tiny{$x_p$}};
 \node(a1-3) at (-4.5,-1.5){};
 \node(a2-1) at (-2,2){};
 \node(a2-2) at (-3.5, 2.5)[rectangle, draw]{\tiny $y_{r_q}$};
  \node(a2-3) at (-5, 2){};
  \node(a2-4) at (-6.65, 2.5)[rectangle, draw]{\tiny $y_{r_2}$};
    \node(a2-5) at (-8, 2){};
    \node(a3-1) at (-2.5,3.5){};
    \node(a3-2) at (-4,3.5){};
        \node(a3-3) at (-6,3.5){};
    \node(a3-4) at (-7.5,3.5){};
    \draw(b3-1) --(b2-2);
     \draw(b3-2) --(b2-2);
         \draw(b3-3) --(b2-4);
     \draw(b3-4) --(b2-4);
\draw(r)--(a0);
\draw(a0)--(b1);
\draw(b1) --(b2-1);
\draw(b1) --(b2-2);
\draw(b1) --(b2-3);
\draw(b1) --(b2-4);
\draw(b1) --(b2-5);
\draw(a0)--(a1-1);
\draw(a0)--(a1-2);
\draw(a0)--(a1-3);
\draw(a1-2)--(a2-1);
\draw(a1-2)--(a2-2);
\draw(a1-2)--(a2-3);
\draw(a1-2)--(a2-4);
\draw(a1-2)--(a2-5);
\draw(a2-2) --(a3-1);
 \draw(a2-2) --(a3-2);
 \draw(a2-4) --(a3-3);
 \draw(a2-4) --(a3-4);
 \node(sgn) at (4,0.6){$\cdot \sigma $};
    		\draw[dotted,thick](-3.3,1.5)--(-3.6,1.5);
    	    \draw[dotted,thick](-4.4,1.5)--(-4.9,1.5);
            \draw[dotted,thick](-5.6,1.5)--(-5.1,1.5);
            \draw[dotted,thick](-6.4,1.5)--(-6.9,1.5);
            \draw[dotted,thick] (-3.1,3.1)--(-3.55,3.1);
            \draw[dotted,thick] (-6.45,3.1)--(-7,3.1);
            \draw[dotted,thick] (0.3,-2.2)--(-0.2,-2.2);
            \draw[dotted,thick] (-0.8,-2.2)--(-2.7,-2.2);
            \draw[dotted,thick] (2,-0.5)--(1.3,-0.5);
            \draw[dotted,thick] (0.8,-0.5)--(-0.85,-0.5);
            \draw[dotted,thick] (-1.3,-0.5)--(-2,-0.5);
            \draw[dotted,thick] (2.2,1.1)--(1.5,1.1);
            \draw[dotted,thick] (-2.2,1.1)--(-1.5,1.1);
 	\path[-,font=\scriptsize]
 			(a0) edge node[descr]{{\tiny{$i_1$}}} (a1-2);
  	\path[-,font=\scriptsize]
 			(a0) edge node[descr]{{\tiny{$i_2$}}} (b1);
  	\path[-,font=\scriptsize]
 			(a1-2) edge node[descr]{{\tiny{$k_{q-1}$}}} (a2-2);
    \path[-,font=\scriptsize]
 			(a1-2) edge node[descr]{{\tiny{$k_{1}$}}} (a2-4);
     \path[-,font=\scriptsize]
 			(b2-2) edge node[descr]{{\tiny{$j_{t-1}$}}} (b1);
      \path[-,font=\scriptsize]
 			(b2-4) edge node[descr]{{\tiny{$j_{1}$}}} (b1);
 		\end{tikzpicture}\\
 &&  \begin{tikzpicture}[scale=0.5,descr/.style={fill=white},baseline=(b)]
\tikzstyle{every node}=[minimum size=3pt, inner sep=0.8pt]
 			\node(b) at(-18,-0.2)[minimum size=0pt,label=right:{$ + \sum\limits_{\substack{2\leqslant p \leqslant n\\ 1\leqslant q\leqslant p}}\sum\limits_{\substack{r_1+\cdots+r_q + p -q =n \\ r_1, \cdots, r_q \geqslant 1\\ 1\leqslant k_1 < \cdots < k_{q-1} \leqslant p}}
 \sum\limits_{\substack{2\leqslant s \leqslant r_1\\ 1 \leqslant t\leqslant s }}
 \sum\limits_{\substack{  v_1+\cdots + v_t +s-t = r_1\\ v_1, \cdots, v_t \geqslant 1 \\ 1 \leqslant j_1 < \cdots < j_{t-1} \leqslant s }}
 \sum\limits_{\substack{1\leqslant i \leqslant v_1 \\ 1\leqslant l \leqslant s \\ l\neq  j_1, \cdots, j_{t-1}}}
 \sum\limits_{\sigma} \lambda^{p-q+s-t}$}]{};
 \node(r) at (5,-4.5){};
 \node(a-1) at (5,-3.5)[rectangle, draw]{\tiny{$y_{v_1}$}};
 \node(a0-1) at (2,-2){};
  \node(a0-2) at (8,-2){};
 \node(a0) at (5,-2)[rectangle, draw]{\tiny{$ x_s$}};
 \node(a1-1) at (0.5,0){};
 \node(a1-2) at (5,0.5)[rectangle, draw]{\tiny{$x_p$}};
 \node(a1-3) at (9.5,0){};
 \node(b1-1) at (2.5,0.5)[rectangle, draw]{\tiny{$y_{v_2}$}};
 \node(b1-2) at (7.5,0.5)[rectangle, draw]{\tiny{$y_{v_{t}}$}};
 \node(b2-1) at (1.4,1.5){};
 \node(b2-2) at (2.6,1.5){};
 \node(b2-3) at (7.4,1.5){};
 \node(b2-4) at (8.6,1.5){};
 \node(a2-1) at (2,2){};
 \node(a2-2) at (3.5, 2.5)[rectangle, draw]{\tiny $y_{r_2}$};
  \node(a2-3) at (5, 2){};
  \node(a2-4) at (6.65, 2.5)[rectangle, draw]{\tiny $y_{r_q}$};
    \node(a2-5) at (8, 2){};
    \node(a3-1) at (2.5,3.5){};
    \node(a3-2) at (4,3.5){};
        \node(a3-3) at (6,3.5){};
    \node(a3-4) at (7.5,3.5){};
    \draw(b2-1) -- (b1-1);
    \draw(b2-2) -- (b1-1);
    \draw(b2-3) -- (b1-2);
    \draw(b2-4) -- (b1-2);
\draw(r)--(a-1);
\draw(a-1)--(a0-1);
\draw(a-1)--(a0-2);
\draw(a-1)--(a0);
\draw(a0)--(a1-1);
\draw(a0)--(a1-2);
\draw(a0)--(a1-3);
\draw(a1-2)--(a2-1);
\draw(a1-2)--(a2-2);
\draw(a1-2)--(a2-3);
\draw(a1-2)--(a2-4);
\draw(a1-2)--(a2-5);
\draw(a2-2) --(a3-1);
 \draw(a2-2) --(a3-2);
 \draw(a2-4) --(a3-3);
 \draw(a2-4) --(a3-4);
 \node(sgn) at (10,0.6){$\cdot \sigma $};
 			\draw[dotted,thick](4.3,-0.75)--(4.85,-0.75);
  			\draw[dotted,thick](5.15,-0.75)--(5.7,-0.75);
    		\draw[dotted,thick](3.3,1.5)--(3.6,1.5);
    	    \draw[dotted,thick](4.4,1.5)--(4.9,1.5);
            \draw[dotted,thick](5.6,1.5)--(5.1,1.5);
            \draw[dotted,thick](6.4,1.5)--(6.9,1.5);
            \draw[dotted,thick] (3.1,3.1)--(3.55,3.1);
            \draw[dotted,thick] (6.45,3.1)--(7,3.1);
            \draw[dotted,thick] (3.9,-2.8)--(4.8,-2.8);
            \draw[dotted,thick] (5.2,-2.8)--(6.1,-2.8);
            \draw[dotted,thick] (2.7,-0.75)--(3.8,-0.75);
            \draw[dotted,thick] (6.2,-0.75)--(7.3,-0.75);
             \draw[dotted,thick](2,1.2)--(2.5,1.2);
             \draw[dotted,thick](8,1.2)--(7.5,1.2);
 	\path[-,font=\scriptsize]
 			(a0) edge node[descr]{{\tiny{$l$}}} (a1-2);
  	\path[-,font=\scriptsize]
 			(a1-2) edge node[descr]{{\tiny{$k_1$}}} (a2-2);
    \path[-,font=\scriptsize]
 			(a1-2) edge node[descr]{{\tiny{$k_{q-1}$}}} (a2-4);
  	\path[-,font=\scriptsize]
 			(a0) edge node[descr]{{\tiny{$i$}}} (a-1);
   	\path[-,font=\scriptsize]
 			(a0) edge node[descr]{{\tiny{$j_1$}}} (b1-1);
    	\path[-,font=\scriptsize]
 			(a0) edge node[descr]{{\tiny{$j_{t-1}$}}} (b1-2);
 		\end{tikzpicture}\\
 &&  \begin{tikzpicture}[scale=0.5,descr/.style={fill=white},baseline=(b)]
\tikzstyle{every node}=[minimum size=3pt, inner sep=0.8pt]
 			\node(b) at(-16,-0.2)[minimum size=0pt,label=right:{$+ \sum\limits_{\substack{2\leqslant p \leqslant n\\ 1\leqslant q\leqslant p}}\sum\limits_{\substack{r_1+\cdots+r_q + p -q =n \\ r_1, \cdots, r_q \geqslant 1\\ 1\leqslant k_1 < \cdots < k_{q-1} \leqslant p}}
 \sum\limits_{\substack{2\leqslant s \leqslant r_1\\ 2 \leqslant t\leqslant s \\ 1\leqslant u \leqslant t-1  }}
 \sum\limits_{\substack{  v_1+\cdots + v_t + s - t = r_1\\ v_1, \cdots, v_t \geqslant 1 \\ 1 \leqslant j_1 < \cdots < j_{t-1} \leqslant s \\ 1\leqslant i_1 \leqslant v_1 \\ 1\leqslant i_2 \leqslant v_{u+1}}}
 \sum\limits_{\sigma} \lambda^{p-q+s-t}$}]{};
 \node(r) at (5,-4.5){};
 \node(a-1) at (5,-3.5)[rectangle, draw]{\tiny{$y_{v_1}$}};
 \node(a0-1) at (2,-2){};
  \node(a0-2) at (8,-2){};
 \node(a0) at (5,-2)[rectangle, draw]{\tiny{$ x_s$}};
 \node(a1-1) at (0.5,0){};
 \node(a1-2) at (5,2.5)[rectangle, draw]{\tiny{$x_p$}};
 \node(a1-2') at (5,0.5)[rectangle, draw]{\tiny{$y_{v_{u+1}}$}};
 \node(a2-1') at (3,2){};
 \node(a2-2') at (7,2){};
 \node(a1-3) at (9.5,0){};
 \node(b1-1) at (2.5,0.5)[rectangle, draw]{\tiny{$y_{v_2}$}};
 \node(b1-2) at (7.5,0.5)[rectangle, draw]{\tiny{$y_{v_{t}}$}};
 \node(b2-1) at (1.4,1.5){};
 \node(b2-2) at (2.6,1.5){};
 \node(b2-3) at (7.4,1.5){};
 \node(b2-4) at (8.6,1.5){};
 \node(a2-1) at (2,4){};
 \node(a2-2) at (3.5, 4.5)[rectangle, draw]{\tiny $y_{r_2}$};
  \node(a2-3) at (5, 4){};
  \node(a2-4) at (6.65, 4.5)[rectangle, draw]{\tiny $y_{r_q}$};
    \node(a2-5) at (8, 4){};
    \node(a3-1) at (2.5,5.5){};
    \node(a3-2) at (4,5.5){};
        \node(a3-3) at (6,5.5){};
    \node(a3-4) at (7.5,5.5){};
    \draw(a2-1') -- (a1-2');
    \draw(a2-2') -- (a1-2');
    \draw(b2-1) -- (b1-1);
    \draw(b2-2) -- (b1-1);
    \draw(b2-3) -- (b1-2);
    \draw(b2-4) -- (b1-2);
\draw(r)--(a-1);
\draw(a-1)--(a0-1);
\draw(a-1)--(a0-2);
\draw(a-1)--(a0);
\draw(a0)--(a1-1);
\draw(a0)--(a1-2');
\draw(a0)--(a1-3);
\draw(a1-2) -- (a1-2');
\draw(a1-2)--(a2-1);
\draw(a1-2)--(a2-2);
\draw(a1-2)--(a2-3);
\draw(a1-2)--(a2-4);
\draw(a1-2)--(a2-5);
\draw(a2-2) --(a3-1);
 \draw(a2-2) --(a3-2);
 \draw(a2-4) --(a3-3);
 \draw(a2-4) --(a3-4);
 \node(sgn) at (10,0.6){$\cdot \sigma $};
 			\draw[dotted,thick](4.3,-0.75)--(4.65,-0.75);
  			\draw[dotted,thick](5.45,-0.75)--(5.7,-0.75);
    		\draw[dotted,thick](3.3,3.5)--(3.6,3.5);
    	    \draw[dotted,thick](4.4,3.5)--(4.9,3.5);
            \draw[dotted,thick](5.6,3.5)--(5.1,3.5);
            \draw[dotted,thick](6.4,3.5)--(6.9,3.5);
            \draw[dotted,thick] (3.1,5.1)--(3.55,5.1);
            \draw[dotted,thick] (6.45,5.1)--(7,5.1);
            \draw[dotted,thick] (3.9,-2.8)--(4.8,-2.8);
            \draw[dotted,thick] (5.25,-2.8)--(6.1,-2.8);
            \draw[dotted,thick] (2.7,-0.75)--(3.8,-0.75);
            \draw[dotted,thick] (6.2,-0.75)--(7.3,-0.75);
             \draw[dotted,thick](2,1.2)--(2.5,1.2);
             \draw[dotted,thick](8,1.2)--(7.5,1.2);
             \draw[dotted,thick](4,1.5)--(4.7,1.5);
             \draw[dotted,thick](5.3,1.5)--(6,1.5);
 	\path[-,font=\scriptsize]
 			(a0) edge node[descr]{{\tiny{$j_u$}}} (a1-2');
  	\path[-,font=\scriptsize]
 			(a1-2) edge node[descr]{{\tiny{$i_2$}}} (a1-2');
  	\path[-,font=\scriptsize]
 			(a1-2) edge node[descr]{{\tiny{$k_1$}}} (a2-2);
    \path[-,font=\scriptsize]
 			(a1-2) edge node[descr]{{\tiny{$k_{q-1}$}}} (a2-4);
  	\path[-,font=\scriptsize]
 			(a0) edge node[descr]{{\tiny{$i_1$}}} (a-1);
   	\path[-,font=\scriptsize]
 			(a0) edge node[descr]{{\tiny{$j_1$}}} (b1-1);
    	\path[-,font=\scriptsize]
 			(a0) edge node[descr]{{\tiny{$j_{t-1}$}}} (b1-2);
 		\end{tikzpicture} \\
 && \begin{tikzpicture}[scale=0.5,descr/.style={fill=white},baseline=(b)]
\tikzstyle{every node}=[minimum size=3pt, inner sep=0.8pt]
 			\node(b) at(-15,-0.2)[minimum size=0pt,label=right:{$- \sum\limits_{\substack{2\leqslant p \leqslant n\\ 1\leqslant q\leqslant p\\ 1\leqslant t \leqslant q\\ 2\leqslant s \leqslant p -1 }}\sum\limits_{\substack{r_1+\cdots+r_{t} + v_1 + \cdots + v_{q-t} + p -q =n \\ r_1, \cdots, r_t, v_1, \cdots, v_{q-t} \geqslant 1\\ 1\leqslant k_1 < \cdots < k_{t-1} \leqslant p-s-1\\ 1\leqslant j_1 < \cdots < j_{q-t} \leqslant s}}
 \sum\limits_{\substack{ 1\leqslant i_1 \leqslant r_1\\ 1\leqslant i_2 \leqslant p-s-1\\ i_2 \neq k_1, \cdots, k_{t-1} }}
 \sum\limits_{\sigma} \lambda^{p-q}$}]{};
 \node(r) at (5,-4.5){};
 \node(a-1) at (5,-3.5)[rectangle, draw]{\tiny{$y_{r_1}$}};
 \node(a0-1) at (2,-2){};
  \node(a0-2) at (8,-2){};
 \node(a0) at (5,-2)[rectangle, draw]{\tiny{$ x_{p-s-1}$}};
 \node(a1-1) at (0.5,0){};
 \node(a1-2) at (5,0.5)[rectangle, draw]{\tiny{$x_s$}};
 \node(a1-3) at (9.5,0){};
 \node(b1-1) at (2.5,0.5)[rectangle, draw]{\tiny{$y_{r_2}$}};
 \node(b1-2) at (7.5,0.5)[rectangle, draw]{\tiny{$y_{r_{t}}$}};
 \node(b2-1) at (1.4,1.5){};
 \node(b2-2) at (2.6,1.5){};
 \node(b2-3) at (7.4,1.5){};
 \node(b2-4) at (8.6,1.5){};
 \node(a2-1) at (2,2){};
 \node(a2-2) at (3.5, 2.5)[rectangle, draw]{\tiny $y_{v_1}$};
  \node(a2-3) at (5, 2){};
  \node(a2-4) at (6.65, 2.5)[rectangle, draw]{\tiny $y_{v_{q-t}}$};
    \node(a2-5) at (8, 2){};
    \node(a3-1) at (2.5,3.5){};
    \node(a3-2) at (4,3.5){};
        \node(a3-3) at (6,3.5){};
    \node(a3-4) at (7.5,3.5){};
    \draw(b2-1) -- (b1-1);
    \draw(b2-2) -- (b1-1);
    \draw(b2-3) -- (b1-2);
    \draw(b2-4) -- (b1-2);
\draw(r)--(a-1);
\draw(a-1)--(a0-1);
\draw(a-1)--(a0-2);
\draw(a-1)--(a0);
\draw(a0)--(a1-1);
\draw(a0)--(a1-2);
\draw(a0)--(a1-3);
\draw(a1-2)--(a2-1);
\draw(a1-2)--(a2-2);
\draw(a1-2)--(a2-3);
\draw(a1-2)--(a2-4);
\draw(a1-2)--(a2-5);
\draw(a2-2) --(a3-1);
 \draw(a2-2) --(a3-2);
 \draw(a2-4) --(a3-3);
 \draw(a2-4) --(a3-4);
 \node(sgn) at (10,0.6){$\cdot \sigma $};
 			\draw[dotted,thick](4.3,-0.75)--(4.85,-0.75);
  			\draw[dotted,thick](5.15,-0.75)--(5.7,-0.75);
    		\draw[dotted,thick](3.3,1.5)--(3.6,1.5);
    	    \draw[dotted,thick](4.4,1.5)--(4.9,1.5);
            \draw[dotted,thick](5.6,1.5)--(5.1,1.5);
            \draw[dotted,thick](6.4,1.5)--(6.9,1.5);
            \draw[dotted,thick] (3.1,3.1)--(3.55,3.1);
            \draw[dotted,thick] (6.45,3.1)--(7,3.1);
            \draw[dotted,thick] (3.9,-2.8)--(4.8,-2.8);
            \draw[dotted,thick] (5.35,-2.8)--(6.1,-2.8);
            \draw[dotted,thick] (2.7,-0.75)--(3.8,-0.75);
            \draw[dotted,thick] (6.2,-0.75)--(7.3,-0.75);
             \draw[dotted,thick](2,1.2)--(2.5,1.2);
             \draw[dotted,thick](8,1.2)--(7.5,1.2);
 	\path[-,font=\scriptsize]
 			(a0) edge node[descr]{{\tiny{$i_2$}}} (a1-2);
  	\path[-,font=\scriptsize]
 			(a1-2) edge node[descr]{{\tiny{$j_1$}}} (a2-2);
    \path[-,font=\scriptsize]
 			(a1-2) edge node[descr]{{\tiny{$j_{q-t}$}}} (a2-4);
  	\path[-,font=\scriptsize]
 			(a0) edge node[descr]{{\tiny{$i_1$}}} (a-1);
   	\path[-,font=\scriptsize]
 			(a0) edge node[descr]{{\tiny{$k_1$}}} (b1-1);
    	\path[-,font=\scriptsize]
 			(a0) edge node[descr]{{\tiny{$k_{t-1}$}}} (b1-2);
 		\end{tikzpicture} \\
  && \begin{tikzpicture}[scale=0.5,descr/.style={fill=white},baseline=(b)]
\tikzstyle{every node}=[minimum size=3pt, inner sep=0.8pt]
 			\node(b) at(-12,-0.2)[minimum size=0pt,label=right:{$ + \sum\limits_{\substack{2\leqslant p \leqslant n\\ 2\leqslant q\leqslant p\\ 2\leqslant l \leqslant q}}\sum\limits_{\substack{r_1+\cdots+r_q + p -q =n \\ r_1, \cdots, r_q \geqslant 1\\ 1\leqslant i \leqslant r_1\\  1\leqslant k_1 < \cdots < k_{q-1} \leqslant p}}
 \sum\limits_{\substack{2\leqslant s \leqslant r_{l+1}\\ v_1+ \cdots + v_s = r_{l+1} \\ v_1, \cdots, v_s \geqslant 1}} \sum\limits_{\sigma} \lambda^{p-q}$}]{};
 \node(r) at (5,-2){};
 \node(a0) at (5,-1)[rectangle, draw]{\tiny{$y_{r_1}$}};
 \node(a1-1) at (1.5,0.5){};
 \node(a1-2) at (5,0.5)[rectangle, draw]{\tiny{$x_p$}};
 \node(a1-3) at (8.5,0.5){};
 \node(a2-1) at (1,2){};
 \node(a2-2) at (2.5, 2.5)[rectangle, draw]{\tiny $y_{r_2}$};
 \node(b2-1) at (4.2,2.1){};
 \node(b2-2) at (5.8,2.1){};
  \node(a2-3) at (5, 2.5)[rectangle, draw]{\tiny $x_{s}$};
  \node(a2-4) at (7.5, 2.5)[rectangle, draw]{\tiny $y_{r_q}$};
    \node(a2-5) at (9, 2){};
    \node(a3-1) at (1.5,3.5){};
    \node(a3-2) at (3.1,3.5){};
        \node(a3-3) at (6.9,3.5){};
    \node(a3-4) at (8.5,3.5){};
    \node(b3-1) at (3.5,4)[rectangle, draw]{\tiny $y_{v_1}$};
    \node(b3-2) at (6.5,4)[rectangle, draw]{\tiny $y_{v_{s}}$};
\draw(b3-1) -- (2.5,5);
\draw(b3-1)--(4.5,5);
\draw(b3-2) -- (5.5,5);
\draw(b3-2) --(7.5,5);
\draw(r)--(a0);
\draw(a0)--(a1-1);
\draw(a0)--(a1-2);
\draw(a0)--(a1-3);
\draw(a1-2)--(a2-1);
\draw(a1-2)--(a2-2);
\draw(a1-2)--(a2-3);
\draw(a1-2)--(a2-4);
\draw(a1-2)--(a2-5);
\draw(a2-2) --(a3-1);
 \draw(a2-2) --(a3-2);
 \draw(a2-4) --(a3-3);
 \draw(a2-4) --(a3-4);
 \draw(b2-1) -- (a1-2);
 \draw(b2-2) -- (a1-2);
 \draw(b3-1) -- (a2-3);
 \draw(b3-2) -- (a2-3);
 \node(sgn) at (9,0.6){$\cdot \sigma $};
 			\draw[dotted,thick](3.6,-0.25)--(4.85,-0.25);
  			\draw[dotted,thick](5.15,-0.25)--(6.4,-0.25);
    		\draw[dotted,thick](2.9,1.5)--(3.4,1.5);
    	    \draw[dotted,thick](3.85,1.5)--(4.7,1.5);
        	\draw[dotted,thick](5.1,1.5)--(5.6,1.5);
            \draw[dotted,thick](6.4,1.5)--(7.3,1.5);
            \draw[dotted,thick] (2.1,3.1)--(2.75,3.1);
            \draw[dotted,thick] (4.4,3.3)--(5.65,3.3);
            \draw[dotted,thick] (7.3,3.1)--(7.95,3.1);
            \draw[dotted,thick] (3.1,4.7)--(4.05,4.7);
            \draw[dotted,thick] (6.15,4.7)--(7.1,4.7);
 	\path[-,font=\scriptsize]
 			(a0) edge node[descr]{{\tiny{$i$}}} (a1-2);
  	\path[-,font=\scriptsize]
 			(a1-2) edge node[descr]{{\tiny{$k_1$}}} (a2-2);
    \path[-,font=\scriptsize]
 			(a1-2) edge node[descr]{{\tiny{$k_{q-1}$}}} (a2-4);
     \path[-,font=\scriptsize]
 			(a1-2) edge node[descr]{{\tiny{$k_{l}$}}} (a2-3);
 		\end{tikzpicture} \\
  && \begin{tikzpicture}[scale=0.5,descr/.style={fill=white},baseline=(b)]
\tikzstyle{every node}=[minimum size=3pt, inner sep=0.8pt]
 			\node(b) at(-16,1.8)[minimum size=0pt,label=right:{$ - \sum\limits_{\substack{2\leqslant p \leqslant n\\ 2\leqslant q\leqslant p\\ 2\leqslant l \leqslant q}}\sum\limits_{\substack{r_1+\cdots+r_q + p -q =n \\ r_1, \cdots, r_q \geqslant 1\\   1\leqslant k_1 < \cdots < k_{q-1} \leqslant p}}
 \sum\limits_{\substack{2\leqslant s \leqslant r_{l+1}\\ 1\leqslant t \leqslant s \\ 1\leqslant i_1 \leqslant r_1 \\ 1\leqslant i_2 \leqslant v_1 }}
 \sum\limits_{\substack{ v_1+ \cdots + v_t  + s -t = r_{l+1} \\ v_1, \cdots, v_t \geqslant 1 \\ 1\leqslant j_1  < \cdots < j_{t-1} \leqslant s}}
 \sum\limits_{\sigma} \lambda^{p-q+s-t}$}]{};
 \node(r) at (5,-2){};
 \node(a0) at (5,-1)[rectangle, draw]{\tiny{$y_{r_1}$}};
 \node(a1-1) at (1.5,0.5){};
 \node(a1-2) at (5,0.5)[rectangle, draw]{\tiny{$x_p$}};
 \node(a1-3) at (8.5,0.5){};
 \node(a2-1) at (1,2){};
 \node(a2-2) at (2.5, 2.5)[rectangle, draw]{\tiny $y_{r_2}$};
 \node(b2-1) at (4.2,2.1){};
 \node(b2-2) at (5.8,2.1){};
  \node(a2-3) at (5, 2.5)[rectangle, draw]{\tiny $y_{v_1}$};
  \node(a2-4) at (7.5, 2.5)[rectangle, draw]{\tiny $y_{r_q}$};
    \node(a2-5) at (9, 2){};
    \node(a3-1) at (1.5,3.5){};
    \node(a3-2) at (3.1,3.5){};
        \node(a3-3) at (6.9,3.5){};
    \node(a3-4) at (8.5,3.5){};
    \node(b3-1) at (2.5,4){};
    \node(b3-1') at (5,4)[rectangle, draw]{\tiny $x_{s}$};
    \draw(b3-1') --(1,5);
    \draw(b3-1') -- (5,6);
    \draw(b3-1') --(9,5);
    \node(b3-2) at (7.5,4){};
    \node(b4-1) at (3,6)[rectangle, draw]{\tiny $y_{v_2}$};
    \draw(b4-1) -- (2,7);
    \draw(b4-1) -- (4,7);
    \draw[dotted,thick](2.6,6.7)--(3.45,6.7);
    \node(b4-2) at (7,6)[rectangle, draw]{\tiny $y_{v_{t}}$};
    \draw(b4-2) -- (6,7);
    \draw(b4-2) -- (8,7);
    \draw[dotted,thick](7.4,6.7)--(6.55,6.7);
\draw(r)--(a0);
\draw(a0)--(a1-1);
\draw(a0)--(a1-2);
\draw(a0)--(a1-3);
\draw(a1-2)--(a2-1);
\draw(a1-2)--(a2-2);
\draw(a1-2)--(a2-3);
\draw(a1-2)--(a2-4);
\draw(a1-2)--(a2-5);
\draw(a2-2) --(a3-1);
 \draw(a2-2) --(a3-2);
 \draw(a2-4) --(a3-3);
 \draw(a2-4) --(a3-4);
 \draw(b2-1) -- (a1-2);
 \draw(b2-2) -- (a1-2);
 \draw(b3-1) -- (a2-3);
 \draw(b3-2) -- (a2-3);
 \node(sgn) at (9.5,2.6){$\cdot \sigma $};
 			\draw[dotted,thick](3.6,-0.25)--(4.85,-0.25);
  			\draw[dotted,thick](5.15,-0.25)--(6.4,-0.25);
    		\draw[dotted,thick](2.9,1.5)--(3.4,1.5);
    	    \draw[dotted,thick](3.85,1.5)--(4.7,1.5);
        	\draw[dotted,thick](5.1,1.5)--(5.6,1.5);
            \draw[dotted,thick](6.4,1.5)--(7.3,1.5);
            \draw[dotted,thick] (2.1,3.1)--(2.75,3.1);
            \draw[dotted,thick] (4,3.3)--(4.65,3.3);
            \draw[dotted,thick] (6,3.3)--(5.35,3.3);
            \draw[dotted,thick] (7.3,3.1)--(7.95,3.1);
 	\path[-,font=\scriptsize]
 			(a0) edge node[descr]{{\tiny{$i_1$}}} (a1-2);
  	\path[-,font=\scriptsize]
 			(a1-2) edge node[descr]{{\tiny{$k_1$}}} (a2-2);
    \path[-,font=\scriptsize]
 			(a1-2) edge node[descr]{{\tiny{$k_{q-1}$}}} (a2-4);
     \path[-,font=\scriptsize]
 			(a1-2) edge node[descr]{{\tiny{$k_{l}$}}} (a2-3);
  \path[-,font=\scriptsize]
 			(b3-1') edge node[descr]{{\tiny{$i_2$}}} (a2-3);
   \path[-,font=\scriptsize]
 			(b3-1') edge node[descr]{{\tiny{$j_1$}}} (b4-1);
   \path[-,font=\scriptsize]
 			(b3-1') edge node[descr]{{\tiny{$j_{t-1}$}}} (b4-2);
 \draw[dotted,thick] (2.2,5)--(3.7,5);
  \draw[dotted,thick] (4.5,5)--(5.4,5);
 \draw[dotted,thick] (6.5,5)--(8,5);
 		\end{tikzpicture}
 \end{eqnarray*}
where all the $\sum_\sigma$ are summations over those $\sigma \in \s_n$ that render the tree monomials into shuffle tree monomials.
Note that in the final summation above, each tree appears twice with opposite signs, resulting in pairwise cancellation. This establishes the identity $\partial^2(y_n)=0$.
\end{proof}

\begin{defn}
	The symmetric homotopy cooperad $\mathscr{S}\RBLA^\antish\ot_{\mathrm{H}} \calS^{-1}$ is called the Koszul dual cooperad of $\RBLA$, and it is denoted as $\RBLA^\antish$.
\end{defn}

 Precisely, the underlying $\mathbb{S}$-module of $\RBLA^\antish$ is
 $$\RBLA^\antish(n)=\bfk e_n\oplus \bfk o_n,n\geq 1$$
 with $e_n=u_n\otimes \delta^{-1}_n $ and $o_n=v_n \otimes \delta^{-1}_n,$ thus   $|e_n|=n-1,e_n\sigma=\sgn(\sigma)e_n$ and $|o_n|=n,o_n\sigma=\sgn(\sigma)o_n,$ for $n\geq 1$ and $\sigma \in \s_n.$
The family of operations $\{\Delta_T\}_{T\in \frakt}$ defining its symmetric homotopy cooperad structure is given by the following:

\begin{itemize}
	\item[(i)] for element $e_n\in\mathscr{S}\RBLA^\antish(n)$ and $T$ of type $\mathrm{(I)}$, define
	\begin{eqnarray*}
		\begin{tikzpicture}[scale=1,descr/.style={fill=white}]
			\tikzstyle{every node}=[thick,minimum size=5pt, inner sep=1pt]
			\node(r) at (0,-0.5)[minimum size=0pt,rectangle]{};
			\node(v-2) at(-1.8,0.5)[minimum size=0pt, label=left:{\Large$\Delta_T(e_n)=(-1)^{(j+1)(n-i+1)} \sgn(\sigma)$}]{};
			\node(v0) at (0,0)[draw,rectangle]{{\small $e_{n-j+1}$}};
\draw(r)--(v0);
			\node(v1-1) at (-1.5,1){};
			\node(v1-2) at(0,1)[draw,rectangle]{\small$e_j$};
			\node(v1-3) at(1.5,1){};
			\node(v2-1)at (-1,2){};
			\node(v2-2) at(1,2){};
\node(v3-1) at (2,0.5) {$\cdot \sigma$};
			\draw(v0)--(v1-1);
			\draw(v0)--(v1-3);
			\draw(v1-2)--(v2-1);
			\draw(v1-2)--(v2-2);
			\draw[thick,dotted](-0.4,1.5)--(0.4,1.5);
			\draw[thick,dotted](-0.5,0.5)--(-0.1,0.5);
			\draw[thick,dotted](0.1,0.5)--(0.5,0.5);
			\path[-,font=\scriptsize]
			(v0) edge node[descr]{{\tiny$i$}} (v1-2);
		\end{tikzpicture}
	\end{eqnarray*}
	\item[(ii)] for the element $o_n\in\mathscr{S}\RBLA^\antish(n)$ and tree $T$ of type $\mathrm{(I)}$ with $j=1$, define
	
	\begin{eqnarray*}
		\begin{tikzpicture}[scale=1,descr/.style={fill=white}]
			\tikzstyle{every node}=[thick,minimum size=5pt, inner sep=1pt]
			\node(r) at (0,-0.5)[minimum size=0pt,rectangle]{};
			\node(v-1) at(-2,0.5)[minimum size=0pt, label=left:{\Large$\Delta_T(o_n)=\sgn(\sigma)$}]{};
			\node(v0) at (0,0)[draw,rectangle]{\small$o_n$};
\draw(r) -- (v0);
			\node(v1-1) at (-1.3,1){};
			\node(v1-2) at(0,1)[draw,rectangle]{\small$e_1$};
			\node(v1-3) at(1.3,1){};
			\node(v2-1)at (0,1.8){};
\node(v3-1) at (1.7,0.5){$\cdot \sigma$};
			\draw(v0)--(v1-1);
			\draw(v0)--(v1-3);
			\draw(v1-2)--(v2-1);
			\draw[thick,dotted](-0.5,0.5)--(-0.1,0.5);
			\draw[thick,dotted](0.1,0.5)--(0.5,0.5);
			\path[-,font=\scriptsize]
			(v0) edge node[descr]{{\tiny$i$}} (v1-2);
		\end{tikzpicture}
	\end{eqnarray*}
	for tree $T$ of type $\mathrm{(I)}$ with $2\leqslant j\leqslant n-1$, define
	\begin{eqnarray*}
		\begin{tikzpicture}[scale=1,descr/.style={fill=white}]
			\tikzstyle{every node}=[thick,minimum size=5pt, inner sep=1pt]
			\node(r) at (0,-0.5)[minimum size=0pt,rectangle]{};
			\node(v-2) at(-2,0.5)[minimum size=0pt, label=left:{\Large$\Delta_T(o_n)=(-1)^{(j+1)(n-i+1)}\lambda^{j-1}\sgn(\sigma)$}]{};
			\node(v0) at (0,0)[draw, rectangle]{$o_{n-j+1}$};
\draw(v0) -- (r);
			\node(v1-1) at (-1.5,1) {};
			\node(v1-2) at(0,1)[draw,rectangle]{\small $e_j$};
			\node(v1-3) at(1.5,1) {};
			\node(v2-1)at (-1,2){};
			\node(v2-2) at(1,2){};
\node(v3-1) at (2,0.5){$\cdot\sigma$};
			\draw(v0)--(v1-1);
			\draw(v0)--(v1-3);
			\draw(v1-2)--(v2-1);
			\draw(v1-2)--(v2-2);
			\draw[thick,dotted](-0.4,1.5)--(0.4,1.5);
			\draw[thick,dotted](-0.5,0.5)--(-0.1,0.5);
			\draw[thick,dotted](0.1,0.5)--(0.5,0.5);
			\path[-,font=\scriptsize]
			(v0) edge node[descr]{{\tiny$i$}} (v1-2);
		\end{tikzpicture}
	\end{eqnarray*}
	and for tree $T$ of type $(1)$ with $j=n$, define
	\begin{eqnarray*}
		\begin{tikzpicture}[scale=1,descr/.style={fill=white}]
			\tikzstyle{every node}=[thick,minimum size=5pt, inner sep=1pt]
			\node(r) at (0,-0.5)[minimum size=0pt,rectangle]{};
			\node(va) at(-4,0.5)[minimum size=0pt, label=left:{\Large$\Delta_T(o_n)=$}]{};
			\node(vb) at(-1,0.5)[minimum size=0pt,label=left:{\Large$\lambda^{n-1}\sgn(\sigma)$}]{};
\begin{scope}[yshift=-0.5cm]
			\node(vc) at (0,0)[draw, rectangle]{\small $o_1$};
\draw(vc) -- (0,-0.5);
			\node(v1) at(0,1)[draw,rectangle]{\small $e_n$};
			\node(v2-1)at (-1,2) {};
			\node(v2-2) at(1,2) {};
			\draw[thick,dotted](-0.4,1.5)--(0.4,1.5);
\node(v3-1) at (1.3,1){$\cdot\sigma$};
\end{scope}
\begin{scope}[xshift=0.5cm]
			\node(vd) at(1.3,0.5)[minimum size=0, label=right:$+\ \ $\Large$\sgn(\sigma) $]{};
\begin{scope}[yshift=-0.5cm]
			\node(ve) at (4,0)[draw, rectangle]{\small $e_1$};
\draw(ve) -- (4,-0.5);
			\node(ve1) at (4,1)[draw,rectangle]{\small $o_n$};
			\node(ve2-1) at(3,2) {};
			\node(ve2-2) at(5,2) {};
\node(ve3-1) at (5.3,1){$\cdot\sigma$};
			\draw[thick,dotted](3.6,1.5)--(4.4,1.5);
\end{scope}
\end{scope}
			\draw(v1)--(v2-1);
			\draw(v1)--(v2-2);
			\draw(vc)--(v1);
			\draw(ve)--(ve1);
			\draw(ve1)--(ve2-1);
			\draw(ve1)--(ve2-2);
		\end{tikzpicture}
	\end{eqnarray*}
	
	\item[(iii)] for tree $T$ of type $\mathrm{(II)}$ with $r_1+\dots+r_k=n$, define
	
	\begin{eqnarray*}
		\begin{tikzpicture}[scale=1,descr/.style={fill=white}]
			\tikzstyle{every node}=[thick,minimum size=5pt, inner sep=1pt]
			\node(v-2) at (-6,-0.5)[minimum size=0pt, label=left:{\Large$\Delta_T(o_n)=$}]{};
			\node(v-1) at(-6,-0.5)[minimum size=0pt,label=right:{\Large$(-1)^{\frac{k(k-1)}{2}}\sgn(\sigma)$}]{};
			\node(v0) at (0,-1.5)[rectangle, draw]{\small $e_k$};
\draw(v0) -- (0,-2);
			\node(v1) at(-1.2,-0.3)[rectangle,draw]{\small $o_{r_1}$};
			\node(v1-1) at(-2,0.8){};
			\node(v1-2) at (-1,0.8){};
			\node(v3) at (1.2,-0.3)[rectangle, draw]{\small $o_{r_k}$};
			\node(v3-1) at (1,0.8){};
			\node(v3-2) at (2,0.8){};
			\node(v2-1) at (0,0) [rectangle,draw]{\small $o_{r_i}$};
			\node(v2-1-1) at (-0.5,1){};
			\node(v2-1-2) at (0.5, 1){};
\node(v4-1) at (2.3,-0.5){$\cdot\sigma$};
			\draw [thick,dotted ] (-0.7,-0.5)--(-0.2,-0.5);
			\draw [thick,dotted ] (0.3,-0.5)--(0.8,-0.5);
			\draw [thick,dotted ] (-0.2,0.5)--(0.2,0.5);
			\draw [thick,dotted ] (-1.6,0.5)--(-1.2, 0.5);
			\draw [thick,dotted ] (1.2,0.5)--(1.6,0.5);
			\draw        (v0)--(v1);
			\draw         (v0)--(v3);
			\draw         (v1)--(v1-1);
			\draw          (v1)--(v1-2);
			\draw        (v2-1)--(v2-1-1);
			\draw        (v2-1)--(v2-1-2);
			\draw        (v3)--(v3-1);
			\draw        (v3)--(v3-2);
			\path[-,font=\scriptsize]
			(v0) edge node[descr]{{\tiny$i$}} (v2-1);
		\end{tikzpicture}
	\end{eqnarray*}
	\item[(iv)] for tree $T$ of type $\mathrm{(III)}$ with $r_1+\dots+r_q+p-q=n$, define
	\begin{eqnarray*}
		\begin{tikzpicture}[scale=1,descr/.style={fill=white}]
			\tikzstyle{every node}=[thick,minimum size=5pt, inner sep=1pt]
			\node(v-2) at (-7,1.2)[minimum size=0pt, label=left:{\Large$\Delta_T(o_n)=$}]{};
			\node(v-1) at(-6.8,1.2)[minimum size=0pt,label=right:{\Large$(-1)^\beta\sgn(\sigma)\lambda^{p-q}$}]{};
			\node(v0) at (0,0)[rectangle,draw]{\small $o_{r_1}$};
\draw(v0)--(0,-0.5);
			\node(v1-1) at (-2,1){};
			\node(v1-2) at(0,1.2)[rectangle,draw]{\small $e_p$};
			\node(v1-3) at(2,1){};
			\node(v2-1) at(-1.9,2.6){};
			\node(v2-2) at (-0.9, 2.8)[rectangle,draw]{\small$o_{r_2}$};
			\node(v2-3) at (0,2.9){};
			\node(v2-4) at(0.9,2.8)[rectangle,draw]{\small $o_{r_q}$};
			\node(v2-5) at(1.9,2.6){};
			\node(v3-1) at (-1.5,3.5){};
			\node(v3-2) at (-0.3,3.5){};
			\node(v3-3) at (0.3,3.5){};
			\node(v3-4) at(1.5,3.5){};
\node(v4-1) at (2.5,1.2) {$\cdot\sigma$};
			\draw(v0)--(v1-1);
			\draw(v0)--(v1-3);
			\path[-,font=\scriptsize]
			(v0) edge node[descr]{{\tiny$i$}} (v1-2);
			\draw(v1-2)--(v2-1);
			\draw(v1-2)--(v2-3);
			\draw(v1-2)--(v2-5);
			\path[-,font=\scriptsize]
			(v1-2) edge node[descr]{{\tiny$k_1$}} (v2-2)
			edge node[descr]{{\tiny$k_{q-1}$}} (v2-4);
			\draw(v2-2)--(v3-1);
			\draw(v2-2)--(v3-2);
			\draw(v2-4)--(v3-3);
			\draw(v2-4)--(v3-4);
			\draw[thick,dotted](-1,0.7)--(-0.1,0.7);
			\draw[thick,dotted](0.1,0.7)--(1,0.7);
			\draw[thick,dotted](-0.5,2.4)--(-0.1,2.4);
			\draw[thick,dotted](0.1,2.4)--(0.5,2.4);
			\draw[thick,dotted](-1.4,2.4)--(-0.8,2.4);
			\draw[thick,dotted](1.4,2.4)--(0.8,2.4);
			\draw[thick,dotted](-1.1,3.2)--(-0.6,3.2);
			\draw[thick,dotted](1.1,3.2)--(0.6,3.2);
		\end{tikzpicture}
	\end{eqnarray*}
	where \begin{eqnarray*} \beta=\sum_{j=1}^{q-1}(q-j)r_j+(p-1)(q-1)+(\sum_{j=2}^qr_j+p-q)(r_1-i) +\sum_{j=2}^q(r_j-1)(p-k_{j-1});    	\end{eqnarray*}
	\item[(v)] all other components of $\Delta_T, T\in \frakt$ vanish.
\end{itemize}

\bigskip

\subsection{The minimal model for the operad of Rota-Baxter Lie algebras}\label{sect: minimal model}
In the previous section, we constructed a coaugmented symmetric homotopy cooperad $\RBLA^\antish.$  We now prove that $\Omega(\RBLA^\antish) $ is the minimal model of the symmetric operad $\RBLA$, using algebraic Morse theory.

\begin{defn}\label{Def: RBLA-infty}
The symmetric dg  operad $ \Omega \RBLA^\antish,$ denoted by   $\RBLA_\infty,$  is called  \textbf{the operad of homotopy Rota-Baxter Lie algebras of weight $\lambda.$}
\end{defn}
Let us make this dg operad explicit.  The dg operad $\RBLA_\infty$ is the free operad generated by the graded $\mathbb{S}$-module $s^{-1}\overline{\RBLA^\antish}$ endowed with the differential induced from the homotopy cooperad structure.
More precisely,
$$s^{-1}\overline{\RBLA^\antish}(1)=\bfk s^{-1}o_1 \ \  \text{and} \ \  s^{-1}\overline{\RBLA^\antish}(n)=\bfk s^{-1}e_n\oplus \bfk s^{-1}o_n, n\geqslant2.$$
We denote
$\ell_n = s^{-1} e_n, n\geqslant 2 \ \ \text{and}  \ \ T_n = s^{-1}o_n, n\geqslant 1.$
Then $|\ell_n|=n-2, |T_n|=n-1,$ and $\ell_n \cdot \sigma = \sgn(\sigma) \ell_n, T_n \cdot \sigma = \sgn(\sigma)T_n$ for any  $\sigma \in \s_n.$
We describe the elements of the dg operad $\RBLA_\infty$ using shuffle trees.
The generator $\ell_n (n\geqslant 2)$ is represented by a corolla with $n$ leaves and a black vertex, while the generator $T_n (n\geqslant 1)$ is represented by a corolla with $n$ leaves and a white vertex:
\begin{eqnarray}\label{eq: generator of RBLA infty}
	\begin{tikzpicture}[grow'=up,scale=0.6,baseline=(current bounding box.center) ]
		\tikzstyle{every node}=[thick,minimum size=6pt, inner sep=1pt]
		\node(r)[fill=black,circle,label=right:$\ell_n$]{}
		child{node(1){$1$}}
		child{node(i) { $i$}}
		child{node(n){$n$}};
\draw(r) -- (0,-0.5);
		\draw [line width=1pt,dotted] (-0.7,0.9)--(-0.1,0.9);
		\draw [line width=1pt,dotted] (0.1,0.9)--(0.7,0.9);
	\end{tikzpicture}
	\hspace{8mm}
	\begin{tikzpicture}[grow'=up,scale=0.6,baseline=(current bounding box.center) ]
		\tikzstyle{every node}=[thick,minimum size=6pt, inner sep=1pt]
		\node(r)[draw,circle,label=right:$T_n$]{}
		child{node(1){$1$}}
		child{node(i){$i$}}
		child{node(n){$n$}};
\draw(r) -- (0,-0.5);
		\draw [line width=1pt,dotted] (-0.7,0.9)--(-0.1,0.9);
		\draw [line width=1pt,dotted] (0.1,0.9)--(0.7,0.9);
\node(s) at (2,0){$.$};
	\end{tikzpicture}
\end{eqnarray}
The action of differential operator $\partial$ on generators can be expressed by shuffle trees as follows:
\begin{eqnarray}\label{Eq: partial l_n}
	\begin{tikzpicture}[scale=0.6,baseline=(current bounding box.center) ]
		\tikzstyle{every node}=[thick,minimum size=6pt, inner sep=1pt]
        \begin{scope}[yshift=-0.8cm]
        \node(a) at (-4,0.5){\begin{huge}$\partial$\end{huge}};
		\node (b0) [circle, fill=black, label=right:$\ell_n$]at (-2,-0.5)  {};
\draw(b0) -- (-2,-1);
		\node (b1) at (-3.5,1)  [minimum size=0pt,label=above:$1$]{};
		\node (b2) at (-2,1)  [minimum size=0pt,label=above:$i$]{};
		\node (b3) at (-0.5,1)  [minimum size=0pt,label=above:$n$]{};
		\draw        (b0)--(b1);
		\draw        (b0)--(b2);
		\draw        (b0)--(b3);
		\draw [dotted,line width=1pt] (-2.8,0.5)--(-2.2,0.5);
		\draw [dotted,line width=1pt] (-1.8,0.5)--(-1.2,0.5);
\end{scope}
		\begin{scope}[xshift=5cm]\node(0)at (0,-0.3){{\large$=\sum\limits_{j=2}^{n-1} \sum\limits_{i=1}^{n-j+1}\sum\limits_{\sigma
}(-1)^{i+j(n-i)}\sgn(\sigma)$}};\end{scope}
		\begin{scope}[xshift=12.5cm,scale=1.5]
\node(e0) at (0,-1.5)[circle, fill=black,label=right:\tiny$\ell_{n-j+1}$]{};
\draw(e0) -- (0,-2);
		\node(e1) at(-1.3,-0.2){};
		\node(e2-0) at (0,-0.75){{\tiny$i$}};
		\node(e3) at (1.3,-0.2){};
		\node(e2-1) at (0,0) [circle,fill=black,label=right: \tiny $\ell_j$]{};
		\node(e2-1-1) at (-1,1){};
		\node(e2-1-2) at (1, 1){};
		\draw [dotted,line width=1pt] (-0.6,-0.75)--(-0.2,-0.75);
		\draw [dotted,line width=1pt] (0.2,-0.75)--(0.6,-0.75);
		\draw [dotted,line width=1pt] (-0.4,0.5)--(0.4,0.5);
		\draw        (e0)--(e1);
		\draw         (e0)--(e3);
		\draw         (e0)--(e2-0);
		\draw         (e2-0)--(e2-1);
		\draw        (e2-1)--(e2-1-1);
		\draw        (e2-1)--(e2-1-2);
	\node(1) at(1.8,-0.3){\Large$\cdot \sigma$};
\end{scope}
\end{tikzpicture}
\end{eqnarray}

\begin{eqnarray}\label{Eq: partial T_n}
	\begin{tikzpicture}[scale=0.6,baseline=(current bounding box.center)]
		\tikzstyle{every node}=[thick,minimum size=6pt, inner sep=1pt]
		\begin{scope}[yshift=-0.5cm]\node(a) at (-4,0.5){\begin{huge}$\partial$\end{huge}};
		\node[circle, draw, label=right:$T_n$] (b0) at (-2,-0.5)  {};
\draw(b0) -- (-2,-1);
		\node (b1) at (-3.5,1)  [minimum size=0pt,label=above:$1$]{};
		\node (b2) at (-2,1)  [minimum size=0pt,label=above:$i$]{};
		\node (b3) at (-0.5,1)  [minimum size=0pt,label=above:$n$]{};
		\draw        (b0)--(b1);
		\draw        (b0)--(b2);
		\draw        (b0)--(b3);
		\draw [dotted,line width=1pt] (-2.85,0.5)--(-2.2,0.5);
		\draw [dotted,line width=1pt] (-1.8,0.5)--(-1.15,0.5);
\end{scope}
\begin{scope}[xshift=5cm]
		\node(0){{\large$= \sum\limits_{k=2}^n\sum\limits_{l_1+\cdots+l_k=n \atop l_1, \dots, l_k\geqslant 1}\sum\limits_{\sigma}(-1)^{\alpha'}\sgn(\sigma)$}};
\end{scope}
\begin{scope}[xshift=13cm,scale=1.5,yshift=0.2cm]
		\tikzstyle{every node}=[thick,minimum size=6pt, inner sep=1pt]
		\node(e0) at (0,-1.5)[circle, fill=black,label=right:\tiny$\ell_{k}$]{};
\draw(e0) -- (0,-2);
		\node(e1) at(-1.2,-0.3)[circle, draw, label=left:\tiny$T_{l_1}$]{};
		\node(e1-1) at(-2,0.8){};
		\node(e1-2) at (-1,0.8){};
		\node(e2-0) at (0,-0.7){{\tiny$i$}};
		\node(e3) at (1.2,-0.3)[draw, circle, label=right: \tiny$T_{l_k}$]{};
		\node(e3-1) at (1,0.8){};
		\node(e3-2) at (2,0.8){};
		\node(e2-1) at (0,0) [draw,circle,label=right: \tiny$T_{l_i}$]{};
		\node(e2-1-1) at (-0.5,1){};
		\node(e2-1-2) at (0.5, 1){};
		\draw [line width=1pt,dotted ] (-0.65,-0.7)--(-0.1,-0.7);
		\draw [line width=1pt,dotted ] (0.1,-0.7)--(0.65,-0.7);
		\draw [line width=1pt,dotted ] (-0.2,0.5)--(0.2,0.5);
		\draw [line width=1pt,dotted ] (-1.55,0.3)--(-1.15, 0.3);
		\draw [line width=1pt,dotted ] (1.15,0.3)--(1.55,0.3);
		\draw        (e0)--(e1);
		\draw         (e0)--(e3);
		\draw         (e1)--(e1-1);
		\draw          (e1)--(e1-2);
		\draw         (e0)--(e2-0);
		\draw         (e2-0)--(e2-1);
		\draw        (e2-1)--(e2-1-1);
		\draw        (e2-1)--(e2-1-2);
		\draw        (e3)--(e3-1);
		\draw        (e3)--(e3-2);
\end{scope}
\begin{scope}[xshift=16.5cm,yshift=-1.3cm]
	\node(0) at (0,0){};
	\node(1) at(0,1.35){\Large$\cdot \sigma$};
\end{scope}
\end{tikzpicture}
\\ \notag
\begin{tikzpicture}[scale=0.6,baseline=(current bounding box.center)]
		\tikzstyle{every node}=[thick,minimum size=6pt, inner sep=1pt]
	\node(0){{\large$+\sum\limits_{{\small\substack{2\leqslant p\leqslant n \\ 1\leqslant q\leqslant p
				}}}\sum\limits_{\small\substack{ r_1+\dots+r_q+p-q=n\\r_1, \dots, r_q\geqslant 1\\1\leqslant i\leqslant r_1\\1\leqslant k_1<\dots< k_{q-1}\leqslant p }}\sum\limits_{\sigma}(-1)^{\beta'}\sgn(\sigma)\lambda^{p-q}$}};
\begin{scope}[xshift=9cm,scale=1.5]
		\tikzstyle{every node}=[thick,minimum size=4pt, inner sep=1pt]
		\node(e0) at (0,-1.5)[circle, draw,label=right:{\tiny$\ T_{r_1}$}]{};
\draw(e0)--(0,-2);
		\node(e1) at(-1.8,-0.3){};
		\node(e1-1) at(-2,0.8){};
		\node(e1-2) at (-1,0.8){};
		\node(e2-0) at (0,-0.9){{\tiny$i$}};
		\node(e3) at (1.8,-0.3){};
		\node(e3-1) at (1,0.8){};
		\node(e3-2) at (2,0.8){};
		\node(e2-1) at (0,-0.3) [fill=black,draw,circle,label=right: {\tiny$\ \ell_p$}]{};
		\node(e2-1-1) at (-1.9,1){};
		\node(e2-1-2) at(-0.6,0.6){{\tiny$k_1$}};
		\node(e2-1-2-0) at (-1,1.2)[draw, circle,label=left:{\tiny $T_{r_2}$}]{};
		\node(e2-1-2-1) at (-1.4,1.9){};
		\node(e2-1-2-2) at (-0.6,1.9){};
		\node(e2-1-3) at(-0.4,1.5){};
		\node(e2-1-4) at (0.4,1.5){};
		\node(e2-1-5) at (0.6,0.6){{\tiny $k_{q-1}$}};
		\node(e2-1-5-0) at (1,1.2) [circle, draw,label=right: {\tiny $\ T_{r_q}$}]{};
		\node(e2-1-5-1) at (0.6,1.9){};
		\node(e2-1-5-2) at (1.4,1.9){};
		\node(e2-1-6) at (1.9, 1){};
		\draw [dotted,line width=1pt] (-0.75,-0.9)--(-0.1,-0.9);
		\draw [dotted,line width=1pt] (0.1,-0.9)--(0.8,-0.9);
		\draw [dotted, line width=1pt] (-1.2,0.6)--(-0.8,0.6);
		\draw [dotted, line width=1pt] (1.2,0.6)--(0.9,0.6);
		\draw [dotted, line width=1pt] (0.3,0.6)--(-0.45,0.6);
		\draw        (e0)--(e1);
		\draw [dotted, line width =1pt](-1.2,1.7)--(-0.8,1.7);
		\draw [dotted, line width=1pt](0.8,1.7)--(1.2,1.7);
		\draw         (e0)--(e3);
		\draw         (e0)--(e2-0);
		\draw         (e2-0)--(e2-1);
		\draw        (e2-1)--(e2-1-1);
		\draw      (e2-1-2)--(e2-1-2-0);
		\draw        (e2-1)--(e2-1-2);
		\draw        (e2-1)--(e2-1-3);
		\draw        (e2-1)--(e2-1-4);	
		\draw        (e2-1)--(e2-1-5);
		\draw         (e2-1-5)--(e2-1-5-0);
		\draw        (e2-1)--(e2-1-6);
		\draw (e2-1-2-0)--(e2-1-2-1);
		\draw (e2-1-2-0)--(e2-1-2-2);
		\draw (e2-1-5-0)--(e2-1-5-1);
		\draw (e2-1-5-0)--(e2-1-5-2);
\end{scope}
\begin{scope}[xshift=13cm,yshift=-0.5cm]
	\node(0) at (0,0){};
	\node(1) at(-0.5,1){\Large$\cdot \sigma$};
\end{scope}
\end{tikzpicture}
\end{eqnarray}
where all  $\sum_\sigma$ denote the summations over permutations  $\sigma \in \s_n$ that make the tree monomials into shuffle tree monomials, and the sign are given by
\begin{eqnarray}\label{Eq: sign   alpha'}
 	\alpha'&=&1+\frac{k(k-1)}{2}+\sum_{j=1}^k(k-j)l_j=1+\sum_{j=1}^k(k-j)(l_j-1),\\
 	\label{Eq: sign   beta'}\beta' &=& 1+i+\big(p+\sum\limits_{j=2}^q(r_j-1)\big)\big(r_1-i\big)+\sum\limits_{j=2}^q(r_j-1)(p-k_{j-1}).	\end{eqnarray}

\begin{remark}
It should be noted that the graded $\mathbb{S}$-module spanned by the shuffle trees in $\RBLA_\infty$ that only have black vertices is a symmetric dg suboperad of $\RBLA_\infty$, and this suboperad is exactly the $L_\infty$ operad.
\end{remark}

The following theorem is the main result of this section, whose proof occupies the rest of this section.
\begin{thm}\label{Thm: Minimal model}
	The dg operad $\RBLA_\infty$ is the minimal model of the operad $\RBLA$.
\end{thm}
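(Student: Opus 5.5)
We have to show three things about $\RBLA_\infty=\Omega(\RBLA^\antish)$: it is quasi-free, it satisfies the minimality conditions of Definition~\ref{def:minimal model}, and it admits a surjective quasi-isomorphism onto $\RBLA$.

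\emph{Minimality.} Quasi-freeness holds by construction. By Equations~\eqref{Eq: partial l_n} and~\eqref{Eq: partial T_n}, $\partial$ sends every generator to a sum of trees of weight at least $2$, so $\partial$ is decomposable. For the filtration we set $M_{(k)}=\bfk\ell_k\oplus\bfk T_{k-1}$ (with $\ell_1:=0$), i.e.\ we filter by arity, where a generator $T_m$ sits one step above the arity it has. In $\partial\ell_n$ only $\ell_{n-j+1},\ell_j$ with $j,n-j+1<n$ occur. In $\partial T_n$ the $\ell_p$ occurring have $p\leqslant n$, and the $T_r$ occurring have $r<n$. Hence condition~(\ref{it:min2}) holds.

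\emph{The comparison map.} Define $\phi:\RBLA_\infty\to\RBLA$ on generators by $\ell_2\mapsto\ell$, $T_1\mapsto T$, and all other generators $\mapsto 0$; it is clearly surjective. For it to be a chain map (with $\RBLA$ concentrated in degree $0$) we check it on degree-one generators.
\begin{itemize}
\item $\phi(\partial\ell_3)$ is, up to sign, the Jacobiator, which vanishes by relation~\eqref{Eq: RBLA relation 1}.
\item $\phi(\partial T_2)$ collects the terms $\ell_2(T_1,T_1)$, $T_1\ell_2(T_1,-)$, $T_1\ell_2(-,T_1)$ and $\lambda T_1\ell_2$. With the signs $\alpha',\beta'$ this should be exactly relation~\eqref{Eq: RBLA relation 2}; this is a sign check.
\item All higher generators map to trees containing some generator killed by $\phi$, so their images vanish.
\end{itemize}
Thus $\phi$ induces a surjection $H_0(\RBLA_\infty)\to\RBLA$, which is an isomorphism because the degree-one generators impose exactly the defining relations. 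It remains to show $H_{\neq0}(\RBLA_\infty)=0$.

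\emph{Acyclicity in nonzero degrees.} This is the main obstacle, and I would use algebraic Morse theory (Theorem~\ref{Thm: main result of algebraic Morse theory}) arity by arity.
\begin{enumerate}
\item Take as basis of $\RBLA_\infty(n)$ the shuffle tree monomials in $\ell_m,T_m$. Fix a monomial order on them, for instance a path-permutation extension order refined so that the leading term of $\partial$ of a generator is controlled. The natural choice is the leading term of $\partial T_n$ given by the $\ell_2(T_1,T_{n-1})$-type tree, or alternatively $T_1\ell_2(T_{n-1},-)$; likewise for $\ell_n$ we use the standard $L_\infty$ matching.
\item Define the matching by scanning each tree monomial in planar order for the first ``reducible'' vertex. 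Either a generator of arity $\geqslant3$ (for $\ell$) or $\geqslant2$ (for $T$) is matched \emph{down} by replacing it with the leading term of its differential, or a leading-term configuration is matched \emph{up} by collapsing it into the higher generator. Call the trees where such a rule applies \emph{effective} tree monomials, and prove the two rules are mutually inverse, so that (M1) holds.
\item The weights of matched arrows are $\pm1$, so (M2) holds.
\item Finiteness of zigzag paths follows because each zigzag strictly decreases the chosen order within a fixed arity and weight-bounded set, using Lemma~\ref{Lem: criterion of morse matching}.
\item The critical monomials must then be exactly the trees built only from $\ell_2$ and $T_1$ that are normal forms for $\RBLA$ (a Gr\"obner/PBW basis for the two relations, e.g.\ no $\ell_2(T,T)$ pattern and Lie normal forms). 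All of these live in degree $0$.
\end{enumerate}
Then $\RBLA_\infty(n)$ is homotopy equivalent to a complex concentrated in degree $0$, of dimension $\dim\RBLA(n)$, which gives the quasi-isomorphism. The delicate points are choosing the order so that every leading term is unique and checking that no critical monomial survives in positive degree. This requires knowing a normal form basis of $\RBLA$, which I would establish via a Gr\"obner–Shirshov argument for the free Rota–Baxter Lie algebra. Finally, uniqueness follows from Theorem~\ref{thm:uniqueness of minimal model}.
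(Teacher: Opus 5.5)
Your overall strategy is the paper's: a Morse matching on the shuffle tree monomials of $\RBLA_\infty$ driven by leading terms of $\partial\ell_n,\partial T_n$, with critical cells the degree-$0$ normal forms of a Gr\"obner basis of $\RBLA$. Your minimality check (with $M_{(k)}=\bfk\ell_k\oplus\bfk T_{k-1}$) and your construction of $\phi$ with $H_0(\RBLA_\infty)\cong\RBLA$ are correct, and more explicit than in the paper.

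The gap is that the Morse-theoretic core is only outlined, and the one concrete choice you lean on fails. Suppose the leading term of $\partial T_2$ is $\ell_2(T_1,T_1)$, as in your ``$\ell_2(T_1,T_{n-1})$-type'' option with normal forms avoiding $\ell(T,T)$. Then the two relations are not a Gr\"obner basis. Take the Jacobi leading term $\ell(x_1,\ell(x_2,x_3))$. The monomial $\ell(x_1,\ell(Tx_2,Tx_3))$ reduces by Jacobi to $\ell(\ell(x_1,Tx_2),Tx_3)-\ell(\ell(x_1,Tx_3),Tx_2)$. It reduces by the Rota--Baxter relation to $\ell(x_1,T\ell(Tx_2,x_3))+\ell(x_1,T\ell(x_2,Tx_3))+\lambda\,\ell(x_1,T\ell(x_2,x_3))$. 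These are five distinct irreducible monomials, and the other two possible Jacobi leading terms behave the same way. So in your scheme the degree-$0$ critical cells are linearly dependent in $\RBLA(3)$. Since $H_0$ must be $\RBLA$, critical cells of positive degree necessarily survive, and your step 5 fails.

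In fact, under a path-permutation extension read in either direction, $\ell_2(T_1,T_1)$ cannot even be leading. The monomial $T_1\ell_2(x_1,T_1x_2)$ (resp. $T_1\ell_2(T_1x_1,x_2)$) has the path $T\ell T$ where $\ell_2(T_1,T_1)$ has $\ell T$, and $\ell T\sqsubset T\ell T$ in every monomial order on words.

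The paper instead takes $T_1\sqsubset\ell_2\sqsubset T_2\sqsubset\ell_3\sqsubset\cdots$ with the \emph{right} path-permutation extension. The leading terms are then $\ell_n\circ_n\ell_2$ for $\partial\ell_{n+1}$ and $T_{n-1}\circ_{n-1}(\ell_2\circ_2T_1)$ for $\partial T_n$. The latter correspond to the iterated self-overlaps $T\ell(x_1,T\ell(x_2,\ldots,T\ell(x_{n-1},Tx_n)))$ of the Rota--Baxter leading term $T\ell(x_1,Tx_2)$, which does not overlap $\ell(x_1,\ell(x_2,x_3))$ at all. Your second option $T_1\ell_2(T_{n-1},-)$ is not simply the mirror of this, since the large generator sits on top rather than at the root. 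Shuffle trees are also not left--right symmetric. So the Gr\"obner check and everything below would have to be redone for that option.

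Your other two load-bearing steps are also not right as stated.

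\emph{The matching.} With right-attached leading terms one must collapse the \emph{rearmost} typical divisor. This is what the paper's effective divisor encodes: no vertex of positive degree, and no other typical divisor, after its first vertex. A first-vertex scan is inconsistent. For example, $T_1\ell_2(x_1,T_1\ell_2(x_2,T_1x_3))$ contains two copies of $\widehat{T_2}$. Collapsing the front one gives $\widehat{T_3}=T_2(x_1,\ell_2(x_2,T_1x_3))$, which must instead be paired with $T_3$. The paper's two conditions are exactly what excludes the three forbidden configurations for (M1). Via the ``last vertex of positive degree'' argument, they also show that no critical cell has positive degree.

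\emph{Finiteness.} A zigzag need not decrease the path order on $\RBLA_\infty$ itself, because the thick arrow may expand a vertex preceding the matched one. For instance, $\ell_3(T_1\ell_2(x_1,T_1x_2),x_3,x_4)\dashrightarrow\ell_3(T_2(x_1,x_2),x_3,x_4)\to\ell_2(T_2(x_1,x_2),\ell_2(x_3,x_4))$ goes up if words are compared length-first: the last path changes from $\ell_3$ to $\ell_2\ell_2$. The missing idea is the paper's map $D$. It replaces $\ell_n$ by a right comb of $n-1$ binary vertices $\mu$, and $T_n$ by $T\mu(x_1,T\mu(x_2,\ldots,T\mu(x_{n-1},Tx_n)))$. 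Then $D(\calS)=D(\widehat{\calS})$, thick arrows never increase $D$, and along a $D$-plateau the first differing decoration strictly decreases for $\sqsubset$. Together with well-ordering, this gives finite zigzags.
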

In order to prove Theorem~\ref{Thm: Minimal model}, we are going to construct a quasi-isomorphism of symmetric dg operad $\RBLA_\infty \to \RBLA$ via algebraic Morse theory, where $\RBLA$ is considered as a symmetric dg operad concentrated in degree $0.$

First, according to the direct sum decomposition of $\RBLA_\infty$ given by the shuffle tree monomials, we obtain a weight quiver $Q=Q_{\RBLA_\infty}.$
Specifically, the vertex  set of $Q$ consist of the shuffle tree monomials of $\RBLA_\infty.$ For the shuffle tree monomial of $\RBLA_\infty$ with weight $1$, the arrows in $Q$ originating from them are as follows:
 \begin{itemize}
 \item[(i)] For each  $n\geqslant 3, 2\leq j\leq n-1, 1\leq i\leq n-j+1,$ and for each $ \sigma \in \s_n$ such that the target of the arrow is a shuffle tree monomial,
 \begin{eqnarray*}
\begin{tikzpicture}[scale=0.6]
 \tikzstyle{every node}=[thick,minimum size=6pt, inner sep=1pt]
 \node(v1-1) at (4,-1) [circle, fill=black,label=right:$\ell_{n}$]{};
 \draw(v1-1) -- (4,-1.5);
 \node(v2-1) at (2.5,0.5){};
 \node(v2-2) at (5.5,0.5){};
 \draw(v1-1) to (v2-1);
  \draw(v1-1) to (v2-2);
  \draw[dotted,thick] (3.4, -0.2) to (4.6, -0.2);
  \draw[->] (5.5,-0.5) to node[midway,above=0.05]{\tiny$(-1)^{i+j(n-i)}\sgn(\sigma)$} (10,-0.5);
		\begin{scope}[xshift=12.5cm,scale=1.5]
\node(e0) at (0,-1.5)[circle, fill=black,label=right:\tiny$\ell_{n-j+1}$]{};
\draw(e0) -- (0,-2);
		\node(e1) at(-1.3,-0.2){};
		\node(e2-0) at (0,-0.75){{\tiny$i$}};
		\node(e3) at (1.3,-0.2){};
		\node(e2-1) at (0,0) [circle,fill=black,label=right: \tiny $\ell_j$]{};
		\node(e2-1-1) at (-1,1){};
		\node(e2-1-2) at (1, 1){};
		\draw [dotted,line width=1pt] (-0.6,-0.75)--(-0.2,-0.75);
		\draw [dotted,line width=1pt] (0.2,-0.75)--(0.6,-0.75);
		\draw [dotted,line width=1pt] (-0.4,0.5)--(0.4,0.5);
		\draw        (e0)--(e1);
		\draw         (e0)--(e3);
		\draw         (e0)--(e2-0);
		\draw         (e2-0)--(e2-1);
		\draw        (e2-1)--(e2-1-1);
		\draw        (e2-1)--(e2-1-2);
	\node(1) at(1.8,-0.3){\Large$\cdot \sigma$};
\end{scope}
 \end{tikzpicture}
 \end{eqnarray*}
 \item[(ii)] for each $n\geqslant 2, 2\leq k\leq n,$ $l_1+\dots+l_k=n,$ $ l_1,\dots,l_k\geq 1,$ and for each $ \sigma \in \s_n$ such that the target of the arrow is a shuffle tree monomial,
\begin{eqnarray*}
 && \begin{tikzpicture}[scale=0.6,baseline=(current bounding box.center) ]
 \tikzstyle{every node}=[thick,minimum size=6pt, inner sep=1pt]
 \node(v1-1) at (3,-1) [circle, draw,label=right:$T_{n}$]{};
 \draw(v1-1) -- (3,-1.5);
 \node(v2-1) at (1.5,0.5){};
 \node(v2-2) at (4.5,0.5){};
 \draw(v1-1) to (v2-1);
  \draw(v1-1) to (v2-2);
  \draw[dotted,thick] (2.4, -0.2) to (3.6, -0.2);
  \draw[->] (4.5,-0.5) to node[midway,above=0.05]{\tiny $(-1)^{\alpha'}\sgn(\sigma)$} (9.5,-0.5);
\begin{scope}[xshift=13cm,scale=1.5,yshift=0.2cm]
		\tikzstyle{every node}=[thick,minimum size=6pt, inner sep=1pt]
		\node(e0) at (0,-1.5)[circle, fill=black,label=right:\tiny$\ell_{k}$]{};
\draw(e0) -- (0,-2);
		\node(e1) at(-1.2,-0.3)[circle, draw, label=left:\tiny$T_{l_1}$]{};
		\node(e1-1) at(-2,0.8){};
		\node(e1-2) at (-1,0.8){};
		\node(e2-0) at (0,-0.7){{\tiny$i$}};
		\node(e3) at (1.2,-0.3)[draw, circle, label=right: \tiny$T_{l_k}$]{};
		\node(e3-1) at (1,0.8){};
		\node(e3-2) at (2,0.8){};
		\node(e2-1) at (0,0) [draw,circle,label=right: \tiny$T_{l_i}$]{};
		\node(e2-1-1) at (-0.5,1){};
		\node(e2-1-2) at (0.5, 1){};
		\draw [line width=1pt,dotted ] (-0.65,-0.7)--(-0.1,-0.7);
		\draw [line width=1pt,dotted ] (0.1,-0.7)--(0.65,-0.7);
		\draw [line width=1pt,dotted ] (-0.2,0.5)--(0.2,0.5);
		\draw [line width=1pt,dotted ] (-1.55,0.3)--(-1.15, 0.3);
		\draw [line width=1pt,dotted ] (1.15,0.3)--(1.55,0.3);
		\draw        (e0)--(e1);
		\draw         (e0)--(e3);
		\draw         (e1)--(e1-1);
		\draw          (e1)--(e1-2);
		\draw         (e0)--(e2-0);
		\draw         (e2-0)--(e2-1);
		\draw        (e2-1)--(e2-1-1);
		\draw        (e2-1)--(e2-1-2);
		\draw        (e3)--(e3-1);
		\draw        (e3)--(e3-2);
\end{scope}
\begin{scope}[xshift=16.5cm,yshift=-1.3cm]
	\node(0) at (0,0){};
	\node(1) at(0,1.35){\Large$\cdot \sigma$};
\end{scope}
 \end{tikzpicture}
 \end{eqnarray*}
 \item[(iii)] for each $n\geqslant 2, 2\leq p \leq n,$ $1\leq q\leq p,$ $r_1+\dots+r_q+p-q=n,$ $r_1,\dots, r_q\geq 1,$ $1\leq i \leq r_1, 1\leq k_1<\dots < k_{q-1} \leq p,$ and for each $ \sigma \in \s_n$ such that the target of the arrow is a shuffle tree monomial,
 \begin{eqnarray*}
 && \begin{tikzpicture}[scale=0.6,baseline=(current bounding box.center) ]
 \tikzstyle{every node}=[thick,minimum size=6pt, inner sep=1pt]
 \begin{scope}[yshift=0.5cm]
 \node(v1-1) at (0,-1) [circle, draw,label=right:$T_{n}$]{};
 \draw(v1-1) -- (0,-1.5);
 \node(v2-1) at (-1.5,0.5){};
 \node(v2-2) at (1.5,0.5){};
 \draw(v1-1) to (v2-1);
  \draw(v1-1) to (v2-2);
  \draw[dotted,thick] (-0.6, -0.2) to (0.6, -0.2);
  \draw[->] (1.5,-0.5) to node[midway,above=0.05]{\tiny $(-1)^{\beta'}\sgn(\sigma)\lambda^{p-q}$} (5.5,-0.5);
  \end{scope}
\begin{scope}[xshift=9cm,scale=1.5]
		\tikzstyle{every node}=[thick,minimum size=4pt, inner sep=1pt]
		\node(e0) at (0,-1.5)[circle, draw,label=right:{\tiny$\ T_{r_1}$}]{};
\draw(e0)--(0,-2);
		\node(e1) at(-1.8,-0.3){};
		\node(e1-1) at(-2,0.8){};
		\node(e1-2) at (-1,0.8){};
		\node(e2-0) at (0,-0.9){{\tiny$i$}};
		\node(e3) at (1.8,-0.3){};
		\node(e3-1) at (1,0.8){};
		\node(e3-2) at (2,0.8){};
		\node(e2-1) at (0,-0.3) [fill=black,draw,circle,label=right: {\tiny$\ \ell_p$}]{};
		\node(e2-1-1) at (-1.9,1){};
		\node(e2-1-2) at(-0.6,0.6){{\tiny$k_1$}};
		\node(e2-1-2-0) at (-1,1.2)[draw, circle,label=left:{\tiny $T_{r_2}$}]{};
		\node(e2-1-2-1) at (-1.4,1.9){};
		\node(e2-1-2-2) at (-0.6,1.9){};
		\node(e2-1-3) at(-0.4,1.5){};
		\node(e2-1-4) at (0.4,1.5){};
		\node(e2-1-5) at (0.6,0.6){{\tiny $k_{q-1}$}};
		\node(e2-1-5-0) at (1,1.2) [circle, draw,label=right: {\tiny $\ T_{r_q}$}]{};
		\node(e2-1-5-1) at (0.6,1.9){};
		\node(e2-1-5-2) at (1.4,1.9){};
		\node(e2-1-6) at (1.9, 1){};
		\draw [dotted,line width=1pt] (-0.75,-0.9)--(-0.1,-0.9);
		\draw [dotted,line width=1pt] (0.1,-0.9)--(0.8,-0.9);
		\draw [dotted, line width=1pt] (-1.2,0.6)--(-0.8,0.6);
		\draw [dotted, line width=1pt] (1.2,0.6)--(0.9,0.6);
		\draw [dotted, line width=1pt] (0.3,0.6)--(-0.45,0.6);
		\draw        (e0)--(e1);
		\draw [dotted, line width =1pt](-1.2,1.7)--(-0.8,1.7);
		\draw [dotted, line width=1pt](0.8,1.7)--(1.2,1.7);
		\draw         (e0)--(e3);
		\draw         (e0)--(e2-0);
		\draw         (e2-0)--(e2-1);
		\draw        (e2-1)--(e2-1-1);
		\draw      (e2-1-2)--(e2-1-2-0);
		\draw        (e2-1)--(e2-1-2);
		\draw        (e2-1)--(e2-1-3);
		\draw        (e2-1)--(e2-1-4);	
		\draw        (e2-1)--(e2-1-5);
		\draw         (e2-1-5)--(e2-1-5-0);
		\draw        (e2-1)--(e2-1-6);
		\draw (e2-1-2-0)--(e2-1-2-1);
		\draw (e2-1-2-0)--(e2-1-2-2);
		\draw (e2-1-5-0)--(e2-1-5-1);
		\draw (e2-1-5-0)--(e2-1-5-2);
\end{scope}
\begin{scope}[xshift=13cm,yshift=-0.5cm]
	\node(0) at (0,0){};
	\node(1) at(-0.5,1){\Large$\cdot \sigma$};
\end{scope}
 \end{tikzpicture}
 \end{eqnarray*}
 where $\alpha'$ and $\beta'$ are given by  \eqref{Eq: sign   alpha'} and \eqref{Eq: sign   beta'}.
 \end{itemize}
Furthermore, $Q$ contains additional arrows where
 \begin{itemize}
 \item[(i)] the source is a shuffle tree monomial of weight $\geqslant 2$,
 \item[(ii)] the target is obtained by expanding an internal vertex of the source according to the aforementioned arrow construction,
 \item[(iii)] the weight of this arrow equals the weight of its corresponding internal vertex expansion (as defined above) multiplied by $-1$ raised to the power of the sum of degrees of all internal vertices preceding this internal  vertex under the planar order.
\end{itemize}

Next, we will construct a partial matching $\M$ on $Q$. To proceed, we  introduce the following notions.
Let $\calf(M)$ be a free symmetric operad.  Fix a basis $B_M$ of $M$ and a monomial order $\sqsubset$ on the free monoid $B_M^{*}.$
For each shuffle tree monomial $\calt,$ it determines a path sequence $\pth(\calt)$  \cite[Chapter 5]{BD}.
Specifically, for any  shuffle tree monomial $\calt$ of arity $n$ (with $n$ leaves), there corresponds a sequence  $\pth(\calt) = (x_1, \dots, x_n),$ where each $x_i$ is obtained by multiplying (in order) all decorations on internal vertices along the unique path from the root of $\calt$ to the leaf labeled  $i$.
The following definition is a slight modification of the path-permutation extension of $\sqsubset$ introduced in \cite{BD}.
\begin{defn}\label{Def: right path-permutation order} Let $\sqsubset$ be the aforementioned monomial order on $B_M^*.$ The \textbf{right path-permutation extension}   $\prec$ of $\sqsubset$ on shuffle tree monomials of $\calf(M)$ is defined as follows:
 \begin{itemize}
 	\item[(i)] If for two shuffle tree monomial $\calt$ and $\calt',$ the number of leaves of $\calt$ is less than the number of leaves  of $\calt',$ we put $\calt \prec \calt';$
 	\item[(ii)] if $\calt$ and $\calt'$ have the same numbers of leaves, we compare the path sequences $\pth(\calt)$ and $\pth(\calt')$ from \textbf{right to left} according to the monomial order $\sqsubset$;
 	\item[(iii)] if  $\calt$ and $\calt'$ have the same numbers of leaves and $\pth(\calt) = \pth(\calt'),$  we compare the labels of the leaves of $\calt$ and $\calt'$ from right to left according to the planar order.
 \end{itemize}
\end{defn}

We now consider the right path-permutation extension $\prec$ of the monomial order generated by
\begin{equation}\label{eq: monomial order of l and T}
T_1 \sqsubset \ell_2 \sqsubset T_2 \sqsubset \ell_3 \sqsubset \cdots \sqsubset T_n \sqsubset \ell_{n+1} \sqsubset \cdots
\end{equation}
in $\RBLA_{\infty}.$
Under this order $\prec$, the leading term  of $\partial(\ell_{n+1}), \partial(T_n)$ (for $n\geq 2$) are the following shuffle tree monomials, respectively:
\begin{eqnarray*}
	\begin{tikzpicture}[scale=1,baseline=(current bounding box.center) ]
		\tikzstyle{every node}=[thick,minimum size=4pt, inner sep=1pt]
		\node(1) at (0,0) [draw, circle, fill=black, label=right:$\ \ell_{n}$]{};
\draw(1) -- (0,-0.5);
		\node(2-3) at (1,1) [draw, circle, fill=black, label=right: $\ \ell_2$]{};
		\node(3-1) at (0,2){};
		\node(3-2) at (2,2){};
		\node(2-2) at (0,1){};
		\node(2-1) at (-1,1){};
        \node(lab1) at (-1,1.2) {\tiny$1$};
        \node(lab2) at (0,2.2) {\tiny$n-1$};
        \node(lab3) at (2,2.2) {\tiny $n$};
		\draw (1)--(2-1);
		\draw  (1)--(2-2);
		\draw  (1)--(2-3);
		\draw (2-3)--(3-1);
		\draw (2-3)--(3-2);
		\draw [dotted,line width=1pt](-0.4,0.5)--(0.4,0.5);
	\end{tikzpicture}
\quad \quad  \text{and} \quad \quad
\begin{tikzpicture}[scale=1,baseline=(current bounding box.center) ]
	\tikzstyle{every node}=[thick,minimum size=4pt, inner sep=1pt]
	\node(1) at (0,0) [draw, circle, label=right:$\ T_{n-1}$]{};
\draw(1) -- (0,-0.5);
	\node(2-1) at (1,1) [draw, circle, fill=black, label=right: $\ \ell_2$]{};
	\node(3-1) at (2,2)[draw, circle, label=right: $\ T_1$]{};
	\node(3-2) at (0,2){};
	\node(2-2) at (0,1){};
	\node(2-3) at (-1,1){};
	\node(4) at (3,3){};
    \node(lab1) at (-1, 1.2) {\tiny$1$};
    \node(lab2) at (0,2.2) {\tiny$n-1$};
    \node(lab3) at (3,3.2) {\tiny$n$};
	\draw (1)--(2-1);
	\draw  (1)--(2-2);
	\draw  (1)--(2-3);
	\draw (2-1)--(3-1);
	\draw (2-1)--(3-2);
	\draw (3-1)--(4);
	\draw [dotted,line width=1pt](-0.4,0.5)--(0.4,0.5);
\end{tikzpicture}
\end{eqnarray*}

Let $\mathcal{S}$ be a generator of degree $\geqslant 1$ in $ \RBLA_\infty$. Denote the leading monomial of $\partial \mathcal{S}$ by $\widehat{\mathcal{S}}.$ 
A shuffle tree monomial of the form $\widehat{\mathcal{S}}$ is called \textbf{typical}.


 \begin{defn}\label{Def: efficient tree monomials}  A shuffle tree monomial $\mathcal{T}$ in $\RBLA_\infty$ is called \textbf{effective} if there exists a typical divisor  $\mathcal{T}'$ of $\mathcal{T}$ satisfies the following conditions:
 \begin{itemize}
 \item[(i)] Under the planar order of $\calt$, there exists no internal vertex decorated by a nonzero-degree element that succeeds the first internal vertex of $\calt';$
 \item[(ii)] under the planar order, no set of  internal vertex in $\calt$ that lie after the first internal vertex of $\calt'$ forms a typical divisor of $\calt$.
 \end{itemize}
The typical divisor $\mathcal{T}'$ is called the \textbf{effective divisor} of $\mathcal{T}$.
\end{defn}
Intuitively, the effective divisor of $\calt$  is the ``top-rightmost" typical divisor of $\calt$ under the planar order.
\begin{exam} Consider the following two shuffle tree monomials whose leaf labels are in increasing order according to the planar order:
\begin{eqnarray*}
\begin{tikzpicture}
	\tikzstyle{every node}=[thick,minimum size=4pt, inner sep=1pt]
	\node(1)at (0,0)[circle, draw, fill=black]{};
	\node(2-1) at (0.5,0.5){};
	\node(2-2) at (-0.5,0.5)[circle,draw, fill=black]{};
	\node(3-1) at (0,1)[circle, draw]{};
	\node(3-2) at (-0.7,1){};
	\node(3-3) at (-0.3,1){};
	\node(3-4) at (-1,1){};
	\node(4-1) at (0.5,1.5)[circle, draw,fill=black]{};
	\node (4-2) at(0,1.5){};
	\node(4-3) at (-0.5,1.5){};
	\node(5-1) at (1,2)[circle, draw]{};
	\node(5-2) at(0,2)[circle, draw, fill=black] {};
	\node (6-1) at (1.5, 2.5)[circle,draw, fill=black]{};
	\node(6-2) at (0.5, 2.5)[circle, draw, fill=black]{};
	\node(6-3) at(0,2.5){};
	\node(6-4) at (-0.5,2.5){};
	\node(7-1) at (2,3)[minimum size=0pt]{};
	\node(7-2) at (1,3){};
	\node(7-3) at(0.9,3){};
	\node(7-4) at (0,3){};
	\draw (1)--(2-1);
	\draw (1)--(2-2);
	\draw (2-2)--(3-1);
	\draw (2-2)--(3-2);
	\draw (2-2)--(3-3);
	\draw (2-2)--(3-4);
	\draw (3-1)--(4-1);
	\draw (3-1)--(4-2);
	\draw (3-1)--(4-3);
	\draw (4-1)--(5-1);
	\draw (4-1)--(5-2);
	\draw (5-1)--(6-1);
	\draw (6-1)--(7-1);
	\draw (6-1)--(7-2);
	\draw (5-2)--(6-2);
	\draw (5-2)--(6-3);
	\draw (5-2)--(6-4);
	\draw(6-2)--(7-3);
	\draw(6-2)--(7-4);
\draw[dashed,red] (0.8, 2.2) to [in=60, out=30] (1.2, 1.8);
\draw[dashed,red] (0.8,2.2) to (-0.2, 1.2);
\draw[dashed,red] (-0.2, 1.2) to [in=-150,out=-120] (0.2,0.8);
\draw[dashed,red] (0.2,0.8) to (1.2, 1.8);
\draw[dashed,blue](0.3,2.7) to [in=30, out=60] (0.7,2.3) ;
\draw[dashed,blue](0.3,2.7)to(-0.2,2.2);
\draw[dashed,blue] (0.2,1.8)to [in=-120, out=-150] (-0.2,2.2);
	\node[minimum size=0pt,inner sep=0pt,label=below:$(\mathcal{T}_1)$] (name) at (0,-0.3){};
	\draw[dashed, blue](0.2,1.8)to(0.7,2.3);
\end{tikzpicture}
\hspace{4mm}
\begin{tikzpicture}
	\tikzstyle{every node}=[thick,minimum size=4pt, inner sep=1pt]
	\node(1)at (0,0)[circle, draw]{};
	\node(2-1) at (0.5,0.5)[minimum size=0pt]{};
	\node(2-2) at (-0.5,0.5)[circle,draw, fill=black]{};
	\node(3-1) at (0,1)[circle, draw]{};
	\node(3-2) at (-0.3,1){};
	\node(3-3) at (-0.7,1){};
	\node(3-4) at (-1,1){};
	\node(4-1) at (0.5,1.5)[circle, draw,fill=black]{};
	\node (4-2) at(0,1.5){};
	\node(4-3) at (-0.5,1.5){};
	\node(5-1) at (1,2)[circle, draw]{};
	\node(5-2) at(0,2)[circle, draw, fill=black] {};
	\node (6-1) at (1.5, 2.5)[circle,draw, fill=black,label=right:$\red\times$]{};
	\node(6-2) at (0.5, 2.5)[circle, draw, fill=black]{};
	\node(6-3) at(0,2.5){};
	\node(6-4) at (-0.5,2.5){};
	\node(7-1) at (2,3){};
	\node(7-2) at (1,3){};
	\node(7-3) at (0.9,3){};
	\node(7-4) at(0,3){};
    \node(7-5) at (1.5,3){};
	\draw(6-2)--(7-3);
	\draw(6-2)--(7-4);
	\draw (1)--(2-1);
	\draw (1)--(2-2);
	\draw (2-2)--(3-1);
	\draw (2-2)--(3-2);
	\draw (2-2)--(3-3);
	\draw (2-2)--(3-4);
	\draw (3-1)--(4-1);
	\draw (3-1)--(4-2);
	\draw (3-1)--(4-3);
	\draw (4-1)--(5-1);
	\draw (4-1)--(5-2);
	\draw (5-1)--(6-1);
	\draw (6-1)--(7-1);
	\draw (6-1)--(7-2);
	\draw (5-2)--(6-2);
	\draw (5-2)--(6-3);
	\draw (5-2)--(6-4);
    \draw(6-1)--(7-5);
    \draw[dashed,red] (0.8, 2.2) to [in=60, out=30] (1.2, 1.8);
\draw[dashed,red] (0.8,2.2) to (-0.2, 1.2);
\draw[dashed,red] (-0.2, 1.2) to [in=-150,out=-120] (0.2,0.8);
\draw[dashed,red] (0.2,0.8) to (1.2, 1.8);
    \draw[dashed,blue](0.3,2.7) to [in=30, out=60] (0.7,2.3) ;
\draw[dashed,blue](0.3,2.7)--(-0.2,2.2);
\draw[dashed,blue] (0.2,1.8)to [in=-120, out=-150] (-0.2,2.2);
	\draw[dashed, blue](0.2,1.8)--(0.7,2.3);
	\node[minimum size=0pt,inner sep=0pt,label=below:$(\mathcal{T}_2)$] (name) at (0,-0.3){};
\end{tikzpicture}
\end{eqnarray*}

For the two tree monomials shown above, each contains two typical divisors (indicated by blue and red dashed circles).
\begin{itemize}
	\item $\mathcal{T}_1$ is  effective with its effective divisor marked by the blue dashed circle. The red-dashed typical divisor $\calt_1'$ is not effective because, under the planar order of $\calt_1,$ the set of the internal vertices of $\calt$ strictly following the first internal vertex of $\calt_1'$ fully contains the internal  vertices of the blue-dashed typical divisor, thereby violating Condition (ii) in Definition \ref{Def: efficient tree monomials};
	\item $\mathcal{T}_2$ is not effective because, among the internal vertices succeeding the first internal  vertex of the blue-dashed typical divisor, the last vertex has positive degree, violating Condition (i) in Definition \ref{Def: efficient tree monomials}.
\end{itemize}
\end{exam}

Let $\mathcal{T}$ be an effective shuffle  tree monomial with effective divisor $\widehat{\mathcal{S}}$.
We denote by $m_{\widehat{\calS},\calS}(\calt)$ the shuffle tree monomial obtained by replacing the effective divisor $\widehat{\calS}$ of $\calt$ with $\calS.$
Observe that since the leaf labels of $\widehat{\calS}$ and $\calS$ are strictly increasing with respect to the planar order, the above definition is unambiguous.
Now, we define the set of arrows $\M$ of $Q$ as follows.
$$\M : = \Bigg\{m_{\widehat{\calS},\calS}(\calt) \xrightarrow{\pm 1}\mathcal{T} \Bigg|
 \begin{array}{l}
 \text{$\mathcal{T}$ is effective with effective divisor $\widehat{\mathcal{S}}$}
 \end{array} \Bigg\} $$

\begin{lem}
$\M$ is a partial matching.
\end{lem}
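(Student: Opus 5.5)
To show $\M$ is a partial matching, I will check the two axioms (M1) and (M2) of Section~\ref{subsect: Morse}. Before that, I will confirm that each proposed arrow $m_{\widehat{\calS},\calS}(\calt)\to\calt$ is actually an arrow of $Q$. The tree $m_{\widehat{\calS},\calS}(\calt)$ is a shuffle tree monomial with one fewer internal vertex than $\calt$. Because the leaf labels inside $\widehat{\calS}$ and $\calS$ increase in planar order, $\calt$ arises from it by expanding the vertex decorated by $\calS$ according to one of the basic arrows (i)--(iii). The relevant term of $\partial\calS$ is its leading monomial $\widehat{\calS}$, which occurs with coefficient $\pm1$: for $\ell_{n+1}$ this is $j=2$, $i=n$, and for $T_n$ it is $p=2$, $q=1$, so the power of $\lambda$ is $\lambda^{0}$. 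Applying the sign rule for arrows out of higher-weight monomials, the weight is $\pm1$, hence invertible. This settles (M2).

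For (M1), every vertex of $Q$ must lie on at most one arrow of $\M$. I will split this into three claims.

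\emph{(a) A vertex is the target of at most one $\M$-arrow.} The target $\calt$ of an $\M$-arrow is effective, and I will show its effective divisor is unique. Suppose $\calt'$ and $\calt''$ both satisfy Definition~\ref{Def: efficient tree monomials} and their first internal vertices differ, say that of $\calt'$ comes first in planar order. Then, provided all vertices of $\calt''$ come after the first vertex of $\calt'$, $\calt''$ is a typical divisor made of vertices after the first vertex of $\calt'$, contradicting condition (ii) for $\calt'$. If the first vertices coincide, the shapes of typical monomials force $\calt'=\calt''$: the root $\ell_n$ or $T_{n-1}$ has its last child $\ell_2$, which in the $T$ case has last child $T_1$. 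So the source $m_{\widehat{\calS},\calS}(\calt)$ is determined by $\calt$.

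\emph{(b) A vertex is the source of at most one $\M$-arrow.} Given a source $\calt_0=m_{\widehat{\calS},\calS}(\calt)$, I must recover $\calt$. By condition (i), the vertex decorated by $\calS$ (degree $\geqslant1$) is the last vertex of $\calt_0$ carrying nonzero degree. Therefore $\calt$ is obtained by expanding the last positive-degree vertex of $\calt_0$ into its leading term, and this is unique.

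\emph{(c) No vertex is both a source and a target.} Suppose $\calt_0$ is a target, so it is effective with effective divisor $\widehat{\calS}'$. Every typical monomial has root of degree $\geqslant1$ unless it is $\ell_2$-rooted, and $\widehat{\ell_3}$ is built only from $\ell_2$'s, which have degree $0$. So I will use the degrees of the typical monomials: the vertices of $\widehat{\calS}'$ are $\ell_n$ or $T_{n-1}$ together with $\ell_2$ and $T_1$. By condition (i), every vertex after the first vertex of $\widehat{\calS}'$ has degree $0$. Now suppose $\calt_0$ is also a source, so its last positive-degree vertex $v$ carries $\calS$; the expansion of $v$ occurs at or after the position of $v$. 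I will show that the vertex $v$, which must come at or before the first vertex of $\widehat{\calS}'$, yields a tree $\calt$ in which $\widehat{\calS}'$ still sits after the expanded divisor. That makes $\widehat{\calS}'$ a later typical divisor, violating condition (ii) for $\calt$. The one remaining case is that $v$ is itself the root of $\widehat{\calS}'$ with positive degree; then expanding $v$ alters $\widehat{\calS}'$ itself, and I will check directly that the resulting tree has a later typical divisor or a later positive-degree vertex.

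I expect (c) to be the main obstacle. It requires careful bookkeeping of planar positions after the vertex expansion, in particular showing that expanding $\calS$ into $\widehat{\calS}$ does not reorder the vertices of $\widehat{\calS}'$ before the new divisor. The first thing I will do is compare first vertices in planar order, using the fact that the planar order is preserved by substitution of a subtree at a vertex.
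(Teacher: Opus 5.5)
Your proposal is correct and follows the paper's proof. You get (M2) from the $\pm1$ weights and then exclude the three forbidden configurations: a common target by uniqueness of the effective divisor; a common source because condition (i) forces the expansion at the last positive-degree vertex (the paper cites (ii) at this point, but (i) is what actually does the work, as in your argument); and a chain by obtaining $i''\geqslant i'$ from (ii) and $i'\geqslant i''$ from (i). The case you defer is exactly the paper's picture: expanding a positive-degree root $\ell_n$ (resp.\ $T_{n-1}$) of $\widehat{\calS}'$ creates a new $\ell_2$ whose last input is the old $\ell_2$ (resp.\ a new $T_1$ whose input is the old $\ell_2$ carrying the old $T_1$). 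This is a copy of $\widehat{\ell_3}$ (resp.\ $\widehat{T_2}$) lying after the root of the new divisor, which violates (ii).

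Two slips should be fixed. First, the leading term of $\partial T_n$ is the $p=q=2$ term ($r_1=n-1$, $r_2=1$, $i=n-1$, $k_1=2$), not a $p=2$, $q=1$ term; the latter have coefficient $\pm\lambda$ and give no arrow when $\lambda=0$. Second, your aside that only $\ell_2$-rooted typical monomials have a degree-zero root is false, since $\widehat{T_2}$ has root $T_1$, though you never use it.
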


\begin{proof}
Clearly, the weight of every arrow in $\M$ is invertible (being $\pm 1$).
Therefore, we only need to verify that no distinct pair of arrows in $\M$ satisfy any of the following three conditions.
$$
\begin{tikzpicture}
\fill(-1,0) circle (1pt);
\fill(0,0) circle (1pt);
\fill(1,0) circle (1pt);
\draw[->](-0.9,0)--(-0.1,0);
\draw[->](0.1,0)--(0.9,0);
\begin{scope}[xshift=2cm]
\fill(0,0) circle (1pt);
\fill(1,0.5) circle (1pt);
\fill(1,-0.5) circle (1pt);
\draw[->](0.1,0.05)--(0.9,0.45);
\draw[->](0.1,-0.05)--(0.9,-0.45);
\end{scope}
\begin{scope}[xshift=4cm]
\fill(0,0.5) circle (1pt);
\fill(0,-0.5) circle (1pt);
\fill(1,0) circle (1pt);
\draw[->](0.1,0.45)--(0.9,0.05);
\draw[->](0.1,-0.45)--(0.9,-0.05);
\end{scope}
\end{tikzpicture}$$

For the first case, suppose there exist two distinct  arrows $\alpha_1$ and $\alpha_2$ in $\M$ of the following form:
$$\begin{tikzpicture}
\node(1) at (-2,0) {$\mathcal{T}$};
\node(2) at (0,0) {$\mathcal{T}'$};
\node(3) at (2,0) {$\mathcal{T}''$};
\draw[->] (1) -- (2) node[midway,above=0.05]{$\alpha_1$};
\draw[->] (2) -- (3) node[midway,above=0.05]{$\alpha_2$};
\end{tikzpicture}.$$
Then both $\calt'$ and $\calt''$ are effective shuffle tree monomials, with effective divisors $\widehat{\calS'}$ and $\widehat{\calS''}$, respectively.
Therefore, we obtain the identities:
$m_{\widehat{\calS''},\calS''}(\calt'') = \calt'$ and $m_{\widehat{\calS'},\calS'}(\calt') = \calt.$
Under the planar order, let
\begin{itemize}
\item $\calS'$ be the $i'$-th internal vertex in $\calt$;
\item $\calS''$ be the $i''$-th internal vertex in $\calt'.$
\end{itemize}
By the effectiveness of  $\mathcal{T}''$ and Definition~\ref{Def: efficient tree monomials} (ii), it follows that $i''\geq i'.$
On the other hand, since $\calt'$ is effective,  and the internal vertex $\calS''$ in $\calt'$ has positive degree, Definition~\ref{Def: efficient tree monomials} (i) yields $i'\geqslant i''.$
We therefore conclude that $i' = i''.$
This shows that $\calt''$ contains the following divisor, where the effective divisor of $\calt''$ is encircled by a blue dashed line.
$$\begin{tikzpicture}
\tikzstyle{every node}=[thick,minimum size=4pt, inner sep=1pt]
\node(1-1) at (0,0)[circle,draw, fill=black]{};
\node(2-1) at (-1,1) {};
\node(2-2) at (0,1) {};
\node(2-3) at (1,1) [circle,draw, fill=black]{};
\node(3-1) at (0,2) {};
\node(3-2) at (2,2) [circle,draw, fill=black]{};
\node(4-1) at (1,3) {};
\node(4-2) at (3,3) {};
\draw (1-1)--(2-1);
\draw (1-1)--(2-2);
\draw (1-1)--(2-3);
\draw (2-3)--(3-1);
\draw (2-3)--(3-2);
\draw (3-2)--(4-1);
\draw (3-2)--(4-2);
\draw [dotted,line width=1pt](-0.4,0.5)--(0.4,0.5);
\draw [dashed,blue] (-0.5,0.5) to (0.5,1.5);
\draw [dashed,blue] (0.5,1.5) to [in=60,out=30] (1.5,0.5);
\draw [dashed,blue] (1.5,0.5) to (0.5,-0.5);
\draw [dashed,blue] (0.5,-0.5) to [in=-120,out=-150](-0.5,0.5);
\draw [dashed,red] (0.75,1.25) to (1.75,2.25);
\draw [dashed,red] (1.75,2.25) to [in=60,out=30](2.25,1.75);
\draw [dashed,red] (2.25,1.75) to (1.25,0.75);
\draw [dashed,red] (1.25,0.75) to [in=-120,out=-150](0.75,1.25);
\end{tikzpicture}
\hspace{4mm}
\begin{tikzpicture}
\tikzstyle{every node}=[thick,minimum size=4pt, inner sep=1pt]
\node(1-1) at (0,0)[circle,draw]{};
\node(2-1) at (-1,1) {};
\node(2-2) at (0,1) {};
\node(2-3) at (1,1) [circle,draw, fill=black]{};
\node(3-1) at (0.5,1.5) {};
\node(3-3) at (1.5,1.5)[circle, draw]{};
\node(3-2) at (2,2) [circle,draw, fill=black]{};
\node(4-1) at (1.5,2.5) {};
\node(4-2) at (3,3) {};
\node(4-3) at (2.5,2.5)[circle, draw]{};
\draw (1-1)--(2-1);
\draw (1-1)--(2-2);
\draw (1-1)--(2-3);
\draw (2-3)--(3-1);
\draw (2-3)--(3-3);
\draw (3-2)--(3-3);
\draw (3-2)--(4-1);
\draw (3-2)--(4-3);
\draw (4-3)--(4-2);
\draw [dotted,line width=1pt](-0.4,0.5)--(0.4,0.5);
\draw [dotted,line width=1pt](-0.4,0.5)--(0.4,0.5);
\draw [dashed,blue] (-0.5,0.5) to (0.85,1.85);
\draw [dashed,blue] (0.85,1.85) to [in=60,out=30] (1.85,0.85);
\draw [dashed,blue] (1.85,0.85) to (0.5,-0.5);
\draw [dashed,blue] (0.5,-0.5) to [in=-120,out=-150](-0.5,0.5);
\begin{scope}[xshift=15,yshift=15]\draw [dashed,red] (0.75,1.25) to (1.75,2.25);
\draw [dashed,red] (1.75,2.25) to [in=60,out=30](2.25,1.75);
\draw [dashed,red] (2.25,1.75) to (1.25,0.75);
\draw [dashed,red] (1.25,0.75) to [in=-120,out=-150](0.75,1.25);
\end{scope}
\end{tikzpicture}
$$
However, $\calt''$ additionally contains a typical divisor marked by a red dashed enclosure, which contradicts Definition~\ref{Def: efficient tree monomials} (ii). Consequently, the case of both $\alpha_1$ and $\alpha_2$ existing simultaneously is impossible.

For the second case, suppose there exist two distinct  arrows $\alpha_1$ and $\alpha_2$ in $\M$ of the following form:
$$\begin{tikzpicture}
\node(1) at (0,0) {$\mathcal{T}$};
\node(2) at (2,1) {$\mathcal{T}'$};
\node(3) at (2,-1) {$\mathcal{T}''$};
\draw[->](0.3,0.05)--(1.7,0.9)node[midway,above=0.05]{$\alpha_1$};
\draw[->](0.3,-0.05)--(1.7,-0.9)node[midway,below=0.05]{$\alpha_2$};
\node(s) at (2.4,-1.1){$.$};
\end{tikzpicture}$$
Then both $\calt'$ and $\calt''$ are effective shuffle tree monomials, with effective divisors $\widehat{\calS'}$ and $\widehat{\calS''}$, respectively.
Therefore, we obtain the identities:
$m_{\widehat{\calS''},\calS''}(\calt'') = \calt = m_{\widehat{\calS'},\calS'}(\calt').$
By Definition~\ref{Def: efficient tree monomials} (ii), we conclude that $\calS'$ and $\calS''$ coincide as the same internal vertex in $\calt,$ which implies $\alpha_1 = \alpha_2.$  This contradicts the  assumption that $\alpha_1 \neq \alpha_2.$

For the third case, suppose there exist two distinct  arrows $\alpha_1$ and $\alpha_2$ in $\M$ of the following form:
$$\begin{tikzpicture}
\node(1) at (0,1) {$\mathcal{T}'$};
\node(2) at (0,-1) {$\mathcal{T}''$};
\node(3) at (2,0) {$\mathcal{T}$};
\draw[->](0.3,0.9)--(1.7,0.05)node[midway,above=0.05]{$\alpha_1$};
\draw[->](0.3,-0.9)--(1.7,-0.05)node[midway,below=0.05]{$\alpha_2$};
\node(s) at (2.4,-1.1){$.$};
\end{tikzpicture}$$
Consequently, $\calt$ possesses a unique effective divisor $\calS$. Combined with the definition of arrows in $\M,$ this immediately yields $\alpha_1 = \alpha_2,$ which contradicts the  assumption that $\alpha_1 \neq \alpha_2.$
\end{proof}

\begin{lem}
$\M$ is a Morse matching.
\end{lem}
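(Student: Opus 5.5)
We apply Lemma~\ref{Lem: criterion of morse matching}: it suffices to show that every zigzag path in $Q^{\M}$ beginning with a dotted arrow has finite length. The plan is to decompose $Q$ by arity and by the grading of $\RBLA_\infty$. All arrows of $Q$ (and hence of $Q^\M$) preserve arity, since $\partial$ preserves arity. So a zigzag path stays inside the subquiver spanned by shuffle tree monomials of a fixed arity $n$.

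In fixed arity $n$ and fixed homological degree, there are only finitely many shuffle tree monomials. Generators $\ell_m, T_m$ of arity $m$ have degrees $m-2$ and $m-1$, so they are all of nonnegative degree except for nothing negative: $T_1$ has degree $0$, $\ell_2$ has degree $0$, and $T_1$ is the only arity-one generator. The point to check is that, for fixed arity $n$, the number of vertices decorated by $T_1$ is not a priori bounded, so arity-$n$ trees of a given degree form an infinite set (arbitrarily long chains of $T_1$). Hence finiteness of each component alone does not suffice, and we need a monotone invariant.

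The invariant will be the right path-permutation order $\prec$ from Definition~\ref{Def: right path-permutation order}, together with the number of internal vertices. A zigzag path alternates a dotted arrow $\calt'\dashrightarrow m_{\widehat{\calS},\calS}(\calt')$, which contracts the effective divisor $\widehat{\calS}$ (two vertices) to the single generator $\calS$, and a thick arrow, which expands one vertex $v$ of the resulting tree via $\partial v$ into some monomial of $\partial v$.

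The key step is to show the following. After a dotted arrow followed by a thick arrow that does not simply undo it, the new target $\calt''$ (if it is again in $\D$, i.e. effective so the path can continue) satisfies $\calt''\prec\calt'$ strictly, while having the same number of vertices and the same degree. We argue by cases on where $v$ sits relative to the position of $\calS$ in the planar order.

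\begin{itemize}
\item If $v=\calS$, the thick arrow replaces $\calS$ by a non-leading term of $\partial\calS$. This term is strictly smaller than $\widehat\calS$, because $\widehat\calS$ is the leading term and the extension $\prec$ is compatible with grafting (the standard property of path-permutation orders, \cite{BD}).
\item If $v$ lies before $\calS$, the expansion is one of the three arrow types (i)--(iii). In each type the new vertices occur lower, and the path words are multiplied by smaller letters in the order \eqref{eq: monomial order of l and T}. We must also use effectiveness conditions (i)--(ii) to rule out the case where the result becomes effective with a divisor further right, which would produce a cycle.
\item If $v$ lies after $\calS$, then $v$ must have positive degree to be expandable. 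This contradicts condition (i) for $\calt'$, so this case cannot occur.
\end{itemize}

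Granting this, along a zigzag path all trees have the same arity, the same degree, and the same number of vertices. These form a finite set, so the path cannot be infinite. Concluding this requires checking that $\prec$ restricted to such a finite set is a strict order, which is immediate. I expect the main obstacle to be the middle case, where $v$ precedes $\calS$, because we must verify carefully that comparing path sequences from right to left really decreases. Here I would first compare the rightmost leaf paths: expansions at earlier vertices only change paths of leaves to the left of, or at the same position as, those under $\calS$, while contraction at $\calS$ strictly shortens or lowers the rightmost affected path word.
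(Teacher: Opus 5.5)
Your reduction to Lemma~\ref{Lem: criterion of morse matching} is correct. So are the observation that all arrows preserve arity, your diagnosis that fixed arity and degree do not give a finite set (because of unbounded chains of $T_1$), and your third case. But the invariant you then rely on does not exist.

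First, the number of internal vertices is not constant along a zigzag path. A dotted arrow contracts $\widehat{\ell_{n+1}}=\ell_n\circ_n\ell_2$ (two vertices) or $\widehat{T_n}=T_{n-1}\circ_{n-1}(\ell_2\circ_2T_1)$ (three vertices) to one vertex. A thick arrow replaces one vertex by a monomial with up to $n+1$ vertices, e.g.\ $T_n\mapsto \ell_n(T_1,\dots,T_1)$.

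Second, the strict decrease $\calt''\prec\calt'$ fails in your middle case. A vertex preceding $\calS$ in the planar order can be an \emph{ancestor} of $\calS$, and expanding it lengthens path words of leaves on both sides of $\calS$. Concretely, let $\calt'=T_2\big(\ell_2(1,\ell_2(2,3)),\,\ell_2(4,T_1(5))\big)$.
\begin{enumerate}
\item Its typical divisors are $\ell_2\circ_2\ell_2$ and $T_2\circ_2(\ell_2\circ_2T_1)$, and condition (ii) selects the first as effective.
\item The dotted arrow goes to $T_2\big(\ell_3(1,2,3),\,\ell_2(4,T_1(5))\big)$.
\item Expanding the root by the term $\ell_2(T_1,T_1)$ of $\partial T_2$ gives a thick arrow to $\calt''=\ell_2\big(T_1(\ell_3(1,2,3)),\,T_1(\ell_2(4,T_1(5)))\big)$.
\item $\calt''$ is again effective: its typical divisor $T_1\circ_1(\ell_2\circ_2T_1)$ comes after its only positive-degree vertex $\ell_3$. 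So the path continues.
\end{enumerate}
Yet $\calt''$ has $6$ vertices against $5$. Moreover, the path word of leaf $5$ grows from $T_2\ell_2T_1$ to $\ell_2T_1\ell_2T_1$, so $\calt''\succ\calt'$ for the degree-lexicographic order on path words. Hence your finiteness conclusion has nothing to rest on.

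The missing idea is a measure that cannot see the contraction $\widehat\calS\leftrightarrow\calS$. The paper introduces a map $D$ that rewrites a tree monomial using a binary $\mu$ and a unary $T$. Each $\ell_n$ becomes a right comb of $n-1$ copies of $\mu$. Each $T_n$ becomes $T$ followed by $n-1$ copies of ``$\mu$ with $T$ on its right input''. This map has two key properties:
\begin{itemize}
\item $D(\widehat\calS)=D(\calS)$, so dotted arrows preserve $D$.
\item Every monomial $\calS'$ of $\partial\calS$ has $D(\calS')\preceq D(\calS)$, so thick arrows weakly decrease $D$. Equality occurs only for explicit terms: $\ell_{n-j+1}$ with $\ell_j$ on its last input, and $T_{n-j}$ with $\ell_2\circ_2T_j$ on its last input.
\end{itemize}
In the example above $D$ drops strictly, since $D(\ell_2(T_1,T_1))=\mu(T(1),T(2))\prec T(\mu(1,T(2)))=D(T_2)$.

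Because $\prec$ is a well-order in fixed arity, $D$ can drop strictly only finitely often along a path. On a stretch where $D$ is constant, the number of vertices of $\calt$ is bounded by that of $D(\calt)$; this is the finiteness you were after. The paper then shows such stretches are finite by checking that the first decoration that changes strictly decreases for $\sqsubset$.
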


\begin{proof}
Given a shuffle tree monomial $\calt$ of $\RBLA_\infty,$ we define the shuffle tree monomial $D(\mathcal{T})$ as the result of replacing:
\begin{itemize}
\item  every divisor corresponding to an internal vertex decorated by $\ell_n$ in $\calt$ with
$$\begin{tikzpicture}
\tikzstyle{every node}=[thick,minimum size=4pt, inner sep=1pt]
\node(1-1) at (0,0)[circle,draw, fill=black,label=right:\tiny$\mu$]{};
\draw(1-1) -- (0,-0.5);
\node(2-1) at (-1,1) {\tiny$1$};
\node(2-3) at (1,1) [circle,draw, fill=black,label=right:\tiny$\mu$]{};
\node(3-1) at (0,2) {\tiny$2$};
\node(3-2) at (2,2) [circle,draw, fill=black,label=right:\tiny$\mu$]{};
\node(4-1) at (1,3) {\tiny$n-1$};
\node(4-2) at (3,3) {\tiny$n$};
\node(3-3) at (1.3, 1.3){};
\node(3-4) at (1.7,1.7){};
\draw (1-1)--(2-1);
\draw (1-1)--(2-3);
\draw (2-3)--(3-1);
\draw (2-3)--(3-3);
\draw (3-4)--(3-2);
\draw (3-2)--(4-1);
\draw (3-2)--(4-2);
\draw [dotted,thick](3-3)--(3-4);
\draw[decorate,decoration={brace,mirror}] (0.6,-0.1) to node[bend right,auto,swap]{\small$n-1$ vertices}(2.7,2);
\end{tikzpicture}$$
\item every divisor corresponding to an internal vertex decorated by $\ell_n$ in $\calt$ with
$$\begin{tikzpicture}
\tikzstyle{every node}=[thick,minimum size=4pt, inner sep=1pt]
\node(1-1) at (0.5,0)[circle,draw,label=right:\tiny$T$]{};
\draw(1-1) --(0.5,-0.5);
\node(2-3) at (0.5,0.5) [circle,draw, fill=black,label=right:\tiny$\mu$]{};
\node(3-1) at (-0.5,1.5) {\tiny$1$};
\node(3-3) at (1,1)[circle, draw,label=right:\tiny$T$]{};
\node(3-2) at (2,2) [circle,draw, fill=black,label=right:\tiny$\mu$]{};
\node(4-1) at (1,3) {\tiny$n-1$};
\node(4-2) at (3,3) {\tiny$n$};
\node(4-3) at (2.5,2.5)[circle, draw,label=right:\tiny$T$]{};
\node(3-4) at (1.3,1.3){};
\node(3-5) at (1.7,1.7){};
\draw (1-1)--(2-3);
\draw (2-3)--(3-1);
\draw (2-3)--(3-3);
\draw (3-2)--(3-5);
\draw (3-4)--(3-3);
\draw (3-2)--(4-1);
\draw (3-2)--(4-3);
\draw (4-3)--(4-2);
\draw[dotted,thick] (3-4)--(3-5);
\draw[decorate,decoration={brace,mirror}] (0.9,0.3) to node[bend right,auto,swap]{\small$2(n-1)$ vertices}(3.1,2.5);
\end{tikzpicture}.$$
\end{itemize}
By direct computation, we conclude that for each  generator $\calS,$  the equality $D(\widehat{\calS}) = D(\calS)$ holds. Moreover, for every shuffle tree monomial $\calS'$ appearing in $\partial(\calS)$ with a non-zero coefficient, we have
$D(\calS')  \preceq D(\calS),$ where $\prec$   is the monomial order defined immediately after Definition~\ref{Def: right path-permutation order}.
Specifically, the equality $D(\calS')  = D(\calS)$ holds if and only if:
\begin{itemize}
\item $\calS = \ell_n$ ($n\geqslant 2$) and $\calS'$ is the following shuffle tree  monomial whose leaf labels are in increasing order according to the planar order
$$\begin{tikzpicture}
\tikzstyle{every node}=[thick,minimum size=4pt, inner sep=1pt]
\node(1-1) at (0,0)[circle,draw,fill=black,label=right: {$\ \ell_{n-j+1}$}]{};
\draw(1-1) -- (0,-0.5);
\node(2-1) at (-1,1){};
\node(2-2) at (0,1) {};
\node(2-3) at (1,1)[circle,draw,fill=black,label=right:{$\ \ell_j$}]{};
\node(3-1) at (0,2){};
\node(3-2) at (1,2){};
\node(3-3) at (2,2){};
\draw(1-1)to (2-1);
\draw(1-1)to (2-2);
\draw(1-1)to (2-3);
\draw(2-3) to (3-1);
\draw(2-3) to (3-2);
\draw(2-3) to (3-3);
\draw[dotted,thick] (-0.4,0.5) to (0.4,0.5);
\draw[dotted,thick] (0.6,1.5) to (1.4,1.5);
\end{tikzpicture}$$
for $2\leq j \leq n-1,$ or
\item $\calS = T_n$ ($n\geqslant 1$) and $\calS'$ is the following shuffle tree  monomial whose leaf labels are in increasing order according to the planar order
$$\begin{tikzpicture}
\tikzstyle{every node}=[thick,minimum size=4pt, inner sep=1pt]
\node(1-1) at (0,0)[circle,draw,label=right:{$\ T_{n-j}$}]{};
\draw(1-1) -- (0,-0.5);
\node(2-1) at (-1,1){};
\node(2-2) at (0,1) {};
\node(2-3) at (1,1)[circle,draw,fill=black,label=right:{$\ \ell_2$}]{};
\node(3-1) at (0,2){};
\node(3-3) at (2,2)[circle,draw,label=right:{$\ T_{j}$}]{};
\node(4-1) at (1,3){};
\node(4-2) at (2,3){};
\node(4-3) at (3,3){};
\draw(1-1)to (2-1);
\draw(1-1)to (2-2);
\draw(1-1)to (2-3);
\draw(2-3) to (3-1);
\draw(2-3) to (3-3);
\draw(3-3) to (4-1);
\draw(3-3) to (4-2);
\draw(3-3) to (4-3);
\draw[dotted,thick] (-0.4,0.5) to (0.4,0.5);
\draw[dotted,thick] (1.6,2.5) to (2.4,2.5);
\end{tikzpicture}$$
for $1\leq j \leq n.$
\end{itemize}
Therefore, in the quiver $Q^{\M}:$
\begin{itemize}
\item For any dashed arrow $\calt \dashrightarrow \calt'$, we have $D(\calt) = D(\calt');$
\item for any thick arrow $ \calt'' \rightarrow \calt'''$, we have $D(\calt''') \preceq D(\calt'').$
\end{itemize}
Hence, for the following zigzag path in the quiver $Q^{\M}$, we have $D(\calt'') \preceq D(\calt).$
\begin{equation}\label{eq: zigzag path of l=2} \begin{tikzpicture}[baseline=(current bounding box.center)]
\node(1) at (4,2){$\mathcal{T}$};
\node(2) at (0,0){$\mathcal{T}'$};
\node(3) at (4,-2){$\mathcal{T}''$};
\draw[dotted,thick,->] (1) to  node[midway,above=0.05cm]{}(2);
\draw[thick,->] (2) to  node[midway,below=0.05cm]{}(3);
\end{tikzpicture}\end{equation}
Since the order $\prec$ is well-ordered, we only need to prove that in $Q^{\M},$ any zigzag path $p$ whose endpoints yield the same shuffle tree monomial when applying $D$  must have finite length. Then, by Lemma~\ref{Lem: criterion of morse matching}, we obtain the desired conclusion.

Consider zigzag paths of the form \eqref{eq: zigzag path of l=2} such that $D(\calt) = D(\calt') = D(\calt'').$
Since $\calt$ is the source  of the  dotted arrow, it is effective, and we denote its effective divisor by $\widehat{\calS}.$   Then $\calt' = m_{\widehat{\calS},\calS}.$
By Definition~\ref{Def: efficient tree monomials} (i), the decorated element at each internal vertex of $\calt'$ that succeeds $\calS$ in the planar order has degree zero.
Therefore, the target  $\calt''$ of the thick arrow is obtained by replacing the subtree $\mu$ corresponding to either an internal vertex preceding $\calS$ in $\calt'$ (under the planar order) or $\calS$ itself, with  some nonzero-coefficient shuffle tree monomial from $\partial(\mu).$
Based on the condition $D(\calt) = D(\calt'')$ and the fact that $\calS$ is typical, we conclude that  the first differing decorated elements $\mu$ (in $\calt$) and $\mu''$ (in $\calt''$) in the planar order satisfy $\mu'' \sqsubset \mu,$ where the order $\sqsubset$ is defined in \eqref{eq: monomial order of l and T}.
Finally, for any zigzag path in $Q^{\M}$ where all vertices yield the same shuffle tree monomial under $D$, the finiteness of internal vertices in its source shuffle tree monomial, combined with the well-ordered property of $\sqsubset,$ implies that such a path must have finite length.
\end{proof}

\begin{lem}\label{lem: morse matching of RBLA}
The set of critical vertices $\V^{\M}$ in $Q^{\M}$ forms a $\bfk$-basis of $\RBLA.$
\end{lem}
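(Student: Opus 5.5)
Our plan is to identify the critical vertices explicitly and compare them with a known normal-form basis of $\RBLA$. First, note that a shuffle tree monomial of positive degree is never critical. If $\calt$ contains a vertex decorated by a generator $\calS$ of positive degree ($\ell_n$ with $n\geqslant 3$, or $T_n$ with $n\geqslant 2$), take the last such vertex in planar order and replace it by the leading monomial $\widehat{\calS}$. This produces a tree $\calt^+$. We will check that $\calt^+$ is effective with effective divisor exactly this $\widehat{\calS}$, so that $\calt\to\calt^+$ lies in $\M$ and $\calt$ is not critical. Condition (i) of Definition~\ref{Def: efficient tree monomials} holds by the choice of the last positive-degree vertex. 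For condition (ii), typical divisors lying after the first vertex of $\widehat{\calS}$ consist only of degree-$0$ vertices $\ell_2$, $T_1$. We must check that no such typical divisor can occur there, or that if one does, it is itself matched. This is the delicate point, since typical monomials have degree one less than $\calS$ and could themselves consist of degree-$0$ generators (e.g.\ $\widehat{\ell_3}$ and $\widehat{T_2}$).

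Consequently, the critical vertices are the degree-$0$ trees (built from $\ell_2$ and $T_1$) that are not effective, i.e.\ that contain no typical divisor at all. Indeed, in degree $0$ condition (i) is vacuous, and a top-rightmost typical divisor always exists if any typical divisor exists. The typical divisors in degree $0$ are exactly $\widehat{\ell_3}=\ell_2\circ_2\ell_2$ (the right comb with labels $1,2,3$) and $\widehat{T_2}=T_1\circ\ell_2\circ_2 T_1$ with trivial labels. So the critical set consists of the shuffle tree monomials in $\ell_2,T_1$ avoiding these two patterns as divisors, with all their shuffle relabellings accounted for.

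Next we observe that these two patterns are precisely the leading terms, for the order $\prec$, of the Jacobi relation \eqref{Eq: RBLA relation 1} and the Rota-Baxter relation \eqref{Eq: RBLA relation 2}, written in shuffle form. We therefore aim to show that $\{\ell_2\circ_2\ell_2$ pattern, $T\ell_2(-,T)$ pattern$\}$ yields a Gröbner basis of the shuffle operad of $\RBLA$ for $\prec$. Granting this, the normal forms (monomials avoiding the leading terms) form a $\bfk$-basis of $\RBLA$ by the standard Gröbner basis theory of \cite{BD}, and the lemma follows.

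The Gröbner basis step is the main obstacle. We would verify it by checking that all S-polynomials (small common multiples of the leading terms in arity $\leqslant 4$) reduce to zero; the overlaps are Jacobi–Jacobi, Jacobi–RB and RB–RB. Alternatively, and probably more robustly, we would use a dimension count. Since $H_0$ of $\RBLA_\infty$ surjects onto $\RBLA$, the normal forms span $\RBLA$. By Theorem~\ref{Thm: main result of algebraic Morse theory}, the critical complex is concentrated in degree $0$ with zero differential and computes $H_0(\RBLA_\infty)=\RBLA$, which gives independence. We would cross-check this against known dimension formulas for free Rota-Baxter Lie algebras, or via the associative analogue from \cite{WZ24} through the commutator embedding.
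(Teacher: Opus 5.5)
Your outline matches the paper's: positive-degree monomials are matched, degree-zero critical monomials are those with no typical divisor, and these are the normal forms for $\widehat{\ell_3}$ and $\widehat{T_2}$. But Step 1, the heart of the lemma, is left open exactly where the work lies, and as you state it, it is false.

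\textbf{Step 1 as stated fails.} You claim $\calt\to\calt^+$ always lies in $\M$. This fails whenever $\calt$ is itself effective. Take $\calt=\ell_3(1,2,\ell_2(3,\ell_2(4,5)))$. Then $\calt^+=\ell_2(1,\ell_2(2,\ell_2(3,\ell_2(4,5))))$, and its effective divisor is the top pair $\ell_2(3,\ell_2(4,5))$, not $\widehat{\calS}$. Here $\calt$ is instead the target of the $\M$-arrow from $\ell_3(1,2,\ell_3(3,4,5))$. So you must first set aside effective $\calt$, which are targets of $\M$-arrows and hence not critical.

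\textbf{The missing argument.} For non-effective $\calt$ you must show that no degree-zero typical divisor $\widehat{\calr}$ of $\calt^+$ lies after the first vertex of $\widehat{\calS}$. This needs an overlap analysis, as in the paper.
\begin{enumerate}
\item If $\widehat{\calr}$ is disjoint from $\widehat{\calS}$, it is already a typical divisor of $\calt$ lying after the last positive-degree vertex $\calS$. Then the rearmost typical divisor of $\calt$ is an effective divisor.
\item If they meet, comparing parents and last children leaves only two configurations. Either the non-root $\ell_2$ of $\widehat{\ell_{n+1}}=\ell_n\circ_n\ell_2$ is the root of $\widehat{\ell_3}$, or the top $T_1$ of $\widehat{T_{n+1}}=T_n\circ_n(\ell_2\circ_2T_1)$ is the root of $\widehat{T_2}$. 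Collapsing $\widehat{\calS}$ back to $\calS$ then exhibits $\widehat{\ell_{n+2}}$, resp.\ $\widehat{T_{n+2}}$, as a typical divisor of $\calt$ rooted at the last positive-degree vertex.
\end{enumerate}
In both cases $\calt$ would be effective, a contradiction. Without this step neither your description of the critical set nor your independence argument goes through.

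\textbf{Your alternative to the S-polynomial check.} Once Step 1 is proved, this alternative is sound, and more self-contained than the paper's appeal to a ``straightforward computation'' of the Gr\"obner basis.
\begin{itemize}
\item $\RBLA_\infty$ is non-negatively graded, so the Morse complex is concentrated in degree $0$ with zero differential.
\item The map $f_0$ of Theorem~\ref{Thm: main result of algebraic Morse theory} is just the inclusion of critical monomials, since no thick arrow leaves degree $0$.
\item $H_0(\RBLA_\infty)$ is the free operad on $\ell_2,T_1$ modulo the ideal generated by $\partial\ell_3$ and $\partial T_2$, because $\partial\ell_2=\partial T_1=0$. This quotient is $\RBLA$.
\end{itemize}
Hence the critical monomials map to a basis of $\RBLA$, and the Gr\"obner property follows as a by-product; no external dimension formula is needed. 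Note also that spanning by normal forms comes from well-ordered rewriting, not from surjectivity of $H_0$.
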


\begin{proof}
First, let $\calt$  be a positive-degree shuffle tree monomial in  $\RBLA_{\infty}$.
If $\calt$ is effective, then by the definition of $\M$, it serves as the source of the dotted arrow in $Q^{\M}.$
Therefore, without loss of generality, we may assume $\calt$ is not effective.
Since $\calt$ is of positive degree, let $\calS$ denote its last internal vertex of positive degree with respect to the planar order. Simultaneously, let $\calt'$ be the shuffle tree monomial obtained by replacing the divisor corresponding to the internal vertex  $\calS$ with $\widehat{\calS}$ in $\calt$. Therefore, we have
$$m_{\widehat{\calS},\calS}(\calt') = \calt.$$
We assert that  $\widehat{\calS}$ is the effective divisor of $\calt'.$
It follows directly that $\widehat{\calS}$ is the typical divisor of $\calt'.$
Since $\calS$ is the last internal vertex of positive degree in  $\calt$  under the planar order, the divisor $\widehat{\calS}$ of $\calt'$ satisfies condition (i) of Definition~\ref{Def: efficient tree monomials}.
Suppose that the divisor $\widehat{\calS}$ of $\calt'$ fails to satisfy condition (ii) of Definition~\ref{Def: efficient tree monomials}.
Then $\calt'$ has a typical divisor $\widehat{\calr}$ consisting of certain internal vertices that lie behind the first internal vertex of $\widehat{\calS}$.
Since $\widehat{\calS}$ satisfies Definition~\ref{Def: efficient tree monomials} (i), the typical divisor $\widehat{\calr}$ of $\calt'$ can only take one of the following two forms:
\begin{eqnarray}\label{Eq: lead monomial of RBL}
\begin{tikzpicture}[baseline=(current bounding box.center)]
\tikzstyle{every node}=[thick,minimum size=4pt, inner sep=1pt]
\node(1-1) at (0,0)[circle,draw, fill=black]{};
\draw(1-1) -- (0,-0.5);
\node(2-1) at (-0.5,0.5) {\tiny$1$};
\node(2-3) at (0.5,0.5) [circle,draw, fill=black]{};
\node(3-1) at (0,1) {\tiny$2$};
\node(3-2) at (1,1) {\tiny$3$};
\draw (1-1)--(2-1);
\draw (1-1)--(2-3);
\draw (2-3)--(3-1);
\draw (2-3)--(3-2);
\end{tikzpicture}
\hspace{3cm}
\begin{tikzpicture}[baseline=(current bounding box.center)]
\tikzstyle{every node}=[thick,minimum size=4pt, inner sep=1pt]
\node(1-1) at (1,0.5)[circle,draw]{};
\draw(1-1) -- (1,0);
\node(2-3) at (1,1) [circle,draw, fill=black]{};
\node(3-1) at (0.5,1.5) {\tiny$1$};
\node(3-3) at (1.5,1.5)[circle, draw]{};
\node(3-2) at (2,2) {\tiny$2$};
\draw (1-1)--(2-3);
\draw (2-3)--(3-1);
\draw (2-3)--(3-3);
\draw (3-2)--(3-3);
\node(s) at (2.3,0.7){$.$};
\end{tikzpicture}
\end{eqnarray}
Then the internal vertices of $\widehat{\calS}$ and $\widehat{\calr}$ in $\calt'$ must intersect. Otherwise, $\calt$ would admit a typical divisor $\widehat{\calr}$ whose internal vertices lie entirely behind $\calS$.
If we take $\widehat{\calr}$ such that no internal vertex of any typical divisor of $\calt$ lies behind the first internal vertex of $\widehat{\calr}$ under the planar order,
then $\widehat{\calr}$ would become an effective divisor of $\calt$, contradicting the  non-effectiveness of $\calt$.
Therefore, $\calt'$ contains a divisor of one of the following two types:
$$\begin{tikzpicture}
\tikzstyle{every node}=[thick,minimum size=4pt, inner sep=1pt]
\node(0-1) at (-1,-1)[circle,draw, fill=black,label=right:{$\ \ell_n$}]{};
\draw(0-1) --(-1,-1.5);
\node(0-2) at (-2,0){};
\node(0-3) at (-1,0){};
\node(1-1) at (0,0)[circle,draw, fill=black]{};
\node(2-1) at (-1,1) {};
\node(2-3) at (1,1) [circle,draw, fill=black]{};
\node(3-1) at (0,2) {};
\node(3-2) at (2,2) {};
\draw (0-1)--(0-2);
\draw (0-1)--(0-3);
\draw (0-1)--(1-1);
\draw (1-1)--(2-1);
\draw (1-1)--(2-3);
\draw (2-3)--(3-1);
\draw (2-3)--(3-2);
\draw[dotted,line width = 1pt] (-1.4,-0.5) to (-0.6,-0.5);
\draw[dashed,blue,thick] (-1.5,-0.5) to (-0.5,0.5);
\draw[dashed,blue,thick] (0.5,-0.5) to node[midway,right]{$ \ \ \widehat{\mathcal{S}} $}(-0.5,-1.5);
\draw[dashed,blue,thick] (-0.5,0.5) to[in=60,out=30] (0.5,-0.5);
\draw[dashed,blue,thick] (-0.5,-1.5) to[in=-120,out=-150]  (-1.5,-0.5);
\begin{scope}[xshift=1cm,yshift=1cm,scale=0.85]
\draw[dashed,red,thick] (-1.5,-0.5) to (-0.5,0.5);
\draw[dashed,red,thick] (0.5,-0.5) to node[midway,right]{$ \ \ \widehat{\mathcal{R}} $}(-0.5,-1.5);
\draw[dashed,red,thick] (-0.5,0.5) to[in=60,out=30] (0.5,-0.5);
\draw[dashed,red,thick] (-0.5,-1.5) to[in=-120,out=-150]  (-1.5,-0.5);
\end{scope}
\end{tikzpicture}
\hspace{5mm}
\begin{tikzpicture}
\tikzstyle{every node}=[thick,minimum size=4pt, inner sep=1pt]
\node(0-0) at (-1,-1)[circle,draw,label=right:{$\ T_n$}]{};
\draw(0-0) -- (-1,-1.5);
\node(0-1) at (-2,0){};
\node(0-2) at (-1,0){};
\node(0-3) at (0,0)[circle,draw, fill=black]{};
\node(0-4) at (-0.5,0.5){};
\node(1-1) at (0.5,0.5)[circle,draw]{};
\node(2-3) at (1,1) [circle,draw, fill=black]{};
\node(3-1) at (0.5,1.5) {};
\node(3-3) at (1.5,1.5)[circle, draw]{};
\node(3-2) at (2,2) {};
\draw(0-0)--(0-1);
\draw(0-0)--(0-2);
\draw(0-0)--(0-3);
\draw(0-3)--(0-4);
\draw(0-3)--(1-1);
\draw (1-1)--(2-3);
\draw (2-3)--(3-1);
\draw (2-3)--(3-3);
\draw (3-2)--(3-3);
\draw[dotted,,line width = 1pt] (-1.4,-0.5) to (-0.6,-0.5);
\draw[dashed,blue,thick] (-1.5,-0.5) to (0,1);
\draw[dashed,blue,thick] (1,0) to node[midway,right]{$ \ \ \widehat{\mathcal{S}} $}(-0.5,-1.5);
\draw[dashed,blue,thick] (0,1) to[in=60,out=30] (1,0);
\draw[dashed,blue,thick] (-0.5,-1.5) to[in=-120,out=-150]  (-1.5,-0.5);
\begin{scope}[xshift=1.5cm,yshift=1.5cm,scale=0.85]
\draw[dashed,red,thick] (-1.5,-0.5) to (-0.5,0.5);
\draw[dashed,red,thick] (0.5,-0.5) to node[midway,right]{$ \ \ \widehat{\mathcal{R}} $}(-0.5,-1.5);
\draw[dashed,red,thick] (-0.5,0.5) to[in=60,out=30] (0.5,-0.5);
\draw[dashed,red,thick] (-0.5,-1.5) to[in=-120,out=-150]  (-1.5,-0.5);
\end{scope}
\node(s) at (2.5,-1.3){.};
\end{tikzpicture}
$$
Observe that replacing the divisor $\widehat{\calS}$ with $\calS$ in the aforementioned shuffle tree monomial, the resulting expression becomes the typical divisor of $\calt.$
By the choice of $\widehat{\calS}$, this typical divisor satisfies the conditions of Definition~\ref{Def: efficient tree monomials}, rendering $\calt$ effective. This contradicts the non-effectiveness of $\calt.$
Consequently, the typical divisor $\widehat{\calS}$ of $\calt'$ satisfies Definition~\ref{Def: efficient tree monomials} (ii),  thereby constituting an effective divisor of $\calt'.$
Ultimately, we conclude that $\calt \to \calt'$ is an arrow in $\M,$ whence $\calt$ does not belong to $\V^{\M}.$
Therefore, there exist no shuffle tree monomials of positive degree in $\V^{\M}.$

Now, let $\calt$ be a degree-zero shuffle tree monomial in $\RBLA_{\infty}.$
If $\calt$ has a typical divisor, then among all typical divisors, the one whose first internal vertex is the rearmost (under the plane order) is an effective divisor.
Thus, $\calt$ is effective and is the target  of an arrow in $\M,$  and consequently $\calt$ does not belong to $\V^{\M}.$
Observe that in $\M$, the shuffle tree monomial at the source of any arrow is non-zero degree, while the target is always effective. Therefore, if $\calt$ admits no typical divisor, then $\calt$ cannot be effective. Moreover, by our hypothesis that $\calt$ is of degree zero, we conclude that $\calt$ must belong to $\V^{\M}.$
Thus,  $\V^{\M}$ consists precisely of those degree-zero shuffle tree monomials that admit no typical divisors.

Let $\RBLA = \calf(E) / \lan R \ran.$ A straightforward computation shows that the generators  $R$ of the operadic ideal  form  a Gr\"{o}bner basis of $ \lan R \ran$ with respect to the monomial order $\prec',$ where $\prec'$ is the right path-permutation extension induced by relation $T < \ell.$
After identifying $T_1$ with $T$ and $\ell_2$ with $\ell,$ the degree-zero typical divisors of $\calf(E)$ precisely correspond to the leading terms of $R$ with respect to $\prec'.$
Hence $\V^{\M}$ consists exactly of shuffle tree monomials admitting no divisors which are leading terms of elements in the Gr\"{o}bner basis $R$, and consequently they constitute a $\bfk$-linear basis of $\RBLA$.
\end{proof}

\begin{proof}[Proof of Theorem~\ref{Thm: Minimal model}]
By applying Theorem~\ref{Thm: main result of algebraic Morse theory} to the Morse matching $\M$ constructed in Lemma~\ref{lem: morse matching of RBLA}, we obtain that $\RBLA_{\infty}$ and $\RBLA_{\infty}^{\M} \cong \RBLA$ are homotopy equivalent.
Thus it suffices to verify that the differential vanishes on $\RBLA_{\infty}^{\M}$. This is immediate since $\RBLA_{\infty}^{\M}$ is concentrated in degree $0$ while the differential has degree $-1.$
\end{proof}

\bigskip

By the general theory of minimal models of operads,  the space spanned by the generators of the minimal model for an operad is exactly the desuspension of its Quillen homology, i.e., the homology of the bar construction of the operad. So for the operad $\RBLA$,
 denote by $\mathbf{B} (\RBLA)$ the bar construction of $\RBLA,$ and
we have the following result.
\begin{cor}
There is an isomorphism of graded collection $${\mathrm{H}}_\bullet\big(\mathbf{B}(\RBLA)\big)\cong \RBLA^{ \antish}.$$
Moreover, we have a quais-isomorphism of homotopy cooperads $$\RBLA^\antish\simeq \mathbf{B}(\RBLA).$$
\end{cor}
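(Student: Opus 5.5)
The plan is to deduce the corollary from Theorem~\ref{Thm: Minimal model} together with the standard bar--cobar formalism for (homotopy) operads. Two facts drive it. First, for any augmented dg operad $\calP$ the counit $\Omega\mathbf{B}(\calP)\to\calP$ is a quasi-isomorphism, so $\Omega\mathbf{B}(\RBLA)$ is a cofibrant (quasi-free) resolution of $\RBLA$. Second, $\RBLA_\infty=\Omega(\RBLA^\antish)$ is the minimal model by Theorem~\ref{Thm: Minimal model}. I will compare the two resolutions and read off the generators.

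The first step is to produce a morphism of homotopy cooperads $\RBLA^\antish\to\mathbf{B}(\RBLA)$, or equivalently a twisting morphism. The quasi-isomorphism $\RBLA_\infty=\Omega(\RBLA^\antish)\to\RBLA$ of Theorem~\ref{Thm: Minimal model} corresponds, by the analogue of Proposition~\ref{prop:Linfinity give MC} (morphisms out of a cobar construction are Maurer--Cartan elements), to a twisting morphism $\RBLA^\antish\to\RBLA$. By the bar--cobar adjunction, extended to coaugmented homotopy cooperads as in \cite{WZ24}, this twisting morphism corresponds to an $\infty$-morphism $f:\RBLA^\antish\rightsquigarrow\mathbf{B}(\RBLA)$. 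Concretely, this is a dg operad map $\Omega(\RBLA^\antish)\to\Omega\mathbf{B}(\RBLA)$ lifting the augmentation.

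The second step shows that $f$ is a quasi-isomorphism. The induced map $\Omega(\RBLA^\antish)\to\Omega\mathbf{B}(\RBLA)$ commutes with the two quasi-isomorphisms to $\RBLA$, so by two-out-of-three it is a quasi-isomorphism of quasi-free operads. I then filter both sides by the number of generators (tree weight) and pass to the indecomposables $s^{-1}\overline{\calC}$. Because $\RBLA_\infty$ is minimal, its differential is decomposable, so its indecomposables carry zero differential and equal $s^{-1}\overline{\RBLA^\antish}$. On the bar side, the indecomposables have homology $s^{-1}\mathrm{H}_\bullet(\overline{\mathbf{B}(\RBLA)})$, i.e.\ the desuspended Quillen homology. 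A quasi-isomorphism between cofibrant quasi-free operads induces a quasi-isomorphism on indecomposables. This is the Quillen-homology invariance, which I will justify via the standard argument in \cite{DCV13} used for Theorem~\ref{thm:uniqueness of minimal model}. It yields ${\mathrm H}_\bullet(\mathbf{B}(\RBLA))\cong\RBLA^\antish$ as graded collections, with the shift conventions absorbed by $\mathscr{S}$ and $\calS^{-1}$. The same computation shows the linear part of $f$ is a quasi-isomorphism, which is the second claim.

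The main obstacle is this indecomposables step. The functor of indecomposables is only homotopy invariant on cofibrant objects, and $\Omega\mathbf{B}(\RBLA)$ is cofibrant only once a suitable weight filtration is available. Since $\RBLA$ sits in degree $0$ and $\RBLA^\antish$ is arity-wise finite and bounded, I will use the arity filtration, which is exhaustive and bounded in each arity, to run the spectral-sequence comparison. If that fails, the fallback is the explicit homotopy retraction from the algebraic Morse theory, Theorem~\ref{Thm: main result of algebraic Morse theory}(ii), which can be transported along homotopy transfer to produce $f$ directly.
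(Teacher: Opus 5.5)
Your argument is correct, and it is essentially the ``general theory'' the paper invokes without further proof. You compare the minimal model $\Omega(\RBLA^\antish)$ with the bar--cobar resolution $\Omega\mathbf{B}(\RBLA)$ and use that indecomposables preserve quasi-isomorphisms between cofibrant quasi-free operads; this identifies the generators of the minimal model with the desuspended Quillen homology and makes the linear part of the comparison map the required $\infty$-quasi-isomorphism.

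The obstacle you flag is not real, but your proposed fix would not work. $\Omega\mathbf{B}(\RBLA)$ is always cofibrant: filter its generators (bar trees) by number of vertices, and note that the cobar differential of a generator involves only bar trees with strictly fewer vertices. An arity filtration gives no such bound, since $\RBLA(1)\cong\bfk[T]$ produces bar trees with arbitrarily many unary vertices in every arity. Likewise, the comparison map $\Omega(\RBLA^\antish)\to\Omega\mathbf{B}(\RBLA)$ is most safely obtained by lifting $\Omega(\RBLA^\antish)\to\RBLA$ along the surjective quasi-isomorphism $\Omega\mathbf{B}(\RBLA)\to\RBLA$. A bar--cobar adjunction for homotopy cooperads, which the paper never establishes, is not needed.
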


\bigskip

\section{The $L_\infty$-algebra and homotopy version of Rota-Baxter Lie algebras}\label{sect: Linfty RBLA}

\subsection{The $L_\infty$-algebra on the deformation complex of Rota-Baxter Lie algebras}\label{sect: Linf of RBLA}\
In this section, we will use the minimal model $\RBLA_{\infty},$ or more precisely  the Koszul dual homotopy cooperad $(\RBLA)^{\antish},$ to  determine the deformation complex as well as the $L_{\infty}$-algebra structure on it for Rota-Baxter Lie algebras of arbitrary weight.

\begin{defn}
Let $V$ be a graded space. Introduce an $L_\infty$-algebra $\frakC_{\rmRBLA}(V)$ associated to $V$ as $\frakC_{\rmRBLA}(V):=\mathbf{Hom}(\RBLA^\antish, \End_V)^{\prod}_{\s}.$
\end{defn}

Now, let's determine the $L_\infty$-algebra $\frakC_{\rmRBLA}(V)$ explicitly. The sign rules in the symmetric homotopy cooperad $(\RBLA)^{\antish}$ are complicated, so we need some transformations.
Notice that there is a natural isomorphism of symmetric  operads
$$\mathbf{Hom}(\calS, \End_{sV})\cong \End_V.$$
Explicitly, any $f\in \End_{V}(n)$  corresponds to an element $\widetilde{f} \in \mathbf{Hom}(\calS, \End_{sV})(n)$ which is defined as $\big( \widetilde{f}(\delta_n) \big)(sv_1 \ot \cdots \ot sv_{n}) = (-1)^{\sum_{k=1}^{n-1}\sum_{j=1}^{k}|v_j| } sf(v_1 \ot \cdots \ot v_n)$
for any $v_1, \cdots, v_n \in V.$

Thus we have the following isomorphisms of symmetric homotopy oeprads:
$$\begin{array} {rl}
\bfhom(\RBLA^{\antish}, \End_{V} ) \hspace{-2mm}& \cong \bfhom(\RBLA^{\antish}, \mathbf{Hom}(\calS, \End_{sV})) \\
& \cong \bfhom(\RBLA^{\antish} \ot_{\rmH} \calS, \End_{sV}) \\
& = \bfhom(\mathscr{S}( \RBLA^{\antish}), \End_{sV}).  \end{array}$$
We obtain
$$ \frakC_{\rmRBLA}(V)\cong \mathbf{Hom}(\mathscr{S}(\RBLA)^{\antish}, \End_{sV})_{\s}^{\prod}.$$

Recall that $\mathscr{S}(\RBLA)^{\antish}(n)=\bfk u_n\oplus \bfk v_n,$ where $|u_n|=0, |v_n|=1,$ and the right $\bfk \s_n$ action on both $u_n$ and $v_n$ is trivial.
By definition
$$\begin{array}{rcl} \mathbf{Hom}\big(\mathscr{S}(\RBLA^{\antish}), \End_{sV}\big)_{\s}(n)
 & = & \Hom_{\s_n}(\bfk u_n\oplus \bfk v_n, \Hom((sV)^{\ot n},sV)) \\
 & \cong& \Hom(\bfk u_n\oplus \bfk v_n, \Hom(\s^{n}(sV),sV)),\end{array}$$
 where the symmetric power $\s^{n}(sV)$  was introduced in Section~\ref{sect: notations}.
Each $f\in \Hom(\s^{n}(sV),sV)$ determines bijectively a map  $\widehat{f} \in  \Hom$ \!\!$(\bfk u_n, \Hom(\s^{n}(sV),sV))$ by imposing $\widehat{f} (u_n) = f,$
  and each  $g\in \Hom(\s^{n}(sV),V)$ determines bijectively a map $\overline{g} \in \Hom(\bfk v_n, $ \!\! $ \Hom(\s^{n}(sV),sV))$ by imposing $\overline{g}(v_n) = (-1)^{|g|} g.$
 Denote
 $$  \frakC_{\Lie}(V) =\prod\limits_{n\geqslant 1}\Hom(\s^{n}(sV),sV) \quad \text{and} \quad  \frakC_{\RBLO}(V) = \prod\limits_{n\geqslant 1}\Hom(\s^{n}(sV),V).$$
  In this way, we identify $\frakC_{\rmRBLA}(V)$ with $\frakC_{\Lie}(V) \oplus \frakC_{\RBLO}(V).$
  By the general theory recalled in Subsection~\ref{Sect: homotopy cooperad},  a direct computation gives the $L_\infty$-algebra structure on $\frakC_{\rmRBLA}(V):$

\begin{itemize}
	
	\item[(I)] For homogeneous elements $sf, sh\in \mathfrak{C}_{\Lie}(V)$, define $$l_2(sf\ot sh):= [sf, sh]_{\mathrm{NR}}\in\mathfrak{C}_{\Lie}(V),$$
	where $[ , ]_{\mathrm{NR}}$ is the Nijenhuis-Richardson bracket (see Remark~\ref{rmk: brace operation and NR bracket}).

	\item[(II)]
	\begin{itemize}	
		\item[(i)] Let $n\geqslant 1$.  For homogeneous elements $sh\in \Hom(\s^{n}(sV),sV)\subset \mathfrak{C}_{\Lie}(V)$ and $g_1,\dots, g_n\in \mathfrak{C}_{\RBLO}(V)$,	define $$l_{n+1}(sh\ot g_1\ot \cdots \ot g_n)\in \mathfrak{C}_{\RBLO}(V)$$  as :
        \begin{align*}&l_{n+1}(sh\ot g_1\ot \cdots \ot g_n)=\\
			&  \sum_{\sigma\in S_n}(-1)^{\eta}\Big(s^{-1} (sh)\{sg_{\sigma(1)},\cdots,sg_{\sigma(n)}\}-(-1)^{(|g_{\sigma(1)}|+1)(|h|+1)}s^{-1}(sg_{\sigma(1)})\big\{sh\big\{sg_{\sigma(2)},\dots,sg_{\sigma(n)}\big\}\big\}\Big),
		\end{align*}
		where $(-1)^{\eta}=\chi(\sigma; g_1,\dots,g_n)(-1)^{n(|h|+1)+\sum\limits_{k=1}^{n-1}\sum\limits_{j=1}^k|g_{\sigma(j)}|}$.

		\item[(ii)]  Let $n\geqslant 2$.  For homogeneous elements $sh\in \Hom(\s^{n}(sV),sV)\subset \mathfrak{C}_{\Lie}(V)$ and $g_1,\dots ,g_m\in \mathfrak{C}_{\RBLO}(V)$ with $1\leqslant m\leqslant n-1$, define
		$$l_{m+1}(sh\ot g_1\ot \cdots\ot g_m)\in \mathfrak{C}_{\RBLO}(V)$$ to be:
		\[l_{m+1}(sh\ot g_1\ot \cdots\ot g_m)=\sum_{\sigma\in S_m}(-1)^\xi\lambda^{n-m}  s^{-1} (sg_{\sigma(1)})\big\{sh\big\{sg_{\sigma(2)},\dots,sg_{\sigma(m)}\big\}\big\},\]
		where $(-1)^\xi=\chi(\sigma; g_1,\dots,g_m)(-1)^{1+m(|h|+1)+\sum\limits_{k=1}^{m-1}\sum\limits_{j=1}^k|g_{\sigma(j)}|+(|h|+1)(|g_{\sigma(1)}|+1)}$.
	\end{itemize}
	
	\smallskip
	
	\item[(III)]  Let $m\geqslant 1$.  For homogeneous elements $sh\in \Hom(\s^{n}(sV),sV)\subset \mathfrak{C}_{\Lie}(V), g_1,\dots,g_m\in \Hom(\s^c(sV),V)\subset \mathfrak{C}_{\RBLO}(V)$ with $1\leqslant m\leqslant n$, for $1\leqslant k\leqslant m$,  define $$l_{m+1}(g_1\ot \cdots\ot g_k\ot sh \ot g_{k+1}\ot \cdots\ot g_m)\in \mathfrak{C}_{\RBLO}(V)$$ to be
	$$l_{m+1}(g_1\ot \cdots\ot g_k\ot sh \ot g_{k+1}\ot \cdots\ot g_m)=(-1)^{(|h|+1)(\sum\limits_{j=1}^k|g_j|)+k}l_{m+1}(sh\ot g_1\ot \cdots \ot g_m),$$
	where the RHS has been introduced in (II) (i) and (ii).

	\item[(IV)] All other  components of operators $\{l_n\}_{n\geqslant 1}$ vanish.
\end{itemize}
The Nijenhuis-Richardson brackets $[-,-]_{\mathrm{NR}}$  and brace operations  $\{-;-,\cdots,-\}$  appearing in the above definition were  introduced earlier in Definition~\ref{Def: brace operation of shuffle tree} and Remark~\ref{rmk: brace operation and NR bracket}. Specifically,
for $sf\in \Hom(S^{n}(sV),sV), sg_i \in \Hom(S^{m_i}(sV), sV)$ and with $1\leq i \leq k\leq n,$
we have
$$sf\{sg_1,\cdots,sg_k\}=\sum\limits_{\sigma\in \sh(m_1,\cdots,m_k,n-k), \atop \sigma(1)<\sigma(m_1+1)<\cdots < \sigma(m_1+\cdots+m_{k-1}+1)}sf\circ(sg_1 \otimes \cdots \otimes sg_k\ot \id_{sV}^{\ot n-k})\circ r_{\sigma} .$$

\begin{thm}\label{Thm: rb-L-infty}
	Given a graded space $V$  and a scalar $\lambda\in \bfk$, the graded space ${\mathfrak{C}_{\rmRBLA}}(V)$ endowed with operations $\{l_n\}_{n\geqslant 1}$ defined above forms an $L_\infty$-algebra.
\end{thm}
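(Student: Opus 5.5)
The plan is to obtain the $L_\infty$-structure for free from the general machinery of Section~\ref{sec: symmetric homotopy (co)operads}, so that the theorem reduces to identifying the transported operations with the explicit formulas (I)--(IV). By Proposition~\ref{Prop: homotopy cooperad structure}, $\mathscr{S}(\RBLA^\antish)$ is a coaugmented symmetric homotopy cooperad, hence so is its operadic desuspension $\RBLA^\antish=\mathscr{S}(\RBLA^\antish)\ot_\rmH\calS^{-1}$. Since $\End_V$ is a symmetric dg operad, Proposition~\ref{prop:Hom(C,P)-homotopy operad} makes $\bfhom(\RBLA^\antish,\End_V)$ a symmetric homotopy operad. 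Proposition~\ref{prop:Linfty on subspace of sym homotopy operad} then shows that the invariant part $\frakC_{\rmRBLA}(V)$ is an $L_\infty$-algebra. It therefore suffices to show that, under the identification $\frakC_{\rmRBLA}(V)\cong\frakC_{\Lie}(V)\oplus\frakC_{\RBLO}(V)$, the brackets $l_n$ obtained this way (anti-symmetrizations of $m_n=\sum_{T\in\frakST^{(n)}}m_T$) are exactly those listed.

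To make this comparison tractable I would first transport along the chain of isomorphisms $\bfhom(\RBLA^\antish,\End_V)\cong\bfhom(\mathscr{S}(\RBLA^\antish),\End_{sV})$ displayed before the theorem. The point of this step is that the cooperad $\mathscr{S}(\RBLA^\antish)$ has trivial $\mathbb S$-actions and essentially sign-free cooperations $\Delta_T$, which makes the bookkeeping manageable. Then $\mathbf{Hom}$ is computed on $u_n$ and $v_n$ separately, using $\widehat f(u_n)=f$ and $\overline g(v_n)=(-1)^{|g|}g$, and I go through the nonvanishing $\Delta_T$ case by case.

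The cases match up with the formulas as follows.
\begin{itemize}
\item Evaluating at $u_n$, only type (I) trees with $u$-decorations contribute. These give the compositions $f\circ_i h$, whose anti-symmetrization is $[sf,sh]_{\mathrm{NR}}$, which is (I). There is no $l_1$, because $\End_V$ has zero differential and the weight-one cooperations vanish.
\item Evaluating at $v_n$ with type (III) trees having $p=q$ (no $\lambda$) gives the first brace term of (II)(i).
\item Evaluating at $v_n$ with type (II) trees, rooted at $u_k$, gives $s^{-1}(sh)\{sg_{\sigma(1)},\dots,sg_{\sigma(n)}\}$ with all inputs of $h$ filled by $g$'s.
\item Type (III) trees with $p>q$ produce the $\lambda^{p-q}$ term of (II)(ii).
\item The $j=n$ case of type (I) on $v_n$ supplies the remaining $\lambda^{n-1}$ pieces, which belong to the same families.
\end{itemize}
The invariance and shuffle-summation convert sums over shuffle trees into the brace operations of Lemma~\ref{lem:shuffle brace operation on End_sV}. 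Formula (III) is just the generalized anti-symmetry of $l_{m+1}$ rewriting the position of $sh$. Every other $l_n$ vanishes because no other $\Delta_T$ is nonzero, which gives (IV).

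The main obstacle is the signs. Each term picks up a sign from four sources: the factor $(-1)^{\frac{n(n-1)}2+1+n\sum|f_i|}$ in Proposition~\ref{prop:Hom(C,P)-homotopy operad}, the factor $(-1)^{k(k-1)/2}$ in $\Delta_T(v_n)$, the Koszul signs from $\overline g(v_n)=(-1)^{|g|}g$, and the suspension identification $\widetilde f$. These must combine into $(-1)^\eta$ and $(-1)^\xi$. I would verify this first for the lowest arities, namely $l_2(sh\ot g)$ with $h$ binary and $l_3(sh\ot g_1\ot g_2)$, where the result should recover the Rota-Baxter identity \eqref{Eq: Rota-Baxter relation}. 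The general exponent then follows by induction on the number of $g$'s, tracking how one more permuted $sg$ shifts the Koszul sign. As a fallback, if a sign mismatch appears, one can note that the explicit operations differ from the transported ones at most by a consistent sign twist $g\mapsto\pm g$ depending only on degree, which is an $L_\infty$-isomorphism and so still yields the theorem.
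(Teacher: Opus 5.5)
Your proposal follows the paper's route: the $L_\infty$-structure is inherited from the symmetric homotopy operad $\mathbf{Hom}(\RBLA^\antish,\End_V)$ via the general $\mathbf{Hom}(\calC,\calP)$ construction and restriction to invariants, and the listed $l_n$ are then obtained by direct computation after passing to $\mathbf{Hom}(\mathscr{S}(\RBLA^\antish),\End_{sV})$. There is one bookkeeping slip: type (III) trees with $p=q$ produce the second term $s^{-1}(sg_{\sigma(1)})\{sh\{sg_{\sigma(2)},\dots,sg_{\sigma(n)}\}\}$ of (II)(i), not the first (the first term comes from type (II)); and your sign-twist fallback is not justified a priori, so the sign verification has to be carried out, not bypassed.
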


Proposition~\ref{prop:Linfinity give MC} gives immediately an alternative definition of homotopy Rota-Baxter Lie algebras.
\begin{prop}\label{prop: RBLA 1-1 MC}
	A homotopy Rota-Baxter Lie algebra structure of weight $\lambda$ on a graded space is equivalent to a Maurer-Cartan element in the $L_\infty$-algebra $\frakC_{\rmRBLA}(V)$. In particular, when $V$ is concentrated in degree $0$, a Maurer-Cartan element in $\frakC_{\rmRBLA}(V)$ gives a Rota-Baxter Lie algebra structure of weight $\lambda$ on $V$.
\end{prop}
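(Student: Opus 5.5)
The statement to prove is Proposition~\ref{prop: RBLA 1-1 MC}. The plan is to specialise Proposition~\ref{prop:Linfinity give MC} to $\calC=\RBLA^\antish$ and $\calP=\End_V$, and then unwind the Maurer--Cartan equation in degree zero.

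\textbf{Step 1: the first claim.} By Definition~\ref{Def: RBLA-infty}, a homotopy Rota--Baxter Lie algebra structure of weight $\lambda$ on $V$ is a morphism of unital dg operads $\RBLA_\infty=\Omega\RBLA^\antish\to\End_V$. Proposition~\ref{prop:Linfinity give MC} identifies such morphisms bijectively with Maurer--Cartan elements of $\mathbf{Hom}(\overline{\RBLA^\antish},\End_V)^{\prod}_{\mathbb S}$. The only point to check is that $\frakC_{\rmRBLA}(V)$ differs from this by the counit summand $\bfk e_1$. Its component is $\Hom(\bfk e_1,\End_V(1))$, and on it the prescribed MC data is fixed by strict unitality. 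Under the identification $\frakC_{\rmRBLA}(V)\cong\frakC_{\Lie}(V)\oplus\frakC_{\RBLO}(V)$ this is the arity-one piece of $\frakC_{\Lie}(V)$, which carries the internal differential of $V$. I would note that this piece plays the role of $l_1$ and is not a free MC datum, so the bijection with the full space holds once this convention is fixed.

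\textbf{Step 2: what an MC element looks like in degree zero.} Now let $V$ be concentrated in degree $0$. An element of degree $-1$ in $\frakC_{\Lie}(V)\oplus\frakC_{\RBLO}(V)$ has two parts.
\begin{itemize}
\item Its $\frakC_{\Lie}$-part is $sf$ with $f\in\Hom(\s^n(sV),sV)$ of degree $-1$. Maps $(sV)^{\ot n}\to sV$ have degree $1-n$, so only $n=2$ survives. This gives one map $s\ell$, and graded symmetry on $sV$ translates to antisymmetry of $\ell$ on $V$.
\item Its $\frakC_{\RBLO}$-part lies in $\Hom(\s^n(sV),V)$, whose maps have degree $-n$. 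Degree $-1$ forces $n=1$, giving a linear map $T\colon V\to V$.
\end{itemize}
So $\alpha=s\ell+T$.

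\textbf{Step 3: the MC equation.} The MC equation splits into a $\frakC_{\Lie}$-component and a $\frakC_{\RBLO}$-component.
\begin{itemize}
\item The $\frakC_{\Lie}$-component is $\tfrac12 l_2(s\ell,s\ell)=\tfrac12[s\ell,s\ell]_{\mathrm{NR}}=0$. This is the Jacobi identity for $\ell$.
\item In the $\frakC_{\RBLO}$-component, $h=\ell$ has arity $n=2$. The terms that survive are:
\begin{itemize}
\item from (II)(i), with two $T$'s: $l_3(s\ell\ot T\ot T)$, which produces $\ell\circ(T\ot T)-T\circ\ell\circ(T\ot\Id+\Id\ot T)$, with the symmetrisation coming from the brace;
\item from (II)(ii), with $m=1$: $l_2(s\ell\ot T)$, which produces $\lambda\,T\circ\ell$;
\item the reordered versions of these given by (III).
\end{itemize}
Weighting by the MC coefficients $\tfrac1{n!}(-1)^{n(n-1)/2}$ and collecting, the component becomes a nonzero scalar multiple of
\[
\ell\circ(T\ot T)-T\circ\ell\circ(\Id\ot T+T\ot\Id)-\lambda\,T\circ\ell,
\]
which is \eqref{Eq: Rota-Baxter relation in terms of maps}.
\end{itemize}
No higher $l_n$ contribute, because every other operation needs an input of a different arity or degree.

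\textbf{Main obstacle.} The hard part is the sign bookkeeping in Step 3: making sure the $\eta$- and $\xi$-signs, together with the MC prefactors, combine so that the three terms appear with the relative signs of the Rota--Baxter relation, in particular with the $\lambda$-term having the correct sign. I would handle this by evaluating on $sa\ot sb$ directly. An alternative is to read it off at once from $\partial T_2$ in \eqref{Eq: partial T_n}, since an operad morphism $\RBLA_\infty\to\End_V$ in degree $0$ kills all generators of positive degree. That forces $\partial T_2\mapsto0$, which is exactly the relation \eqref{Eq: RBLA relation 2}, and $\partial\ell_3\mapsto0$, which is Jacobi. This second route gives a sign-free consistency check on the first.
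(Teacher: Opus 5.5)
Your proposal follows the paper's route. The paper deduces the statement directly from Proposition~\ref{prop:Linfinity give MC}. Your degree-zero unwinding in Steps~2--3 (MC element $(s\widetilde{\ell},\widetilde{T})$, with only $l_2(s\widetilde{\ell}\ot\widetilde{T})$ and $l_3(s\widetilde{\ell}\ot\widetilde{T}\ot\widetilde{T})$ feeding the $\frakC_{\RBLO}$-part) is what the paper does in the proofs of Propositions~\ref{prop: cochain complex of RBLA} and~\ref{prop: dgLa RBLO and twisting}. Your second route via $\partial\ell_3\mapsto 0$, $\partial T_2\mapsto 0$ is in fact a complete proof of the second claim given the first, not just a check, and it avoids the $\eta$/$\xi$ sign bookkeeping.

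One correction to Step~1. The $e_1$-component is not fixed by strict unitality: a degree $-1$ element sends $e_1$, which has degree $0$, to a degree $-1$ endomorphism of $V$, never to the identity. It is the free structure map $\ell_1=d_V$, which the paper's equivalent definition lists among the $\ell_n$, $n\geqslant 1$. The arity-one component of the MC equation forces $\ell_1^2=0$. So MC elements of the unreduced algebra correspond to pairs consisting of a differential $d_V$ and a morphism $\RBLA_\infty\to\End_{(V,d_V)}$. For $V$ in degree $0$ this component vanishes for degree reasons, so your Steps~2--3 are unaffected.
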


\subsection{Cohomology theory of Rota-Baxter Lie algebras}
Now we introduce the cochain complex of a Rota-Baxter Lie algebra with coefficients in a Rota-Baxter Lie representation.
We will see that this complex can be obtained by twisting the $L_{\infty}$-algebra in Section~\ref{sect: Linf of RBLA}.

Let $(\frakg, \ell = [-,-])$ be a Lie algebra and $(M, \rho)$ be a Lie representation over it. Recall that the \textbf{Chevalley-Eilenberg cochain complex of $\frakg$ with coefficients in $M$} is
$$\rmC_{\Lie}^{\bullet}(\frakg,M) : = \bigg(\bigoplus_{n=0}^{\infty} \rmC_{\Lie}^{n}(\frakg,M), \delta^{\bullet}\bigg),$$
where $\rmC_{\Lie}^{n}(\frakg,M) = \Hom(\wedge^{n}(\frakg), M)$  and the differential $\delta^{n}: \rmC_{\Lie}^{n}(\frakg,M) \to \rmC_{\Lie}^{n+1}(\frakg,M) $ is defined as:
$$ \delta^{n}(f)(x_1 \wedge \cdots \wedge x_{n+1}) =  \sum\limits_{i=1}^{n+1} (-1)^{n+i} x_i \cdot f(x_1 \wedge \cdots \wedge \widehat{x_i} \wedge \cdots \wedge x_{n+1}) \hspace{4cm}$$
$$ \hspace{2cm} +  \sum\limits_{1\leqslant i < j \leqslant n+1}(-1)^{i+j+n+1} f([x_i,x_j] \wedge x_1 \wedge \cdots \wedge \widehat{x_i} \wedge \cdots \wedge \widehat{x_j} \wedge \cdots \wedge x_{n+1}), $$
for all $f \in \rmC_{\Lie}^{n}(\frakg,M), x_1, \cdots, x_{n+1} \in \frakg,$  where  $a \cdot m : = \rho(a)(m),$ for $a\in \frakg,m\in M.$
In the above definition, $\wedge^{n}(\frakg) $ is the $n$-th exterior power of $\frakg$ (as defined in Section~\ref{sect: notations}). The notation  $x_1 \wedge \cdots \wedge \widehat{x_i} \wedge \cdots \wedge x_{n+1}$ denotes the wedge product of the sequence with $x_i$ omitted. The cohomology of the Chevalley-Eilenberg cochain complex $\rmC_{\Lie}^{\bullet}(\frakg,M)$ is called the \textbf{Chevalley-Eilenberg cohomology of $\frakg$ with coefficient in $M$}, denoted by $\rmH_{\Lie}^{\bullet}(\frakg,M).$
When the Lie representation $M$ the regular Lie presentation  $\frakg$ itself, we just denote $\rmC_{\Lie}^{\bullet}(\frakg,\frakg)$ by $\rmC_{\Lie}^{\bullet}(\frakg)$ and call it the  Chevalley-Eilenberg cochain complex of Lie algebra $\frakg$.
Denote $\rmH_{\Lie}^{\bullet}(\frakg,\frakg)$ by $\rmH_{\Lie}^{\bullet}(\frakg),$ called the Chevalley-Eilenberg cohomology of Lie algebra $\frakg$.
	
We now introduce the concept of Rota-Baxter Lie representation.
\begin{defn}
Let $(\frakg, \ell = [-,-], T)$ be a Rota-Baxter Lie algebra of weight $\lambda$, $(M, \rho)$ be a Lie representation over Lie algebra $(\frakg, \ell)$. We say that $M$ is a \textbf{Rota-Baxter Lie representation} if $M$ is endowed with a linear operation $T_{M} : M \to M$ such that
$$ T(a)\cdot T_{M}(m) = T_{M}\bigg(a \cdot T_{M}(m) +  T(a) \cdot m  + \lambda a\cdot m\bigg) $$
holds for any $a\in \frakg$ and $m \in M.$
\end{defn}

Of course, $(\frakg,\ell,T)$ itself is a Rota-Baxter Lie representation  over the Rota-Baxter Lie algebra $(\frakg,\ell,T),$ called the regular Rota-Baxter Lie representation.

Parallel to the case of Rota-Baxter associative algebras (see \cite{Guo12,WZ24,WZ24}), we obtain corresponding descendent properties for Rota-Baxter Lie algebras, whose proofs follow by obvious modifications and are left as exercises.
\begin{prop}
Let $(\frakg, \ell = [-,-], T)$ be a Rota-Baxter Lie algebra of weight $\lambda$. Define a new binary operation as:
$$[a, b]_{\star} : = [a, T(b)] + [T(a),b] + \lambda [a,b]$$
for any $a,b \in \frakg.$ Then
\begin{itemize}
\item[(i)] $(\frakg, [-,-]_{\star})$ is a Lie algebra;
\item[(ii)] the triple $(\frakg, [-,-]_{\star}, T)$ also forms a Rota-Baxter Lie algebra of weight $\lambda$ and denote it by $\frakg_{\star};$
\item[(iii)] the map $T: (\frakg, [-,-]_{\star}, T)\to(\frakg, \ell = [-,-], T)$ is a morphism of Rota-Baxter Lie algebras.
\end{itemize}
\end{prop}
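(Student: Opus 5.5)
The plan is to verify all three items by direct computation. For (i) the Jacobi identity for $[-,-]_\star$ will reduce to the Jacobi identity of $\ell$ together with the Rota-Baxter relation \eqref{Eq: Rota-Baxter relation}. For (ii) and (iii) the starting point is the identity
\[T([a,b]_\star)=[T(a),T(b)],\]
which is exactly \eqref{Eq: Rota-Baxter relation} rewritten, and this will drive everything.

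For (iii), this identity already says that $T$ is a Lie algebra morphism $(\frakg,[-,-]_\star)\to(\frakg,\ell)$. It also commutes with the operators, since $T\circ T=T\circ T$. So (iii) follows at once, provided (ii) holds.

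For (i), antisymmetry of $[-,-]_\star$ is immediate from antisymmetry of $\ell$. For the Jacobi identity I will expand
\[[[a,b]_\star,c]_\star=[[a,b]_\star,T(c)]+[T([a,b]_\star),c]+\lambda[[a,b]_\star,c].\]
In the middle term I replace $T([a,b]_\star)$ by $[T(a),T(b)]$. In the other two terms I expand $[a,b]_\star$ by its definition.

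The result is a sum of iterated brackets of the shape $[[x,y],z]$, where each slot carries either $T$ or nothing, with coefficients $1$, $\lambda$ or $\lambda^2$. I then group these terms by which of the three variables carry a $T$, and take the cyclic sum over $(a,b,c)$.

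\begin{itemize}
\item Exactly two variables carry $T$ and the coefficient is $1$: for each choice of the two variables, the three terms that appear are precisely the three cyclic rotations of one Jacobi identity in $\ell$, so they cancel.
\item Exactly one variable carries $T$ and the coefficient is $\lambda$: the same regrouping shows these also form Jacobi sums for $\ell$.
\item No variable carries $T$ and the coefficient is $\lambda^2$: this is $\lambda^2$ times the Jacobi identity for $\ell$.
\end{itemize}

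This grouping is the main obstacle. The bookkeeping is routine but must be done carefully, for instance by writing out each $T$-pattern class explicitly.

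For (ii) I must show
\[[T(a),T(b)]_\star=T\big([a,T(b)]_\star+[T(a),b]_\star+\lambda[a,b]_\star\big).\]
Applying the identity $T([x,y]_\star)=[T(x),T(y)]$ termwise, the right-hand side becomes
\[[T(a),T^2(b)]+[T^2(a),T(b)]+\lambda[T(a),T(b)].\]
By the definition of $[-,-]_\star$, the left-hand side is exactly the same expression, which proves (ii).

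This mirrors the associative case treated in \cite{Guo12,WZ24}.
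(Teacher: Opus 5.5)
Your proposal is correct: the identity $T([a,b]_\star)=[T(a),T(b)]$, which is just the Rota-Baxter relation, gives (ii) and (iii) at once. Your grouping by $T$-pattern for the Jacobi identity also checks out. Each class (two $T$'s, one $T$, no $T$) splits into Jacobi sums for $\ell$, such as $[[a,T(b)],T(c)]+[[T(b),T(c)],a]+[[T(c),a],T(b)]$.

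The paper gives no written proof. It leaves this as an exercise by analogy with the associative case, so your direct verification is exactly the intended argument.
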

\begin{prop}
Let $(\frakg, \ell = [-,-], T)$ be a Rota-Baxter Lie algebra of weight $\lambda$ and $(M,\rho, T_M)$ be a Rota-Baxter Lie representation over it. We define a new Lie representation $\rhd$ of $\frakg$ on $M$ as follows: for any $a\in \frakg, m \in M,$
$$a \rhd m : = T(a) \cdot m - T_{M}(a \cdot m).$$
Then this operation makes $M$ into a Rota-Baxter Lie representation over $\frakg_{\star}$ and denote this new Lie representation by $_{\rhd}M.$
\end{prop}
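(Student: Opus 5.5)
We need to prove that $a\rhd m := T(a)\cdot m - T_M(a\cdot m)$ defines a Lie representation of $\frakg_\star$ on $M$, and that $T_M$ is a Rota-Baxter operator on it, i.e. $(M,\rhd,T_M)$ is a Rota-Baxter Lie representation over $\frakg_\star=(\frakg,[-,-]_\star,T)$. The plan is the standard semidirect product trick, which turns both claims into the previous proposition on descendent Rota-Baxter Lie algebras.

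\emph{Step 1: the semidirect product.} Form $\frakg\ltimes M$ with bracket
$$[(a,m),(b,n)]=([a,b],\ a\cdot n-b\cdot m)$$
and the operator $\widetilde T(a,m)=(T(a),T_M(m))$. Expanding the Rota-Baxter identity for $\widetilde T$ componentwise gives two conditions. The $\frakg$-component is the Rota-Baxter identity for $T$. The $M$-component is bilinear in $(a,n)$ and $(b,m)$, and splits into the defining identity of a Rota-Baxter Lie representation together with its antisymmetric counterpart. Hence $(\frakg\ltimes M,\widetilde T)$ is a Rota-Baxter Lie algebra of weight $\lambda$, and conversely this property characterises Rota-Baxter Lie representations.

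\emph{Step 2: apply the descendent proposition.} By that proposition, $(\frakg\ltimes M,[-,-]_\star,\widetilde T)$ is a Rota-Baxter Lie algebra of weight $\lambda$. We compute the descendent bracket $[(a,0),(0,m)]_\star$:
$$[(a,0),(0,T_Mm)]+[(Ta,0),(0,m)]+\lambda[(a,0),(0,m)]=\bigl(0,\ a\cdot T_M(m)+T(a)\cdot m+\lambda\, a\cdot m\bigr).$$
The brackets of two elements of $M$ vanish, and the $\frakg$-part is $[-,-]_\star$. So $\frakg_\star\ltimes M$ is a semidirect product for the action
$$a\ast m:=a\cdot T_M(m)+T(a)\cdot m+\lambda\, a\cdot m.$$
By Step 1 in reverse, $(M,\ast,T_M)$ is a Rota-Baxter Lie representation of $\frakg_\star$.

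\emph{Step 3: the main obstacle.} The resulting action $\ast$ is not literally $\rhd$, so the key step is to bridge the two. I expect the intended $\rhd$ to be the analogue of the associative case in \cite{WZ24}, where the descendent bimodule uses $T(a)\cdot m - T_M(a\cdot m)$; this arises from the ``twisted'' semidirect product associated with the morphism $T:\frakg_\star\to\frakg$.

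I would verify directly that $\rhd$ is a representation:
$$[a,b]_\star\rhd m = a\rhd(b\rhd m)-b\rhd(a\rhd m).$$
To do this, expand both sides. Use the Rota-Baxter identity for $T$ to rewrite $T([a,b]_\star)=[Ta,Tb]$. Use the Rota-Baxter Lie representation identity to rewrite $T(a)\cdot T_M(b\cdot m)$ as $T_M\bigl(a\cdot T_M(b\cdot m)+T(a)\cdot(b\cdot m)+\lambda\, a\cdot(b\cdot m)\bigr)$. After these substitutions, the $T_M$-free terms cancel by the representation property of $\rho$, and the remaining $T_M$-terms cancel pairwise, using the representation property again inside $T_M$.

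The Rota-Baxter compatibility
$$T(a)\rhd T_M(m)=T_M\bigl(a\rhd T_M(m)+T(a)\rhd m+\lambda\, a\rhd m\bigr)$$
is checked the same way: expand, then apply the representation identity once to each term of the form $T(\cdot)\cdot T_M(\cdot)$.

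Should the signs fail to match with $\rhd$ as written, I would fall back on the conclusion of Step 2, which is certainly correct with $\ast$, and check whether $\rhd$ agrees with $\ast$ up to the convention $T_M\mapsto -\lambda\Id-T_M$, a symmetry of weight-$\lambda$ Rota-Baxter identities.
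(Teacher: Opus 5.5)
Your Step 3 is the actual proof, and it is the direct verification the paper has in mind (the paper gives no argument, leaving this as a routine exercise modelled on the associative case). It closes as you predict: the Rota-Baxter representation identity reduces $a\rhd(b\rhd m)$ to $T(a)\cdot(T(b)\cdot m)-T_M\bigl(T(a)\cdot(b\cdot m)+a\cdot(T(b)\cdot m)+\lambda\, a\cdot(b\cdot m)\bigr)$, whose antisymmetrization in $a,b$ is $[a,b]_\star\rhd m$ because $T([a,b]_\star)=[T(a),T(b)]$, and both sides of the compatibility condition reduce to $T_M\bigl(T(T(a))\cdot m+\lambda\, T(a)\cdot m\bigr)$.

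Steps 1--2 only produce the different Rota-Baxter representation $(M,\ast,T_M)$ and can be dropped. The fallback should also be discarded, since it is wrong: replacing $T_M$ by $-\lambda\Id-T_M$ turns $a\ast m$ into $T(a)\cdot m-a\cdot T_M(m)$, not $a\rhd m$.
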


Let $(\frakg, \ell = [-,-], T)$ be a Rota-Baxter Lie algebra of weight $\lambda$ and $(M,\rho, T_M)$ be a Rota-Baxter Lie representation over it. Consider the Chevalley-Eilenberg cochain complex of $\frakg_{\star}$ with coefficients in $_{\rhd}M:$
$$\rmC_{\Lie}^{\bullet}(\frakg_{\star}, _{\rhd}M) : = \bigg(\bigoplus_{n=0}^{\infty} \rmC_{\Lie}^{n}(\frakg_{\star}, _{\rhd}M), \partial^{\bullet}\bigg).$$
More precisely, for $n\geqslant 0,$ $ \rmC_{\Lie}^{n}(\frakg_{\star}, _{\rhd}M) = \Hom(\wedge^{n}(\frakg), M)$ and its differential $\partial^{n} : \rmC_{\Lie}^{n}(\frakg_{\star}, _{\rhd}M) \to \rmC_{\Lie}^{n+1}(\frakg_{\star}, _{\rhd}M)$ is defined as
$$ \partial^{n}(f)(x_1 \wedge \cdots \wedge x_{n+1}) \hspace{14cm}$$
$$ \hspace{0cm}\begin{array} {rl}
= &  \sum\limits_{i=1}^{n+1} (-1)^{n+i} x_i \rhd f(x_1 \wedge \cdots \wedge \widehat{x_i} \wedge \cdots \wedge x_{n+1})\\
  &+  \sum\limits_{1\leqslant i < j \leqslant n+1}(-1)^{i+j+n+1} f([x_i,x_j]_{\star} \wedge x_1 \wedge \cdots \wedge \widehat{x_i} \wedge \cdots \wedge \widehat{x_j} \wedge \cdots \wedge x_{n+1})\\
= &  \sum\limits_{i=1}^{n+1} (-1)^{n+i} T(x_i)\cdot f(x_1 \wedge \cdots \wedge \widehat{x_i} \wedge \cdots \wedge x_{n+1}) -   \sum\limits_{i=1}^{n+1} (-1)^{n+i}  T_{M}(x_i \cdot f(x_1 \wedge \cdots \wedge \widehat{x_i} \wedge \cdots \wedge x_{n+1}))\\
& +  \sum\limits_{1\leqslant i < j \leqslant n+1} (-1)^{i+j+n+1} f([T(x_i),x_j] \wedge x_1 \wedge \cdots \wedge \widehat{x_i} \wedge \cdots \wedge \widehat{x_j} \wedge \cdots \wedge x_{n+1})\\
& +  \sum\limits_{1\leqslant i < j \leqslant n+1}(-1)^{i+j+n+1} f([x_i,T(x_j)] \wedge x_1 \wedge \cdots \wedge \widehat{x_i} \wedge \cdots \wedge \widehat{x_j} \wedge \cdots \wedge x_{n+1})\\
& +  \sum\limits_{1\leqslant i < j \leqslant n+1}(-1)^{i+j+n+1} \lambda f([x_i,x_j] \wedge x_1 \wedge \cdots \wedge \widehat{x_i} \wedge \cdots \wedge \widehat{x_j} \wedge \cdots \wedge x_{n+1}),
\end{array}$$
for any $f\in \rmC_{\Lie}^{n}(\frakg_{\star}, _{\rhd}M)$ and $x_1, \cdots, x_{n+1} \in \frakg.$
\begin{defn}
Let $(\frakg, \ell, T)$ be a Rota-Baxter Lie algebra of weight $\lambda$ and $(M,\rho, T_M)$ be a Rota-Baxter Lie representation over it. Then the cochain complex  $\rmC_{\Lie}^{\bullet}(\frakg_{\star}, _{\rhd}M)$ is called the \textbf{cochain complex of Rota-Baxter operator $T$ with coefficients in $(M, \rho, T_{M})$}, denoted by $\rmC_{\RBLO}^{\bullet}(\frakg,M)$. The cohomology of $\rmC_{\RBLO}^{\bullet}(\frakg,M)$, denoted by $\rmH_{\RBLO}^{\bullet}(\frakg, M)$, is called the \textbf{cohomology of Rota-Baxter operator $T$ with coefficients in $(M, \rho, T_{M})$}.
\end{defn}
When $(M, \rho, T_{M})$ is the regular Rota-Baxter Lie representation $(\frakg, \ell, T),$ we denote $\rmC_{\RBLO}^{\bullet}(\frakg,\frakg)$ by $\rmC_{\RBLO}^{\bullet}(\frakg)$ and call it the cochain complex of Rota-Baxter operator $T$, and denote $\rmH_{\RBLO}^{\bullet}(\frakg, \frakg)$ by $\rmH_{\RBLO}^{\bullet}(\frakg)$ and call it the cohomology of Rota-Baxter operator $T$.

Let $(\frakg, \ell = [-,-], T)$ be a Rota-Baxter Lie algebra of weight $\lambda$ and $(M,\rho, T_M)$ be a Rota-Baxter Lie representation over it. Now, let's construct a chain map
$$\Phi^{\bullet}:\rmC_{\Lie}^{\bullet}(\frakg,M) \to \rmC_{\RBLO}^{\bullet}(\frakg,M), $$
namely, the following commutative diagram:
$$\small {\xymatrix@C=5mm@W=3mm{
 0  \ar@{->}[r]&  \rmC_{\Lie}^{0}(\frakg,M)\ar@{->}[r]^{\delta^0} \ar@{->}[d]^{\Phi^0} & \rmC_{\Lie}^{1}(\frakg,M)\ar@{.}[r]\ar@{->}[d]^{\Phi^1} & \rmC_{\Lie}^{n}(\frakg,M)\ar@{->}[r]^{\delta^n}\ar@{->}[d]^{\Phi^{n}} & \rmC_{\RBLO}^{n+1}(\frakg,M)\ar@{.}[r]\ar@{->}[d]^{\Phi^{n+1}} & \\
 0 \ar@{->}[r]&  \rmC_{\RBLO}^{0}(\frakg,M)\ar@{->}[r]^{\partial^0} &\rmC_{\RBLO}^{1}(\frakg,M)\ar@{.}[r] & \rmC_{\RBLO}^{n}(\frakg,M)\ar@{->}[r]^{\partial^n}& \rmC_{\RBLO}^{n+1}(\frakg,M)\ar@{.}[r] & .\\
}}$$

Define $\Phi^{0} = \id_{\Hom(\bfk,M)} = \id_{M},$ and for $n\geqslant 1$ and $f \in \rmC_{\Lie}^{n}(\frakg,M),$ define $\Phi^{n}(f) \in \rmC_{\RBLO}^{n}(\frakg,M)$ as:
$$\Phi^{n}(f)(x_1 \wedge \cdots \wedge x_{n}) = f(T(x_1)\wedge \cdots \wedge T(x_n)) \hspace{5cm}$$
$$ - \sum\limits_{k=0}^{n-1} \sum_{1\leqslant i_1 < \cdots < i_k \leqslant n} \lambda^{n-k-1} T_{M} \circ f(x_{1\wedge (i_1-1)}  \wedge T(x_{i_1}) \wedge x_{(i_1+1) \wedge (i_2-1)}  \wedge T(x_{i_2}) \wedge \cdots \wedge T(x_{i_k}) \wedge x_{(i_k+1) \wedge n}),$$
where the notation $x_{n\wedge m}$ denotes the exterior product $x_n \wedge x_{n+1} \wedge \cdots \wedge x_{m}.$
\begin{prop}\label{prop: Phi is chain map}
The map $\Phi^{\bullet}:\rmC_{\Lie}^{\bullet}(\frakg,M) \to \rmC_{\RBLO}^{\bullet}(\frakg,M)$ is a chain map.
\end{prop}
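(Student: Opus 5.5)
The plan is to verify the identity $\partial^{n}\circ\Phi^{n}=\Phi^{n+1}\circ\delta^{n}$ directly. To keep the bookkeeping manageable, I would first repackage $\Phi^n$ in operator form. Write $\Phi^n(f)=f\circ T^{\wedge n}-T_M\circ f\circ \Lambda_n$, where
$$\Lambda_n(x_1\wedge\cdots\wedge x_n)=\sum_{k=0}^{n-1}\sum_{1\leqslant i_1<\cdots<i_k\leqslant n}\lambda^{n-k-1}\,x_{1\wedge(i_1-1)}\wedge T(x_{i_1})\wedge\cdots\wedge T(x_{i_k})\wedge x_{(i_k+1)\wedge n}.$$
Treating $\lambda$ as a formal variable, one has $\lambda\Lambda_n=(T+\lambda\,\Id)^{\wedge n}-T^{\wedge n}$. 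It therefore suffices to prove the identities as polynomial identities in $\lambda$, after which one may specialize $\lambda$ freely, including $\lambda=0$.

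\textbf{Step 1: the term $f\circ T^{\wedge n}$.} Since $[T(x),T(y)]=T([x,y]_\star)$, the map $T\colon\frakg_\star\to\frakg$ is a Lie morphism. Hence pullback along $T$ intertwines the Chevalley--Eilenberg differentials, provided $M$ is viewed as a $\frakg_\star$-module through $a\cdot_T m:=T(a)\cdot m$. This gives
$$\partial^n\big(f\circ T^{\wedge n}\big)-(\delta^n f)\circ T^{\wedge (n+1)}=-\sum_i(-1)^{n+i}\,T_M\big(x_i\cdot f(T x_1\wedge\cdots\widehat{x_i}\cdots\wedge Tx_{n+1})\big).$$
The right-hand side is the discrepancy coming from the $-T_M(a\cdot m)$ part of $a\rhd m$.

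\textbf{Step 2: the term $T_M\circ f\circ\Lambda_n$.} I will compute $\partial^n\big(T_M\circ f\circ\Lambda_n\big)$ and $T_M\circ(\delta^n f)\circ\Lambda_{n+1}$ and compare them. Two ingredients are needed.
\begin{enumerate}
\item The Rota--Baxter representation identity, in the form $T(a)\cdot T_M(m)-T_M(a\rhd' m)=0$ for a suitable action $\rhd'$. This moves $T(x_i)$ inside $T_M$.
\item A compatibility of $\Lambda$ with the brackets. The key formula is the polynomial identity
$$(T+\lambda)[x,y]_\star=[(T+\lambda)x,(T+\lambda)y]-\lambda[x,y]\cdot(\text{correction}),$$
which follows from $[Tx,Ty]=T[x,y]_\star$. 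After expanding $(T+\lambda)^{\wedge(n+1)}-T^{\wedge(n+1)}$, this lets me match each bracket term $f([x_i,x_j]_\star\wedge\cdots)$ on the $\partial$-side with the corresponding $f([\,\cdot,\cdot\,]\wedge\cdots)$ terms on the $\delta$-side.
\end{enumerate}

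\textbf{Step 3: assembling.} I will group all terms by their outer operator: no $T_M$, one $T_M$ applied outside, and two nested applications of $T_M$. The nested terms arise from $x_i\rhd T_M(\cdots)$. Using the representation identity, they should combine to cancel exactly the Step 1 discrepancy together with the cross terms. The remaining terms then match by comparing coefficients of $\lambda^{n-k}$, indexed by which positions carry $T$. As a sanity check I will verify the cases $n=0$ and $n=1$ by hand first; for $n=0$ the claim reduces to $\partial^0 m=\Phi^1(\delta^0 m)$, which is essentially the definition of $\rhd$.

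I expect the main obstacle to be Step 2: the combinatorics of the subsets $\{i_1<\cdots<i_k\}$ when a bracket $[x_i,x_j]$ merges two slots, one of which may carry $T$, together with the signs coming from moving $x_i$ to the front. What I would try first is to work entirely with the generating operator $(T+\lambda)^{\wedge n}$. The hope is that this turns the subset sums into a single multiplicative identity, so that the whole verification reduces to the two-variable relation above applied slotwise.
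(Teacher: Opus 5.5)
\textbf{Comparison with the paper's route.} Your plan to verify $\partial^n\Phi^n=\Phi^{n+1}\delta^n$ by hand is sound, and it is a genuinely different route. The paper does no computation here. It notes that $\Phi^\bullet$ is a chain map if and only if the mapping-cone differential of $\rmC^\bullet_{\rmRBLA}(\frakg,M)$ squares to zero. It then deduces this from Proposition~\ref{prop: cochain complex of RBLA}, where the cone is identified, up to shift, with $\frakC_{\rmRBLA}(\frakg)$ twisted by the Maurer--Cartan element of $(\ell,T)$; there $(\ell_1^\alpha)^2=0$ is automatic. That explains where $\Phi^\bullet$ comes from. However, it depends on the operadic machinery and on a sign-heavy identification that is only sketched. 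As stated it also covers only the regular representation: for general $M$ one must pass through $\frakg\ltimes M$ with operator $T\oplus T_M$, which is Rota--Baxter of weight $\lambda$ precisely because $M$ is a Rota--Baxter representation. Your computation is elementary and handles every $(M,\rho,T_M)$ at once.

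\textbf{Simplifications in Step 2.} Your repackaging and Step 1 are correct, and Step 2 is cleaner than you expect.
\begin{itemize}
\item The ``correction'' is zero: $[(T+\lambda)x,(T+\lambda)y]=(T+\lambda)[x,y]_\star$, so $T+\lambda\Id$ is also a Lie morphism $\frakg_\star\to\frakg$.
\item The representation identity gives $x\rhd T_M(m)=T_M\big((T+\lambda)(x)\cdot m\big)$. So the nested $T_M$ terms cancel inside each $x_i\rhd T_M(\cdots)$, and $\partial^n(T_M\circ g)=T_M\circ\partial_{T+\lambda}(g)$. Here $\partial_\phi$ is the Chevalley--Eilenberg differential of $\frakg_\star$ with coefficients in $M$ via $\rho\circ\phi$.
\item The Step 1 discrepancy is not cancelled by nested terms. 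It is matched by the terms of $T_M\circ(\delta^nf)\circ\Lambda_{n+1}$ in which $x_i$ acts and every other entry carries $T$.
\end{itemize}
What remains, with $x=x_1\wedge\cdots\wedge x_{n+1}$, is the identity $(\delta^nf)(\Lambda_{n+1}x)=\partial_{T+\lambda}(f\circ\Lambda_n)(x)+\sum_i(-1)^{n+i}x_i\cdot f(Tx_1\wedge\cdots\widehat{x_i}\cdots\wedge Tx_{n+1})$. For $\lambda\neq0$, multiplying by $\lambda$ reduces this to pullback along the two Lie morphisms $T$ and $T+\lambda\Id$, as you hoped.

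\textbf{The step that fails as written.} The passage to a formal $\lambda$ does not work. The Rota--Baxter identity, $[-,-]_\star$, $\rhd$ and the identity for $T_M$ all involve the fixed scalar $\lambda$. So there is no polynomial identity in a free variable to prove and then specialize. The generating-operator argument only yields $\lambda\cdot(\text{difference})=0$, which says nothing at $\lambda=0$, i.e.\ exactly in the weight-zero case.

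\textbf{The fix.} Do not divide by $\lambda$. Match terms by which slots carry $T$, not by powers of $\lambda$, since $[-,-]_\star$ itself contains $\lambda$. Use $(T+\lambda\Id)^{\wedge n}=T^{\wedge n}+\lambda\Lambda_n$ only to expand.
\begin{itemize}
\item The action terms of $(\delta^nf)\circ\Lambda_{n+1}$ split according to whether $i$ lies in the chosen subset. They give $(T+\lambda)(x_i)\cdot f\Lambda_n(\cdots)+x_i\cdot f(T^{\wedge n}\cdots)$.
\item For each $i<j$, split the subsets by their intersection with $\{i,j\}$ and use $[Tx_i,Tx_j]=T[x_i,x_j]_\star$ once. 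The bracket terms then regroup exactly into $f\Lambda_n([x_i,x_j]_\star\wedge\cdots)$, according to whether the slot of $[x_i,x_j]_\star$ carries $T$.
\end{itemize}
This argument is uniform in $\lambda$, including $\lambda=0$.
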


This result is equivalent to the fact that the cochain complex $\rmC_{\rmRBLA}^{\bullet}(\frakg,M)$ of Rota-Baxter Lie algebra $(\frakg, \ell = [-,-], T)$ with coefficients in $(M, \rho, T_{M})$ in the following definition is a cochain complex, so it follows from Proposition~\ref{prop: cochain complex of RBLA}.
\begin{defn}
Let $(\frakg, \ell, T)$ be a Rota-Baxter Lie algebra of weight $\lambda$ and $(M,\rho, T_M)$ be a Rota-Baxter Lie representation over it. We define the \textbf{cochain complex} $$\rmC_{\rmRBLA}^{\bullet}(\frakg,M) : =\bigg( \bigoplus_{n=0}^{\infty} \rmC_{\rmRBLA}^{n}(\frakg,M), d^{\bullet} \bigg) $$
\textbf{of Rota-Baxter Lie algebra  $(\frakg, \ell, T)$ with coefficients in $(M,\rho, T_M)$} to be the negative shift of the mapping cone of $\Phi^{\bullet},$ that is, let
$$\rmC_{\rmRBLA}^{0}(\frakg,M)= \rmC_{\Lie}^{0}(\frakg,M) \quad \text{and} \quad \rmC_{\rmRBLA}^{n}(\frakg,M)= \rmC_{\Lie}^{n}(\frakg,M)\oplus \rmC_{\RBLO}^{n-1}(\frakg,M), \forall n \geqslant 1, $$
and the differential $d^{n}:\rmC_{\rmRBLA}^{n}(\frakg,M) \to \rmC_{\rmRBLA}^{n+1}(\frakg,M)$ is given by
$$d^{n}(f,g) = (\delta^{n}(f), -\partial^{n-1}(g) - \Phi^{n}(f))$$
for any $f \in \rmC_{\Lie}^{n}(\frakg,M)$ and $g \in \rmC_{\RBLO}^{n-1}(\frakg,M).$ The cohomology of  $\rmC_{\rmRBLA}^{\bullet}(\frakg,M)$, denoted by $\rmH_{\rmRBLA}^{\bullet}(\frakg,M)$, is called the cohomology of the Rota-Baxter Lie algebra $(\frakg, \ell, T)$ with coefficients in $(M,\rho, T_M)$. When $(M,\rho, T_M)= (\frakg, \ell, T),$ we just denote $ \rmC_{\rmRBLA}^{\bullet}(\frakg,\frakg), \rmH_{\rmRBLA}^{\bullet}(\frakg,\frakg)$ by $\rmC_{\rmRBLA}^{\bullet}(\frakg), \rmH_{\rmRBLA}^{\bullet}(\frakg)$ respectively, and call them the \textbf{cochain complex}, \textbf{the cohomology of Rota-Baxter Lie algebra $(\frakg, \ell, T)$} respectively.
\end{defn}

\begin{prop}\label{prop: cochain complex of RBLA}
Let $(\frakg, \ell, T)$ be a Rota-Baxter Lie algebra of weight $\lambda.$
 Twisting the $L_\infty$-algebra $\frakC_{\rmRBLA}(\frakg)$ by the Maurer-Cartan element  corresponding to the Rota-Baxter Lie algebra structure $(\ell, T),$
 then its underlying complex is precisely $s\rmC_{\rmRBLA}^{\bullet}(\frakg)$,  the shift of the cochain complex of the Rota-Baxter Lie algebra $(\frakg, \ell, T)$.
	\end{prop}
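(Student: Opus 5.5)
We compute the twisted differential $l_1^{\alpha}$ on $\frakC_{\rmRBLA}(\frakg)=\frakC_{\Lie}(\frakg)\oplus\frakC_{\RBLO}(\frakg)$ directly from the explicit operations (I)--(IV) and Proposition~\ref{Prop: deformed-L-infty}.

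\emph{Step 1: the Maurer--Cartan element.} With $\frakg$ concentrated in degree $0$, set $\alpha=(s\ell',T')$. Here $s\ell'\in\Hom(\s^2(s\frakg),s\frakg)$ is the degree $-1$ element corresponding to $\ell$ under $f\mapsto\widetilde f$, and $T'\in\Hom(s\frakg,\frakg)$ is the degree $-1$ element corresponding to $T$. Proposition~\ref{prop: RBLA 1-1 MC} shows $\alpha$ is Maurer--Cartan. Its only nonvanishing contributions are $l_2(\alpha,\alpha)$, which encodes the Jacobi identity via $[s\ell',s\ell']_{\mathrm{NR}}$, together with $l_3$ (type (II)(i), $n=2$) and $l_2$ (type (II)(ii), $m=1$). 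The last two reproduce the three terms of \eqref{Eq: Rota-Baxter relation in terms of maps}.

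\emph{Step 2: the twisted differential.} For $x\in\frakC_{\Lie}(\frakg)$ or $x\in\frakC_{\RBLO}(\frakg)$ we have
$$l_1^\alpha(x)=\sum_{i\geqslant0}\tfrac{1}{i!}(-1)^{i+\frac{i(i-1)}{2}}l_{i+1}(\alpha^{\ot i}\ot x).$$
We sort the terms by type.

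\textbf{(a)} Take $x=sf$ with $f\in\Hom(\wedge^n\frakg,\frakg)$. Its $\frakC_{\Lie}$-component is $\pm[s\ell',sf]_{\mathrm{NR}}$, which under the usual identification is the Chevalley--Eilenberg differential $\delta^n(f)$. Its $\frakC_{\RBLO}$-component comes from $l_{k+1}(sf\ot T'^{\ot k})$ for $k\leqslant n$, by (II)(i) and (II)(ii) together with the symmetry (III). The term with $k=n$ gives $f(T\ot\cdots\ot T)$ minus $T\circ f$ applied with $n-1$ of its arguments replaced by $T$. The terms with $k<n$ give $-\lambda^{n-k}T\circ f$ applied with $k-1$ arguments replaced by $T$. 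Collecting these, the $\frakC_{\RBLO}$-component is $\pm\Phi^n(f)$; the $T_M$ in $\Phi^n$ is $T$ in the regular case. The $\tfrac1{i!}$ cancels against the $i!$ reorderings of identical copies of $T'$, so we must check that the signs $\eta,\xi$ combine with the factor $(-1)^{i+i(i-1)/2}$ to give a uniform sign.

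\textbf{(b)} Take $x=g\in\Hom(\wedge^{n-1}\frakg,\frakg)$. The only surviving terms are $l_2(s\ell'\ot g)$ via (III) and (II)(i)--(ii) with $h=\ell'$, and $l_3(s\ell'\ot T'\ot g)$ with $h=\ell'$, $n=2$. Terms with two or more copies of $T'$ and no $\ell'$ vanish by (IV). Expanding these braces gives the terms $T(x_i)\cdot g$, $-T(x_i\cdot g)$, and $g([x_i,x_j]_\star,\dots)$, i.e.\ exactly $\partial^{n-1}(g)$ for $\frakg_\star$ acting on ${}_\rhd\frakg$, up to a global sign. The $\lambda$-term of $[-,-]_\star$ comes from (II)(ii) with $m=1$.

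\emph{Step 3: matching gradings.} An element of $\Hom(\s^n(s\frakg),s\frakg)$ has degree $n-1$ after the shift, and $\Hom(\s^{n-1}(s\frakg),\frakg)$ has degree $n-2$. Hence $\rmC^n_{\rmRBLA}(\frakg)=\rmC^n_{\Lie}\oplus\rmC^{n-1}_{\RBLO}$ sits in a single degree of $s\rmC_{\rmRBLA}^\bullet(\frakg)$, with homological degree corresponding to cohomological degree up to sign and shift. We then fix sign rescalings on each summand, of the form $(-1)^{\text{something in }n}$, so that $l_1^\alpha$ becomes $d^n(f,g)=(\delta^n f,-\partial^{n-1}g-\Phi^n f)$.

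The main obstacle is the sign bookkeeping in Step 2(a), where many terms $l_{k+1}(\alpha^{\ot k}\ot sf)$ involve Koszul signs $\eta$, $\xi$ and the desuspension conventions. We would first carry out the computation for $n=1,2$ to pin down the normalizing signs, then argue for general $n$ by induction on the number of $T$-insertions. Since $\Phi^\bullet$ being a chain map (Proposition~\ref{prop: Phi is chain map}) is derived from this identification, it cannot be invoked here. The identity $(l_1^\alpha)^2=0$ is automatic from Proposition~\ref{Prop: deformed-L-infty}, so only the identification of the terms needs to be shown.
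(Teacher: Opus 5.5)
Your proposal is correct and is essentially the paper's own proof. You compute $l_1^\alpha$ directly: $l_2(s\ell'\ot sf)$ gives $\delta^n(f)$, the terms $l_{i+1}((T')^{\ot i}\ot sf)$ give $\Phi^n(f)$ (the $i!$ cancelling the $\frac{1}{i!}$), and $l_2(s\ell'\ot g)$ together with $l_3(s\ell'\ot T'\ot g)$ give $\partial^{n-1}(g)$.

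One slip in Step~3: both $\Hom(\s^n(s\frakg),s\frakg)$ and $\Hom(\s^{n-1}(s\frakg),\frakg)$ have degree $1-n$, not $n-1$ and $n-2$ as you wrote. It is this common degree that puts $\rmC^n_{\Lie}\oplus\rmC^{n-1}_{\RBLO}$ in a single degree. Also, $\frakC_{\rmRBLA}(\frakg)$ has no arity-$0$ part, so the identification is really with the subcomplex omitting $\rmC^0_{\Lie}$ and $\rmC^0_{\RBLO}$; the paper leaves this point implicit as well.
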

\begin{proof}
We first make explicit the bijection in Proposition~\ref{prop: RBLA 1-1 MC}. Regard  $\frakg$ as a  graded vector space concentrated  in degree $0$.
Set $\widetilde{\ell} = - \ell (s^{-1} \ot s^{-1} ): s\frakg \ot s\frakg \to \frakg $ and $\widetilde{T} = T (s^{-1}) : s\frakg \to \frakg.$
Then $\alpha = (s\widetilde{\ell}, \widetilde{T})$ corresponds to a Maurer-Cartan element in  $\frakC_{\rmRBLA}(\frakg)$.
Conversely, given a Maurer-Cartan element $\alpha = (s\widetilde{\ell}, \widetilde{T})$ in  $\frakC_{\rmRBLA}(\frakg)$, define $\ell = \widetilde{\ell} (s\ot s)$ and $T=\widetilde{T}s.$
Then $(\frakg,\ell,T)$ become a Rota-Baxter Lie algebra.

We now compute the twisted differential  $\ell_1^{\alpha} : \frakC_{\rmRBLA}(\frakg) \to \frakC_{\rmRBLA}(\frakg)$ for $\alpha = (s\widetilde{\ell}, \widetilde{T}).$
Given $\beta=sf \in \Hom((s\frakg)^{\ot n}, s\frakg) \subset\frakC_{\Lie}(\frakg),$  the twisted differential acts as
$$\ell_1^{\alpha}(\beta) =  -\ell_2(s\widetilde{\ell}, sf) + \sum_{i=1}^{n} \frac{1}{i!} (-1)^{\frac{i(i+1)}{2}} \ell_{i+1}(\widetilde{T}^{\ot i} \ot sf),$$
according to  Proposition~\ref{Prop: deformed-L-infty} and the construction of $\{\ell_n\}_{n\geqslant 1}$ from Theorem~\ref{Thm: rb-L-infty}.
Furthermore, we compute that
$$ \ell_2(s \widetilde{\ell} \ot sf) = \sum_{\sigma \in \sh(n,1)} s\widetilde{\ell} \circ (sf \ot \id)\circ r_{\sigma} - (-1)^{n-1} \sum_{\sigma \in \sh(2,n-2)} sf\circ (s\widetilde{\ell} \ot \id^{\ot n-1}) \circ r_{\sigma} \in \frakC_{\Lie}(\frakg),   $$
$$ \ell_{i+1}(\widetilde{T}^{\ot i} \ot sf) =  i! \sum_{\sigma \in \sh(i-1,n-i+1)} (-1)^{1+\frac{i(i+1)}{2}} \lambda^{n-i} \widetilde{T} \circ (sf \circ (\widetilde{T}^{\ot i-1} \ot \id^{\ot n-i+1})) \circ r_{\sigma} \in \frakC_{\RBLO}(\frakg),  $$
for $1\leqslant i \leqslant n-1, $ and
$$ \ell_{n+1}(\widetilde{T}^{\ot n} \ot sf)  =  n! \sum_{\sigma \in \sh(n-1,1)} (-1)^{1+\frac{n(n+1)}{2}} \widetilde{T} \circ (sf \circ (\widetilde{T}^{\ot n-1} \ot \id)) \circ r_{\sigma} +n! (-1)^{\frac{n(n+1)}{2}}f\circ(s\widetilde{T}^{\ot n}) \in \frakC_{\RBLO}(\frakg).  $$
Given $\beta=g \in \Hom((s\frakg)^{\ot n}, \frakg) \subset\frakC_{\RBLO}(\frakg),$  the twisted differential acts as
$$ \ell_1^{\alpha}(\beta) = -\ell_2( s\widetilde{\ell} \ot g)  - \ell_3 (s\widetilde{\ell} \ot \widetilde{T} \ot g) $$
according to  Proposition~\ref{Prop: deformed-L-infty} and the construction of $\{\ell_n\}_{n\geqslant 1}$ from Theorem~\ref{Thm: rb-L-infty}.
Furthermore, we compute that
$$ \ell_2( s\widetilde{\ell} \ot g) = \sum_{\sigma \in \sh(2,n-2)}(-1)^{n-1} \lambda g\circ(s\widetilde{\ell} \ot \id^{\ot n-2})\circ r_{\sigma}  $$
and
$$\ell_3 (s\widetilde{\ell} \ot \widetilde{T} \ot g) = - \sum_{\sigma \in \sh(1,n)}  \widetilde{\ell}\circ (s\widetilde{T} \ot sg)\circ r_{\sigma}  + \sum_{\sigma \in \sh(n,1)} \widetilde{T}\circ (s\widetilde{\ell} \circ (sg \ot \id)) \circ r_{\sigma}$$
$$ + \sum_{\sigma \in \sh(1,1,n-2)} (-1)^{n-1} g \circ (s\widetilde{\ell} \circ(s\widetilde{T} \ot \id) \ot \id^{n-2})\circ r_{\sigma}.$$
After identifying the spaces $\Hom((s\frakg)^{\ot n}, s\frakg)\subset\frakC_{\Lie}(\frakg)$ with $\rmC_{\Lie}^{n}(\frakg)) $ and $\Hom((s\frakg)^{\ot n}, \frakg)\subset\frakC_{\RBLO}(\frakg)$ with $\rmC_{\RBLO}^{n}(\frakg) $, we obtain
$(\frakC_{\rmRBLA}(\frakg), \ell_1^{\alpha}) \cong s\rmC_{\rmRBLA}^{\bullet}(\frakg). $
\end{proof}

\begin{prop}\label{prop: dgLa RBLO and twisting}
Let $(\frakg, \ell)$ be a Lie algebra.
\begin{itemize}
\item[(i)]  Then the graded space $ \frakC_{\RBLO}(\frakg)$   can be endowed with a dg Lie algebra structure,
and the set of its  Maurer-Cartan elements is in bijection with the set of Rota-Baxter operators  of weight $\lambda$ on $(\frakg, \ell)$;
\item[(ii)] Given a Rota-Baxter operator $T$ on the Lie algebra  $(\frakg, \ell)$, the underlying complex of the twisted dg Lie algebra $\frakC_{\RBLO}(\frakg)$
by the corresponding Maurre-Cartan    element is exactly the cochain complex of Rota-Baxter operator   $\rmC_{\RBLO}^{\bullet}(\frakg)$.
\end{itemize}
\end{prop}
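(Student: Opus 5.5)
The plan is to extract a dg Lie algebra from the $L_\infty$-algebra $\frakC_{\rmRBLA}(\frakg)$ of Theorem~\ref{Thm: rb-L-infty} by the standard ``fix one component'' procedure. For (i), regard $\frakg$ as concentrated in degree $0$. Set $\widetilde{\ell}=-\ell(s^{-1}\ot s^{-1})$, so that $s\widetilde{\ell}\in\frakC_{\Lie}(\frakg)$ has degree $-1$. Since $(\frakg,\ell)$ is a Lie algebra, $l_2(s\widetilde{\ell}\ot s\widetilde{\ell})=[s\widetilde\ell,s\widetilde\ell]_{\mathrm{NR}}=0$, so $s\widetilde{\ell}$ is a Maurer--Cartan element of $\frakC_{\rmRBLA}(\frakg)$. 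The operations in (I)--(IV) show that $\frakC_{\RBLO}(\frakg)$ is an $L_\infty$-ideal, in the sense that every $l_n$ with at least one input in $\frakC_{\RBLO}$ lands in $\frakC_{\RBLO}$. Moreover, all $l_n$ with all inputs in $\frakC_{\RBLO}$ vanish by (IV). Consequently the twisted structure $\{l_n^{s\widetilde\ell}\}$, restricted to $\frakC_{\RBLO}(\frakg)$, is again an $L_\infty$-algebra.

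Next I compute this restriction. By (II)(i),(ii) and (III), a nonzero $l_{m+1}$ involves exactly one element of $\frakC_{\Lie}$. Since $s\widetilde\ell$ has arity $2$, case (II)(i) requires $m=2$ and case (II)(ii) requires $m=1$. Hence on $\frakC_{\RBLO}(\frakg)$ we get $l_1^{s\widetilde\ell}(g)=\pm l_2(s\widetilde\ell\ot g)$ and $l_2^{s\widetilde\ell}(g_1\ot g_2)=\pm l_3(s\widetilde\ell\ot g_1\ot g_2)$, and all higher $l_n^{s\widetilde\ell}$ vanish, because they would need more than two inputs besides $s\widetilde\ell$. So $\frakC_{\RBLO}(\frakg)$ is a dg Lie algebra. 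Its Maurer--Cartan equation for $\widetilde{T}\in\Hom(s\frakg,\frakg)$, which is the only degree $-1$ component, reads $l_1^{s\widetilde\ell}(\widetilde T)-\tfrac12 l_2^{s\widetilde\ell}(\widetilde T,\widetilde T)=0$.

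The explicit formulas for $l_2(s\widetilde\ell\ot g)$ and $l_3(s\widetilde\ell\ot\widetilde T\ot g)$ already appear in the proof of Proposition~\ref{prop: cochain complex of RBLA}. Plugging in $g=\widetilde T$ with $n=1$, the first gives $\lambda$ times $\widetilde T\circ s\widetilde\ell$. The second gives the terms $\widetilde\ell(s\widetilde T\ot s\widetilde T)$ and $\widetilde T\circ s\widetilde\ell\circ(s\widetilde T\ot\id)$, the latter symmetrized. Unwinding with $T=\widetilde Ts$, the Maurer--Cartan equation becomes exactly \eqref{Eq: Rota-Baxter relation in terms of maps}. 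This step matches the Maurer--Cartan equation of $\frakC_{\rmRBLA}(\frakg)$ at $\alpha=(s\widetilde\ell,\widetilde T)$, since Proposition~\ref{prop: RBLA 1-1 MC} already identifies that equation with the pair of conditions ``$\ell$ is Lie'' and ``$T$ is Rota--Baxter''. So I will argue as follows: for $\alpha=s\widetilde\ell+\widetilde T$, the $\frakC_{\RBLO}$-component of the Maurer--Cartan equation of $\frakC_{\rmRBLA}$ equals the Maurer--Cartan equation of the twisted algebra at $\widetilde T$. This is the general fact that $\alpha$ is Maurer--Cartan in $L$ if and only if $\alpha-\alpha_0$ is Maurer--Cartan in $L^{\alpha_0}$, and it avoids recomputing signs. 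This gives the bijection in (i).

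For (ii), twist the dg Lie algebra $\frakC_{\RBLO}(\frakg)$ further by $\widetilde T$. By the same general fact this equals the restriction to $\frakC_{\RBLO}(\frakg)$ of the twist of $\frakC_{\rmRBLA}(\frakg)$ by $\alpha=(s\widetilde\ell,\widetilde T)$. Its differential on $g\in\Hom((s\frakg)^{\ot n},\frakg)$ is $-l_2(s\widetilde\ell\ot g)-l_3(s\widetilde\ell\ot\widetilde T\ot g)$, which is the $\frakC_{\RBLO}$-to-$\frakC_{\RBLO}$ part computed in the proof of Proposition~\ref{prop: cochain complex of RBLA}. That proposition identifies this part with $-\partial^{\bullet}$, so under $\Hom((s\frakg)^{\ot n},\frakg)\cong\rmC^n_{\RBLO}(\frakg)$ we obtain $\rmC^{\bullet}_{\RBLO}(\frakg)$, up to the harmless sign and shift. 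The main obstacle is sign bookkeeping: checking that the term $\lambda\,T\circ\ell$ and the two $T\circ\ell\circ(T\ot\id)$ terms come with the correct signs relative to $\ell\circ(T\ot T)$. I would first check this on $n=1,2$ directly, and then lean on the general twisting argument above to avoid any further sign computation.
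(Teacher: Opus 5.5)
Your proposal is correct, and its skeleton is the paper's. Both twist $\frakC_{\rmRBLA}(\frakg)$ by the Maurer--Cartan element $(s\widetilde{\ell},0)$, note that $\frakC_{\RBLO}(\frakg)$ is stable under the twisted operations, and read off from (II)--(IV) that only $l_1^{\alpha}=-l_2(s\widetilde{\ell}\ot -)$ and $l_2^{\alpha}=l_3(s\widetilde{\ell}\ot -\ot -)$ survive. You diverge in the two verifications that follow.

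\emph{The paper's route.} It writes out $l_2(s\widetilde{\ell}\ot g)$ and $l_3(s\widetilde{\ell}\ot g_1\ot g_2)$ explicitly. It then expands the Maurer--Cartan equation at $\tau\in\Hom(s\frakg,\frakg)$ into the Rota--Baxter identity. For (ii) it computes $(l_1^{\alpha})^{\tau}(f)=l_1^{\alpha}(f)-l_2^{\alpha}(\tau\ot f)$ term by term.

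\emph{Your route.} You use two formal twisting identities:
\begin{itemize}
\item $\alpha_0+\beta$ is Maurer--Cartan in $L$ if and only if $\beta$ is Maurer--Cartan in $L^{\alpha_0}$;
\item $(L^{\alpha_0})^{\beta}=L^{\alpha_0+\beta}$.
\end{itemize}
Both hold with the paper's sign conventions, because $(-1)^{N(N-1)/2}=(-1)^{i(i-1)/2+j(j-1)/2+ij}$ for $N=i+j$ and the $l_N$ are symmetric on degree $-1$ elements. With these you reduce (i) to Proposition~\ref{prop: RBLA 1-1 MC}. You reduce (ii) to the $\frakC_{\RBLO}\to\frakC_{\RBLO}$ block of the computation in Proposition~\ref{prop: cochain complex of RBLA}.

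\emph{What each buys.} Your approach avoids redoing sign bookkeeping and explains why the two propositions are compatible. It is, however, only as solid as those earlier results, in particular the converse direction of the Maurer--Cartan bijection, which the paper makes explicit only inside the proof of Proposition~\ref{prop: cochain complex of RBLA}. The paper's direct computation is self-contained. It also produces an explicit formula for the bracket $l_2^{\alpha}$ on $\frakC_{\RBLO}(\frakg)$, which is the dg Lie algebra controlling deformations of the operator alone.
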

\begin{proof}
Regard  $\frakg$ as a  graded vector space concentrated  in degree $0$. According to the correspondence given in the proof of Proposition~\ref{prop: cochain complex of RBLA}, the Lie bracket of
$\frakg$ corresponds to the MC element $\alpha = (s\widetilde{\ell}, 0)$  of $ \frakC_{\rmRBLA}(\frakg)$.
By the construction of the $L_{\infty}$-algebra structure $\{\ell_n\}_{n\geqslant 1}$ on $ \frakC_{\rmRBLA}(\frakg)$, the graded subspace $ \frakC_{\RBLO}(\frakg) $ is closed under the action of $\{\ell_n^{\alpha} \}_{n\geqslant 1}$.

We now compute the twisted $L_{\infty}$-algebra structure $\{\ell_n^{\alpha} \}_{n\geqslant 1}$ on $ \frakC_{\RBLO}(\frakg)$ for $\alpha = (s\widetilde{\ell}, 0).$
\begin{itemize}
\item Given $\beta = g \in \Hom((s\frakg)^{\ot n}, \frakg) \subset \frakC_{\RBLO}(\frakg),$ define
$$ \ell_1^{\alpha}(\beta) = -\ell_2(s\widetilde{\ell} \ot g). $$
\item Given $\beta_i = g_i \in \Hom((s\frakg)^{\ot n_i}, \frakg) \subset \frakC_{\RBLO}(\frakg)$   $(i=1,2),$ define
$$\ell_2^{\alpha}(\beta_1 \ot \beta_2) = \ell_3(s\widetilde{\ell} \ot g_1 \ot g_2).   $$
\item All other components of operators  $\{\ell_n^{\alpha} \}_{n\geqslant 1}$ on $ \frakC_{\RBLO}(\frakg)$   vanish.
\end{itemize}
Therefore $(\frakC_{\RBLO}(\frakg), \ell_1^{\alpha}, \ell_2^{\alpha}) $ forms a dg Lie algebra.

Furthermore, we compute that
$$ \ell_2(s\widetilde{\ell} \ot g) =  \sum_{\sigma \in \sh(2,n-1)}(-1)^{n+1} \lambda g \circ (s\widetilde{\ell} \ot \id^{n-1}) \circ r_{\sigma} $$
and
$$ \ell_3(s\widetilde{\ell} \ot g_1 \ot g_2) = \sum_{\sigma \in \sh(n_1,n_2)}  (-1)^{n_1} \widetilde{\ell} \circ (sg_1 \ot sg_2) \circ  r_{\sigma}
+ \sum_{\sigma \in \sh(n_2,1,n_1-1)} g_1 \circ (s\widetilde{\ell} \circ (sg_2 \ot \id) \ot \id^{n_1-1})\circ r_{\sigma} $$
$$+ \sum_{\sigma \in \sh(n_1, 1,n_2-1)} (-1)^{n_1n_2 + 1} g_2 \circ (s\widetilde{\ell}\circ (sg_1 \ot \id) \ot \id^{n_2-1})\circ r_{\sigma}. $$

Since $\frakg$ is concentrated in degree $0$, $\tau \in \frakC_{\RBLO}(\frakg)_{-1}$ lies in $\Hom((s\frakg), \frakg)$. Then $\tau$ satisfies the Maurer-Cartan equation:
$$ \ell_1^{\alpha}(\tau) - \frac{1}{2} \ell_2^{\alpha}(\tau \ot \tau) = 0,  $$
if and only if
$$  -\lambda \tau \circ s\widetilde{\ell} - \big(-\widetilde{\ell} \circ (s\tau \ot s \tau)  + \tau \circ s\widetilde{\ell} \circ (s\tau \ot \id) + \tau \circ s\widetilde{\ell} \circ ( \id \ot s\tau )\big)=0 $$
Let $T = \tau \circ s : \frakg \to \frakg.$  Then the above equation shows that $T$ is a Rota-Baxter operator of weight $\lambda$ on the Lie algebra $(\frakg, \ell)$.

Now let $T$ be a Rota-Baxter operator on the Lie algebra $(\frakg, \ell)$.
According to part (i) of this proposition, $\tau = T \circ s^{-1}$ is a Maurer-Cartan element in the twisted dg Lie algebra $ (\frakC_{\RBLO}(\frakg), \ell_1^{\alpha}, \ell_2^{\alpha}).$
Thus by Proposition~\ref{Prop: deformed-L-infty}, for any $f \in \Hom((s\frakg)^{\ot n}, \frakg) \subset \frakC_{\RBLO}(\frakg),$  the  twisted differential $(\ell_1^{\alpha})^{\tau}$ is given as
$$\hspace{-10cm}(\ell_1^{\alpha})^{\tau} (f) = \ell_1^{\alpha}(f) - \ell_2^{\alpha}(\tau \ot f)  $$
$$ \hspace{-1cm}= \sum_{\sigma \in \sh(2,n-1)}(-1)^{n} \lambda f \circ (s\widetilde{\ell} \ot \id^{n-1}) \circ r_{\sigma} +  \sum_{\sigma \in \sh(1,n)}   \widetilde{\ell} \circ (s\tau \ot sf) \circ  r_{\sigma}
 $$
$$\hspace{2.5cm}- \sum_{\sigma \in \sh(n,1)} \tau \circ s\widetilde{\ell} \circ (sf \ot \id)\circ r_{\sigma} + \sum_{\sigma \in \sh(1, 1,n-1)} (-1)^{n} f \circ (s\widetilde{\ell}\circ (s\tau \ot \id) \ot \id^{n-1})\circ r_{\sigma}). $$
After identifying the spaces  $\Hom((s\frakg)^{\ot n}, \frakg)\subset\frakC_{\RBLO}(\frakg)$ with $\rmC_{\RBLO}^{n}(\frakg) $, we obtain
$$(\frakC_{\RBLO}(\frakg), (\ell_1^{\alpha})^{\tau}) \cong \rmC_{\RBLO}^{\bullet}(\frakg). $$
\end{proof}

\subsection{Homotopy Rota-Baxter Lie algebras}\
Since we have found the  operad of ``homotopy Rota-Baxter Lie algebras of weight $\lambda$", we could now give the definition of homotopy Rota-Baxter Lie algebras.
		
\begin{defn}
	Let $(V,d_V)$ be a complex. A homotopy Rota-Baxter Lie algebra of weight $\lambda$ on $V$ is defined to be a morphism of symmetric dg operads from $\RBLA_\infty$ to $\End_V$.
\end{defn}

Let $(V,d_V)$ be a homotopy Rota-Baxter Lie algebra. Still denote by $\ell_n:V^{\otimes n} \to V, n \geq 2$ (resp. $T_n: V^{\otimes n} \to V, n\geq 1$) the image of $\ell_n\in \RBLA_{\infty}$ (resp. $T_n \in \RBLA_{\infty}$). We also rewrite $\ell_1=d_V.$
The $\mathbb{S}$-module structure of $\RBLA_{\infty}$ gives the anti-symmetry of $\ell_n$ and $T_n$
\begin{eqnarray}
\ell_n(v_1\otimes \dots \otimes v_n)= \chi(\sigma ; v_1,\dots ,v_n) \ell_n(v_{\sigma(1)}\otimes \dots \otimes v_{\sigma(n)}) \\
 T_n(v_1\otimes \dots \otimes v_n)= \chi(\sigma ; v_1,\dots ,v_n)T_n(v_{\sigma(1)}\otimes \dots \otimes v_{\sigma(n)})
\end{eqnarray}
for all $\sigma \in \s_n,v_1,\dots, v_n \in V,n\geq 1.$
 Equations~(\ref{Eq: partial l_n}) and (\ref{Eq: partial T_n}) give
\begin{eqnarray}\label{Eq: homotopy ell_n}
\sum\limits_{i=1}^n\sum\limits_{\sigma\in \Sh(i,n-i)}\sgn(\sigma) (-1)^{i(n-i)}\ell_{n-i+1}(\ell_i\otimes \id^{\otimes n-i}) r_{\sigma}=0
\end{eqnarray}	
and
\begin{eqnarray}\label{Eq: homotopy T_n}
&& \sum\limits_{k=1}^{n} \sum\limits_{\substack{r_1+ \dots + r_k=n, \\ r_1,\dots, r_k \geq 1}}\sum\limits_{\substack{\sigma\in \sh(r_1,\dots, r_k), \\ \sigma(1)< \sigma(r_1+1)<\dots < \sigma(r_1+\dots +r_{k-1}+1) }} (-1)^{\delta+1}\sgn(\sigma)\ell_{k} (T_{r_1}\otimes \dots \otimes T_{r_k}) r_{\sigma} \\ \notag
=&&  \sum\limits_{1\leq q\leq p} \sum\limits_{\substack{r_1 + \dots + r_q + p - q =n, \\ r_1, \dots, r_q \geq 1 }} \sum\limits_{\substack{\sigma\in \sh(r_2, \dots, r_q, p-q+1, r_1-1), \\ \sigma(1)< \sigma(r_2+1)< \dots < \sigma(r_2+\dots +r_{q-1}+1)}}\lambda^{p-q}(-1)^{\eta}\sgn(\sigma) \\ \notag
 &&  T_{r_1}(\ell_p(T_{r_2}\otimes \dots \otimes T_{r_q} \otimes \id^{\otimes p-q+1}) \otimes \id^{r_1-1}) r_{\sigma},
\end{eqnarray}
where \begin{eqnarray*}
 	\delta&=&1+\frac{k(k-1)}{2}+\sum_{j=1}^k(k-j)r_j=1+\sum_{j=1}^k(k-j)(r_j-1),\\
 \eta &=& \big(p+\sum\limits_{j=2}^q(r_j-1)\big)\big(r_1-1\big)+\sum\limits_{j=2}^q(r_j-1)(p-j+1).	\end{eqnarray*}

We obtain thus an equivalent definition of homotopy Rota-Baxter Lie algebras.

\begin{defn}
Let $V$ be a graded space. A homotopy Rota-Baxter Lie algebra of weight $\lambda$ structure on $V$ consists of two family of anti-symmetric graded maps $\ell_n:V^{\otimes n } \to V, n\geq 1$ and $T_n:V^{\otimes n } \to V,n \geq 1,$ with $|\ell_n|=n-2$ and $|T_n|=n-1,$  subject to Equations~(\ref{Eq: homotopy ell_n}) and (\ref{Eq: homotopy T_n}).
\end{defn}
Equation~(\ref{Eq: homotopy ell_n}) is exactly the generalised Jacobi identity defining $L_{\infty}-$algebras. In particular, the operator $\ell_1$ is a differential on $V$ and the operator $\ell_2$ induces a Lie algebra structure on the homology $\h_{\bullet}(V,\ell_1).$
\begin{exam}
 Expanding Equation~(\ref{Eq: homotopy T_n}) for small $n$'s gives the following:
 \begin{itemize}
 \item[(i)] when $n=1,$ we have $$ \ell_1 T_1=T_1 \ell_1,$$
 which implies that $T_1:(V, \ell_1) \to (V, \ell_1)$ is a chain map;
 \item[(ii)] when $n=2,$ we have
 \begin{eqnarray*} && \ell_2(T_1\otimes T_1)-T_1 \ell_2 (T_1\otimes \id)- T_1 \ell_2 (\id \otimes T_1) - \lambda T_1\ell_2 \\
 = && \ell_2(T_1\otimes T_1)-T_1 \ell_2 (T_1\otimes \id)+ T_1 \ell_2 (T_1\otimes \id)r_{(12)} - \lambda T_1\ell_2 \\
 = && -\ell_1 T_2 -T_2 (\ell_1 \otimes \id) + T_2 (\ell_1 \otimes \id)r_{(12)}\\
 =&&-(\ell_1 T_2 +T_2 (\ell_1 \otimes \id) + T_2 ( \id \otimes \ell_1)) =-\partial(T_2),
 \end{eqnarray*}
 which shows that $T_1$ is a Rota-Baxter Lie operator of weight $\lambda$ with respect to $\ell_2,$ but only up to homotopy given by the operator $T_2.$
  \end{itemize}
\end{exam}
Observe that for a homotopy Rota-Baxter Lie algebra $(V, \{\ell_n\}_{n\geq 1}, \{T_n\}_{n\geq 1}),$ its homology $\h_{\bullet}(V,\ell_1)$ endowed with the operators induced by $\ell_2$ and $T_1$ is an usual Rota-Baxter Lie algebra.

\subsection{Comparison with the deformation theory of relative Rota-Baxter Lie algebras}\label{subsect: comparison with LST}
In this subsection, we compare our $L_\infty$-algebras, cohomology theories and homotopy Rota-Baxter Lie algebras with the corresponding results of Tang, Bai, Guo and Sheng \cite{TBGS19} and of Lazarev, Sheng and Tang \cite{LST, LST2} on relative Rota-Baxter Lie algebras, and show that their results, when the representation is specialized to the adjoint one, coincide with ours at weight zero.

Recall from \cite{TBGS19, LST} that a \textbf{relative Rota-Baxter operator} (of weight zero), also called an $\mathcal{O}$-operator, on a Lie algebra $(\frakg,[-,-])$ with respect to a representation $(V,\rho)$ is a linear map $T: V\to \frakg$ satisfying
$$[T(u),T(v)]=T\big(\rho(T(u))(v)-\rho(T(v))(u)\big), \quad \forall\, u,v\in V.$$
A relative Rota-Baxter Lie algebra consists of a Lie algebra, a representation and a relative Rota-Baxter operator. When $(V,\rho)$ is the adjoint representation $(\frakg,\mathrm{ad})$, a relative Rota-Baxter operator is nothing but a Rota-Baxter operator of weight zero on $\frakg$. We now compare the two theories item by item under this specialization.

\textbf{(i) The controlling $L_\infty$-algebras.} In \cite{LST}, applying Voronov's higher derived brackets \cite{Vor, Vor2} to a $V$-data built out of the Nijenhuis-Richardson bracket on $C^\bullet(\frakg\oplus V, \frakg\oplus V)$, the authors constructed an $L_\infty$-algebra whose Maurer-Cartan elements are precisely relative Rota-Baxter Lie algebra structures; the part governing the operator lives on $\bigoplus_{n\geqslant 1}\Hom(\wedge^n V, \frakg)$, which, under the standard suspension identification, is the same as $\prod_{n\geqslant 1}\Hom(s^n(sV),s\frakg)$. Specializing $(V,\rho)=(\frakg,\mathrm{ad})$, their $L_\infty$-algebra coincides with our $L_\infty$-algebra $\frakC_{\rmRBLA}(\frakg)$ of Theorem~\ref{Thm: rb-L-infty} at $\lambda=0$, and their Maurer-Cartan characterization of relative Rota-Baxter Lie algebra structures corresponds to Proposition~\ref{prop: RBLA 1-1 MC}.

\textbf{(ii) The cochain complexes and cohomology.} In \cite{TBGS19, LST}, the cochain complex of a (relative) Rota-Baxter operator is the Chevalley-Eilenberg complex of the descendent Lie algebra $(V,[-,-]_T)$, where $[u,v]_T=\rho(T(u))(v)-\rho(T(v))(u)$, with coefficients in a suitable module, and the cohomology of a relative Rota-Baxter Lie algebra is defined as the cohomology of the mapping cone of a canonical chain map from the Chevalley-Eilenberg complex of $\frakg$ to the complex of the operator. For $(V,\rho)=(\frakg,\mathrm{ad})$, the descendent bracket $[-,-]_T$ is exactly our bracket $[-,-]_\star$ with $\lambda=0$, their cochain complex of the operator reduces to our complex $\rmC_{\RBLO}^{\bullet}(\frakg)$ of the Rota-Baxter operator, and their chain map coincides with our map $\Phi^{\bullet}$ at $\lambda=0$ (Proposition~\ref{prop: Phi is chain map}). Consequently, their cohomology of relative Rota-Baxter Lie algebras reduces to our cohomology $\rmH_{\rmRBLA}^{\bullet}(\frakg)$, which arises from twisting the $L_\infty$-algebra $\frakC_{\rmRBLA}(\frakg)$ (Proposition~\ref{prop: cochain complex of RBLA}).

\textbf{(iii) The homotopy versions.} In \cite{LST2}, homotopy relative Rota-Baxter Lie algebras were introduced and characterized as Maurer-Cartan elements $T=\sum_{k\geqslant 1}T_k\in \Hom(\overline{S}(V),\frakg)$ of a weakly filtered $L_\infty$-algebra on $\prod_{i\geqslant 1}\Hom(S^i(V),\frakg)$, again constructed by higher derived brackets. Specializing $(V,\rho)=(\frakg,\mathrm{ad})$, their defining identities for the family $\{T_k\}_{k\geqslant 1}$ coincide with Equation~\eqref{Eq: homotopy T_n} at $\lambda=0$. Hence a homotopy relative Rota-Baxter Lie algebra over the adjoint representation in the sense of \cite{LST2} is exactly a homotopy Rota-Baxter Lie algebra of weight zero in the sense of Proposition~\ref{prop: RBLA 1-1 MC}.

\textbf{(iv) Nonzero weights.} For nonzero weights, Jiang and Sheng \cite{JiangSheng21} introduced representations of relative Rota-Baxter Lie algebras of arbitrary weight $\lambda$ and defined their cohomology with coefficients in a representation, by the same descendent Lie algebra and mapping cone pattern as in the weight zero case; they also deduced a cohomology theory for absolute Rota-Baxter Lie algebras of arbitrary weight. When the representation is the adjoint one, their construction specializes precisely to our complexes $\rmC_{\RBLO}^{\bullet}(\frakg, M)$ and $\rmC_{\rmRBLA}^{\bullet}(\frakg, M)$ in Section~\ref{sect: Linf of RBLA}: the underlying cochain complexes and hence the cohomology groups coincide. What we add to their cohomological theory is the $L_\infty$-structure behind it: our complex $\rmC_{\rmRBLA}^{\bullet}(\frakg)$ is obtained by twisting the $L_\infty$-algebra $\frakC_{\rmRBLA}(\frakg)$ extracted from the minimal model $\RBLA_\infty$ (Proposition~\ref{prop: cochain complex of RBLA}), the chain map $\Phi^{\bullet}$ and the dg Lie algebra governing deformations of the operator alone arise from the same twisting procedure (Proposition~\ref{prop: dgLa RBLO and twisting}), and the same $L_\infty$-algebra encodes the homotopy version in item (iii), which has no counterpart in \cite{JS21}. On the associative side, Das \cite{Das21} constructed a dg Lie algebra whose Maurer-Cartan elements are $\lambda$-weighted relative Rota-Baxter operators and developed the cohomology and deformation theory of weighted Rota-Baxter operators, and Das and Mishra \cite{DasM} determined the $L_\infty$-algebra governing $L_\infty$-deformations of associative Rota-Baxter algebras and introduced homotopy Rota-Baxter operators. Through the commutator functor, these associative results correspond to ours; the precise relation on the homotopy level is given by the comparison morphism $\Phi: \RBLA_\infty\to \RB_\infty$ in Section~\ref{sect: comparison}. For relative Rota-Baxter operators of weight $1$ on Lie groups and Lie algebras, a cohomology theory based on the Chevalley-Eilenberg complex of the descendent Lie algebra was established and related to integration and factorization problems by Jiang, Sheng and Zhu \cite{JSZ24}; at the infinitesimal level, their complex is again the weight one case of the operator complex above.

In summary, the results of \cite{TBGS19, LST, LST2} specialized to the adjoint representation agree with the weight zero case of our results, and the cohomology theory of \cite{JS21} for arbitrary weights coincides with ours at the level of cochain complexes, while our theory works for Rota-Baxter Lie algebras of arbitrary weight $\lambda$, is constructed intrinsically from the minimal model $\RBLA_\infty$ of the operad $\RBLA$, and provides in addition the homotopy version, namely homotopy Rota-Baxter Lie algebras of arbitrary weight.

\bigskip

\section{From Rota-Baxter   associative algebras to Rota-Baxter Lie algebras}\label{sect: comparison}
		
		For a given Rota-Baxter associative algebra $(A,\cdot,T)$ of weight $\lambda$, let $[-,-]$ be the Lie bracket given by the commutator of $A$. It is easy to check the triple $(A,[-,-],T)$ is  a Rota-Baxter Lie algebra  of weight $\lambda$. Let $\RB$ be the regular symmetric operad of $\nsRB$, the (non-symmetric) operad of the Rota-Baxter associative algebra of weight $\lambda$. The construction above gives a mophism of symmetric operads from $\RBLA$  to $\RB$. In this section, we will give an anologue of the mophism between the minimal models of these two operads.
		
		Let  $\RB_\infty$ be the minimal model of $\RB$, i.e., $\RB_\infty$ is the regular symmetric operad of the minimal model $\nsRB_{\infty}$ of $\nsRB$ given in \cite{WZ24}. 
Specifically, the generating $\bbS$-module $M_{\bbS}$ of the dg quasi-free operad $\RB_\infty$ is defined as $ M_{\bbS}(1) = \bfk R_1 \ot \bfk[\s_1]  ,$ $M_{\bbS}(n) = (\bfk R_n  \oplus \bfk \mu_{n}) \ot \bfk[\s_n],$ for $n\geqslant 2$, endowed with the regular representation as its $\bbS$-module structure. If $\sigma $ is the identity element in $\s_n$, we abbreviate $\mu_n \ot \sigma$ and  $R_n\ot \sigma$ as $\mu_n$ and $R_n$, respectively.
The action of differential operator $\partial_{\bbS}$ on generators can be expressed as follows:
		\begin{eqnarray*}
			\partial(\mu_n) \ot \sigma =  (\sum_{j=2}^{n-1}\sum_{i=1}^{n-j+1}(-1)^{i+j(n-i)}\mu_{n-j+1}  \circ_i \mu_j ) \cdot \sigma
		\end{eqnarray*} and
		\begin{eqnarray*}
			   &\partial (R_n \ot \sigma ) &= \sum\limits_{k=2}^n\sum_{l_1+\cdots+l_k=n\atop l_1, \dots, l_k\geqslant 1}(-1)^{\alpha'}\Big(\cdots\big((\mu_k\circ_1 R_{l_1})\circ_{l_1+1}R_{l_2}\big)\cdots\Big)\circ_{l_1+\cdots+l_{k-1}+1}R_{l_k}\Big) \cdot \sigma \\
			    & &\hspace{-1cm} + \sum\limits_{{\small\substack{2\leqslant p\leqslant n \\ 1\leqslant q\leqslant p
			}}}\sum\limits_{\small\substack{ r_1+\dots+r_q+p-q=n\\r_1, \dots, r_q\geqslant 1\\1\leqslant i\leqslant r_1\\1\leqslant k_1<\dots< k_{q-1}\leqslant p }}(-1)^{\beta'}\lambda^{p-q}\Big(R_{r_1}\circ_i \Big(\big((\cdots(( \mu_p\circ_{k_1}R_{r_2})\circ_{k_2+r_2-1} R_{r_3}))\cdots \big)\circ_{k_{q-1}+r_2+\dots+r_{q-1}-q+2}R_{r_q}\Big)\Big)\cdot \sigma,
		\end{eqnarray*}
		where the sign $(-1)^{\alpha'}$ and $(-1)^{\beta'}$ are defined in equations~\eqref{Eq: sign   alpha'} and \eqref{Eq: sign   beta'}, respectively. 

\begin{prop}\label{prop: comparison morphism}
		There is a morphism of dg operads $\Phi: \RBLA_\infty\rightarrow\RB_\infty$.
\end{prop}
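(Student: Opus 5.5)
Since $\RBLA_\infty$ is quasi-free on the $\mathbb S$-module spanned by $\ell_n$ ($n\geqslant 2$) and $T_n$ ($n\geqslant 1$), a morphism of graded symmetric operads is determined by the images of these generators, subject only to $\mathbb S_n$-equivariance. I will define $\Phi$ on generators by anti-symmetrization:
\[
\Phi(\ell_n)=\sum_{\sigma\in\mathbb S_n}\sgn(\sigma)\,\mu_n\otimes\sigma,\qquad
\Phi(T_n)=\sum_{\sigma\in\mathbb S_n}\sgn(\sigma)\,R_n\otimes\sigma .
\]
Possibly each summand needs an extra global sign $\epsilon_n=\pm1$, to be fixed in the verification below. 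Degrees match, since $|\mu_n|=|\ell_n|=n-2$ and $|R_n|=|T_n|=n-1$. Equivariance holds because $\ell_n\cdot\tau=\sgn(\tau)\ell_n$, and the anti-symmetrized element transforms the same way under the regular action. These images extend uniquely to a morphism of graded symmetric operads.

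It then remains to check $\partial\circ\Phi=\Phi\circ\partial$ on generators. For $\ell_n$ this is the classical statement that the anti-symmetrization map $L_\infty\to A_\infty$ is a morphism of dg operads. The formula \eqref{Eq: partial l_n} carries the same signs $(-1)^{i+j(n-i)}$ as the $A_\infty$ differential of $\mu_n$. I will verify it directly by expanding $\Phi(\ell_{n-j+1})\circ_i\Phi(\ell_j)$ as a sum over $\mathbb S_{n-j+1}\times\mathbb S_j$. I will then regroup the terms by the resulting permutation in $\mathbb S_n$. Each term $(\mu_{n-j+1}\circ_{i'}\mu_j)\cdot\tau$ with $\tau\in\mathbb S_n$ appears exactly once when one sums over the shuffle trees and over $i$, with sign $\sgn(\tau)$ times the coefficient from \eqref{Eq: partial l_n}. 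This reduces to the fact that a shuffle tree together with a choice of orderings of the two vertex sets parametrizes $\mathbb S_n$ bijectively.

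For $T_n$ the same argument applies separately to the two families of terms in \eqref{Eq: partial T_n}. The first family, $\ell_k(T_{l_1},\dots,T_{l_k})\cdot\sigma$, matches $\mu_k\circ(R_{l_1},\dots,R_{l_k})$ with sign $(-1)^{\alpha'}$. The second family, $T_{r_1}\circ_i\ell_p(\ldots T_{r_j}\ldots)$, matches the $\lambda^{p-q}$ terms with sign $(-1)^{\beta'}$. The $\RB_\infty$ formula uses exactly the signs $\alpha',\beta'$ from \eqref{Eq: sign   alpha'} and \eqref{Eq: sign   beta'}, so after anti-symmetrizing every vertex, the composite of anti-symmetrized generators along a planar tree, summed over all planar trees and leaf orders, equals the anti-symmetrized sum over shuffle trees. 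I will isolate this as a lemma: for any planar tree shape $t$ with vertices decorated by anti-symmetrized generators, the composite equals $\sum_{\tau}\sgn(\tau)\,(\text{planar composite along }t)\cdot\tau$, where $\tau$ runs over $\mathbb S_n$ modulo nothing. The $\sum_\sigma$ over shuffle permutations in the $\RBLA_\infty$ formulas, together with the vertex anti-symmetrizations, then reproduces every $\tau$ exactly once.

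The main obstacle is the sign bookkeeping in the $T_n$ case. Reordering the inputs of $\ell_p$ moves the odd or even elements $T_{r_j}$ past one another, producing Koszul signs $\varepsilon$. These must combine with $\sgn$ of the shuffle to give $\sgn(\tau)$ for the total permutation. I would first check the small cases $n=1,2$ explicitly to pin down whether the global signs $\epsilon_n$ are needed. I would then prove the general case by noting that $\chi(\sigma;\ldots)=\sgn(\sigma)\varepsilon(\sigma;\ldots)$ is exactly the convention under which $T_n\cdot\sigma=\sgn(\sigma)T_n$ was defined. Since the vertex degrees enter $\alpha',\beta'$ identically on both sides, the Koszul signs cancel.
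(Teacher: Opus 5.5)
Your construction and overall strategy are the paper's. $\Phi$ is anti-symmetrization on generators with no extra sign, so $\epsilon_n=1$ works. One then checks $\partial\Phi=\Phi\partial$ on generators by rewriting each term of $\Phi\partial$ uniquely as a planar composite times a permutation in $\mathbb S_n$ and comparing coefficients. For $\ell_n$ the paper records the decisive identity $\sgn(\sigma)=(-1)^{(j-1)(i-k)}\sgn(\sigma_1)\sgn(\sigma_2)\sgn(\tau)$. Here $k$ is the attachment position in the shuffle tree and $i=\sigma_1(k)$ is the position after absorbing $\sigma_1$ into the planar composite. This factor is exactly what turns $(-1)^{k+j(n-k)}$ into $(-1)^{i+j(n-i)}$, so the coefficient you compare with must be read off at the new position $i$, not at $k$.

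Two steps of your write-up are wrong as stated.

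\emph{The lemma you want to isolate fails for a single planar shape.} Anti-symmetrizing a vertex moves attachment points and permutes subtrees of different arities, so it changes the shape. For instance, $\Phi(\ell_2)\circ_1\Phi(\ell_2)$ consists of four terms, two of shape $\mu_2\circ_1\mu_2$ and two of shape $\mu_2\circ_2\mu_2$. By contrast, $\sum_{\tau\in\mathbb S_3}\sgn(\tau)(\mu_2\circ_1\mu_2)\cdot\tau$ consists of six terms, all of shape $\mu_2\circ_1\mu_2$. The true statement holds family by family: pairs (shuffle tree, vertex permutations) are in bijection with pairs (planar shape in the family, $\tau\in\mathbb S_n$). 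For $\ell_n$ and fixed $j$ both sets have $(n-j+1)\,n!$ elements. So they parametrize $\{1,\dots,n-j+1\}\times\mathbb S_n$, not $\mathbb S_n$.

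\emph{In the $T_n$ case the signs do not match merely because $\alpha',\beta'$ appear identically on both sides.} A term with composition $(l_1,\dots,l_k)$ and root permutation $\sigma_0$ is matched with the term of the permuted composition. So $\alpha'$ is evaluated at different arguments, and you must show its variation compensates the Koszul sign, the sign of the block permutation, and $\sgn(\sigma_0)$. This is true, and easy to check on adjacent transpositions:
\begin{itemize}
\item Swapping adjacent blocks changes $\alpha'$ by $l_j+l_{j+1}\equiv (l_j-1)(l_{j+1}-1)+l_jl_{j+1}+1$.
\item Moving the grafting point in $T_{r_1}$ from $i$ to $i'$ changes $\beta'$ by $(i'-i)(N-1)$ modulo $2$. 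Here $N=p+\sum_{j\geqslant 2}(r_j-1)$ is the arity of the grafted subtree, and this is exactly the sign of the block-expanded permutation.
\item Swapping two adjacent $T_{r_j}$'s, or a $T_{r_j}$ with a free input of $\ell_p$, is absorbed by the term $\sum_{j}(r_j-1)(p-k_{j-1})$ of $\beta'$.
\end{itemize}
With these corrections your argument is the paper's proof, with the $T_n$ case (which the paper only calls ``similar'') actually carried out.
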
		
		
\begin{proof}
Define the morphism by, on the generators: 
	\begin{align*}
	\Phi: \RBLA_\infty&\rightarrow\RB_\infty\\
	\ell_n&\mapsto \sum_{\sigma\in \s_n}\sgn(\sigma)\mu_n \ot \sigma\\
	T_n&\mapsto \sum_{\sigma\in \s_n}\sgn(\sigma)R_n \ot \sigma.
		\end{align*}
	It suffices to check the map commutes with the differentials on the generators.  On one hand,
	\begin{align*}
		\partial\Phi(\ell_n)=&\sum_{\sigma\in \s_n}\sum_{j=2}^{n-1}\sum_{i=1}^{n-j+1}(-1)^{i+j(n-i)}\sgn(\sigma)(\mu_{n-j+1}\circ_i \mu_j )\cdot \sigma.
	\end{align*}
On the other hand,
	\begin{align*}
	\Phi\partial(\ell_n)=&\Phi\Big(\sum_{\tau}\sum_{j=2}^{n-1}\sum_{i=1}^{n-j+1}(-1)^{i+j(n-i)}\sgn(\tau)(\ell_{n-j+1}\circ_i \ell_j )\cdot \tau\Big)\\
	=&\sum_{\tau} \sum_{j=2}^{n-1} \sum_{i=1}^{n-j+1}\sum_{\small\substack{ \sigma_1\in \s_{n-j+1}\\ \sigma_2\in \s_{j}}}(-1)^{i+j(n-i)}\sgn(\tau)\sgn(\sigma_1)\sgn(\sigma_2)\big((\mu_{n-j+1} \ot \sigma_1)\circ_i (\mu_j \ot \sigma_2)\big)\cdot\tau,
\end{align*}
where all  $\sum_\tau$ denote the summations over permutations  $\tau \in \s_n$ that make the corresponding tree monomials into shuffle tree monomials. 
Since element of the form $(\mu_{n-j+1}\circ_i \mu_j )\cdot \sigma$ can be uniquely written as shuffle tree monomial $\big((\mu_{n-j+1}\ot \sigma_1)\circ_{i'} (\mu_j \ot \sigma_2)\big)\cdot\tau$  by equivariance of the $\bbS$-module structure, we only need to show the signs of the corresponding terms agree. For any fixed $\sigma$ and $i$, let $\sigma_1, \sigma_2, \tau$ and $k$ be such that
\begin{align*}
	(\mu_{n-j+1}\circ_i \mu_j )\cdot\sigma=&\big((\mu_{n-j+1} \ot \sigma_1)\circ_k (\mu_j \ot \sigma_2)\big)\cdot \tau\\
	=&(\mu_{n-j+1}\circ_{\sigma_1(k)} \mu_j)\cdot\sigma_1''\sigma_2'\tau,
\end{align*}
then $\sgn(\sigma_1'')=(-1)^{(j-1)(\sigma_1(k)-k)} \sgn(\sigma_1)$ and $\sgn(\sigma_2') = \sgn(\sigma_2)$ by computation. 
Then we have $\sigma_1(k)=i, \sigma = \sigma_1''\sigma_2'\tau$ and 
\begin{align*}
	\sgn(\sigma)=&\sgn(\sigma_1'')\sgn(\sigma_2')\sgn(\tau)\\
	=&(-1)^{(j-1)(i-k)}\sgn(\sigma_1)\sgn(\sigma_2)\sgn(\tau).
\end{align*}
This ensures that $\partial\Phi(m_n)=\Phi\partial(m_n)$.
It is similar for $\partial\Phi(T_n)=\Phi\partial(T_n)$.
\end{proof}		
	
Consequently, for any homotopy Rota-Baxter associative algebra $A$ of weight $\lambda$, the above morphism induces a structure of  homotopy Rota-Baxter Lie algebra of weight $\lambda$ on $A$.
		
	\bigskip

 \textbf{Acknowledgements:}   
This work was supported by  the National Key R  $\&$ D Program of China (No. 2024YFA1013803),  and by Shanghai Key Laboratory of PMMP (No.
22DZ2229014).

%

\end{document}